\address[jim.conant@gmail.com]{Jim Conant, Department of Mathematics, University of Tennessee, Knoxville, TN, USA}
\definecolor{dmagenta}{rgb}{.5,0,.5} 
\definecolor{dred}{rgb}{.5,0,0} 
\definecolor{dgreen}{rgb}{0,.5,0} 
\definecolor{blue}{rgb}{0,0,0.5} 
\definecolor{black}{rgb}{0,0,0} 
\definecolor{vdgreen}{rgb}{0,.3,0} 
\definecolor{vdred}{rgb}{.3,0,0} 
\definecolor{red}{rgb}{1,0,0} 
\newcommand\cG{{\mathcal{G}}}
\newcommand\cO{{\mathcal{O}}} %Notation for a cyclic operad
\newcommand{\Lie}{\mathsf{Lie}}  %Lie operad
\newcommand{\F}{{\mathds{k}}}
\newcommand{\Z}{{\mathbb{Z}}}
\newcommand{\C}{\F}
\newcommand{\ext}{\bigwedge\nolimits}
\newcommand{\id}{\mathrm{Id}}   %Identity
\DeclareMathOperator{\Tr}{Tr}  % Trace map
\DeclareMathOperator{\SP}{Sp}  % symplectic group
\DeclareMathOperator{\Out}{Out} % Out
\DeclareMathOperator{\sym}{Sym}
\DeclareMathOperator{\GL}{GL}  % general linear group
\DeclareMathOperator{\Aut}{Aut}  % Aut
\DeclareMathOperator{\Mod}{Mod}  % Mapping class groups
\DeclareMathOperator{\im}{im}  % image
\DeclareMathOperator{\SL}{SL}
\let\ker\undefined
\DeclareMathOperator{\ker}{ker}  % kernel
\newcommand{\SF}[1]{{\mathbb S}_{#1}}  % Schur functor
\newcommand{\arity}[1]{{(\!(#1)\!)}}
\newcommand{\wt}{\widetilde}
\newcommand{\wh}{\widehat}
\newcommand{\PSL}{\operatorname{PSL}}
\newcommand{\smat}{s}
\newcommand{\tmat}{t}
\newcommand{\homfunctor}{\mathscr{H}}
\newcommand{\graphA}[3]{
\begin{minipage}{3cm}
\resizebox{3cm}{!}{
\begin{tikzpicture}[scale=.8]
\coordinate(a) at (0,0);
\coordinate(b) at (1.5,0);
\coordinate(c) at (3.8,0);
\coordinate(d) at (5,0);
\coordinate(e) at (2,-1);
\coordinate(f) at (5,-.25);
\coordinate(g) at (1,1.5);
\coordinate(h) at (2.5,1.0);
\coordinate(i) at (3.8,.25);
\coordinate(j) at (5,.25);
\node[hopf, at=(g)] (G){${#1}$};
\node[hopf, at=(h)] (H){${#2}$};
\node[hopf, at=(e)] (E){${#3}$};
\draw[thick] (a) to (d);
\draw[thick] (a) to[out=90, in=180] (G);
\draw[thick] (b) to[out=90, in=180] (H);
\draw[thick] (a) to[out=270, in =180] (E);
\draw[thick] (c) to (i);
\draw[thick] (d) to (j);
\draw[thick] (d) to (f);
\begin{scope}[decoration={markings,mark = at position 0.5 with {\arrow{stealth}}}]
\draw[densely dashed, postaction=decorate] (E) to[densely dashed, postaction=decorate, out=0, in=270](f);
\draw[densely dashed, postaction=decorate] (G) to[densely dashed, postaction=decorate,out=0,in=90](i);
\draw[densely dashed, postaction=decorate] (H) to[densely dashed, postaction=decorate,out=0,in=90](j);
\end{scope}
\end{tikzpicture}
}
\end{minipage}
}
\newcommand{\graphB}[3]{
\begin{minipage}{3cm}
\resizebox{3cm}{!}{
\begin{tikzpicture}[scale=.8]
\coordinate(a) at (0,0);
\coordinate(b) at (1.5,0);
\coordinate(c) at (3.5,0);
\coordinate(d) at (5,0);
\coordinate(e) at (2,-1);
\coordinate(f) at (5,-.25);
\coordinate(g) at (1,1.5);
\coordinate(h) at (2.5,1.0);
\coordinate(i) at (3.5,.25);
\coordinate(j) at (5,.25);
\node[hopf, at=(g)] (G){$#1$};
\node[hopf, at=(h)] (H){$#2$};
\node[hopf, at=(e)] (E){$#3$};
\draw[thick] (a) to (d);
\draw[thick] (a) to[out=90, in=180] (G);
\draw[thick] (b) to[out=90, in=180] (H);
\draw[thick] (a) to[out=270, in =180] (E);
\draw[thick] (c) to (i);
\draw[thick] (d) to (j);
\draw[thick] (d) to (f);
\begin{scope}[decoration={markings,mark = at position 0.5 with {\arrow{stealth}}}]
\draw[densely dashed, postaction=decorate] (E) to[densely dashed, postaction=decorate, out=0, in=270](f);
\draw[densely dashed, postaction=decorate] (G) to[densely dashed, postaction=decorate,out=0,in=90](j);
\draw[densely dashed, postaction=decorate] (H) to[densely dashed, postaction=decorate,out=0,in=90](i);
\end{scope}
\end{tikzpicture}
}
\end{minipage}
}
\newcommand{\graphC}[3]{
\begin{minipage}{3cm}
\resizebox{3cm}{!}{
\begin{tikzpicture}[scale=.8]
\coordinate(a) at (0,0);
\coordinate(b) at (1.8,0);
\coordinate(c) at (2.7,0);
\coordinate(d) at (4.5,0);
\coordinate(e) at (1,-1);
\coordinate(f) at (4.5,-.25);
\coordinate(g) at (.7,1);
\coordinate(h) at (3.7,1);
\coordinate(i) at (1.8,.25);
\coordinate(j) at (4.5,.25);
\node[hopf, at=(g)] (G){$#1$};
\node[hopf, at=(h)] (H){$#2$};
\node[hopf, at=(e)] (E){$#3$};
\draw[thick] (a) to (d);
\draw[thick] (a) to[out=90, in=180] (G);
\draw[thick] (c) to[out=90, in=180] (H);
\draw[thick] (a) to[out=270, in =180] (E);
\draw[thick] (b) to (i);
\draw[thick] (d) to (j);
\draw[thick] (d) to (f);
\begin{scope}[decoration={markings,mark = at position 0.5 with {\arrow{stealth}}}]
\draw[densely dashed, postaction=decorate] (E) to[densely dashed, postaction=decorate, out=0, in=270](f);
\draw[densely dashed, postaction=decorate] (G) to[densely dashed, postaction=decorate,out=0,in=90](i);
\draw[densely dashed, postaction=decorate] (H) to[densely dashed, postaction=decorate,out=0,in=90](j);
\end{scope}
\end{tikzpicture}
}
\end{minipage}
}
\newcommand{\boxtensor}{
%\draw (0,0) to (0,.5);
%\draw (1,0) to (1,.5);
%\draw (2,0) to (2,.5);
\draw[fill=blue!20] (-.5,0) to (2.5,0) to (2.5,.5) to (-.5,.5) to (-.5,0);
}
\newcommand{\varboxtensor}{
\draw[fill=blue!20] (-.25,0) to (1.25,0) to (1.25,.25) to (-.25,.25) to (-.25,0);
}
\newcommand{\varid}[1]{
\draw (0,1-#1) to (0,-#1);
\draw (1,1-#1) to (1,-#1);
}
\newcommand{\vartrans}[1]{
\draw (0,1-#1) to[out=-90,in=90] (1,-#1);
\draw (1,1-#1) to[out=-90,in=90]  (0,-#1);
}
\newcommand{\roundtensor}{
%\draw (0,0) to (0,.5);
%\draw (1,0) to (1,.5);
%\draw (2,0) to (2,.5);
\draw[fill=yellow!20] (-.5,0) to (2.5,0) to[out=90,in=0] (1,.75) to[out=180,in=90] (-.5,0);
}
\newcommand{\tritensor}{
%\draw (0,0) to (0,.5);
%\draw (1,0) to (1,.5);
%\draw (2,0) to (2,.5);
\draw[fill=red!20] (-.5,0) to (2.5,0) to (1,1) to (-.5,0);
}
\newcommand{\transA}[1]{
\draw (0,1-#1) to[out=-90,in=90] (1,0-#1);
\draw (1,1-#1) to[out=-90,in=90] (0,0-#1);
\draw (2,1-#1) to (2,0-#1);
}
\newcommand{\transB}[1]{
\draw (0,1-#1) to[out=-90,in=90] (2,0-#1);
\draw (1,1-#1) to[out=-90,in=90] (1,0-#1);
\draw (2,1-#1) to[out=-90,in=90] (0,0-#1);
}
\newcommand{\transC}[1]{
\draw (0,1-#1) to[out=-90,in=90] (0,0-#1);
\draw (1,1-#1) to[out=-90,in=90] (2,0-#1);
\draw (2,1-#1) to[out=-90,in=90] (1,0-#1);
}
\newcommand{\transD}[1]{
\draw (0,1-#1) to[out=-90,in=90] (2,0-#1);
\draw (1,1-#1) to[out=-90,in=90] (0,0-#1);
\draw (2,1-#1) to[out=-90,in=90] (1,0-#1);
}
\newcommand{\transE}[1]{
\draw (0,1-#1) to[out=-90,in=90] (1,0-#1);
\draw (1,1-#1) to[out=-90,in=90] (2,0-#1);
\draw (2,1-#1) to[out=-90,in=90] (0,0-#1);
}
\newcommand{\transF}[1]{
\draw (0,1-#1) to[out=-90,in=90] (0,0-#1);
\draw (1,1-#1) to[out=-90,in=90] (1,0-#1);
\draw (2,1-#1) to[out=-90,in=90] (2,0-#1);
}
\newcommand{\antip}[4]{
\coordinate (a) at (0,-#4+.5);
\coordinate (b) at (1,-#4+.5);
\coordinate (c) at (2,-#4+.5);
\ifnum1=#1
\node[antipode, at =(a)] (A) {$S$};
\else
\coordinate (A) at (0,-#4+.5);
\fi
\ifnum1=#2
\node[antipode, at =(b)] (B) {$S$};
\else
\coordinate (B) at (1,-#4+.5);
\fi
\ifnum1=#3
\node[antipode, at =(c)] (C) {$S$};
\else
\coordinate (C) at (2,-#4+.5);
\fi
\draw (0,1-#4) to (A);
\draw (A) to (0,-#4);
\draw (1,1-#4) to (B);
\draw (B) to (1,-#4);
\draw (2,1-#4) to (C);
\draw (C) to (2,-#4);
}
\newcommand{\varantip}[5]{
\coordinate (a) at (0,-#4+.5);
\coordinate (b) at (1,-#4+.5);
\coordinate (c) at (2,-#4+.5);
\ifnum1=#1
\node[rec, at =(a)] (A) {$#5$};
\else
\coordinate (A) at (0,-#4+.5);
\fi
\ifnum1=#2
\node[rec, at =(b)] (B) {$#5$};
\else
\coordinate (B) at (1,-#4+.5);
\fi
\ifnum1=#3
\node[rec, at =(c)] (C) {$#5$};
\else
\coordinate (C) at (2,-#4+.5);
\fi
\draw (0,1-#4) to (A);
\draw (A) to (0,-#4);
\draw (1,1-#4) to (B);
\draw (B) to (1,-#4);
\draw (2,1-#4) to (C);
\draw (C) to (2,-#4);
}
\newcommand{\opT}[3]{
\draw(0,1-#3) to (0,-#3);
\draw(1,1-#3) to (1,-#3);
\draw(2,1-#3) to (2,-#3);
\coordinate (a) at (.5,.5-#3);
\ifnum1=#2 
\node[antipode, at=(a)] (A) {$S$};
\else
\coordinate (A) at (.5,.5-#3);
\fi
\ifnum1=#1
\draw (0,.75-#3) to (A);
\draw (A) to (1,.25-#3);
\fi
\ifnum0=#1
\draw (1,.75-#3) to (A);
\draw (A) to (0,.25-#3);
\fi
}
\newcommand{\opX}[2]{
\ifnum0=#1
\draw(0,1-#2) to (0,-#2);
\draw (0,.75-#2) to[out=-45,in=90] (1,-#2);
\draw (0,.25-#2) to[out=45,in=-90] (1,1-#2);
\else
\draw(1,1-#2) to (1,-#2);
\draw (1,.75-#2) to[out=-135,in=90] (0,-#2);
\draw (1,.25-#2) to[out=135,in=-90] (0,1-#2);
\fi
\draw(2,1-#2) to (2,-#2);
}
\newcommand{\varopT}[3]{
\draw(0,1-#3) to (0,-#3);
\draw(1,1-#3) to (1,-#3);
\coordinate (a) at (.5,.5-#3);
\ifnum1=#2 
\node[antipode, at=(a)] (A) {$S$};
\else
\coordinate (A) at (.5,.5-#3);
\fi
\ifnum1=#1
\draw (0,.75-#3) to (A);
\draw (A) to (1,.25-#3);
\fi
\ifnum0=#1
\draw (1,.75-#3) to (A);
\draw (A) to (0,.25-#3);
\fi
}
\newcommand{\twoantip}[3]{
\coordinate (a) at (0,-#3+.5);
\coordinate (b) at (1,-#3+.5);
\ifnum1=#1
\node[antipode, at =(a)] (A) {$S$};
\else
\coordinate (A) at (0,-#3+.5);
\fi
\ifnum1=#2
\node[antipode, at =(b)] (B) {$S$};
\else
\coordinate (B) at (1,-#3+.5);
\fi
\draw (0,1-#3) to (A);
\draw (A) to (0,-#3);
\draw (1,1-#3) to (B);
\draw (B) to (1,-#3);
}
\newcommand{\tp}[2]{
\begin{minipage}{#1}
\resizebox{#1}{!}{
\begin{tikzpicture}[thick]
#2
\end{tikzpicture}
}
\end{minipage}
}
\newtheorem{proposition}{Proposition}[section]
\newtheorem{theorem}[proposition]{Theorem}
\newtheorem{lemma}[proposition]{Lemma}
\newtheorem{corollary}[proposition]{Corollary}
\theoremstyle{remark}
\theoremstyle{definition}
\newtheorem{definition}[proposition]{Definition}
\newtheorem{remark}[proposition]{Remark}
\newtheorem{conjecture}[proposition]{Conjecture}
\newtheoremstyle{red}{3pt}{3pt}{\color{red}}{}{\itshape}{.}{.5em}{}
\theoremstyle{red}
\def\Ddots{\mathinner{\mkern1mu\raise\p@
\vbox{\kern7\p@\hbox{.}}\mkern2mu
\raise4\p@\hbox{.}\mkern2mu\raise7\p@\hbox{.}\mkern1mu}}
\tikzset{
empty/.style={inner sep=0pt,minimum size=0mm},
triangle/.style={regular polygon,regular polygon sides =3, draw=red!50,fill=red!20,thick,inner sep=0pt, minimum size=8mm},
semi/.style={semicircle, draw=blue!50,fill=blue!20,thick,inner sep=0pt, minimum size=4mm},
rec/.style={rectangle, draw=green!50,fill=green!20,thick,inner sep=0pt, minimum size=6mm},
operad/.style={circle,draw=red!50,fill=red!20,thick, inner sep=0pt,minimum size=6mm},
hopf/.style={signal, signal to=east, signal from=west,draw=brown!50,fill=brown!20,thick, inner sep=2pt,minimum size=6mm},
lhopf/.style={signal, signal to= west, signal from= east,draw=brown!50,fill=brown!20,thick, inner sep=2pt,minimum size=6mm},
antipode/.style={circle,draw=purple!50,fill=purple!20,thick, inner sep=0pt,minimum size=4mm},
unit/.style={circle,draw=black,fill=white,thick, inner sep=0pt,minimum size=2mm},
break/.style={inner sep=0pt,minimum size=5mm},
block/.style={draw=blue,fill=blue!20,thick,inner sep=10pt}
outer/.style={}
}
\title[The Lie Lie algebra]{The $\Lie$ Lie algebra}
\author[J. Conant]{Jim Conant}
\begin{document}

\maketitle

\begin{abstract}
We study the abelianization of Kontsevich's Lie algebra associated with the Lie operad and some related problems.
Calculating the abelianization is a long-standing unsolved problem, which is important in at least two different contexts: constructing cohomology classes in $H^k(\Out(F_r);\mathbb Q)$ and related groups as well as studying the higher order Johnson homomorphism of surfaces with boundary.
The abelianization carries a grading by ``rank," with previous work of Morita and Conant-Kassabov-Vogtmann computing it up to rank $2$.
This paper presents a partial computation of the rank $3$ part of the abelianization, finding lots of irreducible $\SP$-representations with multiplicities given by spaces of modular forms.
 Existing conjectures in the literature on the twisted homology of $\SL_3(\Z)$ imply that this gives a full account of the rank $3$ part of the abelianization in even degrees. 
\end{abstract}

%\begin{classification}
%17B40; 20C15, 20F28
%\end{classification}

%\begin{keywords}
%Lie algebra of symplectic derivations, automorphism groups of free groups, Johnson cokernel
%\end{keywords}

\section{Introduction}
Let $\mathfrak h$ denote Kontsevich's Lie algebra of symplectic derivations of the free Lie algebra \cite{K1,K2}. 
We call $\mathfrak h$ the ``$\Lie$ Lie algebra" to distinguish it from the Lie algebras of symplectic derivations in the commutative and associative cases. 
 Let $\mathsf A=\oplus \mathsf A_d$ denote the abelianization of the positive degree part $\mathfrak h^+$ graded by degree, let $s_{w}$ be the dimension of the vector space of cusp forms of weight $w$ for the modular group $\PSL_2(\Z)$, and let $[\lambda]_{\SP}$ be the irreducible $\SP$-representation corresponding to the partition $\lambda$. Our main theorem is
\begin{theorem}\label{thm:rank3intro}
Let $\lambda=[a,b,c]$ be a partition of $2m$. The multiplicity of the representation $[\lambda]_{\SP}$ in $\mathsf A_{2m+4}$ is at least $s_{a-b+2}+s_{b-c+2}+\delta_{a,b,c}+\epsilon_{a,b,c}$, where $\epsilon_{a,b,c}=1$ if $a>b>c$ are all even and is equal to $0$ otherwise, and $\delta_{a,b,c}=s_{a-b+2}$ if $a-b=b-c$, and is equal to $0$ otherwise.
\end{theorem}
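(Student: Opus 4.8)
The plan is to first reduce the computation of the rank $3$ part of $\mathsf A$ to a twisted homology computation for $\SL_3(\Z)$, and then, since a complete evaluation of the latter rests on the conjectures referenced in the abstract, to obtain the stated \emph{lower} bound by constructing explicit nonzero classes indexed by cusp forms. I would begin from the hairy graph model of $\mathfrak h^+$, in which the degree grading is the number of hairs and the rank grading is the first Betti number of the underlying graph; the bracket becomes a graph gluing operation, so that $\mathsf A=\mathfrak h^+/[\mathfrak h^+,\mathfrak h^+]$ is identified with the cokernel of this gluing map. Restricting to rank $3$ and extracting the $[\lambda]_{\SP}$-isotypic component (for $\lambda=[a,b,c]\vdash 2m$, living in degree $2m+4$) should, by the symplectic invariant-theory bookkeeping already used in the rank $\le 2$ computations, express the multiplicity space as a piece of the homology of $\SL_3(\Z)$ with coefficients in the irreducible rational representation $V_{p,q}$ of highest weight $(p,q)=(a-b,\,b-c)$.

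Given this dictionary, the four summands in the bound are to be produced as distinct families of classes coming from the boundary of the Borel--Serre compactification, i.e. from the two conjugacy classes of maximal parabolic subgroups $P_1,P_2\subset\SL_3$. Each $P_i$ has Levi factor containing a copy of $\SL_2(\Z)$, and the Eichler--Shimura isomorphism $H^1(\SL_2(\Z);\sym^{k})\supset S_{k+2}$ attaches to every cusp form a nonzero class; the two parabolics see $\sym^{a-b}$ and $\sym^{b-c}$ respectively, giving the leading terms $s_{a-b+2}$ and $s_{b-c+2}$. When $a-b=b-c$ the weight $(p,q)$ is fixed by the outer symmetry exchanging $P_1$ and $P_2$ (which acts as $(p,q)\mapsto(q,p)$), and I expect the symmetrization to contribute the extra copy $\delta_{a,b,c}=s_{a-b+2}$. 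The remaining term $\epsilon_{a,b,c}$, present exactly when $a>b>c$ are all even, should come from a single distinguished \emph{rational} (Eisenstein/Steinberg-type) class that exists precisely under this regularity-and-parity condition. Concretely I would realize all of these classes back in the hairy graph complex by gluing the corresponding modular symbols onto a fixed rank $3$ trivalent skeleton and decorating its hairs with the appropriate symmetric powers, paralleling the rank $2$ construction of Conant--Kassabov--Vogtmann.

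The main obstacle is \emph{detection}: showing that the constructed classes are nonzero and linearly independent in $\mathsf A$, i.e. that none of them lies in the image of the bracket. For this I would build a detecting homomorphism out of $\mathsf A$ — a higher analogue of the Morita/Enomoto--Satoh trace, or equivalently a pairing with the appropriate twisted cohomology of $\SL_3(\Z)$ — and evaluate it on the glued classes, using the nondegeneracy of the Eichler--Shimura pairing on cusp forms to separate them. Two points require genuine care. First, I must rule out cancellation between the $P_1$- and $P_2$-contributions (and the $\epsilon$-class), which amounts to checking that the two parabolic induction maps land in independent subspaces of the multiplicity space; the outer symmetry is what makes the $a-b=b-c$ case subtle and is responsible for the correction $\delta_{a,b,c}$. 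Second, because only a lower bound is claimed, I do not need to prove that these classes exhaust the multiplicity — that is exactly where the conjectural input on the twisted homology of $\SL_3(\Z)$ would be required for the matching upper bound in even degrees — so the argument can stop once independence of the explicit classes is established.
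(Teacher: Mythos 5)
Your proposal contains two genuine gaps, one structural and one in the construction of the classes. First, you treat the passage from the rank~$3$ part of $\mathsf A$ to twisted homology of $\SL_3(\Z)$ as ``symplectic invariant-theory bookkeeping already used in the rank $\le 2$ computations.'' But the trace-map dictionary identifies $\mathsf A^3$ with $\varphi^{\SP}_{\GL}(\homfunctor_3(\sym(V)))$, and $\homfunctor_3(\sym(V))\cong H^3(\Out(F_3);\overline{\sym(V)^{\otimes 3}})$ --- the coefficient group is $\Out(F_3)$, not $\GL_3(\Z)$. In rank $2$ the step is automatic because $\Out(F_2)\cong\GL_2(\Z)$; in rank $3$ these groups differ, and the required isomorphism $H^3(\Out(F_3);[\sym(V)^{\otimes 3}]_e)\cong H^3(\GL_3(\Z);[\sym(V)^{\otimes 3}]_e)$ is a novel theorem of the paper, proved not by any natural map but by computing explicit presentations of both sides (chord-diagram relations in the graph complex for $\Out(F_3)$, versus Ash's presentation for $\SL_3(\Z)$, Theorem~\ref{thm:evenpres}) and observing that they coincide. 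The isomorphism \emph{fails} in odd degrees, so no parity-blind bookkeeping reduction can be correct; without this bridge your classes live in the wrong cohomology group.

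Second, you misattribute the term $\delta_{a,b,c}$. You propose to obtain it by symmetrizing the two parabolic (Borel--Serre boundary) contributions under the outer duality when $a-b=b-c$; this would fail, because the boundary homology contributes only $s_{a-b+2}+s_{b-c+2}+\epsilon_{a,b,c}$ (Theorem~\ref{thm:bdrycalc}, via the Allison--Ash--Conrad subsets built from $M^B$, $M^{U_P}$, $M^{U_Q}$ and a delicate analysis of the kernel of the antisymmetrizer $A$ and of the one-dimensional overlap spanned by $A(f)$ --- which is also the actual source of $\epsilon_{a,b,c}$, a Borel-invariant class rather than a Steinberg-type one). The extra copy $\delta_{a,b,c}=s_{a-b+2}$ is \emph{cuspidal}, i.e.\ interior, cohomology: self-duality of $\Psi_{(a,b,c)}$ (equivalent to $a-b=b-c$) is merely what permits nonvanishing cuspidal cohomology by Borel--Wallach, and the classes themselves are supplied by the Ash--Pollack symmetric square lift of cusp forms --- genuine automorphic input that cannot be manufactured from Eisenstein/boundary data (Corollary~\ref{cor:hom3}). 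Relatedly, your final ``detection'' step is unnecessary as posed: $\varphi^{\SP}_{\GL}(\homfunctor_3(\sym(V)))\cong\mathsf A^3$ is already an isomorphism from prior work, so no new higher trace is needed; the real work is the lower bound on $H_3(\GL_3(\Z);\Psi_{(a,b,c)})$ and the linear independence of the explicit classes, which the paper settles by direct computation with invariants of $B$, $U_P$, $U_Q$ rather than by an Eichler--Shimura pairing argument.
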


The  Lie algebra $\mathfrak h$ appears in many places in topology. Kontsevich showed its homology is essentially the same as the direct sum of cohomologies $H^*(\Out(F_r);\F)$ for a base field of characteristic $0$. On the other hand Morita \cite{morita} demonstrated it as the natural target of the higher Johnson homomorphisms of the mapping class group. Relatedly, Levine \cite{Levine} showed that it is the associated graded Lie algebra for the Johnson filtration of the group of 3-dimensional homology cylinders up to homology cobordism.

Calculating the abelianization $\mathsf A$ of the positive degree part  $\mathfrak h^+$ is an important unsolved problem. For example, elements of the abelianization can be assembled to give cohomology classes in $\Out(F_r)$. See \cite{K1,K2,morita,MorClasses,Stable,Ohashi,Gray,CKV1,CKV2} for papers about this. 
The abelianization is bigraded. It has a degree coming from the usual grading on the free Lie algebra $\mathsf L(V)$, but it also carries a grading by ``rank" \cite{CKV1}. We denote the rank $r$ degree $d$ part of the abelianization by $\mathsf A^r_d$, which we think of as a functor on symplectic vector spaces $V$. Theorem~\ref{thm:rank3intro} is a computation in rank $3$, giving representations $[a,b,c]_{\SP}$ in $\mathsf A^3_{2m+4}$. Conjecturally, the inequality of Theorem~\ref{thm:rank3intro} is an equality in even degrees $2m+4$. Indeed this would follow from two conjectures in the literature. See Conjecture~\ref{conj:ash-pollack} of this paper and the remarks just after it. 

The abelianization was previously known up to rank $2$. The ranks 0 and 1 pieces were calculated by Morita using his trace map. The only nonzero pieces are $A^0_1=[1^3]_{\SP}$ and
$A^1_{2k+1}=[2k+1]_{\SP}$ for $k\geq 0$.

The rank $2$ piece was calculated in \cite{CKV1,CKV2}. It is only nonzero for even $d$, in which case the only $\SP$-representations appearing  in $ \mathsf A^{2}_{d+2}$ are $[a,b]_{\SP}$ where $a\geq b+2$, $a+b=d$. These appear with multiplicity $s_{a-b}$ if $a,b$ are even and $s_{a-b}+1$ if $a,b$ are odd.
%cons
%\displaystyle \bigoplus_{a+b=d} [a,b]_{\SP}\otimes \mathcal X_{a-b,a}$, 
%%$$
%%\mathsf A^{2}_{d+2}=
%%\begin{cases}
%%\displaystyle \bigoplus_{a+b=d} [a,b]_{\SP}\otimes \mathcal X_{a-b,a}&\text{if $d$ is even}\\
%%0&\text{if $d$ is odd}
%%\end{cases}
%%$$
%where $\mathcal X_{w,i}$ is equal to $\mathcal S_w$, the vector space of weight $w$ cusp forms if $i$ is even and is equal to $\mathcal M_w$, the vector space of all classical modular forms of weight $w$ if $i$ is odd. 

Unlike in rank $2$, the odd degree part $A^3_{2k+1}$ is not trivial, but it remains poorly understood.  Indeed, computer calculations show that  
\begin{align*}
\mathsf A^{3}_7&\cong [21]_{\SP}\\
\mathsf A^{3}_9&\cong [41]_{\SP}\oplus[32]_{\SP}\\
\mathsf A^{3}_{11}&\cong [7]_{\SP}\oplus2[61]_{\SP}\oplus2[52]_{\SP}\oplus [51^2]_{\SP}\oplus[43]_{\SP}.
\end{align*}
A conceptual explanation of the appearance of these odd degree classes is an important problem for future research.

In addition to the abelianization $\mathsf A$, which is the quotient of $\mathfrak h^+$ by the space of commutators, we consider $\mathsf C$, the quotient of $\mathfrak h^+$ by the Lie algebra generated by degree $1$ elements. $\mathsf C$ inherits the degree from $\mathfrak h$ and  clearly there is a surjection $\mathsf C_d\twoheadrightarrow \mathsf A_d,\,\, d\geq 2.$

By a theorem of Hain, $\mathsf C(H_1(\Sigma_{g,1};\F))$ is isomorphic to the cokernel of the Johnson homomorphism, for large enough genus $g$ compared to degree \cite{Hain}. See \cite{morita, MoritaProspect,EE,C,ES,MSS} for background and historical interest in $\mathsf C$.

Theorem~\ref{thm:rank3intro} gives new information about the Johnson cokernel, but more can be said:
\begin{theorem}\label{thm:omega3intro}
Let $\lambda=[a,b,c]$ be a partition of $2m$. The representation $[\lambda]_{\SP}$ appears in $\mathsf C_{2m+4}$ with multiplicity at least $s_{a-b+2}+s'_{b-c+2}+\delta_{a,b,c}+\epsilon_{a,b,c}$, where
$s'_{2m+2}=\lceil\frac{2m}{3}\rceil-1$ if $m>0$ and $s'_{k}=0$ in all other cases.
\end{theorem}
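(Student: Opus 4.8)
The plan is to produce, for each partition $\lambda=[a,b,c]$ of $2m$, an explicit $\SP$-equivariant family of classes in $\mathsf C_{2m+4}$ indexed by the four contributions in the statement, and then to verify that the families are jointly linearly independent. The starting observation is that the surjection $\mathsf C_d\twoheadrightarrow\mathsf A_d$ of $\SP$-representations lets us import Theorem~\ref{thm:rank3intro}: any set of independent copies of $[\lambda]_{\SP}$ detected in $\mathsf A_{2m+4}$ lifts to an independent set in $\mathsf C_{2m+4}$, since a lift of a basis of the image is automatically independent. Hence the contributions $s_{a-b+2}$, $\delta_{a,b,c}$, $\epsilon_{a,b,c}$ (and even $s_{b-c+2}$) are inherited directly from the abelianization, and the entire content of Theorem~\ref{thm:omega3intro} is to enlarge the second slot from the cusp-form count $s_{b-c+2}$ to $s'_{b-c+2}=\lceil(b-c)/3\rceil-1\geq s_{b-c+2}$. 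Concretely, I must exhibit $s'_{b-c+2}-s_{b-c+2}$ further copies of $[\lambda]_{\SP}$ lying in $\ker(\mathsf C_{2m+4}\to\mathsf A_{2m+4})$ and show they are independent there; combined with the lifted Theorem~\ref{thm:rank3intro} classes, which map injectively to $\mathsf A_{2m+4}$, independence of the two families is then automatic.

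First I would write $\mathfrak h^3$ in its combinatorial (hairy-graph) model and isolate the relations defining $\mathsf A$ and $\mathsf C$: $\mathsf A$ imposes all commutators, whereas $\mathsf C$ imposes only the Lie subalgebra $\langle\mathfrak h^1\rangle$ generated by degree-$1$ elements, and for $d\geq2$ one has $\langle\mathfrak h^1\rangle_d\subseteq[\mathfrak h^+,\mathfrak h^+]_d$, so the two quotients share generators but differ by the relation space $[\mathfrak h^+,\mathfrak h^+]/\langle\mathfrak h^1\rangle$. Next I would feed in period-polynomial data for $\SL_2(\Z)$: the $a-b$ and $b-c$ slots each correspond to an $\SL_2(\Z)$-substructure of the rank-$3$ configuration (matching the two maximal-parabolic directions that also govern the conjectural $\SL_3(\Z)$ picture), and in each slot the surviving representations are cut out by Eichler--Shimura-type relations. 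In $\mathsf A$ the full relations are imposed in both slots, leaving the cusp-form dimension $s$; in $\mathsf C$ the $b-c$ slot feels only the weaker relations coming from $\langle\mathfrak h^1\rangle$, and a direct dimension count in the model of even polynomials of degree $b-c$ modulo those relations is what should yield the larger number $s'_{b-c+2}=\lceil(b-c)/3\rceil-1$. Carrying out this count and matching it term-by-term to the model is the combinatorial core.

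The hard part will be detection, i.e.\ proving that the $s'_{b-c+2}-s_{b-c+2}$ extra classes really are nonzero and independent in $\ker(\mathsf C\to\mathsf A)$ rather than collapsing under the relations of $\langle\mathfrak h^1\rangle$. Here I would invoke Hain's theorem \cite{Hain}, which identifies $\mathsf C(H_1(\Sigma_{g,1};\F))$ with the stable Johnson cokernel, and detect the classes by a trace-type invariant of Enomoto--Satoh type \cite{ES,MSS} that vanishes on $\langle\mathfrak h^1\rangle$ (equivalently on the image of the Johnson homomorphism) and so descends to $\mathsf C$; evaluating it on the constructed classes and showing the resulting pairing matrix has rank $s'_{b-c+2}$ would give independence. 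The principal obstacle is exactly this rank computation: one must check that the extra, non-cusp-form period data pairs nontrivially with the chosen traces and is not already accounted for by the inherited Theorem~\ref{thm:rank3intro} classes. As a consistency check, the formula can be compared in low degrees against the computer values of $\mathsf A^3$ quoted above and against the conjectural $\SL_3(\Z)$ count, which should pin down that the bound is sharp in even degrees.
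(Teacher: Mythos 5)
Your plan has the right overall shape and even anticipates the paper's mechanism in spirit: the detection of classes in $\mathsf C$ is indeed via the generalized Enomoto--Satoh trace $\Tr^{C}\colon \mathsf C\to\bigoplus_n\Omega_n(T(V))$, the enlargement from $s_{b-c+2}$ to $s'_{b-c+2}$ does come from the fact that $\Omega_3$ imposes only part of the boundary relations (those from $\mathcal S_2$), and the count $\lceil(b-c)/3\rceil-1$ does arise from a period-polynomial-style computation (Kassabov's $\Sigma_3$ argument, Proposition~\ref{martin:prop}). But your independence bookkeeping contains a genuine gap. You decompose the bound as (classes inherited from $\mathsf A$ via $\mathsf C\twoheadrightarrow\mathsf A$) plus ($s'_{b-c+2}-s_{b-c+2}$ extra classes in $\ker(\mathsf C\to\mathsf A)$), with independence then ``automatic.'' To place the extra classes in $\ker(\mathsf C\to\mathsf A)$ you must know their images vanish in $\mathsf A_{2m+4}$, i.e.\ you need the \emph{exact} multiplicity of $[\lambda]_{\SP}$ in $\mathsf A_{2m+4}$; but Theorem~\ref{thm:rank3intro} is only a lower bound, and equality is conjectural (it rests on Conjecture~\ref{conj:ash-pollack} and the Allison--Ash--Conrad spanning conjecture). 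The paper never argues relative to $\mathsf A$ at all: it proves joint independence of all four summands inside the single computable target $\Omega_3(\sym(V))_e$ (Theorem~\ref{thm:iota}, using that $A(M^{U_P})\cap A(M^{U_Q})$ is spanned by the $E_{a,b,c}$ class and that the $D_{a,b,c}$ term is independent of $A(M^{U_P}+M^{U_Q})$), and then transfers the whole bound to $\mathsf C$ in one step via the surjection of $\mathsf C$ onto $\varphi^{\SP}_{\GL}(\Omega_3(T(V)))$ and the quotient $\Omega_3(T(V))\twoheadrightarrow\Omega_3(\sym(V))$.

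Your detection step is also circular as proposed: the ``trace-type invariant vanishing on $\langle\mathfrak h^1\rangle$'' whose pairing rank you would compute \emph{is} $\Tr^{C}$ valued in $\Omega_3$, so computing that rank is the same problem as computing $\Omega_3(\sym(V))$ --- and you have not set up the tools that make this possible. The technical core of the paper is exactly this: the chord-diagram presentation of $\overline{\cG_{H\Lie,1}}$, the resulting presentation of $\Omega_3(H)$ (Theorem~\ref{thm:omega3pres}), its dualization to functionals $f$ with $f(x,y,z)=-f(y,x,z)=-f(x,z,y)$ and the relation $f(x+y,y,z)+f(x,x+y,z)=f(x,-y,z-y)+f(x,-y+z,z)$, and the explicit verification that $A(f)\in\Omega_3(\sym(V))^*_e$ for $f\in M^{U_P}$ satisfying the \emph{relaxed} conditions (the $(1+\smat)$ relation from $(\dagger)$ is dropped, which is precisely why the dimension jumps from $s_{b-c+2}$ to $\omega$-type numbers). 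Finally, your symmetric treatment of the two parabolic slots misses a real asymmetry: the enlargement works only in the $b-c$ slot (the $M^{U_P}$ side after dualizing); the paper remarks that the analogous computation fails for $M^{U_Q}$, which is why $s_{a-b+2}$ is \emph{not} enlarged. A symmetric argument of the kind you sketch would predict $s'_{a-b+2}$ as well, which is not what the theorem asserts and is not known.
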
 
Comparing to Theorem~\ref{thm:rank3intro}, the term $s_{b-c+2}\approx (b-c)/12$ for $b-c$ even has been enlarged to $s'_{b-c+2}\approx (b-c)/3$.

Theorem~\ref{thm:omega3intro} is in some sense a rank $3$ computation of classes in the cokernel. In \cite{CK}, we did some rank $2$ computations, which we generalize here. The result is somewhat technical to state, and we  will need a few preliminary definitions. Let $\SF{\lambda}(V)$ be the $\GL(V)$ Schur functor for partition $\lambda$. (When $V$ is suppressed, this is often written $[\lambda]_{\GL}.$) It can be constructed as follows. If $\lambda$ is a partition of $n$, let $P_\lambda$ be the corresponding irreducible representation of the symmetric group $\Sigma_n$. Define $\SF{\lambda}(V)=P_\lambda\otimes_{\F[\Sigma_n]} V^{\otimes n}$, where $\Sigma_n$ acts on $V^{\otimes n}$ by permuting the tensor factors. Next, let $\mathsf L_{(2)}(V)=V\oplus\ext^2 V$ be the free nilpotent Lie algebra on $V$ of nilpotency class $2$, and consider $\SF{\lambda}(\mathsf L_{(2)}(V))$. For any Lie algebra $\mathfrak g$, the adjoint action of $\mathfrak g$ induces an action of $\mathfrak g$ on $\SF{\lambda}(\mathfrak g)$. Modding out by the image of this action yields a vector space $\overline{\SF{\lambda}(\mathfrak g)}$.
Also define $\varphi^{\SP}_{\GL}(\oplus m_\mu [\mu]_{\GL})=\oplus m_\mu[\mu]_{\SP}$, an operator which takes a decomposition into irreducible $\GL$-modules and spits out the corresponding direct sum of $\SP$-modules. 
\begin{theorem}\label{thm:omega2intro}
For each $m>0$, there is an embedding
$$
 \bigoplus_{a+b=2m}\varphi^{\SP}_{\GL}(\overline{\SF{(a,b)}(\mathsf L_{(2)}(V))})^{\otimes \alpha(a,b)}
 \hookrightarrow \mathsf C_{2m+2},$$
 where $\alpha(a,b)=0$ if $a-b$ is odd, $\alpha(a,b)=\lceil\frac{a-b}{3}\rceil$ if $a-b$ is even and $b>0$,  and $\alpha(2m,0)=\lceil\frac{2m}{3}\rceil-1$.
\end{theorem}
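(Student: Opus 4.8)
The plan is to produce the classes by explicit diagrams and then to identify which of them survive in $\mathsf C_{2m+2}=\mathfrak h_{2m+2}/\langle\mathfrak h_1\rangle_{2m+2}$, where $\langle\mathfrak h_1\rangle\subseteq\mathfrak h^+$ is the graded Lie subalgebra generated by the degree-one part (so $\mathsf C$ computes the Johnson cokernel in the stable range, by Hain's theorem). Working in the diagrammatic model for $\mathfrak h$, in which a symplectic derivation is a combination of trivalent graphs with univalent legs labeled by $V$, I would first construct a $\GL(V)$-equivariant map
$$\Phi\colon\bigoplus_{a+b=2m}\SF{(a,b)}(\mathsf L_{(2)}(V))^{\oplus\alpha(a,b)}\longrightarrow\mathfrak h_{2m+2}$$
by fixing, for each $(a,b)$ and each of the $\alpha(a,b)$ copies, a ``core'' configuration generalizing the earlier rank-two classes and inserting the decorations $\mathsf L_{(2)}(V)=V\oplus\ext^2V$ (of Lie-degrees one and two) into its legs; the two-row partition $(a,b)$ records the Schur symmetry of the legs, while the core absorbs the complementary degree so that every graded piece lands in $\mathfrak h_{2m+2}$.

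Next I would pin down $\alpha(a,b)$ and show it is forced. The count $\lceil\tfrac{a-b}{3}\rceil$ should emerge as the number of inequivalent admissible cores after dividing out the order-three automorphism of the core configuration, with the vanishing for $a-b$ odd and the shift $\alpha(2m,0)=\lceil\tfrac{2m}{3}\rceil-1$ being the parity and $b=0$ boundary corrections, which I would confirm by a direct weight count. The bar-quotient is then forced rather than cosmetic: I would prove that composing $\Phi$ with the projection $\mathfrak h_{2m+2}\twoheadrightarrow\mathsf C_{2m+2}$ kills exactly the image of the adjoint action of $\mathsf L_{(2)}(V)$ on $\SF{(a,b)}(\mathsf L_{(2)}(V))$. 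The easy half is that this adjoint image is killed: sliding a decoration past a trivalent vertex is governed by the Jacobi (IHX) relation, so moving it by $\ad_x$ for $x\in\mathsf L_{(2)}(V)$ is realized, up to that relation, by a bracket with an element of $\langle\mathfrak h_1\rangle$, and hence lands in $\langle\mathfrak h_1\rangle_{2m+2}$. Thus $\Phi$ descends to a map $\overline\Phi$ out of $\bigoplus_{a+b=2m}\overline{\SF{(a,b)}(\mathsf L_{(2)}(V))}^{\oplus\alpha(a,b)}$.

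The reverse inclusion is \emph{the crux of the argument}, and the step I expect to be hardest: proving that $\overline\Phi$ is injective, i.e. that no further relations in $\mathsf C$ collapse these classes and that the $\alpha(a,b)$ copies remain independent. For this I would build a detection map $\tau\colon\mathsf C_{2m+2}\to Y$ into an explicitly understood target — a symplectic-contraction/trace invariant refining Morita's trace, which vanishes on $\langle\mathfrak h_1\rangle$ by construction — and then compute $\tau\circ\overline\Phi$ on the chosen cores. The difficulty is to show this composite is injective uniformly over all $(a,b)$ and all copies simultaneously: one must both separate distinct cores under $\tau$ and check that the induced pairing on the coinvariants $\overline{\SF{(a,b)}(\mathsf L_{(2)}(V))}$ is nondegenerate, which is where a careful representation-theoretic analysis of these coinvariants is needed.

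Finally, since $\Phi$, the core choices, and $\tau$ are all $\GL(V)$-equivariant, with the symplectic form entering only in closing up the contractions that define $\tau$, in the stable range the construction transports $\GL(V)$-isotypic pieces to $\SP$-isotypic pieces. Decomposing each $\overline{\SF{(a,b)}(\mathsf L_{(2)}(V))}$ into $\GL$-irreducibles and relabeling via $\varphi^{\SP}_{\GL}$ then converts the injectivity of $\overline\Phi$ into the asserted embedding into $\mathsf C_{2m+2}$.
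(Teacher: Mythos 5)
You have correctly identified where the difficulty lies, but your proposal defers exactly those steps, and the route you sketch inverts the one that actually works. The paper never constructs an injection into $\mathsf C_{2m+2}$ by exhibiting explicit diagram classes and proving them independent; it instead uses a map \emph{out} of $\mathsf C$: the trace map $\Tr^{C}\colon \mathsf C\to\bigoplus_n\Omega_n(T(V))$ from \cite{C}, which is already known to surject onto the top-level $\SP$-representations, composed with the surjection $\Omega_2(T(V))\twoheadrightarrow\Omega_2(U(\mathsf L_{(2)}(V)))$ induced by $T(V)\twoheadrightarrow U(\mathsf L_{(2)}(V))$. The embedding statement is then a corollary of an \emph{exact computation} of the target: Theorem~\ref{thm:omega2pres} presents $\Omega_2(H)$ as $\overline{H^{\otimes 2}}$ modulo $\id-\tau$, $\id-S\otimes S$, $\id+\gamma+\gamma^2$ and $\epsilon\otimes\id$; Kassabov's argument (Proposition~\ref{martin:prop}) computes the quotient by noting that $\tau$ and $\gamma$ generate a copy of $\Sigma_3$ and counting the multiplicity of its two-dimensional irreducible by characters, which is where $\lceil\frac{a-b}{3}\rceil$ actually comes from (the order-three element is $\gamma=\rho_H(st)$ acting on $\overline{H^{\otimes 2}}$, not an automorphism of a diagrammatic ``core''); Schur--Weyl duality distributes this count over the summands $\SF{(a,b)}(V)$; and the $-1$ in $\alpha(2m,0)$ is the effect of the extra relation $1\otimes a=0$, which lands only in the one-row summands --- not a ``boundary correction'' to be confirmed by a weight count. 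Your plan leaves all of the corresponding content unproven: the cores are never defined, $\alpha(a,b)$ is only conjectured to ``emerge,'' and the detection map $\tau$ together with the nondegeneracy of the induced pairing --- which you yourself call the crux --- is pure placeholder. Since the known trace invariants are precisely what $\tau$ would have to be, your argument is circular at its hardest point.

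There is also a genuine error of mechanism in your ``easy half.'' The adjoint-coinvariant quotient $\overline{\SF{(a,b)}(\mathsf L_{(2)}(V))}$ is not something one proves is killed by passing to $\mathsf C=\mathfrak h^+/\langle\mathfrak h_1\rangle$; it is built into the \emph{target} of the invariant. The space $\overline{H^{\otimes 2}}$ is by definition the quotient by the conjugation action of $H$ (needed so that the $\Aut(F_2)$-action descends to $\Out(F_2)$), and under the PBW coalgebra isomorphism $U(\mathsf L_{(2)})\cong\sym(\mathsf L_{(2)})$ --- using the fact from \cite{CK} that the two $\GL_2(\Z)$-actions coincide because $[X,[X,Y]]=0$ in $\mathsf L_{(2)}$ --- this conjugation quotient becomes precisely the adjoint quotient of the Schur functors. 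Your claim that moving a decoration by $\ad_x$, $x\in\mathsf L_{(2)}(V)$, is realized by bracketing with an element of $\langle\mathfrak h_1\rangle$ is unsubstantiated: the degree-one part of $\mathfrak h$ consists of tripods in $\ext^3 V$, and you give no argument that brackets with these effect the adjoint action of $\ext^2V\subset\mathsf L_{(2)}(V)$ on your decorations, nor does the theorem require such a statement. In short, the proposal is a research plan whose load-bearing steps (presentation of $\Omega_2$, the $\Sigma_3$ count, the PBW identification, and the surjectivity of $\Tr^{C}$ onto top-level pieces) are exactly the ones missing.
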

In \cite{CK}, roughly speaking, we had proven the same fact with $\alpha(a,b)$ replaced by the smaller number $s_{a-b+2}$.

\subsection{Generalized trace maps}
In order to prove these theorems and related results, 
we will study two functors of cocommutative Hopf algebras: $\homfunctor_r(H)$ and $\Omega_r(H)$ which will have implications for $\mathsf A$ and $\mathsf C$. More precisely $\homfunctor_r(\sym(V))$ will contain complete information about $\mathsf A$, while $\homfunctor_r(T(V))$ and $\Omega_r(T(V))$ will contain a lot of information about $\mathsf C$, including that coming from the Morita and Enomoto-Satoh trace maps \cite{MoritaProspect,ES}.

Supposing that $H$ is a cocommutative Hopf algebra, and that $\mathcal O$ is a cyclic operad, one can form a new cyclic operad $H\mathcal O$, which consists of formal compositions of elements of $H$ and elements of $\mathcal O$, such that one can push elements of $H$ past elements of $\mathcal O$ using the coproduct. (See Definition~\ref{def:freeoperad}). Then $\mathcal G_{H\Lie}$ is defined as the graph complex where vertices are labeled by elements of $H\Lie$. The grading is by number of vertices, so that the bottom homology is $H_1(\mathcal G_{H\Lie})$. Restricting to rank $r$ connected graphs $\mathcal G^{(r)}_{H\Lie}$ we have
$$\homfunctor_r(H):=H_1(\mathcal G^{(r)}_{H\Lie}).
$$
An example of a graph in $\mathcal G^{(3)}_{H\Lie}$ is:
$$\begin{minipage}{5.5cm}
\resizebox{5.5cm}{!}{
\begin{tikzpicture}
\coordinate (a) at (0,0);
\coordinate (b) at(1,0);
\coordinate (c) at (2,0);
\coordinate (d) at (3.5,0);
\coordinate(e) at (4,0);
\coordinate (f) at (5,0);
\coordinate (g) at (5,-1);
\coordinate (h) at (4,-1);
\coordinate (i) at (3.5,-1);
\coordinate (j) at (1.5,-1);
\coordinate(k) at (1,-1);
\coordinate(l) at (0,-1);
\node[hopf, at=(a)](A){$a$};
\node[hopf, at=(c)](C){$c$};
\node[hopf, at=(f)](F){$b$};
\draw[thick] (A) to (b) to (k) to (l);
\draw[thick] (b) to (C);
\draw[thick] (k) to (j);
\draw[thick] (d) to (F);
\draw[thick] (i) to (g);
\draw[thick](e) to (h);
  \begin{scope}[decoration={markings,mark = at position 0.5 with {\arrow{stealth}}}]
\draw[densely dashed, postaction=decorate] (l) to[densely dashed, postaction=decorate, out=180, in=180](A);
\draw[densely dashed, postaction=decorate] (C) to[densely dashed, postaction=decorate,out=0,in=180](d);
\draw[densely dashed, postaction=decorate] (j) to[densely dashed, postaction=decorate,out=0,in=180](i);
\draw[densely dashed, postaction=decorate] (F) to[densely dashed, postaction=decorate,out=0,in=0](g);
\end{scope}
\end{tikzpicture}
}
\end{minipage}
$$
Here $a,b,c$ represent elements in $H$, while there are two solid trees representing elements of $\Lie((4))$.

In \cite{CK}, we proved that $\homfunctor_r(H)\cong H^{2r-3}(\Out(F_r);\overline{H^{\otimes r}})$, where $\Aut(F_r)$ acts on $H^{\otimes r}$ via the Hopf algebra structure (see section~\ref{sec:action}), and $\overline{H^{\otimes r}}$ is a natural quotient on which inner automorphisms act trivially. When $H=\sym(V)$, $\overline{H^{\otimes r}}=H^{\otimes r}$ and the action of $\Out(F_r)$ factors through the standard $\GL_r(\Z)$ action.

By definition $\homfunctor_r(H):=\mathcal G^{(r)}_{H\Lie,1}/\partial(\mathcal G^{(r)}_{H\Lie,2})$.  We define $\Omega_r(H)$ to be $\mathcal G^{(r)}_{H\Lie,1}/\partial\mathcal S_2$ where $\mathcal S_2$ consists of 2-vertex graphs where one of the two vertices is labeled either by a tripod (generator of $\Lie((3))$) or an element of $H$.

In \cite{CKV1} we introduced a generalized \emph{trace} map $\Tr\colon \mathfrak h\to \mathcal G_{{\sym(V)\Lie},1}$. The Lie algebra $\mathfrak h$ has a well-known description via trees with $V$-labeled leaves, and the trace map is defined by adding several directed edges joining pairs of leaves of a disjoint union of trees in all possible ways, multiplying by the product of contractions of the labels. The leaves which are not joined become hairs, and strings of adjacent $V$-labeled hairs can be interpreted as elements of $\sym(V)$ \cite{CK}. 

In fact, in \cite{CKV2}  the trace map was used to prove that $${\mathsf A}^r\cong \varphi^{\SP}_{\GL}(\homfunctor_r(\sym(V))).$$
Thus $\homfunctor_r(\sym(V))$ contains complete information about the abelianization $\mathsf A$. 

The map $\Tr$ also induces maps $\mathsf C\to \Omega_r(T(V))$ and $\mathsf C\to \homfunctor_r(T(V))$ for the tensor algebra $T(V)$ \cite{C,CK}, and moreover $\mathsf C$ surjects onto $\varphi^{\SP}_{\GL}(\Omega_r(T(V)))$ and $\varphi^{\SP}_{\GL}(\homfunctor_r(T(V)))$.

The meat of the paper is to find explicit presentations for $\homfunctor_r(H)$ and $\Omega_r(H)$ for $r\leq 3$ (Theorems~\ref{thm:rank2pres}, \ref{thm:omega2pres}, \ref{thm:rank3pres}, \ref{thm:omega2pres}). In particular, when $H=\sym(V)$, the presentation for $\homfunctor_3(H)$ implies that 
\begin{theorem}
$$\homfunctor_3(\sym(V))_{2k}\cong H^3(\Out (F_3);\sym(V)^{\otimes 3})_{2k}\cong H^3(\GL_3(\Z);\sym(V)^{\otimes 3})_{2k}$$
\end{theorem}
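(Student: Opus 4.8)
The first isomorphism requires no new work: it is the specialization to $r=3$ and $H=\sym(V)$ of the identity $\homfunctor_r(H)\cong H^{2r-3}(\Out(F_r);\overline{H^{\otimes r}})$ of \cite{CK}. Indeed $2r-3=3$, and for $H=\sym(V)$ one has $\overline{H^{\otimes r}}=H^{\otimes r}$, with the $\Out(F_3)$-action on $M:=\sym(V)^{\otimes 3}$ factoring through the standard $\GL_3(\Z)$-action; restricting to the degree-$2k$ summand gives the claim. The whole content is therefore the second isomorphism, comparing the top cohomology of $\Out(F_3)$ with that of its abelian quotient $\GL_3(\Z)$. Note that both groups have virtual cohomological dimension $3$, so $H^{3}$ is the top nonvanishing group in each case, and the coefficients $M$ are literally the same module for the two groups.

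Before the comparison I would isolate why only even degrees appear. The central element $-I\in\GL_3(\Z)$ is the image of the automorphism inverting every generator of $F_3$, and under the Hopf-algebra action it acts on the degree-$d$ part $M_d$ through the antipode, hence by the scalar $(-1)^{d}$. A central element acts trivially on group cohomology, so $(-1)^{d}$ must be the identity on $H^{3}(\GL_3(\Z);M_d)$; this forces $H^{3}(\GL_3(\Z);M_d)=0$ for $d$ odd. The inverting automorphism is far from central in $\Out(F_3)$, so no such collapse occurs there, and the odd-degree classes of $\homfunctor_3(\sym(V))$ genuinely persist. An isomorphism can therefore hold only in even degree, which is precisely the range claimed.

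The main step is to read off the second isomorphism from the explicit presentation of $\homfunctor_3(\sym(V))$ supplied by Theorem~\ref{thm:rank3pres}. Via the first isomorphism this is a presentation of $H^{3}(\Out(F_3);M)$ with generators indexed by one-vertex rank-$3$ graphs and relations by two-vertex graphs. On the other side, Borel--Serre/Bieri--Eckmann duality presents the top cohomology of $\GL_3(\Z)$ as coinvariants, $H^{3}(\GL_3(\Z);M)\cong(\mathrm{St}_3\otimes M)_{\GL_3(\Z)}$, where $\mathrm{St}_3$ is the Steinberg module, which Ash--Rudolph generate by unimodular symbols subject to explicit antisymmetry and boundary relations. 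The plan is to build a dictionary sending even-degree graph generators to unimodular symbols and two-vertex relations to Steinberg relations, compatibly with the inflation map $\pi^{*}$ induced by $\pi\colon\Out(F_3)\to\GL_3(\Z)$, and thereby identify the two presentations in even degree.

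The hard part is that the graph-complex presentation a priori has more generators than the Steinberg one, since a general rank-$3$ graph need not encode a unimodular frame of $\Z^{3}$. I expect the surplus to be eliminated precisely in even degree, the likely mechanism being an orientation-reversing symmetry of the graph whose total sign depends on the parity of the label degree --- the same sign phenomenon underlying the $-I$ vanishing above --- so that surplus generators of even total degree vanish while their odd-degree counterparts (which account for the genuine odd-degree classes of $\homfunctor_3$) survive. Establishing this cancellation, and checking that the two-vertex relations generate exactly the Steinberg relations (no more, no fewer), is where the real effort lies; granting it, the two presentations coincide in degree $2k$ and the theorem follows.
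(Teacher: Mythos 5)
Your setup is sound and follows the paper's actual route: the first isomorphism is indeed just Theorem~\ref{thm:HReduced} specialized to $r=3$, $H=\sym(V)$ (where $\overline{H^{\otimes 3}}=H^{\otimes 3}$ and the action factors through $\GL_3(\Z)$), and your central-element argument that $-I$ acts by $(-1)^d$ and hence kills $H^3(\GL_3(\Z);M_d)$ in odd degree is correct. But the proposal stops exactly where the theorem begins: the matching of the two presentations, which you explicitly defer (``granting it, the two presentations coincide''), is the entire mathematical content, so as written this is a plan rather than a proof. Moreover, the obstacle you anticipate --- surplus graph generators beyond unimodular symbols, to be cancelled by a conjectural sign symmetry --- is not where the effort lies, because that cancellation is already established in the paper: Theorem~\ref{thm:evenpres} shows that in even degree $S\otimes S\otimes S$ acts as the identity, so the symmetry relations (S3) and (D2) kill the two extra families of one-vertex generators $\wt a\otimes\wt b\otimes\wt c$ and $\wh a\otimes\wh b\otimes\wh c$ outright, and the presentation of $\homfunctor_3(\sym(V))_e$ collapses to the quotient of $[\sym(V)^{\otimes 3}]_e$ by just $\id+\sigma_{12}$, $\id+\sigma_{23}$, $\id-S\otimes\id\otimes\id$, and $E+F-\id$. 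Your speculated mechanism is correct in spirit, but you should invoke Theorem~\ref{thm:evenpres} rather than re-derive it; working from the full presentation of Theorem~\ref{thm:rank3pres}, as you propose, makes the comparison needlessly hard.

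On the $\GL_3(\Z)$ side no graph-to-symbol dictionary is needed either: since the Steinberg module is cyclic over $\GL_3(\Z)$, the duality/unimodular-symbol description collapses to a presentation of $H^3(\SL_3(\Z);\sym(V)^{\otimes 3})$ as the quotient of the \emph{same} vector space $\sym(V)^{\otimes 3}$ by the operators $\id+\sigma_{12}$, $\id+\sigma_{23}$, $E+F-\id$ and $S\otimes S\otimes\id-\id$; this is derived from \cite{ash80} and stated (in characteristic $p$) in \cite{allison-ash-conrad}. Passing to $\GL_3(\Z)$ amounts to taking coinvariants under $\mathrm{diag}(-1,1,1)$, which acts as $S\otimes\id\otimes\id$; this adjoins the relation $\id-S\otimes\id\otimes\id$ (rendering $S\otimes S\otimes\id-\id$ redundant) and reproduces verbatim the four relations of Theorem~\ref{thm:evenpres}. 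That literal coincidence of two quotients of one vector space is the whole proof; your outline becomes complete once these two citations replace the deferred ``real effort.'' (One terminological slip: $\GL_3(\Z)$ is not an ``abelian quotient'' of $\Out(F_3)$ --- it is the quotient by the outer Torelli subgroup, i.e., the kernel of the action on $H_1(F_3)$.)
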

\noindent allowing us to appeal to known results about the cohomology of $\GL_3(\Z)$ to produce classes in $\mathsf A^3$ and leading to Theorem~\ref{thm:rank3intro}. Note that the equality of $\Out(F_3)$ and $\GL_3(\Z)$ cohomology in even degrees here is a novel and unexpected result. Its proof comes down to finding presentations for both as vector spaces and noticing they are equal. Surely a more conceptual proof exists, which is another good problem for future research. It is worthwhile to note that the isomorphism does not hold in odd degrees, as the $\GL_3(\mathbb Z)$ cohomology vanishes but the $\Out(F_3)$ cohomology does not.

 Theorem~\ref{thm:omega3intro} is proven by careful examination of  calculations of Allison-Ash-Conrad \cite{allison-ash-conrad} for $H_3(\GL_3(\Z),M)$, extending them to our presentation of $\Omega_3(\sym(V))$.

Similarly, the presentation of $\Omega_2(\sym(V))$ allows us to calculate its dimension using an elementary and clever argument due to Martin Kassabov, and extending the ideas of \cite{CK}, one can compute $\Omega_2(U\mathsf L_{(2)})$, which is a quotient of $\Omega_2(T(V))$, leading to Theorem~\ref{thm:omega2intro}.

These presentations are also useful for computer calculations, the results of which are listed in the last section. These computations have been confirmed and extended in \cite{CKS}.

\subsection{The groups $\Gamma_{n,s}$}
The abelianization $\mathsf A$ is intimately related to the cohomology of certain groups $\Gamma_{n,s}$ in their vcd, and our computations of the rank $3$ part of the abelianization have implications for the cohomology of $\Gamma_{3,s}$. First we recall the definition of the groups $\Gamma_{n,s}$. Let $X_{n,s}$ be a 1-complex homotopy equivalent to a wedge of $n$ circles with $s$ marked points. $\Gamma_{n,s}$ is the set of self-homotopy equivalences of $X_{n,s}$ fixing the $s$ points, up to homotopy relative to those points. These are groups which generalize $\Out(F_n)$ and $\Aut(F_n)$ in the sense that $\Gamma_{n,0}=\Out(F_n)$ and $\Gamma_{n,1}=\Aut(F_n)$. 
The vcd of $\Gamma_{n,s}$ is $2n-3+s$. 

Let $V^{\wedge n}$ be the $\Sigma_n$-representation which is $V^{\otimes n}$ as a vector space, with $\Sigma_n$ acting with the sign of the permutation.
We have the following theorem \cite{CKV1,CHKV}
\begin{theorem}
There is an isomorphism
$$H^{2n-3}(\Out(F_n);\sym(V)^{\otimes n})\cong \bigoplus_{s\geq 0} H^{2n-3+s}(\Gamma_{n,s};\F)\otimes V^{\wedge s}.$$
\end{theorem}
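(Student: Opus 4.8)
The plan is to exhibit both sides as coinvariants of a dualizing module and to read off the isomorphism from a factorization of that module over the $s$ marked points. First I would reinterpret the right-hand side functorially in $V$: since the marked points of $X_{n,s}$ are ordered, relabeling endows $H^{2n-3+s}(\Gamma_{n,s};\F)$ with a $\Sigma_s$-action, and I read the summand $H^{2n-3+s}(\Gamma_{n,s};\F)\otimes V^{\wedge s}$ as the Schur--Weyl product $H^{2n-3+s}(\Gamma_{n,s};\F)\otimes_{\Sigma_s}V^{\wedge s}$, where $V^{\wedge s}=V^{\otimes s}$ carries the sign action (as defined before the statement). Despite the notation, this produces general Schur functors of $V$, with multiplicities governed by the $\Sigma_s$-representation type of $H^{2n-3+s}(\Gamma_{n,s};\F)$; the sign twist is precisely the device that will convert a symmetric algebra on the left into marked-point cohomology on the right (a d\'ecalage).

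Next I would invoke that both groups are virtual duality groups over $\F$ (characteristic $0$): $\Out(F_n)$ has dimension $2n-3$ with rational dualizing module $\mathrm{St}_n$ (Bestvina--Feighn), and $\Gamma_{n,s}$ has dimension $2n-3+s$ with dualizing module $\mathrm{St}_{n,s}$. Rational duality then gives, for any $\F[\Out(F_n)]$-module $M$, the identification $H^{2n-3}(\Out(F_n);M)\cong(\mathrm{St}_n\otimes M)_{\Out(F_n)}$, and likewise $H^{2n-3+s}(\Gamma_{n,s};\F)\cong(\mathrm{St}_{n,s})_{\Gamma_{n,s}}$. The geometric heart is to compare the two dualizing modules using the forgetful bundle whose base is the spine $\mathcal S_n$ of Outer space, whose universal fiber is the rank-$n$ graph $G$ with $H_1(G)\cong\Z^n$ (on which $\Out(F_n)$ acts through the standard $\GL_n(\Z)$-action), and whose total space is the contractible marked-point spine of $\Gamma_{n,s}$ with fiber the configuration space $\mathrm{Conf}_s(G)$. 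Adding a marked point raises the dimension by one and contributes, through the orientation of the $s$ point-positions on the $1$-complex $G$, a factor recording how each point winds around $G$. Concretely I would prove the factorization $\mathrm{St}_{n,s}\cong\mathrm{St}_n\otimes\bigl(H_1(G)^{\otimes s}\otimes\mathrm{sgn}\bigr)$ as a $\Sigma_s\times\Out(F_n)$-module, the sign character arising from permuting the $s$ independent position-orientations.

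Granting this factorization, the computation closes cleanly. Taking $\Gamma_{n,s}$-coinvariants and then tensoring with the labels over $\Sigma_s$, the two sign characters cancel: $\bigl(\mathrm{St}_n\otimes H_1(G)^{\otimes s}\otimes\mathrm{sgn}\bigr)_{\Out(F_n)}\otimes_{\Sigma_s}\bigl(V^{\otimes s}\otimes\mathrm{sgn}\bigr)\cong\bigl(\mathrm{St}_n\otimes\sym^s(\Z^n\otimes V)\bigr)_{\Out(F_n)}$, using that $(\Z^n\otimes V)^{\otimes s}_{\Sigma_s}=\sym^s(\Z^n\otimes V)$ and that coinvariants are exact in characteristic $0$. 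The right-hand side is exactly $H^{2n-3}(\Out(F_n);\sym^s(\Z^n\otimes V))$ by duality, so $H^{2n-3+s}(\Gamma_{n,s};\F)\otimes_{\Sigma_s}V^{\wedge s}\cong H^{2n-3}(\Out(F_n);\sym^s(\Z^n\otimes V))$. Summing over $s$ and using $\bigoplus_s\sym^s(\Z^n\otimes V)=\sym(\Z^n\otimes V)=\sym(V)^{\otimes n}$ reassembles the full coefficient system and yields the theorem, with the $\Out(F_n)$-action factoring through $\GL_n(\Z)$ as required.

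The hard part will be the dualizing-module factorization $\mathrm{St}_{n,s}\cong\mathrm{St}_n\otimes(H_1(G)^{\otimes s}\otimes\mathrm{sgn})$, i.e.\ identifying the fiber's contribution to the orientation of the marked-point bordification and tracking its $\Sigma_s$-signs. Since $G$ is not a manifold and configuration spaces of graphs have intricate homotopy type, I would either extract this from the compactly supported top cohomology $H^s_c(\mathrm{Conf}_s(G))$ (which vanishes above degree $s$ because the fiber is an open subset of the $s$-dimensional space $G^s$), or use a scanning/labeled-configuration computation identifying $\bigoplus_s H^s_c(\mathrm{Conf}_s G)\otimes_{\Sigma_s}V^{\wedge s}$ with the free graded-commutative algebra on $H_1(G)\otimes V$ placed in degree $1$, whose sign twist makes it the symmetric algebra. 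A cleaner alternative, closer to the rest of this paper, avoids topology altogether: compute both sides in the hairy graph complex, where marked points are $V$-labeled leaves, strings of adjacent leaves along an edge contribute factors of $\sym(V)$, and permuting leaves introduces the edge-orientation sign governing $V^{\wedge s}$; matching the two hair complexes then gives the isomorphism directly, with the same sign bookkeeping as the crux.
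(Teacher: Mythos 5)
A preliminary remark on the benchmark: this paper does not prove the statement but quotes it from \cite{CKV1,CHKV}, where the proof is, in outline, exactly the route you relegate to a one-sentence alternative at the end. The rank-$n$ hairy graph complex splits according to the number $s$ of hairs; the $s$-hair piece is the orientation-twisted quotient of a contractible $(2n-3+s)$-dimensional spine of marked-point outer space on which $\Gamma_{n,s}$ acts cocompactly with finite stabilizers, so it computes $H^{2n-3+s}(\Gamma_{n,s};\F)$ against the sign action on hairs; meanwhile the total complex computes $H^{2n-3}(\Out(F_n);\sym(V)^{\otimes n})$ as in Theorem~\ref{thm:HReduced} with $H=\sym(V)$ and $k=1$. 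Your reading of the unadorned tensor in the statement as $\otimes_{\Sigma_s}$ is indeed the intended one (it is what makes the subsequent corollary about $P_{\lambda^*}$ in $H^{3+s}(\Gamma_{3,2s};\F)$ work). But since your actual proposed proof is the duality route, that is what must be assessed, and it has a genuine gap.

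The gap is the factorization $\mathrm{St}_{n,s}\cong\mathrm{St}_n\otimes\bigl(H_1(G)^{\otimes s}\otimes\mathrm{sgn}\bigr)$, which is false as a statement about $\Gamma_{n,s}$-modules already for $s=1$. Bieri--Eckmann duality applied to $1\to\mathrm{Inn}(F_n)\to\Aut(F_n)\to\Out(F_n)\to 1$ gives $\mathrm{St}_{n,1}\cong\mathrm{St}_n\otimes H^1(F_n;\F F_n)$, whose second factor is infinite dimensional and carries a nontrivial action of the inner automorphisms; your proposed right-hand side is inflated from $\Out(F_n)$, so the inner automorphisms act trivially on it, and the two cannot be isomorphic as $\Aut(F_n)$-modules. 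The finite-dimensional factor emerges only after taking coinvariants against coefficients pulled back from $\Out(F_n)$, and it is then $H^1(F_n;\F)$, the \emph{dual} of $H_1(G)$: for $\GL_n(\Z)$-modules this is a $\det$-twist, which is not cosmetic --- the paper's rank-$2$ computations show a $\det$-twist changes $\mathcal S_{g+2}$ into $\mathcal M_{g+2}$ --- so the $H_1$-versus-$H^1$ bookkeeping must be reconciled with the right-action conventions, not waved through. For $s\geq 2$ the situation is structurally worse: there is no fibration with fiber $\mathrm{Conf}_s(G)$, because over the spine the underlying graph degenerates by edge collapses and marked points collide with vertices; making the ``universal fiber'' heuristic precise is exactly the combinatorial content of \cite{CKV1,CHKV}, so the factorization is not a lemma one can prove fiberwise but a restatement of the theorem. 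Two side issues are repairable: $\Gamma_{n,s}$ is not known to be a virtual duality group, but you do not need that, since in the top degree $d=2n-3+s$ the formula $H^{d}(\Gamma;M)\cong\bigl(H^{d}(\Gamma;\F\Gamma)\otimes M\bigr)_{\Gamma}$ follows from the existence of a cocompact contractible $d$-dimensional complex with finite stabilizers (in characteristic $0$ its cellular chains form a projective resolution of length $d$); and your scanning identification of $\bigoplus_s H^s_c(\mathrm{Conf}_s(G))\otimes_{\Sigma_s}V^{\wedge s}$ with a free graded-commutative algebra on $H_1(G)\otimes V$ is itself an unproved computation requiring the same collision analysis. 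As it stands, the proposal's load-bearing step is either false (as stated) or circular, while its appended ``cleaner alternative'' is the literature's actual proof and would need to be carried out in detail to count.
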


This implies that the $\GL$ representations appearing in $H^{2n-3}(\Out(F_n);\sym(V)^{\otimes n})$ appear as the conjugate Young diagram for an $\Sigma_s$-representation in $H^{2n-3+s}(\Gamma_{n,s};\F)$. Thus Theorem~\ref{thm:rank3intro} implies the following theorem.

\begin{theorem}
Let $\lambda=[a,b,c]$ be a partition of $2s$, and $\lambda^*$ its conjugate. Then the $\Sigma_{2s}$-representation $P_{\lambda^*}$ appears in $H^{3+s}(\Gamma_{3,2s};\F)$ with multiplicity at least $s_{a-b+2}+s_{b-c+2}+\delta_{a,b,c}+\epsilon_{a,b,c}$.
\end{theorem}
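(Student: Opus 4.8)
The plan is to transport the rank-$3$ multiplicity bound of Theorem~\ref{thm:rank3intro} along the chain of identifications already recorded in the introduction and then to read off the associated symmetric-group representation by Schur--Weyl duality. Concretely, I would combine the isomorphism $\mathsf A^3\cong\varphi^{\SP}_{\GL}(\homfunctor_3(\sym(V)))$, the identification $\homfunctor_3(\sym(V))\cong H^3(\Out(F_3);\sym(V)^{\otimes 3})$, and the decomposition
$$H^3(\Out(F_3);\sym(V)^{\otimes 3})\cong\bigoplus_{t\ge 0}H^{3+t}(\Gamma_{3,t};\F)\otimes V^{\wedge t}.$$

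First I would fix $\lambda=[a,b,c]\vdash 2s$ and apply Theorem~\ref{thm:rank3intro} (with $m=s$), which provides the lower bound $N:=s_{a-b+2}+s_{b-c+2}+\delta_{a,b,c}+\epsilon_{a,b,c}$ for the multiplicity of $[\lambda]_{\SP}$ in the rank-$3$ piece $\mathsf A^3_{2s+4}$. Since $\varphi^{\SP}_{\GL}$ merely relabels a decomposition into irreducibles, changing neither the partitions nor their multiplicities, the Schur functor $\SF{\lambda}(V)=[\lambda]_{\GL}$ occurs with multiplicity at least $N$ in the corresponding graded piece of $\homfunctor_3(\sym(V))\cong H^3(\Out(F_3);\sym(V)^{\otimes 3})$.

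Next I would feed this occurrence into the $\Gamma_{3,t}$ decomposition above. The substantive input is the sign-twisted Schur--Weyl duality already used in the paragraph following that decomposition: inside a summand $H^{3+t}(\Gamma_{3,t};\F)\otimes V^{\wedge t}$ the $\GL(V)$-isotypic component $\SF{\mu}(V)$ is matched, through the sign action on $V^{\wedge t}=V^{\otimes t}$, with the $\Sigma_t$-irreducible $P_{\mu^*}$ occurring in $H^{3+t}(\Gamma_{3,t};\F)$, the sign twist being exactly what passes to the conjugate partition. Locating the copy of $\SF{\lambda}(V)$ in the correct summand then yields $P_{\lambda^*}$ with multiplicity at least $N$ in $H^{3+s}(\Gamma_{3,2s};\F)$, which is the claimed bound.

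The main obstacle is the grading bookkeeping that determines which summand $\SF{\lambda}(V)$ feeds into. One must reconcile the $\mathfrak h$-degree $2s+4$ on $\mathsf A^3$, the polynomial degree that $\SF{\lambda}(V)=[\lambda]_{\GL}$ carries as a $\GL(V)$-module, and the index $t$ that simultaneously governs the number of marked points of $\Gamma_{3,t}$ and the cohomological shift in $H^{3+t}(\Gamma_{3,t};\F)$. In particular, confirming that the relevant class is detected in $H^{3+s}(\Gamma_{3,2s};\F)$ --- rather than elsewhere in the range of degrees $H^{3+t}(\Gamma_{3,t};\F)$ through which the decomposition runs --- is the delicate point, and is precisely where the degree conventions of \cite{CKV1,CHKV} must be applied with care. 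Once the correct summand is pinned down the multiplicity comparison is immediate, since every isomorphism in the chain preserves multiplicities and the sign-twisted Schur--Weyl correspondence is an exact matching of $\GL(V)$- and $\Sigma_t$-isotypic data.
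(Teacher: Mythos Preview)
Your proposal is correct and follows exactly the route the paper indicates: the paper gives no separate proof of this theorem, merely stating that the decomposition $H^{3}(\Out(F_3);\sym(V)^{\otimes 3})\cong \bigoplus_{t\ge 0} H^{3+t}(\Gamma_{3,t};\F)\otimes V^{\wedge t}$ together with Theorem~\ref{thm:rank3intro} yields the result, and your write-up simply unpacks that implication.

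One small comment: the ``delicate'' bookkeeping you flag is in fact immediate. Since $\lambda\vdash 2s$, the Schur functor $\SF{\lambda}(V)$ lives in $V^{\otimes 2s}$ and in no other tensor power, so it can only occur in the summand with $t=2s$; there is no genuine ambiguity about which $\Gamma_{3,t}$ receives the class. (This also shows that the cohomological degree should read $3+2s$, matching the vcd of $\Gamma_{3,2s}$; the exponent $3+s$ in the stated theorem appears to be a typo in the paper, which you have faithfully reproduced.)
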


{\bf Acknowledgments:} The author thanks the referee, Avner Ash, Martin Kassabov, Andy Putman and Nolan Wallach for useful discussions and suggestions.
\section{Definitions}
\subsection{The $\Lie$ Lie algebra}
Suppose $V$ is a finite dimensional symplectic $\F$-vector space. I.e. it has a nondegenerate antisymmetric bilinear form  $\langle\cdot,\cdot\rangle\colon V\otimes V\to \F$. Let $\{p_i,q_i\}$ form a symplectic basis:
$\langle p_i,q_i\rangle =1=-\langle q_i,p_i\rangle$ and all other pairings of basis elements are $0$.
Consider the free Lie algebra $\mathsf{L}(V)$ which has $V$ as its degree $1$ elements. Define $\mathfrak h(V)=\operatorname{Der}_\omega(\mathsf L(V))$ to be the set of derivations of $\mathsf L(V)$ which annihilate the element $\omega=\sum [p_i,q_i]$, and let $\mathfrak h^+(V)$ be generated by derivations of positive degree. $\mathfrak h(V)$ is well-known to be isomorphic to the space of Lie spiders. These are trivalent trees with univalent vertices labeled by elements of $V$, modulo orientation, IHX and multilinearity relations. See \cite{CV, Levine} for more details. Let $\displaystyle\mathfrak h_\infty^+=\lim_{n\to\infty}\mathfrak h^+(V_n)$ where $$\cdots \to V_n\to V_{n+1}\to\cdots$$ is a standard sequence of symplectic vector spaces $V_n$ of dimension $2n$.

In this paper we are primarily interested in this Lie algebra as it relates to the homology of $\Out(F_n)$ and the Johnson homomorphism of the mapping class group of a surface. 
\begin{theorem}[Kontsevich]
$\displaystyle \lim_{n\to\infty}PH^*(\mathfrak h^+(V_n))^{\SP}\cong \bigoplus_{r\geq 2} H^*(\Out(F_r);\F)$
\end{theorem}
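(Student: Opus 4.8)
The plan is to compute the stable $\SP$-invariant Chevalley--Eilenberg cohomology of $\mathfrak h^+$ by translating it into a graph complex, and then to match that graph complex with the rational cohomology of the groups $\Out(F_r)$ rank by rank. Three ingredients drive the argument: (i) the first fundamental theorem of invariant theory for the symplectic group, which converts invariant tensors into pairings of tree-leaves; (ii) the Milnor--Moore theorem, which reduces cohomology to its primitive part, realized by \emph{connected} graphs; and (iii) Culler--Vogtmann Outer space, which identifies the connected $\Lie$ graph complex of a fixed first Betti number $r$ with $H^*(\Out(F_r);\F)$.

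First I would set up the Chevalley--Eilenberg cochain complex computing $H^*(\mathfrak h^+(V_n))$, take $\SP$-invariants, and pass to the limit $n\to\infty$. Using the Lie-spider description of $\mathfrak h^+$ --- trivalent trees with $V$-labeled leaves modulo IHX and multilinearity --- an invariant element of $\ext^k$ is, stably and by the FFT for $\SP$, a sum of products of symplectic pairings of the leaves of $k$ disjoint spiders. Each such pairing glues leaves in pairs, producing a graph whose vertices retain the internal trivalent-tree (i.e. $\Lie$) structure; the number of spiders records the exterior degree. Thus the stable invariant cochain complex is identified with a $\Lie$ graph complex. I would then verify that the Chevalley--Eilenberg differential, dual to $d(x_1\wedge\cdots)=\sum\pm[x_i,x_j]\wedge\cdots$, corresponds under this dictionary to the graph-complex differential that contracts/expands an edge (equivalently, splits a vertex or glues two vertices along a paired pair of leaves). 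Reconciling the Chevalley--Eilenberg orientation data with the edge-orientation conventions of the graph complex is already delicate sign bookkeeping.

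Next, the exterior algebra carries a graded-cocommutative Hopf structure with $\mathfrak h^+$ primitive, and disjoint union of graphs realizes the product after taking invariants; over a field of characteristic $0$ the Milnor--Moore theorem then expresses the cohomology as free graded-commutative on its primitives, so $PH^*(\mathfrak h^+)^{\SP}$ is computed by the subcomplex of \emph{connected} graphs, which I would split according to first Betti number $r\geq 2$. The final and hardest step is to identify, for each $r$, this connected $\Lie$ graph complex with $H^*(\Out(F_r);\F)$. Here I would use that the spine $K_r$ of Outer space is contractible with finite $\Out(F_r)$-stabilizers, so $H^*(\Out(F_r);\F)\cong H^*(K_r/\Out(F_r);\F)$; the quotient has a cell decomposition indexed by rank-$r$ graphs, and one shows that its cellular cochain complex --- once orientations and the IHX relations coming from the $\Lie$ structure at the vertices are correctly incorporated --- agrees with the connected graph complex up to the expected degree shift (the top trivalent graphs sitting in the vcd $2r-3$). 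The main obstacle I anticipate is precisely this last identification: matching the marking/orientation data on cells of $K_r/\Out(F_r)$ to the leaf-pairing and IHX data of the graph complex and checking the differentials agree. This is the technical heart of the theorem, carried out in detail by Conant--Vogtmann in refining Kontsevich's original sketch.
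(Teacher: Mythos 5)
The paper does not actually prove this statement---it is quoted as Kontsevich's theorem, with the detailed proof deferred to the cited reference \cite{CV}---and your sketch reproduces precisely that argument: symplectic invariant theory applied to the Chevalley--Eilenberg complex of the Lie-spider model, primitivity corresponding to connected graphs via the Hopf structure, and identification of the connected rank-$r$ Lie graph complex with $H^*(\Out(F_r);\F)$ through Outer space (where, as you correctly flag, the technical heart is matching the equivariant cell structure of the spine---in \cite{CV} via the forested graph complex, with orientation issues killing cells whose stabilizers reverse orientation---to the IHX/leaf-pairing data). So your proposal is correct and is essentially the same approach as the paper's source.
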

In particular the abelianization gives rise to potential homology classes in $\Out(F_r)$:
$$\lim_{n\to\infty} PH^*(\mathsf A^+(V_n))^{\SP}\to  \bigoplus_{r\geq 2} H^*(\Out(F_r);\F)$$
On the other hand, the higher order Johnson homomorphism is a Lie algebra homomorphism $$\tau\colon \mathsf{Gr}_{\mathbb J}(\Mod(g,1)) \to \mathfrak h^+(H_1(\Sigma_{g,1};\F))$$
where $ \mathsf{Gr}_{\mathbb J}(\Mod(g,1))$ is the associate graded (tensored with $\F$) vector space associated with the Johnson filtration of the mapping class group $\Mod(g,1)$.
\begin{theorem}[Hain]
$\im\tau$ is (stably) generated as a Lie algebra by degree $1$ elements.
\end{theorem}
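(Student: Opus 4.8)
The plan is to prove the equivalent statement that the graded Lie algebra $\mathsf{Gr}_{\mathbb J}(\Mod(g,1))$ is generated in degree $1$ in the stable range ($g$ large compared to the degree), from which the assertion about $\im\tau$ is immediate. Indeed, $\tau=\oplus_d\tau_d$ is a homomorphism of graded Lie algebras, and the Johnson filtration is defined precisely so that each $\tau_d$ is injective on $\mathbb J_d/\mathbb J_{d+1}$; hence $\tau$ is injective and $\mathsf{Gr}_{\mathbb J}(\Mod(g,1))\cong\im\tau$ as graded Lie algebras. More to the point, since $\tau$ is a Lie homomorphism, the image of any Lie algebra generated in degree $1$ is generated by the image of its degree-$1$ part, which is exactly the degree-$1$ part of $\im\tau$. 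So it suffices to exhibit \emph{some} graded Lie algebra generated in degree $1$ that surjects onto $\im\tau$.

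The first structural input is the standard fact (a graded Nakayama lemma) that a non-negatively graded Lie algebra $\mathfrak g=\bigoplus_{d\ge 1}\mathfrak g_d$ is generated in degree $1$ if and only if its space of indecomposables $H_1(\mathfrak g)=\mathfrak g/[\mathfrak g,\mathfrak g]$ is concentrated in degree $1$; equivalently, the iterated bracket $\mathfrak g_1^{\otimes d}\to\mathfrak g_d$ is surjective for every $d$. Thus the entire question reduces to a purity statement for $H_1$ of the appropriate Lie algebra: there must be no indecomposable generators in degrees $\ge 2$. Note that, unlike Hain's full quadratic presentation, we need \emph{only} the $H_1$ computation and no control of the relations in degree $2$.

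The second step is to access $\im\tau$ through Hain's relative (Malcev) completion $\mathcal G_{g,1}$ of $\Mod(g,1)$ relative to the symplectic representation $\Mod(g,1)\to\SP(H)$, where $H=H_1(\Sigma_{g,1};\F)$. This completion is an extension of $\SP$ by a prounipotent group whose Lie algebra $\mathfrak u_{g,1}$ carries a canonical mixed Hodge structure coming from the variation of Hodge structure on the fundamental groups of the fibers of the universal curve. The associated weight-graded object $\mathrm{Gr}^W\mathfrak u_{g,1}$ is then a graded Lie algebra in the category of $\SP$-representations, and the higher Johnson homomorphism is recovered as the natural map $\mathrm{Gr}^W\mathfrak u_{g,1}\to\mathfrak h^+(H)$, whose image is $\im\tau$. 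Hence $\im\tau$ is a quotient of $\mathrm{Gr}^W\mathfrak u_{g,1}$, and it is enough to show the latter is generated in degree $1$.

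By the first step this comes down to computing $H_1(\mathfrak u_{g,1})$. This abelianization is identified with $H_1(\mathcal I_{g,1};\F)$, which by Johnson's theorem equals $\ext^3 H$ for $g\ge 3$ — the same $\SP$-module as the degree-$1$ part $\mathfrak h_1(H)$ — and the crux is that it is \emph{pure}, sitting entirely in the weight corresponding to degree $1$. This purity is exactly where the Hodge theory is indispensable and is the main obstacle: one must rule out higher-weight (hence higher-degree) indecomposables. The mechanism is Deligne's strictness of morphisms of mixed Hodge structures with respect to the weight filtration, applied to the relevant low-degree cohomology $H^1$ and $H^2$ of $\Mod(g,1)$ and of $\SP$ with the appropriate coefficients; strictness forces $H_1(\mathfrak u_{g,1})$ to be pure of the expected weight. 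Granting this, $\mathrm{Gr}^W\mathfrak u_{g,1}$ is generated by its degree-$1$ piece $\ext^3 H$, so its quotient $\im\tau$ is generated in degree $1$. The hypothesis $g\ge 3$ relative to the degree, needed for Johnson's computation and for the weight calculations to stabilize, is precisely the source of the word ``stably.''
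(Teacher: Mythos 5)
Your proposal is correct in outline and takes exactly the route of the proof the paper relies on: the paper gives no argument of its own, citing Hain's \emph{Infinitesimal presentations of the Torelli group}, and your chain of reductions --- graded Nakayama, surjecting $\mathrm{Gr}^W\mathfrak u_{g,1}$ onto $\im\tau$ via the relative Malcev completion, identifying $H_1(\mathfrak u_{g,1})$ with $H_1(\mathcal I_{g,1};\F)\cong\ext^3 H$ by Johnson's theorem, and forcing weight-purity of this $H_1$ by Deligne strictness --- is precisely Hain's strategy. Be aware only that the step you flag with ``granting this'' (the purity of $H_1(\mathfrak u_{g,1})$, together with the identification of the image of $\mathrm{Gr}^W\mathfrak u_{g,1}\to\mathfrak h^+(H)$ with the Johnson-filtration image $\im\tau$) is where essentially all of Hain's Hodge-theoretic work lives, so what you have is a faithful sketch of the cited proof rather than a self-contained argument.
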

Let $\mathsf C(V)$ be $\mathfrak{h}^+(V)$ divided by the Lie algebra generated by degree $1$ elements. By Hain's result, $\mathsf C_d(H_1(\Sigma_{g,1};\F))$ this is isomorphic to the degree $d$ part of the Johnson cokernel when the $d$ is small compared to the genus $g$. Clearly, for degree $d>1$, we have a surjection $\mathsf C_d(V)\twoheadrightarrow \mathsf A_d(V)$.

\subsection{The operad $H\cO$}
Suppose $H$ is a co-commutative Hopf algebra and $\cO$ is an operad with unit (in the category of $\F$-vector spaces). We let $\cO[n]$ denote the vector space spanned by operad elements with $n$ inputs and one output, $n$ being referred to as the \emph{arity}. If $\cO$ is cyclic, we let $\cO\arity{n}=\cO[n-1]$ as a $\Sigma_{n}$-module.

Regard $H$ as an operad with elements only of arity $1$ and operad composition given by algebra multiplication. The antipode $S$ turns $H$ into a cyclic operad: the $\Sigma_{2}$ action sends $h$ to $S(h)$.

\begin{definition}\label{def:freeoperad}\
\begin{enumerate}
\item Let $\cO_1$ and $\cO_2$ be operads with unit. Define $\cO_1*\cO_2$ to be the operad \emph{freely generated by $\cO_1$ and $\cO_2$}. This is defined to be the operad consisting of trees with vertices of valence $\geq 2$ labeled by elements of $\cO_1$ or $\cO_2$. Composing two elements of $\cO_i$ for $i=1,2$ along a tree edge is considered the same element of $\cO_1*\cO_2$, and the units of $\cO_1$ and $\cO_2$ are identified and equal to the unit of $\cO_1*\cO_2$.
\item Let $H\cO$ be the quotient of $H*\cO$ by the relation that $h$ commutes with an element of $\cO$ via the comultiplication map as in the figure below. (The fact that $1_\cO=1_H$ is also included for emphasis.)
\end{enumerate}
\end{definition}

\begin{center}
\begin{minipage}{\linewidth}
\resizebox{\linewidth}{!}{
\begin{tikzpicture}
\node[empty](aa){};
\node[operad](bb)[left of=aa]{$1_\cO$} edge (aa);
\node[empty](cc)[left of=bb]{}  edge (bb);

\node[empty](dd)[left of=cc]{};
\node[hopf](ee)[left of=dd]{$\,\,1_H$}  edge (dd);
\node[empty](ff)[left of=ee]{}  edge (ee);

\path(cc) to node{$=$} (dd);

\node[break](br)[right of=aa]{};

\node[empty](a)[right of=br]{};
\node[hopf](b)[right of=a]{$\,\,h$} edge (a);
\node[operad](c)[right of=b, node distance=1.3cm]{$o$} edge (b);
\node[empty](d)[right of=c]{} edge (c);
\node[empty](e)[above right of=c]{} edge (c);
\node[empty](f)[below right of=c]{} edge (c);

\node[empty](a')[right of=d]{};
\node[operad](b')[right of=a']{$o$} edge (a');
\node[hopf](c')[right of=b', node distance=1.3cm]{$\,\,h_{(2)}$} edge (b');
\node[empty](d')[right of=c']{} edge (c');
\node[hopf](e')[above of=c']{$\,\,h_{(1)}$}; \draw (b') to[out=45,in=180] (e');
\node[empty](f')[right of=e']{} edge (e');
\node[hopf](g')[below of=c']{$\,\,h_{(3)}$}; \draw (b') to[out=-45,in=180] (g');
\node[empty](h')[right of=g']{} edge (g');

\path(d) to node{$=$} (a');

\end{tikzpicture}}
\end{minipage}
\end{center}

Note the use of Sweedler notation hiding the fact that the coproduct is actually a sum of pure tensors. 

\subsection{$\Aut(F_n)$ acting on $H^{\otimes n}$}\label{sec:action}
In \cite{CK}, a right action of $\Aut(F_n)$ on $H^{\otimes n}$ is defined. It can be described as follows. For a group $G$, introduce maps $m\colon G\times G\to G$, $\Delta\colon G\to G\times G$ and $S\colon G\to G$, defined by $m(g,h)=gh$, $m(g)=g\times g$ and $S(g)=g^{-1}$. Suppose $F_n$ is generated by $x_1,\ldots x_n$. Given $\phi\in \Aut(F_n)$ represent the transformation $F_n^n\to F_n^n$ given by $(x_1,\ldots,x_n)\mapsto (\phi(x_1),\dots,\phi(x_n))$ as a composition of permutations of coordinates, multiplication of two coordinates, doubling of a coordinate and inversion of a coordinate. Now think of these operations instead as operations of the Hopf algebra. For example suppose $\phi(x_1)=x^2_2x_1^{-1}$ and $\phi(x_2)=x_1x_2^{-1}$. Then $(a\otimes b)\cdot \phi=b_{(1)}b_{(2)}S(a_{(1)})\otimes a_{(2)}S(b_{(3)})$.

We let $\rho_H\colon \Aut(F_n)\to \Aut(H^{\otimes n})$ denote this action.
\begin{definition}
The Hopf algebra $H$ acts on $H^{\otimes n}$ via \emph{conjugation}. That is, suppose $h\in H$ and $\Delta^{2n}(h)=h_{(1)}\otimes h_{(2)}\otimes\cdots\otimes h_{(2n-1)}\otimes h_{(2n)}$, using Sweedler notation. Then define
$$
h\circledast (h_1\otimes\cdots\otimes h_n)= h_{(1)}h_1S(h_{(2)})\otimes\cdots\otimes h_{(2n-1)}h_n S(h_{(2n)}).
$$
Let $\overline{H^{\otimes n}}$ be the quotient of $H^{\otimes n}$ by the subspace spanned by elements of the form
$$
(h-\eta\epsilon(h))\circledast(h_1\otimes \cdots\otimes h_n),
$$
i.e., this is the maximal quotient of $H^{\otimes n}$ where the conjugation action of $H$ factors through the counit.

\end{definition}

The action of $\Aut(F_n)$ induces an action of $\Out(F_n)$ on $\overline{H^{\otimes n}}$, which we also denote by $\rho_H$.

\subsection{Graph homology}
Recall from~\cite{CV} that one can define a graph complex $\cG_\cO$ for any cyclic operad $\cO$ by putting elements of $\cO\arity{|v|}$ at each vertex $v$ of a graph and identifying the i/o slots with the adjacent edges. In this definition, the graph may have bivalent vertices but no univalent or isolated vertices, since the operad $\cO$ is assumed not to have anything in arity $-1$ and $0$.

These complexes are graded by the number of vertices of the underlying graph and the boundary operator is induced by contracting edges of the underlying graph.  Let $\cG_{\cO}^{(n)}$ 
be the subcomplex spanned by $\cO$-colored connected graphs of rank $n$. 
In this section we will study $H_\bullet(\cG_{H\Lie})$.
As a consequence of the definitions, the rank $0$ part of $\cG_{H\Lie}$ 
is trivial.

\begin{definition}
Let $\overline{\cG_{H\Lie}}$ denote the quotient of the graph complex for $H\Lie$ where the elements in $H$ are allowed to slide through the edges, i.e., the following graphs in $\cG_{H\Lie}$ are equivalent in $\overline{\cG_{H\Lie}}$.
\begin{center}
\begin{minipage}{4.5cm}
\resizebox{4.5cm}{!}{
\begin{tikzpicture}
\node[hopf](a){$h$};
\node[empty](b)[left of=a, node distance=.8cm]{} edge (a);
\node[empty](c)[above left of=b, node distance=.8cm]{$\ddots$} edge (b);
\node[empty](d)[below left of=b, node distance=.8cm]{$\Ddots$} edge (b);
\node[empty](e)[right of=a, node distance=.8cm]{} edge (a);
\node[empty](f)[right of=e, node distance=1.2cm]{};
\begin{scope}[decoration={markings,mark = at position 0.5 with {\arrow{stealth}}}]
\draw[decorate] (e) to (f);
\end{scope}
\draw[densely dashed] (e) to (f);
\node[empty](g)[right of=f, node distance=.8cm]{} edge (f);
\node[empty](h)[above right of =g, node distance=.8cm]{$\Ddots$} edge (g);
\node[empty](i)[below right of =g, node distance=.8cm]{$\ddots$} edge (g);
\end{tikzpicture}}
\end{minipage}
$=$\,\,\,
\begin{minipage}{4.5cm}
\resizebox{4.5cm}{!}{
\begin{tikzpicture}
\node[hopf](a){$h$};
\node[empty](b)[right of=a, node distance=.8cm]{} edge (a);
\node[empty](c)[above right of=b, node distance=.8cm]{$\Ddots$} edge (b);
\node[empty](d)[below right of=b, node distance=.8cm]{$\ddots$} edge (b);
\node[empty](e)[left of=a, node distance=.8cm]{} edge (a);
\node[empty](f)[left of=e, node distance=1.2cm]{};
\begin{scope}[decoration={markings,mark = at position 0.5 with {\arrow{stealth}}}]
\draw[decorate] (f) to (e);
\end{scope}
\draw[densely dashed] (e) to (f);
\node[empty](g)[left of=f, node distance=.8cm]{} edge (f);
\node[empty](h)[above left of =g, node distance=.8cm]{$\ddots$} edge (g);
\node[empty](i)[below left of =g, node distance=.8cm]{$\Ddots$} edge (g);
\end{tikzpicture}}
\end{minipage}
\end{center}
It is clear that the quotient map $\cG_{H\Lie} \to \overline{\cG_{H\Lie}}$ preserves the differential and induces a map between the homologies.   
\end{definition}

\begin{remark}
The Hopf algebra elements $h\in H$ encode the ``hairs" of hairy graph homology. For example the product $v_1v_2\cdots v_k\in T(V)$ represents $k$ hairs in a row, labeled by $v_1,\ldots, v_k$. When $H=\sym(V)$, these hairs commute, which is what happens for hairy Lie graph homology.  See section 5.3 of \cite{CKV2}. 
\end{remark}

\begin{theorem}[Conant-Kassabov]
\label{thm:HReduced}
For $n\geq 2$ we have $$H_k(\overline{\cG_{H\Lie}^{(n)}}) = H^{2n-2-k}(\Out(F_n);\overline{H^{\otimes n}}).$$
\end{theorem}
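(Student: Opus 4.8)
The plan is to realize $\overline{\cG_{H\Lie}^{(n)}}$ as the $\Out(F_n)$-equivariant cochain complex of the spine $K_n$ of Culler--Vogtmann Outer space, with coefficients in the local system determined by $\overline{H^{\otimes n}}$, and then to read off the result from contractibility of $K_n$ together with the finiteness of the stabilizers of the $\Out(F_n)$-action. Recall that $K_n$ is a contractible $(2n-3)$-dimensional complex whose cells are indexed by marked rank-$n$ graphs with all vertices of valence $\ge 3$, ordered by forest collapse; since $\Out(F_n)$ acts properly with finite stabilizers and we work over a characteristic-zero field $\F$, for any $\F[\Out(F_n)]$-module $M$ one has $H^*(\Out(F_n);M)\cong H^*_{\Out(F_n)}(K_n;M)$, and the latter is computed by the cellular cochain complex of the quotient moduli space of graphs with coefficients in the induced local system.

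First I would dispose of the undecorated case $H=\F$, which is precisely Kontsevich's theorem in the form proved by Conant--Vogtmann \cite{CV}: blowing up each $\Lie$-labeled vertex of a graph in $\cG^{(n)}_\Lie$ into a trivalent tree identifies the Lie graph complex, graded by number of vertices $k$, with the forested graph complex, in which a generator is a trivalent rank-$n$ graph $G$ equipped with a forest $\Phi$ graded by its number of edges. A trivalent rank-$n$ graph has $2n-2$ vertices and $3n-3$ edges, so collapsing $\Phi$ produces a graph with $k=2n-2-|\Phi|$ vertices; the edge-contraction differential of $\cG_\Lie$ exactly adds an edge to $\Phi$, hence raises $|\Phi|$, and so plays the role of the cellular coboundary. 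The orientation data (an ordering of the edge set, i.e.\ a generator of the determinant of the edge space) supplies the degree-reversing twist, and one obtains $H_k(\cG^{(n)}_\Lie)\cong H^{2n-2-k}(\Out(F_n);\F)$, with $\overline{\cG_{H\Lie}}=\cG_\Lie$ and $\overline{H^{\otimes n}}=\F$ in this case.

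Next I would install the Hopf-algebra decorations as the coefficient system. The essential observation is that an $H\Lie$-decorated graph is the same data as a trivalent Lie graph whose edges carry elements of $H$, and that the sliding relation defining $\overline{\cG_{H\Lie}}$---pushing $h\in H$ along an edge---is exactly the relation that forces the edge-labelings to descend to a well-defined module over the quotient moduli space. Concretely, on the rose the $n$ loops correspond to the free generators, edge-labelings give $H^{\otimes n}$, and one must verify that the action $\rho_H$ assembled from multiplication, comultiplication, and antipode agrees with the change-of-marking action: comultiplication governs pushing a label past a trivalent vertex, while the antipode $S$ governs reversal of edge orientation, matching the cyclic $\Sigma_2$-structure $h\mapsto S(h)$ on $H$ and group inversion $g\mapsto g^{-1}$ in the definition of $\rho_H$. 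Passing to $\overline{\cG_{H\Lie}}$, i.e.\ letting labels slide freely, corresponds to forcing the conjugation (inner) action to act trivially, which is precisely the passage from $H^{\otimes n}$ to $\overline{H^{\otimes n}}$. Matching differentials then identifies $\overline{\cG^{(n)}_{H\Lie}}$ with the equivariant cochain complex, and the degree bookkeeping of the base case carries over verbatim to give $H_k(\overline{\cG^{(n)}_{H\Lie}})\cong H^{2n-2-k}(\Out(F_n);\overline{H^{\otimes n}})$ for $n\ge 2$.

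The main obstacle is the coefficient-system bookkeeping in the third step. One must verify, with full attention to signs and orientations, that for \emph{every} graph $G$ (not merely the rose) the $\Aut(G)$-(co)invariants of the $H$-edge-labelings modulo sliding reproduce the fiber $\overline{H^{\otimes n}}$, that edge contraction in $\cG_{H\Lie}$---which uses algebra composition in $H$ together with operad composition in $\Lie$---corresponds correctly to the cellular coboundary, and that the antipode intervenes precisely when an edge orientation is reversed. This is exactly where the Hopf-algebra axioms, especially coassociativity and the antipode relations, are needed to guarantee consistency of the local system, and it is the technical heart that upgrades Kontsevich's theorem to the decorated statement; the remaining ingredients are the already-established identifications of \cite{CV} and \cite{CHKV}.
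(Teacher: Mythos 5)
Your proposal is correct and takes essentially the same route as the paper's proof, which is deferred entirely to \cite{CK}: there the theorem is established exactly by identifying $\overline{\cG^{(n)}_{H\Lie}}$ with the $\Out(F_n)$-equivariant cochain complex of the spine of Outer space with coefficients in the local system $\overline{H^{\otimes n}}$, building on the forested-graph formulation of Kontsevich's theorem from \cite{CV}, with the sliding relation implementing the passage to the conjugation-trivial quotient. Your degree bookkeeping ($2n-2$ trivalent vertices, forests of size $2n-2-k$) and the matching of $\rho_H$ (multiplication, comultiplication, antipode under edge reversal) with the change-of-marking action are precisely the verifications carried out in \cite{CK}, so no essential idea is missing.
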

\begin{proof}
See \cite{CK}.
\end{proof}

Let $\homfunctor_n(H)=H_1(\overline{\cG_{H\Lie}^{(n)}})$.

\begin{theorem}\
\begin{enumerate}
\item There is a stable embedding
$$\lim_{n\to\infty}\mathsf A(V_n)\hookrightarrow \lim_{n\to\infty} \ext^3 V_n\oplus \bigoplus_{r\geq 1}\homfunctor_r(\sym(V_n)).$$
 Moreover the $\SP$-decomposition of $\mathsf A$ is isomorphic to the $\GL$ decomposition of $\bigoplus_{r\geq 1}\homfunctor_r(\sym(V))$: i.e. $\varphi^{\SP}_{\GL}(\bigoplus_{r\geq 1}\homfunctor_r(\sym(V)))=\mathsf A^+$.
\item
Then there is a map from the Johnson cokernel $$\mathsf C(V)\to\bigoplus_{r\geq 1}\homfunctor_r(T(V)).$$ Stably, every $\GL$-representation in $\homfunctor_r(T(V))$ will appear as an $\SP$ representation in $\mathsf C(V)$. 
\end{enumerate}
\end{theorem}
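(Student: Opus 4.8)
The plan is to build both maps from the generalized trace map $\Tr\colon \mathfrak h^+\to \cG_{\sym(V)\Lie,1}$ of \cite{CKV1} and then, in each case, to verify that it descends to the stated quotient and to identify its target with graph homology. Recall that $\Tr$ sends a Lie spider (a disjoint union of $V$-labeled trees) to the signed sum over all ways of joining pairs of its leaves by directed edges, weighted by the product of symplectic contractions of the joined labels; the unjoined leaves become $\sym(V)$-labeled hairs. The first Betti number of the resulting graph equals the number of contractions, so $\Tr$ is graded by rank, matching the rank grading on $\mathsf A$.

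For part (1) I would first check that $\Tr$ carries commutators to boundaries. The bracket $[\xi,\eta]$ in $\mathfrak h^+$ is realized geometrically by gluing a leaf of $\xi$ to a leaf of $\eta$; in $\cG_{\sym(V)\Lie}$ this is precisely the edge contraction $\partial$ applied to the two-vertex graph carrying $\xi$ and $\eta$ (after passing to $\overline{\cG_{H\Lie}}$, where hairs slide freely through edges). Hence $\Tr([\xi,\eta])\in \partial(\cG^{(r)}_{\sym(V)\Lie,2})$, so $\Tr$ descends to $\overline{\Tr}\colon \mathsf A\to \bigoplus_r \homfunctor_r(\sym(V))$. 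That this descended map induces the identification $\mathsf A^r\cong \varphi^{\SP}_{\GL}(\homfunctor_r(\sym(V)))$ for each $r\geq 1$ is the content of \cite{CKV2}; summing over $r\geq 1$ yields the $\varphi^{\SP}_{\GL}$-equality. The rank-$0$ class $\mathsf A^0_1=[1^3]_{\SP}=\ext^3 V$, produced by Morita's trace \cite{morita}, is not captured by any $\homfunctor_r$ with $r\geq1$ and is adjoined directly, giving the stable embedding into $\ext^3 V\oplus \bigoplus_{r\geq 1}\homfunctor_r(\sym(V))$ after passing to the limit $n\to\infty$. Because $\sym(V)$ is (co)commutative we have $\overline{\sym(V)^{\otimes r}}=\sym(V)^{\otimes r}$ and the $\Out(F_r)$-action factors through $\GL_r(\Z)$, so Theorem~\ref{thm:HReduced} identifies $\homfunctor_r(\sym(V))$ with the relevant $\Out(F_r)$-cohomology, as needed downstream.

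For part (2) I would run the same construction with the tensor algebra $T(V)$ in place of $\sym(V)$: since each $v\in V$ is primitive, $T(V)$ is a cocommutative Hopf algebra, so $\Tr$ still lands in $\cG_{T(V)\Lie,1}$, now with noncommuting hairs recording the order of the unjoined leaves. The difference from part (1) is that here $\Tr$ must be shown to descend not merely modulo all commutators but modulo the Lie subalgebra generated by degree-$1$ elements, i.e.\ to the Johnson cokernel $\mathsf C$. This is exactly the map constructed in \cite{C,CK}, and the surjection $\mathsf C\twoheadrightarrow \varphi^{\SP}_{\GL}(\homfunctor_r(T(V)))$ recorded just above the theorem gives the final claim: every $\GL$-representation occurring in $\homfunctor_r(T(V))$ occurs stably as an $\SP$-representation in $\mathsf C$.

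The main obstacle is the second half of part (1): upgrading the well-defined map $\overline{\Tr}$ to the identification $\mathsf A^r\cong \varphi^{\SP}_{\GL}(\homfunctor_r(\sym(V)))$. Well-definedness and the rank grading are formal once the edge-contraction interpretation of the bracket is in hand, but matching multiplicities requires the representation-theoretic comparison between the symplectic and general-linear pictures encoded by $\varphi^{\SP}_{\GL}$ — one must show that every class in $\homfunctor_r(\sym(V))$ is realized by a spider under $\Tr$ and that no nontrivial abelianization class is annihilated. This is where the substantive work of \cite{CKV2} lies; for part (2) the analogous and more delicate point is verifying that it is exactly the degree-$1$-generated subalgebra, rather than the full commutator subalgebra, that maps into the boundaries of the $T(V)$ graph complex.
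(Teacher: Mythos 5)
Your proposal is correct and takes essentially the same route as the paper, whose entire proof consists of citing \cite{CKV1,CKV2} for part (1) and \cite{CK} for part (2) --- precisely the references to which you defer the substantive steps (the descent of $\Tr$ to the abelianization, the identification $\mathsf A^r\cong\varphi^{\SP}_{\GL}(\homfunctor_r(\sym(V)))$, and the degree-$1$ descent plus surjectivity statements for $T(V)$). Your additional sketch of why commutators map to boundaries and your separate adjunction of the rank-$0$ class $[1^3]_{\SP}\subset\ext^3 V$ are consistent with the paper's own discussion of the rank-$0$ case.
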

\begin{proof}
The first statement follows from \cite{CKV1,CKV2}. The second from \cite{CK}.
\end{proof}
\subsection{Johnson cokernel obstructions}
We define $\Omega_n(H)$ to be $\overline{\cG^{(n)}_{H\Lie,1}}$ modulo certain boundaries.
We define a subspace $\mathcal S_2\subset \cG^{(n)}_{H\Lie,2}$ as follows.
Note that graphs in  $\cG^{(n)}_{H\Lie,2}$ are described by two elements of $H\Lie$ joined by some edges. $\mathcal S_2$ is spanned by graphs where one of the two $H\Lie$ elements is actually an element of $\Lie((3))\subset H\Lie((3))$, i.e. it is a tripod where all three i/o slots are joined to graph edges.  %The second type is where one of the elements of $H\Lie$ is actually an element of $V\subset H\subset H\Lie((2))$, i.e. a tripod where one hair has an element of $V$ decorating it, and the two other i/o slots are connected by edges to the the other $H\Lie$ element. 
We define $$\Omega_n(H)=\overline{\cG^{(n)}_{H\Lie,1}}/\partial \mathcal S_2.$$ 

Let $\sf C$ be the cokernel of the Johnson homomorphism.

\begin{theorem} 
There is a degree preserving $\SP$-module map $$\Tr^{C}\colon \mathsf C\to \bigoplus_{n\geq 0} \Omega_n(T(V))$$ which surjects onto the top level $\SP$-representations.
\end{theorem}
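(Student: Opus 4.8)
The plan is to construct the map $\Tr^{C}$ by composing the generalized trace map $\Tr\colon \mathfrak h\to \mathcal G_{T(V)\Lie,1}$ (restricted to $\mathfrak h^+$) with the quotient maps that define $\Omega_n(T(V))$, and then to analyze how the defining relations of $\mathsf C$ interact with the boundaries we are quotienting by. First I would recall from the excerpt that $\Tr$ lands in $\overline{\mathcal G_{H\Lie,1}}$ after passing to the reduced complex, and that composing with the projection $\overline{\mathcal G^{(n)}_{T(V)\Lie,1}}\twoheadrightarrow \Omega_n(T(V))=\overline{\mathcal G^{(n)}_{T(V)\Lie,1}}/\partial\mathcal S_2$ yields, for each rank $n$, a candidate map out of $\mathfrak h^+$. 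Summing over $n$ gives $\mathfrak h^+\to \bigoplus_n \Omega_n(T(V))$. The core task is to show this factors through $\mathsf C=\mathfrak h^+/(\text{Lie algebra generated by degree }1)$, i.e. that every bracket $[\xi,\eta]$ with $\xi$ of degree $1$ maps to zero.

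The key steps, in order, are as follows. \emph{Step 1:} verify that $\Tr$ is $\SP$-equivariant and degree preserving, so that the composite respects the $\SP$-module and grading structure we want to assert. \emph{Step 2:} show the image of a bracket $[\xi,\eta]$ with $\deg\xi=1$ is a boundary lying in $\partial\mathcal S_2$. The point is that a degree-$1$ symplectic derivation corresponds to a tripod (a generator of $\Lie((3))$), so bracketing with it, after applying $\Tr$, produces a $2$-vertex graph in which one vertex is exactly such a tripod; this is precisely the content of $\mathcal S_2$, and its boundary is killed in the quotient. This is the crucial compatibility that makes $\mathsf C$, rather than merely $\mathfrak h^+$, the correct source. \emph{Step 3:} identify the top-level $\SP$-representations and prove surjectivity onto them. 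Here I would use the established fact that $\mathsf C$ surjects onto $\varphi^{\SP}_{\GL}(\Omega_r(T(V)))$ (stated in the introduction), or re-derive the essential surjectivity by exhibiting, for each graph generator of $\Omega_n(T(V))$ in top filtration degree, a derivation in $\mathfrak h^+$ whose trace hits it modulo lower-order terms and modulo $\partial\mathcal S_2$.

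The main obstacle I expect is Step 2: controlling exactly which boundaries arise from brackets with degree-$1$ elements. One must check that bracketing a spider with a degree-$1$ spider, then applying the edge-gluing trace construction, yields \emph{only} terms expressible through the $\mathcal S_2$ relations (tripod-labeled or $H$-labeled two-vertex graphs) and not stray contributions that survive in $\Omega_n$. This requires a careful bookkeeping of how the contraction of labels and the addition of directed edges interact with the Leibniz rule for the bracket; the Sweedler-notation coproduct pushes $H$-elements around in a way that could in principle generate terms outside $\mathcal S_2$. The verification that all such terms either cancel by the IHX/antisymmetry relations on spiders or collapse into $\partial\mathcal S_2$ is the technical heart of the argument. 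Everything else---equivariance, grading, and the top-level surjectivity---follows from the cited results in \cite{CKV1,CKV2,CK} together with the explicit presentations of $\Omega_n(T(V))$ developed later in the paper.
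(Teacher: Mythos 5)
The first thing to say is that the paper does not prove this theorem internally at all: its entire proof reads ``This is the main theorem of \cite{C}, with slightly altered notation,'' so there is no in-paper argument for your outline to be measured against line by line. What you have written is a reconstruction of the strategy underlying the cited external proof, and in outline it is the right one. In particular, the mechanism you isolate in Step 2 is exactly why $\mathcal S_2$ is defined the way it is: the degree-$1$ part of $\mathfrak h^+$ is $\ext^3 V$, i.e.\ tripods, and the trace of a bracket $[\xi,\eta]$ with $\xi$ of degree $1$ is realized as a boundary of $2$-vertex graphs in which one vertex is a tripod (or carries only an $H$-label, the other case the introduction's version of $\mathcal S_2$ allows), hence dies in $\Omega_n(T(V))=\overline{\cG^{(n)}_{T(V)\Lie,1}}/\partial\mathcal S_2$. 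This is the compatibility that makes $\mathsf C$ the correct source, just as you say.

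Two genuine cautions, however. First, your Step 3 as stated is circular in its primary option: the introduction's assertion that $\mathsf C$ surjects onto $\varphi^{\SP}_{\GL}(\Omega_r(T(V)))$ is itself quoted from \cite{C} and is essentially the theorem under discussion, so it cannot be used as input; your fallback --- exhibiting, for each top-degree graph generator of $\Omega_n(T(V))$, a derivation in $\mathfrak h^+$ whose trace hits it modulo lower-order terms and modulo $\partial\mathcal S_2$ --- is the correct route, but it is only sketched. Second, you correctly identify the Leibniz-rule/Sweedler bookkeeping in Step 2 as the technical heart, but the proposal defers rather than performs it, so what you have is a proof plan consistent with the cited argument rather than a complete proof. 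A smaller routine point worth making explicit: to factor through $\mathsf C$ it suffices to kill brackets $[\xi,\eta]$ with $\deg\xi=1$, because by the Jacobi identity every element of positive degree $\geq 2$ in the subalgebra generated by degree-$1$ elements is a combination of left-normed brackets of that form.
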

\begin{proof}
This is the main theorem of \cite{C}, with slightly altered notation. Our $\Omega_n(T(V))$ was called simply $\Omega_n(V)$ in \cite{C}.
\end{proof}

\begin{corollary}
There is a degree preserving $\SP$-module map $$\Tr^{C}\colon \mathsf C\to \bigoplus_{n\geq 0} \Omega_n(U\mathsf L_{(k)}(V))$$ which surjects onto the top level $\SP$-representations.
\end{corollary}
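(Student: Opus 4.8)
The plan is to obtain the corollary from the preceding theorem by pure functoriality, exploiting that $U\mathsf L_{(k)}(V)$ is a cocommutative quotient Hopf algebra of $T(V)$. The first step is to record the quotient map. The free nilpotent Lie algebra $\mathsf L_{(k)}(V)$ is the quotient of the free Lie algebra $\mathsf L(V)$ by the ideal of brackets of length greater than $k$, so the canonical Lie-algebra surjection $\mathsf L(V)\twoheadrightarrow \mathsf L_{(k)}(V)$ induces a surjection of cocommutative Hopf algebras $\pi\colon T(V)=U\mathsf L(V)\twoheadrightarrow U\mathsf L_{(k)}(V)$. This $\pi$ preserves the grading by word length and is natural in the symplectic vector space $V$, hence $\SP$-equivariant.

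Next I would check that $\Omega_n(-)$ is a covariant functor on cocommutative Hopf algebras carrying surjections to surjections. A Hopf map $H\to H'$ induces a map of cyclic operads $H\Lie\to H'\Lie$, since the construction $H\mapsto H\cO$ of Definition~\ref{def:freeoperad} is functorial in $H$ (comultiplication and antipode are preserved, so the commutation relation is respected); this in turn induces a chain map of graph complexes $\cG^{(n)}_{H\Lie}\to\cG^{(n)}_{H'\Lie}$ compatible with the contraction differential, with the sliding relation defining $\overline{\cG_{H\Lie}}$, and with the subspace $\mathcal S_2$. When $H\twoheadrightarrow H'$ is surjective, every $H'$-labeling of a graph lifts to an $H$-labeling, so the maps are surjective in each graph-degree; a short diagram chase then gives $\bdry\mathcal S_2(H)\twoheadrightarrow\bdry\mathcal S_2(H')$, and passing to the quotients yields a surjection $\Omega_n(\pi)\colon\Omega_n(T(V))\twoheadrightarrow\Omega_n(U\mathsf L_{(k)}(V))$ that is degree-preserving and $\SP$-equivariant. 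This is exactly the assertion, already invoked for $k=2$ in the discussion of Theorem~\ref{thm:omega2intro}, that $\Omega_2(U\mathsf L_{(2)})$ is a quotient of $\Omega_2(T(V))$.

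The desired map is then the composite $\mathsf C\xrightarrow{\Tr^{C}}\bigoplus_{n}\Omega_n(T(V))\xrightarrow{\oplus_n\Omega_n(\pi)}\bigoplus_n\Omega_n(U\mathsf L_{(k)}(V))$, which is degree-preserving and an $\SP$-module map as a composite of such. For surjectivity onto the top-level representations I would use the interpretation of ``top level'': each $\GL$-irreducible $[\lambda]_{\GL}$ occurring in $\Omega_n(U\mathsf L_{(k)}(V))$ also occurs in $\Omega_n(T(V))$ (the former is a $\GL$-quotient of the latter, since $\pi$ is natural in $V$), and contributes the leading $\SP$-type $[\lambda]_{\SP}$ of its symplectic restriction. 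The theorem guarantees $\Tr^{C}$ surjects onto these leading $\SP$-types upstairs, and since $\Omega_n(\pi)$ is a surjective filtered $\SP$-map it induces a surjection on the top-level associated graded, so the composite surjects onto the top level downstairs.

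The one step requiring genuine care, and the likely main obstacle, is this last bookkeeping: one must confirm that the filtration with respect to which ``top level'' is defined is preserved by $\Omega_n(\pi)$, and that no $\SP$-type which fails to be top-level in $\Omega_n(T(V))$ becomes top-level after passing to the nilpotent quotient. This reduces to checking that ``top level'' is an intrinsic attribute of each $\GL$-constituent (namely the maximal-$|\lambda|$ term of its restriction to $\SP$, as encoded by $\varphi^{\SP}_{\GL}$) rather than a property relative to the ambient decomposition; granting that, surjectivity onto the top level composes and the corollary follows. Everything else is formal naturality.
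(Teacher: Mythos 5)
Your proposal is correct and follows essentially the same route as the paper: the quotient map $T(V)\twoheadrightarrow U\mathsf L_{(k)}(V)$ induces a surjective chain map $\overline{\cG^{(n)}_{T(V)\Lie,\cdot}}\twoheadrightarrow \overline{\cG^{(n)}_{U\mathsf L_{(k)}(V)\Lie,\cdot}}$ preserving $\mathcal S_2$, hence a surjection $\Omega_n(T(V))\twoheadrightarrow\Omega_n(U\mathsf L_{(k)}(V))$, composed with $\Tr^{C}$ from the preceding theorem. Your closing discussion of the top-level bookkeeping is a valid elaboration of a point the paper leaves implicit, and your resolution (top level is intrinsic to each $\GL$-constituent, and $\Omega_n(\pi)$ is $\GL$-equivariant by naturality in $V$) is sound.
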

\begin{proof}
The map $T(V)\twoheadrightarrow U(\mathsf L_{(k)}(V))$ induces a surjection $\Omega_n(T(V))\twoheadrightarrow \Omega_n(U(\mathsf L_{(k)}(V)))$. To see that $\Omega_n$ preserves surjections $H_1\twoheadrightarrow H_2$, observe that there is a map of chain complexes $\overline{\cG^{(n)}_{H_1\Lie,\cdot}}\twoheadrightarrow \overline{\cG^{(n)}_{H_2\Lie,\cdot}}$ which preserves the $\mathcal S_2$ subspace. So the induced map is necessarily a surjection: $\Omega_n(H_1)\twoheadrightarrow \Omega_n(H_2)$.
\end{proof}

\section{Rank 0 and 1}
Rank $0$ is a bit special. $\homfunctor_0(V)$ is not well defined. However, 
  the degree $1$ part of $\mathfrak h^+(V)$ is isomorphic to $\ext^3(V)\cong [1^3]_{\SP}\oplus [1]_{\SP}$.
The $[1]_{\SP}$ is detected in rank $1$ by the trace map, while $[1^3]_{\SP}$ lies in the kernel of the trace. So there is a sense in which $\homfunctor_0(V)=[1^3]_{\SP}.$
   
Rank 1 was considered in \cite{CK}.
\begin{theorem}[Conant-Kassabov]
There is an isomorphism $\homfunctor_1(H)\cong \Omega_1(H)  \cong(\id - S)(H/[H,H])$. In particular
\begin{enumerate}
\item $\displaystyle\homfunctor_1(\sym(V))\cong   \bigoplus_{k\geq 0}\sym^{2k+1}(V)$
\item $\displaystyle\homfunctor_1(T(V) \cong )\bigoplus_{k\geq 1} [V^{\otimes k}]_{D_{2k}}$
\end{enumerate}
\end{theorem}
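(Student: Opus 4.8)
The plan is to compute directly with the rank-one part of the reduced complex $\overline{\cG^{(1)}_{H\Lie}}$, writing $\cG_k:=\overline{\cG^{(1)}_{H\Lie,k}}$ for its piece on $k$ vertices. The first step is a valence count. A connected graph of first Betti number $1$ satisfies $E=V$ (since $b_1=E-V+1$), so $\sum_v|v|=2E=2V$; as every vertex has valence $\geq 2$, every vertex must be exactly bivalent. Hence each rank-$1$ graph is an oriented cycle, and since $\Lie\arity{2}=\Lie[1]$ is only the operadic unit, every bivalent vertex is labeled by an element of $H\arity{2}=H$. Thus $\cG_k$ is spanned by ``necklaces'' of $k$ beads from $H$ around a $k$-gon, and the homology in question is $\cG_1/\partial\cG_2$, with $\cG_1$ the single beads on a self-loop and $\cG_2$ the bigons.

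Next I would identify $\cG_1$. A bead $h$ on a self-loop has one nontrivial automorphism, the flip exchanging the two half-edges; on the cyclic-operad label this acts by the antipode $h\mapsto S(h)$, and I claim it reverses the graph orientation, giving the relation $L(h)=-L(S(h))$. Hence $\cG_1\cong\mathrm{coker}(\id+S)$. Because $H$ is cocommutative, $S^2=\id$, so $S$ is an involution and (in characteristic $0$) $\mathrm{coker}(\id+S)\cong\ker(\id+S)$. Contracting either edge of a bigon labeled $(h_1,h_2)$ composes the adjacent beads, and the two contractions differ by transposing the factors, so $\partial$ produces exactly the commutators $h_1h_2-h_2h_1$. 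Passing to homology therefore replaces $H$ by $H/[H,H]$, on which $S$ still descends to an involution, and we obtain
\[
\homfunctor_1(H)\cong (H/[H,H])^{S=-1}=(\id-S)(H/[H,H]),
\]
the last equality because $\im(\id-S)=\ker(\id+S)$ for any involution in characteristic $0$.

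For $\Omega_1(H)$ I would observe that the distinguished subspace $\mathcal S_2$ cannot differ from $\cG_2$ in rank $1$: a two-vertex rank-$1$ graph containing a trivalent tripod would force the other vertex to be univalent (by the same count, $3+|v|=2E=4$), which is excluded. Hence $\mathcal S_2=\cG_2$ and $\Omega_1(H)=\homfunctor_1(H)$. The two special cases then follow by computing $S$-eigenspaces. For $H=\sym(V)$ one has $[H,H]=0$ and $S=(-1)^k$ on $\sym^k(V)$, so the $(-1)$-eigenspace is $\bigoplus_{k\geq 0}\sym^{2k+1}(V)$. For $H=T(V)$, the quotient $H/[H,H]$ is the space of cyclic words, on which $S$ acts by order-reversal together with the sign $(-1)^k$; combined with the cyclic symmetry this realizes the sign-twisted dihedral group, and the $(-1)$-eigenspace is $\bigoplus_{k\geq 1}[V^{\otimes k}]_{D_{2k}}$.

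The main obstacle I anticipate is pinning down the signs. Everything hinges on the claim that the self-loop flip reverses orientation, giving $L(h)=-L(S(h))$ rather than $L(h)=+L(S(h))$; the opposite sign would select the $S=+1$ eigenspace and produce the \emph{even} rather than the odd symmetric powers, so this must be checked carefully against the orientation convention of $\cG_\cO$. Similarly, in the $T(V)$ case one must verify that the signs coming from the antipode and from the graph orientation assemble precisely into the sign-twisted dihedral action encoded by the notation $[V^{\otimes k}]_{D_{2k}}$, rather than some other twist.
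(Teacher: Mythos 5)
Your computation is correct, and it is essentially the argument of the cited source: this paper itself gives no proof of the rank-$1$ theorem (it defers to \cite{CK}), and the necklace reduction you carry out --- $E=V$ forces all vertices bivalent, $\Lie\arity{2}$ is spanned by the unit so beads lie in $H$, the loop flip acts by $-S$, and bigon contractions generate (antipode-twisted, hence ordinary, modulo the $\id+S$ relation) commutators --- is exactly the standard computation behind $\homfunctor_1(H)\cong(\id-S)(H/[H,H])$. Your sign worry resolves itself: the relation must be $L(h)=-L(S(h))$, since the flip exchanges the two half-edges of the loop and so reverses the orientation while fixing the vertex, and this is corroborated by the statement itself ($\sym(V)$ with $S=(-1)^k$ on $\sym^k(V)$ must yield the odd symmetric powers).

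One caveat deserves flagging on the $\Omega_1$ step. This paper states two inequivalent definitions of $\mathcal S_2$: in the introduction it consists of $2$-vertex graphs with one vertex labeled by a tripod \emph{or} by an element of $H$, while in the section on Johnson cokernel obstructions only the tripod clause appears. Your valence count correctly shows no tripod can occur in a rank-$1$ bigon; but under the tripod-only definition that would give $\mathcal S_2\cap\cG_2=0$, hence $\Omega_1(H)=\cG_1=\operatorname{coker}(\id+S)$ on $H$ with no commutator quotient, which differs from $\homfunctor_1(H)$ for noncommutative $H$ such as $T(V)$. Your conclusion $\mathcal S_2=\cG_2$, and with it $\Omega_1(H)\cong\homfunctor_1(H)$, is the right one, but it rests on the introduction's definition (every bigon has an $H$-labeled vertex, so all of $\cG_2$ qualifies); you should say so explicitly rather than deriving it from the non-occurrence of tripods alone. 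The identifications in the two special cases --- $[H,H]=0$ and $S=(-1)^k$ for $\sym(V)$, and cyclic words with the order-reversing, $(-1)^k$-twisted involution assembling into the dihedral coinvariants $[V^{\otimes k}]_{D_{2k}}$ for $T(V)$ --- are as in \cite{CK} and match the stated targets of the Morita and Enomoto--Satoh traces.
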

These give the targets of the Morita and Enomoto-Satoh traces respectively.

\section{Rank 2}
In this section we calculate presentations for $\homfunctor_2(H)$ and $\Omega_2(H)$. $\homfunctor_2(\sym(V))$ was completely calculated in \cite{CKV1} while $\homfunctor_2(T(V))$ was partially analyzed in \cite{CK}. In particular,  $\homfunctor_2(U\mathsf L_{2}(V))$ was calculated in \cite{CK}, where $\mathsf L_{(2)}(V)\cong V\oplus \ext^2V$ is the free Lie algebra of nilpotency class $2$. Our calculations in this section allow us to completely calculate $\Omega_2(\sym(V))$ and $\Omega_2(U\mathsf L_{2}(V))$ as well, leading to new families of representations in the Johnson cokernel.

\subsection{Operators on $\overline{H^{\otimes 2}}$}
Note that $\Out(F_2)\cong \GL_2(\Z)$ acts on $\overline{H^{\otimes 2}}$. We introduce some standard matrices in $\GL_2(\Z)$ and explore how they act on $\overline{H^{\otimes 2}}$.
Let $$s=\begin{bmatrix}
0&1\\-1&0
\end{bmatrix},
t=\begin{bmatrix}
1&1\\0&1
\end{bmatrix},
\text{ and thus }
st=\begin{bmatrix}
0&1\\-1&-1
\end{bmatrix}.
$$
Let $\gamma=\rho_H(st)$. The operator $\gamma$ is induced by the automorphisms $x\mapsto y^{-1}, y\mapsto xy^{-1}$ and $x\mapsto y^{-1}, y\mapsto y^{-1}x$, which are equivalent as outer automorphisms. Hence, one can write $(a\otimes b)\cdot \gamma =S(b_{(1)})\otimes a S(b_{(2)})$ or $(a\otimes b)\cdot \gamma =S(b_{(1)})\otimes S(b_{(2)})a$, and indeed both are equivalent in $\overline{H^{\otimes 2}}$. 
Thus,
$$(a\otimes b)(\id +\gamma+\gamma^2)=a\otimes b+S(a_{(1)}) b\otimes S(a_{(2)})+ S(b_{(1)})\otimes S(b_{(2)})a.$$

Let $\delta=\begin{bmatrix}
-1&0\\0&1
\end{bmatrix}$, and $\tau=\begin{bmatrix}
0&1\\1&0
\end{bmatrix}.$

\subsection{Cohomology of $\GL_2(\Z)$}
In this section, we calculate a presentation for $H^1(\GL_2(\Z);M)$ where $M$ is a right $\GL_2(\Z)$-module. (That is, $M$ is a vector space with a $\GL_2(\Z)$ action.)
This is well-known and intimately related to the Eichler-Shimura isomorphism and modular symbols, but as it is relatively easy, we include a proof.
 Start with the fact that $\PSL_2(\Z)\cong \Z_2*\Z_3$ with the first factor generated by $\smat$ and the second factor generated by $\smat\tmat$. We also use the fact that $H^1(G;M)$ is isomorphic to the set of derivations of $G$ modulo inner derivations. That is functions $\phi\colon G\to M$ satisfying $\phi(gh)=\phi(h)+\phi(g)h$ modulo cocyles of the form $\phi_m(g)=m-mg$ for each $m\in M$. 

We begin with a well-known lemma.
\begin{lemma}\label{lem:order}
Let $X$ be an operator acting on the $\F$ vector space $M$ on the right which satisfies $X^n=\id$. Define $\xi_X=\id+X+\cdots+X^{n-1}$. Then $\ker \xi_X=\im(\id-X)$ and $\ker(\id-X)=\im(\xi_x)$.
\end{lemma}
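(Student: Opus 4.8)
The plan is to work entirely inside the commutative subalgebra $\F[X]\subseteq\End(M)$ generated by $X$, so that every operator in sight is a polynomial in $X$ and they all commute, and to reduce the four required inclusions to the single polynomial factorization $t^n-1=(t-1)(1+t+\cdots+t^{n-1})$ together with the fact that $\F$ has characteristic $0$, so that $n$ is invertible in $\F$. The asserted identities are then just the two statements $\ker\xi_X=\im(\id-X)$ and $\im\xi_X=\ker(\id-X)$, each of which I would prove by a pair of inclusions.

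First I would record the basic relation. Since $\xi_X X=X+X^2+\cdots+X^n=X+\cdots+X^{n-1}+\id=\xi_X$, multiplying out gives $(\id-X)\xi_X=\xi_X(\id-X)=\id-X^n=0$. Read as compositions of right operators, $(\id-X)\xi_X=0$ yields $\im(\id-X)\subseteq\ker\xi_X$, and $\xi_X(\id-X)=0$ yields $\im\xi_X\subseteq\ker(\id-X)$. These two \emph{easy} inclusions use nothing about the characteristic.

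For the reverse inclusions I would exploit that $n$ is invertible. Setting $P=\tfrac1n\xi_X$, the relation $\xi_X X=\xi_X$ gives $\xi_X^2=n\xi_X$, so $P^2=P$ is idempotent with $\im P=\im\xi_X$ and $\ker P=\ker\xi_X$. The inclusion $\ker(\id-X)\subseteq\im\xi_X$ is then immediate: if $mX=m$ then $mX^k=m$ for all $k$, hence $m\xi_X=nm$ and $m=mP\in\im\xi_X$.

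The main obstacle is the remaining inclusion $\ker\xi_X\subseteq\im(\id-X)$, which genuinely needs the coprimality of the two factors. Because $\xi(1)=n\neq 0$ in $\F$, the polynomials $t-1$ and $\xi(t)=1+t+\cdots+t^{n-1}$ are coprime in $\F[t]$, so B\'ezout supplies $u(t)(t-1)+v(t)\xi(t)=1$ for some $u,v\in\F[t]$. Substituting $X$ and acting on $m\in\ker\xi_X$, the second term satisfies $m\,v(X)\xi_X=m\,\xi_X\,v(X)=0$ since $v(X)$ commutes with $\xi_X$, leaving $m=m\,u(X)(X-\id)=\big(-m\,u(X)\big)(\id-X)\in\im(\id-X)$. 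Combining all four inclusions gives the two asserted equalities. The only subtlety to watch is the bookkeeping of left-versus-right composition, since $X$ acts on the right; but as every operator involved is a polynomial in $X$, commutativity makes this harmless.
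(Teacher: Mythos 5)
Your proof is correct; every step checks out. The easy inclusions follow from $(\id-X)\xi_X=\xi_X(\id-X)=\id-X^n=0$; the identity $\xi_X X=\xi_X$ gives $\xi_X^2=n\xi_X$, so $P=\tfrac1n\xi_X$ is a genuine idempotent (here characteristic $0$ enters), which handles $\ker(\id-X)\subseteq\im\xi_X$; and since $\xi(1)=n\neq0$ the polynomials $t-1$ and $\xi(t)$ are coprime, so your B\'ezout argument correctly yields $\ker\xi_X\subseteq\im(\id-X)$. Your route is genuinely different from the paper's. The paper proves the lemma by citing the vanishing of group (co)homology of the finite cyclic group in characteristic $0$: it identifies $1$-cocycles $\phi\colon\Z_n\to M$ with elements $\phi(X)\in\ker\xi_X$ and coboundaries with $\im(\id-X)$, so $H^1(\Z_n;M)=0$ forces $\ker\xi_X=\im(\id-X)$, and a dual argument with $H_1(\Z_n;M)=0$ gives $\ker(\id-X)=\im(\xi_X)$. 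That proof is a one-liner modulo known machinery and fits the paper's theme, since the lemma is immediately applied to compute $H^1$ of free products of cyclic groups; but the machinery it invokes (Maschke-type vanishing) is itself proved by exactly the averaging operator $P=\tfrac1n\xi_X$ you construct by hand. Your argument is thus more elementary and self-contained, and it makes the hypothesis on the ground field explicit: you only need $n$ invertible in $\F$, i.e.\ $\operatorname{char}\F\nmid n$, which is slightly more general than characteristic $0$. One could even streamline your B\'ezout step: $\ker\xi_X=\ker P=\im(\id-P)$, and $\id-P=\tfrac1n\sum_{k=0}^{n-1}(\id-X^k)$ visibly factors through $\id-X$, avoiding the appeal to coprimality altogether. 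Your caution about left-versus-right composition is well placed but, as you note, moot: all operators lie in the commutative algebra $\F[X]\subseteq\End(M)$.
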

\begin{proof}
A slick way to prove $\ker \xi_X=\im(\id-X)$ is to observe that $H^1(\mathbb Z_n;M)=0$ since $M$ is over a field of characteristic $0$. This means that all $1$ cocycles are inner derivations. In our case the $1$ cocycles $\phi\colon \mathbb Z_n\to M$ are identified with the image on a generator $\phi(X)\in M$, and because $X^n=\id$, must satisfy $\phi(X)\in\ker\xi_X$. Thus the cocycles are exactly $\ker\xi$. Similarly the coboundaries (or inner derivations) 
are canonically identified with $\im(\id-X)$.

A similar argument uses $H_1(\mathbb Z_n;M)=0$ to show $\ker(\id-X)=\im(\xi_x)$.
\end{proof}
\begin{proposition} 
Let $G=\Z_k*\Z_\ell=\langle X,Y\,|\, X^k=Y^\ell=1\rangle$, and 
let $M$ be a right $G$-module. 
 We have an isomorphism
$H^1(\Z_k*\Z_\ell;M)\cong \dfrac{M}{\langle \xi_X,\xi_Y\rangle},$
where the notation $\langle r_1,r_2,\ldots r_k\rangle$ signifies the subspace $Mr_1+\cdots +Mr_k$.
\end{proposition}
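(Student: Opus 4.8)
The plan is to compute $H^1(G;M)$ straight from its description as derivations modulo principal (inner) derivations, exploiting crucially that $G$ is a free product, so that a derivation may be prescribed on the two free factors $\langle X\rangle$ and $\langle Y\rangle$ independently. I recall that a derivation $\phi\colon G\to M$ is determined by its values on the generators $X,Y$, and that the single-generator analysis is exactly what Lemma~\ref{lem:order} controls.

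First I would pin down the cocycle space $Z^1(G;M)$. The clean way is to encode a derivation $\phi$ as a section $g\mapsto(\phi(g),g)$ of the projection $M\rtimes G\to G$, with multiplication $(a,g)(b,h)=(ah+b,gh)$, which reproduces the convention $\phi(gh)=\phi(g)h+\phi(h)$. By the universal property of the free product, such a section amounts to a pair of sections over $\langle X\rangle\cong\Z_k$ and $\langle Y\rangle\cong\Z_\ell$, i.e. a pair $(\phi(X),\phi(Y))$ subject only to the relations $X^k=1$, $Y^\ell=1$. Since $(\phi(X),X)^k=(\phi(X)\xi_X,\,X^k)$, the relation $X^k=1$ is equivalent to $\phi(X)\in\ker\xi_X$, and likewise for $Y$. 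Hence $Z^1(G;M)\cong\ker\xi_X\oplus\ker\xi_Y$. By Lemma~\ref{lem:order}, $\ker\xi_X=\im(\id-X)=M(\id-X)$, and the surjection $m\mapsto m(\id-X)$ from $M$ onto $M(\id-X)$ has kernel $\ker(\id-X)=\im\xi_X=M\xi_X$; thus $\ker\xi_X\cong M/M\xi_X$ and similarly $\ker\xi_Y\cong M/M\xi_Y$, giving $Z^1(G;M)\cong M/M\xi_X\oplus M/M\xi_Y$.

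Next I would identify the coboundaries. The principal derivation $\phi_m(g)=m-mg=m(\id-g)$ satisfies $(\phi_m(X),\phi_m(Y))=(m(\id-X),m(\id-Y))$, which under the isomorphism above corresponds to $(\bar m,\bar m)$, the image of the diagonal map $M\to M/M\xi_X\oplus M/M\xi_Y$. Therefore
$$H^1(G;M)\cong\bigl(M/M\xi_X\oplus M/M\xi_Y\bigr)\big/\{(\bar m,\bar m):m\in M\}.$$
Finally I would compute this quotient via the difference map $(\bar a,\bar b)\mapsto\overline{a-b}\in M/(M\xi_X+M\xi_Y)$, which is well-defined, surjective, and kills the diagonal; a short check gives that its kernel is exactly the diagonal, since if $a-b=u\xi_X+v\xi_Y$ then $(\bar a,\bar b)=(\overline{b+v\xi_Y},\overline{b+v\xi_Y})$. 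This yields the claimed isomorphism $H^1(G;M)\cong M/(M\xi_X+M\xi_Y)=M/\langle\xi_X,\xi_Y\rangle$.

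The main obstacle is bookkeeping rather than depth, and it concentrates in two places: (i) justifying that cocycles split as the direct sum $\ker\xi_X\oplus\ker\xi_Y$ over the two free factors — this is where the free-product structure is indispensable and where one must track the right-module conventions in the semidirect product correctly — and (ii) the final cokernel-of-the-diagonal computation, which is where the symmetric form $M/\langle\xi_X,\xi_Y\rangle$ genuinely emerges. Everything in between is a direct application of Lemma~\ref{lem:order}.
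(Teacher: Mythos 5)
Your proof is correct and follows essentially the same route as the paper: both compute $H^1$ as derivations modulo inner derivations, identify a cocycle with the pair $(\phi(X),\phi(Y))$ subject to $\phi(X)\xi_X=0$, $\phi(Y)\xi_Y=0$, and invoke Lemma~\ref{lem:order} to trade kernels for images, with your final difference-map computation of the cokernel of the diagonal being a repackaging of the paper's map $z\mapsto(0,z(\id-Y))$ and its kernel calculation. Your only genuine addition is the explicit semidirect-product/universal-property justification that $Z^1(G;M)\cong\ker\xi_X\oplus\ker\xi_Y$, which the paper asserts without proof.
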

\begin{proof}
%Let $\xi_X= \id+X+\cdots+X^{k-1}$ and $\xi_Y=\id+Y+\cdots Y^{\ell-1}$.
Using the definition of $H^1(G;M)$ as the space of derivations modulo inner derivations, we see
\begin{align*}H^1(\Z_k*\Z_\ell;M)&\cong 
\dfrac{\{(\phi(X),\phi(Y)\,|\,\phi(X)\xi_X=0, \phi(Y)\xi_Y=0\}}
{\{(m-mX,m- mY)\}}\\
&\cong\dfrac{\{(a,b)\,|\,a\xi_X=0, b\xi_Y=0\}}
{\{(m-mX,m-mY)\}}\\
&\cong\dfrac{\{(a(\id-X),b(\id-Y))\,|\, a,b\in M\}}
{\{(m(\id-X),m(\id-Y)\,|\, m\in M)\}}
\end{align*}
Define a map $M\to \dfrac{\{(a(\id-X),b(\id-Y))\,|\, a,b\in M\}}
{\{(m(\id-X),m(\id-Y)\,|\, m\in M)\}}$ by $z\mapsto (0,z(\id-Y))$. It is straightforward to see that the kernel of this map is $\im\xi_X+\im\xi_Y$, completing the proof.
\end{proof}
\begin{corollary}
Let $M$ be a right $\PSL_2(\Z)$ module. Then
$$H^1(\PSL_2(\Z);M)\cong \dfrac{M}{\langle1+s,1+(st)+(st)^2\rangle}.$$
\end{corollary}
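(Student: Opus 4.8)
The plan is to obtain this as an immediate specialization of the preceding Proposition. First I would recall the structure theorem $\PSL_2(\Z)\cong\Z_2*\Z_3$, already noted above, under which the $\Z_2$ factor is generated by $s$ and the $\Z_3$ factor by $st$. To make the application airtight I would confirm the orders by a one-line matrix computation: one has $s^2=-I$ and $(st)^3=I$, so that after passing to $\PSL_2(\Z)=\SL_2(\Z)/\{\pm I\}$ the images of $s$ and $st$ have orders exactly $2$ and $3$ respectively, and these two elements freely generate the group.

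Next I would set $G=\langle X,Y\mid X^2=Y^3=1\rangle$ with $X=s$ and $Y=st$, which matches the hypotheses of the Proposition with $k=2$ and $\ell=3$. Under these identifications the elements $\xi_X=\id+X+\cdots+X^{k-1}$ and $\xi_Y=\id+Y+\cdots+Y^{\ell-1}$ specialize to $\xi_X=\id+s=1+s$ and $\xi_Y=\id+(st)+(st)^2=1+(st)+(st)^2$. Feeding these into the isomorphism $H^1(\Z_k*\Z_\ell;M)\cong M/\langle\xi_X,\xi_Y\rangle$ furnished by the Proposition yields precisely $H^1(\PSL_2(\Z);M)\cong M/\langle 1+s,\,1+(st)+(st)^2\rangle$, which is the claim.

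Because the Corollary is a pure specialization, there is essentially no genuine obstacle: the real content lives in the Proposition (and, upstream, in Lemma~\ref{lem:order}). The only points demanding any care are purely organizational --- verifying that $s$ and $st$ are the correct order-$2$ and order-$3$ generators of the free product, and translating the abstract generators $X,Y$ of the Proposition into the concrete matrices $s$ and $st$ so that $\xi_X$ and $\xi_Y$ come out exactly as the two stated relations.
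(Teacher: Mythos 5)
Your proposal is correct and coincides with the paper's own (implicit) argument: the paper states this corollary without proof precisely because it is the specialization of the preceding proposition to $\PSL_2(\Z)\cong\Z_2*\Z_3$ with $X=\smat$, $Y=\smat\tmat$, $k=2$, $\ell=3$, so that $\xi_X=1+\smat$ and $\xi_Y=1+\smat\tmat+(\smat\tmat)^2$. Your added verification that $\smat^2=-I$ and $(\smat\tmat)^3=I$, hence orders $2$ and $3$ in $\PSL_2(\Z)$, is a harmless and correct confirmation of the structure fact the paper already cites.
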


\begin{corollary}
Suppose that $M$ is a right $\SL_2(\Z)$-module. If $-I_2$ acts as the identity on $M$, then 
  $M$ is a $\PSL_2(\Z)$-module under the induced action and
there is an isomorphism
$$H^1(\SL_2(\Z);M)\cong H^1(\PSL_2(\Z);M)$$
%\dfrac{M}{\langle 1+\smat\tmat+(\smat\tmat)^2,1+\smat\rangle}.$$
\end{corollary}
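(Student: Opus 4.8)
The first assertion is immediate: saying that $-I_2$ acts as the identity means precisely that the $\SL_2(\Z)$-action annihilates the central subgroup $\{\pm I_2\}$, so it factors through the quotient $\PSL_2(\Z)=\SL_2(\Z)/\{\pm I_2\}$ and endows $M$ with a $\PSL_2(\Z)$-module structure. For the cohomological statement the plan is to work directly with the description of $H^1$ as derivations modulo inner derivations used in the preceding proposition, and to exploit the central extension
$$1\to \{\pm I_2\}\to \SL_2(\Z)\xrightarrow{\ \pi\ } \PSL_2(\Z)\to 1.$$

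The key step is to show that every derivation $\phi\colon \SL_2(\Z)\to M$ vanishes on $-I_2$ and hence descends to $\PSL_2(\Z)$. From the cocycle identity $\phi(gh)=\phi(h)+\phi(g)h$ one first gets $\phi(I_2)=0$, and applying it to $(-I_2)^2=I_2$ together with the hypothesis that $-I_2$ acts trivially yields $0=\phi(I_2)=2\phi(-I_2)$. Since $\F$ has characteristic $0$ this forces $\phi(-I_2)=0$. Centrality then gives, for every $g$, $\phi(-g)=\phi((-I_2)g)=\phi(g)+\phi(-I_2)g=\phi(g)$, so $\phi$ is constant on the cosets of $\{\pm I_2\}$ and descends to a well-defined map $\bar\phi\colon \PSL_2(\Z)\to M$, which one checks is again a derivation for the induced action. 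Conversely, any derivation of $\PSL_2(\Z)$ inflates along $\pi$ to one of $\SL_2(\Z)$, and I expect these two operations to be mutually inverse bijections on the derivation spaces.

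It then remains to match the inner derivations. An inner derivation $\phi_m(g)=m-mg$ of $\SL_2(\Z)$ satisfies $\phi_m(-I_2)=m-m(-I_2)=0$, so it descends to the inner derivation $\bar g\mapsto m-m\bar g$ of $\PSL_2(\Z)$, and every inner derivation of $\PSL_2(\Z)$ arises in this way; hence the correspondence above carries inner derivations to inner derivations bijectively. Passing to quotients would give the desired isomorphism $H^1(\SL_2(\Z);M)\cong H^1(\PSL_2(\Z);M)$. The only place where anything can go wrong is the division by $2$ used to conclude $\phi(-I_2)=0$, so the main (indeed only) obstacle is ensuring the characteristic $0$ hypothesis is in force; everything else is bookkeeping on cocycles. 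I would remark in passing that more is true: since $|\{\pm I_2\}|=2$ is invertible in $\F$, the Lyndon--Hochschild--Serre spectral sequence of the central extension collapses and inflation yields $H^n(\SL_2(\Z);M)\cong H^n(\PSL_2(\Z);M)$ in every degree, but for the case $n=1$ needed here the elementary derivation argument above is cleaner and fits the computational framework already set up.
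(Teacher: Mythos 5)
Your proof is correct, but it takes a genuinely different route from the paper's. The paper disposes of the statement in one line via the Hochschild--Serre spectral sequence of the central extension $1\to\Z_2\to\SL_2(\Z)\to\PSL_2(\Z)\to 1$: in characteristic $0$ one has $H^q(\Z_2;M)=0$ for $q>0$, so $H^1(\SL_2(\Z);M)\cong H^1(\PSL_2(\Z);H^0(\Z_2;M))\cong H^1(\PSL_2(\Z);M^{\Z_2})$, and $M^{\Z_2}=M$ by hypothesis. You instead argue entirely at the level of derivations modulo inner derivations: from $\phi(I_2)=0$ and the cocycle identity applied to $(-I_2)^2=I_2$ you get $2\phi(-I_2)=0$, divide by $2$, conclude $\phi$ is constant on cosets of $\{\pm I_2\}$ and descends, and check that inflation and descent are mutually inverse and match inner derivations on both sides. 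Both arguments invoke invertibility of $2$ in exactly one place---yours to divide by $2$, the paper's to kill the higher cohomology of $\Z_2$---so they are equally robust; yours is more elementary and stays inside the derivations-mod-inner-derivations framework the paper has already set up for its $H^1$ computations, while the paper's is shorter and, as your own closing remark observes (that remark is in fact precisely the paper's proof, and is also how the paper later handles the analogous reduction for $\GL_3(\Z)$ modules on which $-I_3$ acts nontrivially), generalizes at no cost to all cohomological degrees. One small point of rigor: the phrase ``I expect these two operations to be mutually inverse bijections'' should be replaced by the one-line verification---descent followed by inflation visibly returns the original derivation, and conversely---since nothing there requires any expectation.
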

\begin{proof}
Using the Hochschild-Serre spectral sequence, $$H^1(\SL_2(\Z);M)\cong H^1(\PSL_2(\Z);H^0(\Z_2;M))\cong H^1(\PSL_2(\Z);M^{\Z_2}).$$ By hypothesis $M^{\Z_2}=M$.
\end{proof}

\begin{proposition}\label{prop:glpres}
Suppose that $M$ is a right $\GL_2(\Z)$-module. We have an isomorphism
$$H^1(\GL_2(\Z);M) \cong  \dfrac{M}{\langle 1+\smat\tmat+(\smat\tmat)^2,(1+\smat),(1-\tau)\rangle}.$$  
\end{proposition}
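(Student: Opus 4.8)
\emph{Overview of the strategy.} The plan is to compute $H^1(\GL_2(\Z);M)$ by two successive Hochschild--Serre reductions in characteristic $0$ --- exactly the device already used above to pass from $\SL_2(\Z)$ to $\PSL_2(\Z)$ --- peeling off first the central subgroup $\{\pm I_2\}$ and then the determinant quotient $\Z/2$, until we land on $\PSL_2(\Z)$, for which the presentation of $H^1$ is already in hand. The only genuinely new input will be the residual action of the determinant involution, represented by $\tau$, on that presentation.

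\emph{The reductions.} Since $-I_2=\smat^2$ is central, $M^{-I_2}:=\ker(\id-\rho_H(-I_2))$ is a $\GL_2(\Z)$-submodule on which $-I_2$ acts trivially, hence descends to a $\PSL_2(\Z)$-module; on it $\smat$ is an involution and $\smat\tmat$ has order $3$. The central extension $1\to\{\pm I_2\}\to\SL_2(\Z)\to\PSL_2(\Z)\to1$ gives, just as in the $\SL_2$--$\PSL_2$ corollary, $H^1(\SL_2(\Z);M)\cong H^1(\PSL_2(\Z);M^{-I_2})$, and the corollary computing $H^1(\PSL_2(\Z);-)$ identifies this with
$$
N:=\frac{M^{-I_2}}{\langle 1+\smat,\ 1+\smat\tmat+(\smat\tmat)^2\rangle}.
$$
On the other hand $\det\colon\GL_2(\Z)\to\{\pm1\}$ is split by $\tau$ (note $\tau^2=1$, $\det\tau=-1$), so $\GL_2(\Z)=\SL_2(\Z)\rtimes\langle\tau\rangle$, and since $H^{>0}(\Z/2;-)=0$ over $\F$ the spectral sequence collapses to $H^1(\GL_2(\Z);M)\cong H^1(\SL_2(\Z);M)^{\langle\tau\rangle}$; by naturality of the two isomorphisms above this is $\tau$-equivariant, so $H^1(\GL_2(\Z);M)\cong N^{\langle\tau\rangle}$.

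\emph{The crux: the $\tau$-action.} Everything now rests on computing how $\tau$ acts on $N$. Conjugation by $\tau$ acts on $\PSL_2(\Z)=\langle\smat\rangle*\langle\smat\tmat\rangle\cong\Z/2*\Z/3$ by inverting both free factors: a direct matrix computation gives $\tau\smat\tau^{-1}=\smat^{-1}$ and $\tau(\smat\tmat)\tau^{-1}=(\smat\tmat)^{-1}$. Feeding this into the derivation description used above --- writing a cocycle $\phi$ as $\phi(\smat)=a(\id-\smat)$ and $\phi(\smat\tmat)=b(\id-\smat\tmat)$, so that its class is recorded by $b-a$ modulo $\langle\xi_\smat,\xi_{\smat\tmat}\rangle$ --- one checks, using the identity $\phi(g^{-1})=-\phi(g)g^{-1}$ for the inverted $\Z/3$-factor, that $\tau$ sends this class to $(b-a)\tau$. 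Thus $\tau$ acts on $N$ simply as the operator $\rho_H(\tau)$, with no extraneous signs. Passing from invariants to coinvariants in characteristic $0$ (Lemma~\ref{lem:order} for the order-$2$ operator $\tau$) gives
$$
N^{\langle\tau\rangle}\cong \frac{M^{-I_2}}{\langle 1+\smat,\ 1+\smat\tmat+(\smat\tmat)^2,\ 1-\tau\rangle}.
$$
Finally, to replace $M^{-I_2}$ by $M$ I would observe that $1-\rho_H(-I_2)=1-\smat^2=(1-\smat)(1+\smat)$, so $M\bigl(1-\rho_H(-I_2)\bigr)\subseteq M(1+\smat)$; hence the quotient of $M$ by the three listed subspaces already forces $-I_2$ to act trivially, factors through the coinvariants $M_{-I_2}\cong M^{-I_2}$, and matches the displayed quotient, yielding the claimed presentation.

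I expect the main obstacle to be exactly the middle step: pinning down the induced $\tau$-action on the cocycle presentation. The two reductions and the final $M^{-I_2}$-versus-$M$ comparison are formal, but the action computation requires care with the cocycle identity for inverse elements and with the right-module conventions, since a priori $\tau$ could act with a twist coming from the inversion of the $\Z/3$-factor; the content of the calculation is that, after passing to the quotient by the relations, this twist disappears and the action is literally $\rho_H(\tau)$.
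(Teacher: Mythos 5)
Your proof is correct and follows essentially the same route as the paper: two Hochschild--Serre reductions (through the central $\pm I_2$ and the determinant quotient split by $\tau$), then identification of $\tau$-invariants with coinvariants in characteristic $0$, together with the observation that the relation $1+\smat$ already kills the $(-1)$-eigenspace of $-I_2$ because $1-\smat^2=(1-\smat)(1+\smat)$. The only place you go beyond the paper's proof is the explicit cocycle computation showing that $\tau$ acts on $M^{-I_2}/\langle 1+\smat,\ 1+\smat\tmat+(\smat\tmat)^2\rangle$ simply as right multiplication by $\tau$ (via $\tau\smat\tau^{-1}=\smat^{-1}$, $\tau(\smat\tmat)\tau^{-1}=(\smat\tmat)^{-1}$, and $\phi(g^{-1})=-\phi(g)g^{-1}$), a detail the paper asserts without verification (``where the $\Z_2$ acts via the matrix $\tau$''); your computation of this action checks out.
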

\begin{proof}
Let $M=M^+\oplus M^-$ be the decomposition into eigenspaces of the action by $-I_2$. Since $-I_2$ is in the center of $\GL_2(\Z)$, these are also $\GL_2(\Z)$ representations.
The Hochschild-Serre spectral sequence yields $$H^1(\GL_2(\Z);M)\cong H^0(\Z_2;H^1(\SL_2(\Z);M))\cong [H^1(\SL_2(\Z);M)]^{\Z_2}
=H^1(\SL_2(\Z);M^+)
$$ where the $\mathbb Z_2$ acts via the matrix $\tau$. Now identify the $\Z_2$ invariants with coinvariants, which will kill $M^-$, to complete the proof.
\end{proof}

\begin{corollary}\label{cor:detrep}
Let $(det)$ be the $1$ dimensional representation of $\GL_2(\Z)$ given by the determinant, and suppose $M $ is a right $\GL_2(\Z)$-module. Then $$H^1(\GL_2(\Z);M\otimes(det)) \cong  \dfrac{M}{\langle 1+\smat\tmat+(\smat\tmat)^2,(1+\smat),(1+\tau)\rangle}.$$ 
\end{corollary}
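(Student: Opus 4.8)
The plan is to deduce this directly from Proposition~\ref{prop:glpres} by applying that result to the twisted module $M\otimes(det)$ in place of $M$. As a vector space $M\otimes(det)$ is canonically identified with $M$, and under this identification the action of a group element $g\in\GL_2(\Z)$ becomes $m\mapsto \det(g)\,(m\cdot g)$, where $m\cdot g$ denotes the original action on $M$. Consequently, every operator appearing in the presentation of Proposition~\ref{prop:glpres} gets rescaled by the determinant of the corresponding matrix, and the only work left is to track these determinant factors.

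First I would record the three relevant determinants: $\det(\smat)=1$, $\det(\smat\tmat)=1$, and $\det(\tau)=-1$. Since $\det(\smat)=\det(\smat\tmat)=1$, the operators $1+\smat\tmat+(\smat\tmat)^2$ and $1+\smat$ act identically on $M\otimes(det)$ and on $M$, so the first two relations in the quotient are left unchanged by the twist. For the last relation, since $\det(\tau)=-1$, the twisted action of $\tau$ sends $m\mapsto -(m\cdot\tau)$, so the operator $1-\tau$ coming from Proposition~\ref{prop:glpres} acts on $M\otimes(det)$ as $m\mapsto m+(m\cdot\tau)$, which is the operator $1+\tau$ expressed in terms of the original action on $M$. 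Substituting these three translated relations into the presentation of Proposition~\ref{prop:glpres} yields exactly
$$H^1(\GL_2(\Z);M\otimes(det))\cong \frac{M}{\langle 1+\smat\tmat+(\smat\tmat)^2,(1+\smat),(1+\tau)\rangle},$$
as claimed.

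The computation is elementary, so there is no serious obstacle; the only point requiring care is the consistent application of the sign/determinant twist to each generator. In particular one must check that the twist affects only $\tau$, the orientation-reversing generator, while leaving the $\PSL_2(\Z)$ relations (those involving $\smat$ and $\smat\tmat$) intact. This is exactly what one expects, since $\det$ restricts trivially to $\SL_2(\Z)$, so the twist should register only on a coset representative of the nontrivial class in $\GL_2(\Z)/\SL_2(\Z)$.
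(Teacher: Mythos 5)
Your proposal is correct and is exactly the argument the paper intends: the corollary is stated without proof as an immediate consequence of Proposition~\ref{prop:glpres}, obtained by applying that presentation to $M\otimes(det)$ and tracking the determinant twist, which is trivial on $\smat$ and $\smat\tmat$ (both in $\SL_2(\Z)$) and converts $1-\tau$ into $1+\tau$ since $\det(\tau)=-1$. Your sign bookkeeping and the closing remark about the twist registering only on the nontrivial coset of $\SL_2(\Z)$ in $\GL_2(\Z)$ are both accurate.
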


\begin{remark}[Relation between twisted homology and cohomology]\label{rem:dual}
Suppose $G$ is a group and $M$ is a finite dimensional $\F[G]$-module over a field of characteristic $0$. Then $H^*(G,M)\cong (H_*(G,M^*))^*$ where the $*$ superscript denotes the vector space dual.

In particular, $H_1(\GL_2(\Z);M\otimes(det))$ can be identified with the subspace of $M$ consisting of all $m$ satisfying 
$m(1+\smat\tmat+(\smat\tmat)^2)=m(1+\smat)=m(1+\tau)=0$. 
\end{remark}

\subsection{Cohomology of $\GL_2(\Z)$ with coefficients in irreducible $\GL_2(\F)$ representations} 
For every partition $(a,b)$, $a\geq b$, there is an irreducible $\GL_2(\F)$ representation $\Psi_{(a,b)}$. In terms of Schur functors, it is defined as $\Psi_{(a,b)}=\SF{(a,b)}(\F^2)$. All irreducible representations are of the form $\Psi_{(a,b)}\otimes (\det)^k$. Moreover $\Psi_{(a+k,b+k)}\cong \Psi_{(a,b)}\otimes (\det)^{k}$. Letting $\Phi_g=\Psi_{(g,0)}\cong\sym^g(\F^2)$, it follows that
all representations of $\GL_2(\F)$ are of the form $\Phi_g\otimes(\det)^k$ for some $g$ and $k$.

\begin{proposition}
The cohomology group
$H^1(\GL_2(\Z);\Phi_g\otimes (\det)^k)$ is trivial if $g$ is odd, is equal to $\mathcal S_{g+2}$ if $g,k$ are even and is equal to $\mathcal M_{g+2}$ if $g$ is even but $k$ is odd.
\end{proposition}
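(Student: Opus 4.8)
The plan is to substitute $M=\Phi_g\otimes(\det)^k$ into the presentation of Proposition~\ref{prop:glpres} and then recognize the resulting quotient of $\Phi_g$ as the Eichler--Shimura space of period polynomials, tracking carefully how the reflection $\tau=\begin{bmatrix}0&1\\1&0\end{bmatrix}$ acts. First I would record the module actions. Since $\smat$ and $\smat\tmat$ lie in $\SL_2(\Z)$ they have determinant $1$, so tensoring with $(\det)^k$ does not change their action on $\Phi_g$; only $\tau$, with $\det\tau=-1$, is affected, acting on $\Phi_g\otimes(\det)^k$ as $(-1)^k$ times its action on $\Phi_g$ (this is exactly the sign change from Proposition~\ref{prop:glpres} to Corollary~\ref{cor:detrep}). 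When $g$ is odd, $-I_2$ acts on $M$ by $(-1)^g\cdot 1=-1$, so the eigenspace $M^+$ appearing in the proof of Proposition~\ref{prop:glpres} vanishes and $H^1(\GL_2(\Z);M)=0$, giving the first case. For $g$ even, Proposition~\ref{prop:glpres} gives
$$H^1(\GL_2(\Z);\Phi_g\otimes(\det)^k)\cong \Phi_g\big/\big\langle 1+\smat\tmat+(\smat\tmat)^2,\ 1+\smat,\ 1-(-1)^k\tau\big\rangle.$$

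Realizing $\Phi_g=\sym^g(\F^2)$ as binary forms, the first two relations identify the quotient with $H^1(\PSL_2(\Z);\sym^g(\F^2))$, which by the period-polynomial form of the Eichler--Shimura isomorphism has total dimension $2s_{g+2}+1$ and carries the even/odd involution $\epsilon\colon P(X)\mapsto P(-X)$ induced by $\delta=\begin{bmatrix}-1&0\\0&1\end{bmatrix}$. Here the even part ($\epsilon=+1$) has dimension $s_{g+2}+1$, since it contains the even cusp-form periods together with the extra Eisenstein class represented by $X^g-1$, while the odd part ($\epsilon=-1$) has dimension $s_{g+2}$; abstractly these pieces are $\cong\mathcal M_{g+2}$ and $\cong\mathcal S_{g+2}$ respectively.

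The crucial step is to relate the reflection $\tau$, which acts on $\Phi_g$ by the reversal $R\colon X^a\mapsto X^{g-a}$, to the involution $\epsilon$. On the quotient by $\im(1+\smat)$ the operator $\smat$ acts as $-\id$ (for $g$ even $\smat^2=-I_2$ acts trivially, so $\im(1+\smat)$ is $\smat$-stable and $\overline{\smat v}=-\overline v$). The matrix identity $\smat=\tau\delta$ shows that, as operators on $\Phi_g$, $\smat=R\,\epsilon$; combined with $\smat=-\id$ on the quotient this yields $R=-\epsilon$. Thus reversal is the \emph{negative} of the even/odd involution, so the $R=+1$ eigenspace is the odd part $\cong\mathcal S_{g+2}$ and the $R=-1$ eigenspace is the even part $\cong\mathcal M_{g+2}$.

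It remains to impose the last relation $1-(-1)^k\tau=1-(-1)^kR$. For $k$ even we quotient by $\im(1-R)$, leaving the $R=+1$ eigenspace $\cong\mathcal S_{g+2}$; for $k$ odd we quotient by $\im(1+R)$, leaving the $R=-1$ eigenspace $\cong\mathcal M_{g+2}$. This is the claimed answer, and it is consistent in the degenerate case $g=0$, where $\mathcal S_2=\mathcal M_2=0$ and $H^1(\PSL_2(\Z);\F)=0$. I expect the main obstacle to be precisely the sign bookkeeping of the third paragraph: establishing that reversal $\tau=R$ equals $-\epsilon$ rather than $+\epsilon$ on the period-polynomial quotient. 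This sign reversal is what places the Eisenstein class -- the extra dimension of $\mathcal M_{g+2}$ over $\mathcal S_{g+2}$ -- on the $k$-odd side rather than the $k$-even side, contrary to the naive guess, and so it must be derived from the relation $\smat=\tau\delta$ together with the Eichler--Shimura identification, not from a dimension count alone.
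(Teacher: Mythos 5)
Your proof is correct, and it follows the same skeleton as the paper's -- the Eichler--Shimura isomorphism $H^1(\SL_2(\Z);\Phi_g)\cong\mathcal M_{g+2}\oplus\mathcal S_{g+2}$ followed by taking invariants under the extra $\Z_2$ -- but where the paper simply cites \cite{CKV1} for the explicit invariant computation, you carry out the eigenvalue bookkeeping internally from Proposition~\ref{prop:glpres}: since $\smat$ acts as $-\id$ on the quotient by $\im(1+\smat)$, the identity $\smat=\tau\delta$ forces your reversal operator $R$ (the action of $\tau$) to equal $-\epsilon$ (minus the action of $\delta$), which is what places $\mathcal M_{g+2}$ on the $k$-odd side. (The facts that $R$ and $\epsilon$ commute and that they descend to the intermediate quotient both follow from $g$ being even: $\delta\tau=-\smat$, $\tau\smat\tau=-\smat$, and $\tau(\smat\tmat)\tau$ acts as $(\smat\tmat)^{-1}$, so the subspace $\im(1+\smat)+\im(1+\smat\tmat+(\smat\tmat)^2)$ is $\tau$- and $\delta$-stable; you pass over this quickly but it is true and easy.) Your sign is right, and it can be confirmed in the minimal case $g=2$: there $\Phi_2/\langle 1+\smat,\,1+\smat\tmat+(\smat\tmat)^2\rangle$ is one-dimensional, spanned by the class of $X^2$ with $Y^2\equiv -X^2$ and $XY\equiv 0$, on which $\tau$ acts by $-1$ and $\delta$ by $+1$; so the Eisenstein class is indeed $\tau$-antiinvariant and survives exactly when $k$ is odd, matching $\mathcal M_4$. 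What your route buys is a self-contained replacement for the citation together with an explanation of the counterintuitive parity assignment; what it still imports is classical period-polynomial data -- that the quotient has total dimension $2s_{g+2}+1$, with $\epsilon$-even part of dimension $s_{g+2}+1$ containing the Eisenstein class $X^g-1$ and odd part of dimension $s_{g+2}$ -- a citation of essentially the same weight as the paper's appeal to \cite{CKV1}, so the two arguments are of comparable rigor.
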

\begin{proof}
The Eichler-Shimura isomorphism implies that $H^1(\SL_2(\Z);\Phi_g)\cong\mathcal M_{g+2}\oplus\mathcal S_{g+2}$. Passing to $\GL_2(\Z)$ involves taking $\mathbb Z_2$-invariants of this space. This is done explicitly in \cite{CKV1}, and gives the result stated in the proposition.
\end{proof}

\subsection{Presentations for $\homfunctor_2(H)$ and $\Omega_2(H)$}
We begin with the statements of the theorems. The proofs are found in section~\ref{sec:proofsrank2}.
\begin{theorem}\label{thm:rank2pres}
There is an isomorphism
$$\homfunctor_2(H)\cong \frac{\overline{H^{\otimes 2}}}{\langle\id-\tau,\id+\delta,\id+\gamma+\gamma^2\rangle }.$$
I.e.  $\homfunctor_2(H)$
is the quotient of $\overline{H^{\otimes 2}}$ by 
\begin{enumerate}
\item  $a\otimes b=b\otimes a$%=S(a)\otimes S(b)$
\item  $a\otimes b=-S(a)\otimes b$
\item   $a\otimes b+S(a_{(1)}) b\otimes S(a_{(2)})+ S(b_{(1)})\otimes S(b_{(2)}) a=0.$ 
\end{enumerate}
\end{theorem}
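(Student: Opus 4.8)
The plan is to reduce the statement entirely to the two results already in hand: the homological translation of Theorem~\ref{thm:HReduced} and the explicit group-cohomology presentation of Proposition~\ref{prop:glpres}. By definition $\homfunctor_2(H)=H_1(\overline{\cG_{H\Lie}^{(2)}})$, so I would first apply Theorem~\ref{thm:HReduced} with $n=2$ and $k=1$. Since $2n-2-k=4-2-1=1$ and $\Out(F_2)\cong\GL_2(\Z)$, this yields the identification
$$
\homfunctor_2(H)\cong H^1(\Out(F_2);\overline{H^{\otimes 2}})\cong H^1(\GL_2(\Z);\overline{H^{\otimes 2}}).
$$
At this point all the geometric content is gone and the problem is purely one of rewriting an abstract cohomology presentation as the three concrete relations in the statement.

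Next I would feed $M=\overline{H^{\otimes 2}}$ into Proposition~\ref{prop:glpres}, obtaining
$$
\homfunctor_2(H)\cong\frac{\overline{H^{\otimes 2}}}{\langle 1+\smat\tmat+(\smat\tmat)^2,\,1+\smat,\,1-\tau\rangle}.
$$
It then remains to match the three relator operators against $\id+\gamma+\gamma^2$, $\id+\delta$, and $\id-\tau$. Two matches are immediate: by definition $\gamma=\rho_H(\smat\tmat)$, so $\id+\gamma+\gamma^2$ is literally $1+\smat\tmat+(\smat\tmat)^2$, and $1-\tau$ is literally $\id-\tau$. The only relator needing an argument is $\id+\delta$ versus $1+\smat$. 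Here I would use the matrix identity $\delta=\tau\smat$ in $\GL_2(\Z)$. Because the relation $\id-\tau$ forces $\tau$ to act as the identity in the quotient (and, as a safeguard against convention issues, $-I_2=\smat^2$ also acts trivially once $1+\smat$ is imposed), we get $m\delta=(m\tau)\smat\equiv m\smat$, hence $m(\id+\delta)\equiv m(\id+\smat)$. Therefore the subspaces $M(\id+\delta)+M(\id-\tau)$ and $M(\id+\smat)+M(\id-\tau)$ coincide, and the two quotients agree.

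Finally I would unwind the operators into the concrete relations (1)--(3). The transposition $\tau$ acts by $a\otimes b\mapsto b\otimes a$, giving (1); the matrix $\delta=\operatorname{diag}(-1,1)$ corresponds to the automorphism $x\mapsto x^{-1},\,y\mapsto y$, so that $(a\otimes b)\cdot\delta=S(a)\otimes b$, giving (2); and the expansion of $(a\otimes b)(\id+\gamma+\gamma^2)$ already computed in the subsection on operators on $\overline{H^{\otimes 2}}$ gives (3) upon setting it to zero. The main obstacle I anticipate is bookkeeping with the right-action conventions of $\rho_H$: one must confirm that Theorem~\ref{thm:HReduced} is natural enough in $H$ to transport a presentation at the level of $\GL_2(\Z)$-modules rather than merely an abstract vector-space isomorphism, and one must apply $\delta=\tau\smat$ with the correct composition order, since an order or sign slip there would replace $\id+\delta$ with $\id-\delta$ and break the identification. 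The observation that both $\tau$ and $-I_2$ act trivially in the target quotient is what makes this last point robust.
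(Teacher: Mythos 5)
Your proposal is correct and is essentially the paper's own second proof of Theorem~\ref{thm:rank2pres}, which likewise obtains the presentation ``immediately'' from the identification $\homfunctor_2(H)\cong H^1(\Out(F_2);\overline{H^{\otimes 2}})$ of Theorem~\ref{thm:HReduced}, the isomorphism $\Out(F_2)\cong\GL_2(\Z)$, and Proposition~\ref{prop:glpres}; you merely spell out the substitution $\delta=\tau\smat$ (most cleanly via $m(\id+\delta)=(m\tau)(\id+\smat)+m(\id-\tau)$, since the quotiented subspace need not be $\GL_2(\Z)$-invariant) that the paper leaves implicit. The paper's independent first proof, a direct computation with chord diagrams and boundaries in the graph complex, is not needed for your route.
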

It can be useful for visualization purposes, especially when we move to rank $3$ to represent an element of $H^{\otimes n}$, graphically as a rectangle with $n$ inputs representing the $n$ tensor factors. See Figure ~\ref{fig:rank2}.
\begin{figure}
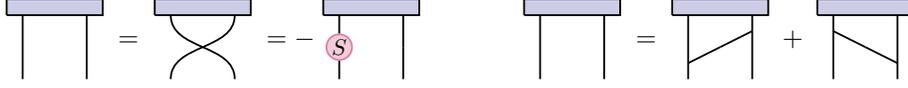

\begin{center}
$\tp{1.5cm}{\varboxtensor\varid{1}}=\tp{1.5cm}{\varboxtensor\vartrans{1}}
=-\tp{1.5cm}{\varboxtensor\twoantip{1}{0}{1}}$\hspace{3em}
$\tp{1.5cm}{\varboxtensor\varid{1}}=\tp{1.5cm}{\varboxtensor\varopT{0}{0}{1}}+\tp{1.5cm}{\varboxtensor\varopT{1}{0}{1}}$
\end{center}
\caption{Relations for $\homfunctor_2(H)$. The relation on the right is equivalent to relation (3) of Theorem \ref{thm:rank2pres} in the presence of relations (1) and (2). 
}\label{fig:rank2}
\end{figure}
\begin{theorem}\label{thm:omega2pres}
There is an isomorphism
$$\Omega_{2}(H)\cong \frac{\overline{H^{\otimes 2}}}{\langle \id-\tau,\id-S\otimes S,\id+\gamma+\gamma^2, \epsilon\otimes \id \rangle}
$$ 
where we recall $\epsilon\colon H\to \F$ is the counit.
I.e. $\Omega_{2}(H)$ is presented by $\overline{H^{\otimes 2}}$ modulo the relations
\begin{enumerate}
\item $a\otimes b=b\otimes a=S(a)\otimes S(b)$
\item $\eta(1)\otimes a=0$
\item  
$a\otimes b+ S(b)a_{(1)}\otimes a_{(2)}+ b_{(1)}\otimes S(a)b_{(2)}=0.$
\end{enumerate} 
\end{theorem}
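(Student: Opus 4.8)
The plan is to run the argument in close parallel with the proof of Theorem~\ref{thm:rank2pres}, using that $\Omega_2(H)$ and $\homfunctor_2(H)$ are quotients of the \emph{same} chain group $\overline{\cG^{(2)}_{H\Lie,1}}$ and differ only in which boundaries are killed. First I would make the degree-one chains explicit: a connected rank-two graph with a single $H\Lie$-decorated vertex is, after expanding the $\Lie$-part into a tree, a tree whose four legs are glued in pairs into two loops, the two loop-edges each carrying an element of $H$. Sending $(a,b)\in H^{\otimes 2}$ to this standard graph gives a surjection $\overline{H^{\otimes 2}}\twoheadrightarrow\overline{\cG^{(2)}_{H\Lie,1}}$, where passing to $\overline{H^{\otimes 2}}$ is precisely the bookkeeping recording the sliding relation that defines $\overline{\cG_{H\Lie}}$. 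The symmetries and degeneracies of the standard graph then cut this down to the structural relations: loop-exchange together with the simultaneous antipodal reversal of both loops gives (1) $a\otimes b=b\otimes a=S(a)\otimes S(b)$, and the vanishing of the tadpole-type graph in which a loop carries the group-like label $1_H$ gives (2) $\eta(1)\otimes a=0$. This identifies the chain group, $\overline{\cG^{(2)}_{H\Lie,1}}\cong\overline{H^{\otimes 2}}/\langle\id-\tau,\ \id-S\otimes S,\ \epsilon\otimes\id\rangle$, exactly as in the proof of Theorem~\ref{thm:rank2pres}.

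The new content is the computation of $\partial\mathcal{S}_2$, the only boundaries killed in $\Omega_2(H)$. A generator of $\mathcal{S}_2$ is a two-vertex rank-two graph one of whose vertices is a bare tripod---an undecorated element of $\Lie\arity{3}$ with all three slots attached to edges---the other vertex carrying the remaining Hopf data. Contracting each of the three edges in turn collapses the tripod into the decorated vertex and produces a one-vertex graph, so the boundary of such a generator is a sum of three terms in $\overline{\cG^{(2)}_{H\Lie,1}}\cong\overline{H^{\otimes 2}}$. The conceptual point is that the $\Z/3$ cyclic symmetry of the tripod is matched by the order-three operator $\gamma=\rho_H(st)$: transporting the Hopf labels across the contracted edges by the defining coproduct/antipode relation of $H\Lie$ (in Sweedler notation) reproduces exactly the cyclic sum
$$a\otimes b+S(b)a_{(1)}\otimes a_{(2)}+b_{(1)}\otimes S(a)b_{(2)},$$
which is relation~(3). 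Modulo relations~(1) and~(2) this is readily rewritten as $a\otimes b+S(a_{(1)})b\otimes S(a_{(2)})+S(b_{(1)})\otimes S(b_{(2)})a=(\id+\gamma+\gamma^2)(a\otimes b)$, which explains why the theorem records the relation in the form $\id+\gamma+\gamma^2$.

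Assembling the pieces yields the presentation
$$\Omega_2(H)=\overline{\cG^{(2)}_{H\Lie,1}}/\partial\mathcal{S}_2\cong\overline{H^{\otimes 2}}/\langle\id-\tau,\ \id-S\otimes S,\ \id+\gamma+\gamma^2,\ \epsilon\otimes\id\rangle,$$
as claimed. As a consistency check I would compare with Theorem~\ref{thm:rank2pres}: quotienting further by the boundaries of the two-vertex graphs carrying Hopf data on \emph{both} vertices imposes the extra relation $\id+\delta$, and since $\id+\delta$ together with $\id-\tau$ implies both $\id-S\otimes S$ and $\epsilon\otimes\id$, these become redundant and one recovers exactly the $\homfunctor_2(H)$ presentation. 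This also exhibits the expected surjection $\Omega_2(H)\twoheadrightarrow\homfunctor_2(H)$ coming from $\partial\mathcal{S}_2\subseteq\partial\overline{\cG^{(2)}_{H\Lie,2}}$.

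The main obstacle I anticipate is the boundary computation above. One must fix an orientation on the two-vertex tripod graph, contract its three edges while scrupulously tracking the induced orientation signs, and push each $H$-label through a contracted edge using the sliding relation of $H\Lie$ together with the cyclic-operad antipode action. Getting the placement of every $S$ and every Sweedler summand right---so that the three contractions assemble into the stated cyclic expression rather than a sign- or antipode-variant---is the delicate part; the identification of the chain group and the final assembly are then the bookkeeping already present in the rank-two analysis.
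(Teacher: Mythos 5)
Your overall strategy is the paper's own: work in the graph complex, identify $\overline{\cG^{(2)}_{H\Lie,1}}$ with a quotient of $\overline{H^{\otimes 2}}$ exactly as in the first proof of Theorem~\ref{thm:rank2pres}, and then compute which boundaries $\partial\mathcal S_2$ imposes. But there is a genuine error in how you apportion the relations. You claim that relation (2), $\eta(1)\otimes a=0$, holds already at the chain level, as ``the vanishing of the tadpole-type graph in which a loop carries the group-like label $1_H$,'' so that $\overline{\cG^{(2)}_{H\Lie,1}}\cong\overline{H^{\otimes 2}}/\langle \id-\tau,\ \id-S\otimes S,\ \epsilon\otimes\id\rangle$. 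That is false: a loop labeled by $1_H$ is just an ordinary unlabeled loop, and no graph symmetry or IHX degeneracy kills it. The graph automorphisms of the one-vertex generator only give $a\otimes b=b\otimes a=S(a)\otimes S(b)$ (note that reversing a single loop is not an automorphism; only the simultaneous reversal of both loops is). A quick sanity check: for $H=\F$ your claim would make the entire chain group $\overline{\cG^{(2)}_{\Lie,1}}$ vanish, yet the one-vertex rank-two graph with two plain loops is a nonzero chain --- its class dies only after dividing by boundaries. Correspondingly, your enumeration of $\mathcal S_2$ is incomplete: you only consider the theta-shaped generators (bare tripod joined to the other vertex by three edges), whose three edge-contractions give relation (3). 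But $\mathcal S_2$ also contains dumbbell-shaped generators, in which the bare tripod carries a self-loop and is joined by a single bridge to the vertex carrying $a$ and $b$. Contracting the bridge (the only contractible edge) and applying IHX gives, as in the second boundary computation in the proof of Theorem~\ref{thm:rank2pres} with one label set to $1$, the chain $-1\otimes S(b)-1\otimes b=-2(1\otimes b)$ (using $S(1)=1$, $S^2=\id$ and the chain-level symmetry), and \emph{this} is the source of relation (2). Your two mistakes compensate --- the final list of relations is the correct one --- but the argument as written both asserts a false chain-level identity and fails to account for part of $\partial\mathcal S_2$, so it does not constitute a proof.

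The repair is small and brings you in line with the paper's proof: drop $\epsilon\otimes\id$ from the description of the chain group, so $\overline{\cG^{(2)}_{H\Lie,1}}\cong\overline{H^{\otimes 2}}/\langle\id-\tau,\ \id-S\otimes S\rangle$, and then compute $\partial\mathcal S_2$ from \emph{both} families of generators: the theta graphs yield relation (3) (your cyclic-sum computation, and your observation that modulo (1) it is equivalent to the operator $\id+\gamma+\gamma^2$, is fine), while the dumbbells with an unlabeled loop yield $2(1\otimes b)=0$, i.e.\ relation (2). Your closing consistency check is correct and worth keeping: the full boundary space of Theorem~\ref{thm:rank2pres} additionally imposes $\id+\delta$ (dumbbells with both loops labeled, which lie outside $\mathcal S_2$), and $\id+\delta$ together with $\id-\tau$ renders $\id-S\otimes S$ and $\epsilon\otimes\id$ redundant, recovering the presentation of $\homfunctor_2(H)$ and exhibiting the surjection $\Omega_2(H)\twoheadrightarrow\homfunctor_2(H)$.
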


\subsection{Computations}
We now use Theorem~\ref{thm:omega2pres} to do some calculations. Let $\omega_{2n}=\left\lceil\dfrac{2n}{3}\right\rceil$.

\begin{proposition}\label{martin:prop}
The vector space dimension of $\Omega_2(\sym(\F))_{2n}$ is given by $\omega_{2n}-1$ while $\Omega_2(\sym(\F))_{2n+1}=0$. 
 \end{proposition}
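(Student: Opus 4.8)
The plan is to turn the presentation of Theorem~\ref{thm:omega2pres} into a concrete computation with binary forms and then read off the dimension from the representation theory of $\Sigma_3$. First I would set $H=\sym(\F)=\F[x]$ with $x$ primitive, so that $S(x)=-x$ and (since $H=\sym(V)$) $\overline{H^{\otimes 2}}=H^{\otimes 2}=\F[x,y]$, where $x=x\otimes 1$ and $y=1\otimes x$. The operators in the presentation then act by algebra automorphisms determined by their action on the degree-one space $\langle x,y\rangle$: one checks $\tau\colon(x,y)\mapsto(y,x)$, $S\otimes S\colon(x,y)\mapsto(-x,-y)$, and $\gamma=\rho_H(st)\colon(x,y)\mapsto(y,-x-y)$, with $\gamma^3=\id$. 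A direct check confirms that $(a\otimes b)(\id+\gamma+\gamma^2)$ agrees with relation (3) of Theorem~\ref{thm:omega2pres} on $\F[x,y]$, so $\Omega_2(\sym(\F))$ is the quotient of $\F[x,y]$ by the four subspaces $\im(\id-\tau)$, $\im(\id-S\otimes S)$, $\im(\id+\gamma+\gamma^2)$, and the pure powers $\langle x^d,y^d\rangle$ coming from $\epsilon\otimes\id$. Note that $\langle\gamma,\tau\rangle\cong\Sigma_3$, since $\tau\gamma\tau^{-1}=\gamma^{-1}$.

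The odd-degree vanishing is immediate: in degree $d$ the relation $\id-S\otimes S$ forces $f=(-1)^d f$, which kills everything when $d$ is odd. So fix an even degree $d=2n$ and let $U=\F[x,y]_{2n}$, a $\Sigma_3$-module of dimension $2n+1$. The key observation is that by Lemma~\ref{lem:order} the relation $\id+\gamma+\gamma^2$ kills exactly $\im(\id+\gamma+\gamma^2)=\ker(\id-\gamma)=U^{\gamma}$, the $\gamma$-invariants. Because the $3$-cycle $\gamma$ acts as the identity on both one-dimensional irreducibles of $\Sigma_3$ and fixed-point-freely (eigenvalues the primitive cube roots of unity) on the two-dimensional standard representation $\mathbf 2$, we get $U^{\gamma}=U_{\mathbf 1}\oplus U_{\mathrm{sgn}}$, the sum of the trivial and sign isotypic components. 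Writing $U\cong a\,\mathbf 1\oplus b\,\mathrm{sgn}\oplus c\,\mathbf 2$ and using that $\tau$ preserves each (necessarily $\tau$-stable) isotypic summand, one finds $\im(\id-\tau)=U_{\mathrm{sgn}}\oplus(U_{\mathbf 2})^{\tau=-1}$, so $U^{\gamma}+\im(\id-\tau)=U_{\mathbf 1}\oplus U_{\mathrm{sgn}}\oplus(U_{\mathbf 2})^{\tau=-1}$ and hence $U/(U^{\gamma}+\im(\id-\tau))\cong(U_{\mathbf 2})^{\tau=+1}$ has dimension exactly $c$. I would then compute $c$ by characters: $\Tr(\tau\mid U)=1$ (only $x^n y^n$ is $\tau$-fixed), and $\Tr(\gamma\mid U)\in\{1,0,-1\}$ as $n\equiv 0,1,2\pmod 3$ (summing the eigenvalues $\zeta^{a+2b}$ over $a+b=2n$), giving $c=\tfrac13\bigl((2n+1)-\Tr(\gamma\mid U)\bigr)=\lceil 2n/3\rceil=\omega_{2n}$ in every residue class.

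It remains to incorporate the counit relation, i.e.\ to quotient $(U_{\mathbf 2})^{\tau=+1}$ by the image of the pure powers $\langle x^{2n},y^{2n}\rangle$. Since $x^{2n}-y^{2n}=(\id-\tau)x^{2n}\in\im(\id-\tau)$, the two pure powers have the same image, so this image is at most one-dimensional; the content is to show it is nonzero, i.e.\ that $x^{2n}\notin U^{\gamma}+\im(\id-\tau)$. I would verify this by applying the $\Sigma_3$-projector onto $(U_{\mathbf 2})^{\tau=+1}$, namely $\tfrac12(\id+\tau)\circ\tfrac13(2\,\id-\gamma-\gamma^2)$, to $x^{2n}$: the result is a nonzero multiple of $x^{2n}+y^{2n}-2(x+y)^{2n}$, which is nonzero because the coefficient of $x^n y^n$ is $-2\binom{2n}{n}\neq 0$ for $n\geq 1$. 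This drops the dimension by exactly one, giving $\dim\Omega_2(\sym(\F))_{2n}=c-1=\omega_{2n}-1$, as claimed.

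The main obstacle is organizing the interaction of the four relations, since two of them are not $\Sigma_3$-subrepresentations: $\im(\id-\tau)$ is only the $(-1)$-eigenspace of the involution $\tau$, and the pure powers $\langle x^{2n},y^{2n}\rangle$ form no invariant subspace at all. The device that makes the computation elementary is to isolate the one genuinely invariant relation $\id+\gamma+\gamma^2$ first (using Lemma~\ref{lem:order} to identify its image with $U^{\gamma}$), and only then to decompose the two remaining relations along the $\tau$-stable isotypic components; this is what reduces everything to the single multiplicity $c=\omega_{2n}$ and to the explicit nondegeneracy check for the pure powers.
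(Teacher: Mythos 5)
Your proof is correct and takes essentially the same route as the paper's (Kassabov's) argument: decompose $\F[x,y]_{2n}$ under the copy of $\Sigma_3$ generated by $\tau$ and $\gamma$, identify the quotient by $\id+\gamma+\gamma^2$ and $\id-\tau$ with the $\tau$-fixed part of the two-dimensional isotypic component, compute its multiplicity $\omega_{2n}$ by characters, and check that the pure powers cut the dimension by exactly one. The only difference is that you make explicit two steps the paper merely asserts --- the identification $\im(\id+\gamma+\gamma^2)=\ker(\id-\gamma)$ via Lemma~\ref{lem:order}, and the projector computation showing the image of $x^{2n}$ in the quotient is the nonzero vector $x^{2n}+y^{2n}-2(x+y)^{2n}$ (up to scale), so the final drop is exactly one --- and both verifications are sound.
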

 \begin{proof}
 The odd degree case is evident. The following argument for the even degree case is due to Martin Kassabov. 
 
 Start with the observation that $\tau=\begin{bmatrix} 0&1\\1&0\end{bmatrix}$ and ${\gamma}=\begin{bmatrix}
 0&1\\-1&-1
 \end{bmatrix}
 $ generate a copy of the symmetric group $\Sigma_3$.
  Now decompose $\sym(\F)^{\otimes 2}\cong \F[x,y]$ into a direct sum of irreducible $\Sigma_3$-modules. Modding out by $1+{\gamma}+{\gamma}^2$ and $1-\tau$ kill both $1$ dimensional representations and reduce the dimension of the $2$ dimensional representation to $1$. This can be done explicitly by realizing the $2$-dimensional representation as the subspace of $\F^3$ consisting of $(t_1,t_2,t_3)$ where $t_1+t_2+t_3=0$. 
Now $\Omega_{2n}(\sym(\F))$ is the quotient of $\F[x,y]_{2n}$ by the relations $1-\tau$, $1+{\gamma}+{\gamma}^2$ and $x^{2n}=y^{2n}=0$.
The above representation theory argument shows that, $\F[x,y]_{2n}$ modulo just $1-\tau$ and $1+{\gamma}+{\gamma}^2$  will have the same dimension as the multiplicity of the two dimensional representations in $\C[x,y]_{2n}$. It's also not hard to show that further modding out by the two monomials $x^{2n}$ and $y^{2n}$ will reduce the dimension by $1$. Now to calculate the $\Sigma_3$ decomposition, we calculate the character. It's not too hard to see that the character is given by $\chi=\begin{bmatrix} 2n+1&1&1\end{bmatrix}, \begin{bmatrix} 2n+1&1&-1\end{bmatrix}$ or $\begin{bmatrix} 2n+1&1&0\end{bmatrix}$ depending on the congruence class of $n$ modulo $3$. From here it follows that the multiplicity of the two dimensional representation in $\C[x,y]_{2n}$ is $\frac{2n+1}{3}, \frac{2n+2}{3}$ or $\frac{2n}{3}$ again depending on the congruence class modulo 3. 
 \end{proof}
 For the next proposition,
 suppose that $\omega_{2k+1}=0$.
 \begin{corollary}\label{cor:omega2comp}
 We have a $\GL$-decomposition
 $$\Omega_2(\sym(V))\cong \bigoplus_{k>\ell> 0}\omega_{k-\ell}[k,\ell]_{\GL}\oplus\bigoplus_{k\geq 0} (\omega_{2k}-1)  [2k]_{\GL}.$$ 
 \end{corollary}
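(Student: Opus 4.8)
The plan is to upgrade Kassabov's $\Sigma_3$-argument from Proposition~\ref{martin:prop} (the one-variable case) to a fully $\GL(V)$-equivariant statement, which reads off all multiplicities at once. First I would dispose of odd degrees: on $\sym^a(V)\otimes\sym^b(V)$ the antipode $S\otimes S$ acts by $(-1)^{a+b}$, so the relation $\id-S\otimes S$ of Theorem~\ref{thm:omega2pres} kills everything in odd total degree and is vacuous in even total degree. Hence, using $\overline{\sym(V)^{\otimes 2}}=\sym(V)^{\otimes 2}$, in even degrees
\[
\Omega_2(\sym(V))\cong \frac{\sym(V)^{\otimes 2}}{\langle\,\id-\tau,\ \id+\gamma+\gamma^2,\ \epsilon\otimes\id\,\rangle}.
\]

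The key structural input is the identification $\sym(V)^{\otimes 2}\cong \sym(V\otimes\F^2)$, under which the Cauchy decomposition gives, as $\GL(V)\times\GL(\F^2)$-modules,
\[
\sym(V)^{\otimes 2}\cong\bigoplus_{k\geq\ell\geq 0}[k,\ell]_{\GL}\otimes\SF{(k,\ell)}(\F^2).
\]
I would then check that $\tau$ and $\gamma=\rho_H(st)$ are the algebra automorphisms of $\sym(V\otimes\F^2)$ induced by $\id_V\otimes g$ for the corresponding $g\in\GL_2(\Z)$ acting on $\F^2$. This holds because $\rho_H(g)$ is assembled from the (co)commutative Hopf operations $m,\Delta,S$, hence is a grading-preserving, $\GL(V)$-equivariant algebra automorphism, so it is determined by its linear part; a direct computation on $V\oplus V$ identifies that linear part with the standard action on $\F^2$. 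Consequently $\langle\tau,\gamma\rangle\cong\Sigma_3$ acts on the auxiliary $\F^2$ as the two-dimensional irreducible representation $W$ (with $\det=\mathrm{sgn}$), and the displayed isomorphism is $\GL(V)\times\Sigma_3$-equivariant.

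Now the representation theory runs exactly as in Kassabov's argument, but equivariantly. Quotienting a $\Sigma_3$-module by the images of $\id-\tau$ and $\id+\gamma+\gamma^2$ annihilates the trivial and sign isotypic pieces and collapses each copy of $W$ to a line, so the quotient has dimension equal to the multiplicity of $W$. Applied $\GL(V)$-equivariantly, the multiplicity of $[k,\ell]_{\GL}$ in $\sym(V)^{\otimes 2}/\langle\id-\tau,\id+\gamma+\gamma^2\rangle$ equals the multiplicity of $W$ in $\SF{(k,\ell)}(\F^2)\cong\sym^{k-\ell}(W)\otimes\mathrm{sgn}^{\ell}$. Since $W\otimes\mathrm{sgn}\cong W$, this equals the multiplicity of $W$ in $\sym^{k-\ell}(W)$, which a short character count (eigenvalues $1,-1$ on a transposition and $\zeta,\zeta^2$ on a $3$-cycle) evaluates to $\lceil (k-\ell)/3\rceil=\omega_{k-\ell}$, depending only on $k-\ell$ as the answer predicts.

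Finally I would incorporate $\epsilon\otimes\id$. Together with $\tau$-symmetry it kills $1\otimes\sym(V)+\sym(V)\otimes 1$, a sum of single-row Schur functors $[k]_{\GL}$; being a $\GL(V)$-subrepresentation supported on partitions with $\ell=0$, its removal leaves the multiplicities of all $[k,\ell]_{\GL}$ with $\ell>0$ unchanged, equal to $\omega_{k-\ell}$. For the single-row multiplicities I would invoke Proposition~\ref{martin:prop} directly: since $\Omega_2(\sym(-))$ is a polynomial functor, evaluating at $V=\F$ annihilates every $[k,\ell]_{\GL}$ with $\ell>0$ and sends each $[k]_{\GL}$ to a line, whence $m_{2k,0}=\dim\Omega_2(\sym(\F))_{2k}=\omega_{2k}-1$. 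Assembling the $\ell>0$ and $\ell=0$ cases yields the stated decomposition. The one genuinely delicate step is the verification that $\langle\tau,\gamma\rangle$ acts on the auxiliary $\F^2$ as the standard $\Sigma_3$-representation, $\GL(V)$-equivariantly and compatibly with Cauchy; once that is in place, everything reduces to the elementary $\Sigma_3$ character computation.
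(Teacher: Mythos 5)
Your proposal is correct and follows essentially the same route as the paper's proof: both use the Cauchy/Schur--Weyl decomposition $\sym(V)^{\otimes 2}\cong\bigoplus_{k\geq\ell\geq 0}\SF{(k,\ell)}(V)\otimes\SF{(k,\ell)}(\F^2)$, reduce the relations $\id-\tau$ and $\id+\gamma+\gamma^2$ to the $\Sigma_3$-multiplicity count of Proposition~\ref{martin:prop} (you absorb the $(\det)^\ell$ twist via $W\otimes\mathrm{sgn}\cong W$ where the paper writes $1-(-1)^{\ell}\tau$), and observe that $\epsilon\otimes\id$ only affects the single-row summands. Your evaluation-at-$V=\F$ step pinning the single-row multiplicity at exactly $\omega_{2k}-1$ is a welcome tightening of the paper's brief remark that these extra relations ``will only reduce their dimension,'' but it is a refinement of the same argument rather than a different route.
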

\begin{proof}
%We map $\Omega_2(V)$ to the quotient gotten by mapping $T(V)$ to $\sym(V)$: $$\dfrac{\sym(V)^{\otimes 2}}{\langle 1+\smat\tmat+(\smat\tmat)^2, 1-\tau\rangle+\C\{1\otimes a, a\otimes 1\}}.$$ 
We use Schur-Weyl duality:
\begin{align*}
\sym(V)^{\otimes 2}&\cong \bigoplus_{k>\ell\geq 0} \SF{(k,\ell)}(V)\otimes \SF{(k,\ell)}(\F^2)\\
& \cong \bigoplus_{k>\ell\geq 0}\SF{(k,\ell)}(V)\otimes \C[x,y]_{k-\ell}\otimes (\det)^\ell\\
\end{align*}
Modding out by $1+\smat\tmat+(\smat\tmat)^2$ and $1-\tau$ corresponds to modding out $\C[x,y]_{k-\ell}$ by $1+\smat\tmat+(\smat\tmat)^2$ and $1-(-1)^{\ell}\tau$. This has the same effect on the fundamental $\Sigma_3$ representations as in the previous proposition, so the dimension will be $\lceil \frac{k-\ell}{3}\rceil$. 

When we additionally mod out by $1\otimes a$ and $a\otimes 1$, these will map into the $\SF{(k)}(V)\otimes  \SF{(k)}(\F^2)$ summands, and so will only reduce their dimension in the total answer. 
\end{proof}

Note that $T(V)=U(\mathsf L(V))$ is the universal enveloping algebra of the free Lie algebra. Let $\mathsf{L}_{(2)}(V)\cong V\oplus\ext^2 V$ be the free metabelian Lie algebra. Then there is a map $T(V)\twoheadrightarrow \mathsf{L}_{(2)}(V)$ and a corresponding surjection $\Omega_2(T(V))\twoheadrightarrow \Omega_2(\mathsf{L}_{(2)}(V))$. The following corollary thus gives large families of representations in the Johnson cokernel, and implies Theorem~\ref{thm:omega2intro} stated in the introduction.
\begin{corollary}
$$\Omega_2(U(\mathsf{L}_{(2)}(V)))\cong \bigoplus_{k>\ell> 0}\overline{\SF{(k,\ell)}(\mathsf{L}_{(2)}(V))}^{\otimes \omega_{k-\ell}}\oplus\bigoplus_{k\geq 0} \overline{\SF{(2k)}(\mathsf{L}_{(2)}(V))}^{\otimes(\omega_{2k}-1)},$$
where $\overline{\SF{(\lambda)}(\mathsf{L}_{(2)}(V)))}$ is the quotient of the Schur functor by the adjoint action.
\end{corollary}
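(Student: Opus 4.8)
The plan is to re-run the computation of Corollary~\ref{cor:omega2comp} with the abelian primitive space $V$ replaced by the nilpotent Lie algebra $\mathfrak{g}:=\mathsf{L}_{(2)}(V)=V\oplus\ext^2 V$, the one new feature being that the conjugation quotient $\overline{(-)}$, which was invisible when $H=\sym(V)$, is now genuinely nontrivial and is exactly what produces the bars $\overline{\SF{\lambda}(\mathfrak{g})}$. Throughout I use that $H:=U(\mathfrak{g})$ is cocommutative and, by the Poincar\'e--Birkhoff--Witt symmetrization map, isomorphic to $\sym(\mathfrak{g})$ as a coalgebra, and I start from the presentation $\Omega_2(H)\cong\overline{H^{\otimes 2}}/\langle\id-\tau,\id-S\otimes S,\id+\gamma+\gamma^2,\epsilon\otimes\id\rangle$ of Theorem~\ref{thm:omega2pres}.

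First I would decompose the underlying coalgebra by the Cauchy formula, $H^{\otimes 2}\cong\sym(\mathfrak{g}\otimes\F^2)\cong\bigoplus_{k\geq\ell\geq 0}\SF{(k,\ell)}(\mathfrak{g})\otimes\SF{(k,\ell)}(\F^2)$, a $\GL(\mathfrak{g})\times\GL_2$-equivariant splitting with $\SF{(k,\ell)}(\F^2)\cong\C[x,y]_{k-\ell}\otimes(\det)^\ell$. Next I would identify the conjugation quotient with the adjoint quotient: for a primitive $x\in\mathfrak{g}$ one computes $x\circledast(h_1\otimes h_2)=[x,h_1]\otimes h_2+h_1\otimes[x,h_2]$, which under Cauchy is the derivation induced by $\ad_x\otimes\id_{\F^2}$ and therefore acts on the summand $\SF{(k,\ell)}(\mathfrak{g})\otimes\SF{(k,\ell)}(\F^2)$ as $\SF{(k,\ell)}(\ad_x)\otimes\id$. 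Since these elements generate the augmentation ideal, the quotient distributes as $\overline{H^{\otimes 2}}\cong\bigoplus_{k\geq\ell\geq 0}\overline{\SF{(k,\ell)}(\mathfrak{g})}\otimes\SF{(k,\ell)}(\F^2)$, where $\overline{(-)}$ is the quotient by the adjoint action of $\mathfrak{g}$.

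It then remains to show that the $\Out(F_2)=\GL_2(\Z)$ action implementing $\tau$, $\gamma$ and $\epsilon\otimes\id$ acts on each summand only through the linear $\GL_2$ on the factor $\SF{(k,\ell)}(\F^2)$, trivially on $\overline{\SF{(k,\ell)}(\mathfrak{g})}$. Granting this, the relations $\id-\tau$ and $\id+\gamma+\gamma^2$ reduce $\SF{(k,\ell)}(\F^2)=\C[x,y]_{k-\ell}\otimes(\det)^\ell$ through the $\Sigma_3$ generated by $\tau$ and $\gamma$ exactly as in Proposition~\ref{martin:prop}, cutting its dimension to $\omega_{k-\ell}$, while the counit relation $\epsilon\otimes\id$ and its $\tau$-image meet only the single-row summands $\SF{(2k)}(\mathfrak{g})$ and lower their multiplicity by one to $\omega_{2k}-1$; odd-difference rows are annihilated just as in Corollary~\ref{cor:omega2comp}. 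Assembling these reductions yields the claimed decomposition.

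The step I expect to be the main obstacle is precisely this last equivariance claim. For general $H$ the $\Out(F_2)$ action does not factor through $\GL_2$, and the operators $\gamma$ and relation~(3) each involve a genuine Hopf-algebra product (for instance $aS(b_{(2)})$ in $\gamma$) rather than the commutative product of $\sym(\mathfrak{g})$. The difference between these is a sum of commutators, and one must verify that, after passing to the adjoint quotient taken above, these commutator corrections vanish rather than contributing new lower-order terms --- equivalently, that the natural PBW filtration on $\Omega_2(U\mathfrak{g})$ degenerates so that the associated-graded computation over $\sym(\mathfrak{g})$ is exact, the final dimension count agreeing with the commutative answer of Corollary~\ref{cor:omega2comp}. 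This is the point at which the $\homfunctor_2(U\mathsf{L}_{(2)}(V))$ argument of \cite{CK} must be extended to the $\Omega_2$ setting.
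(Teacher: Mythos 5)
Your outline reproduces the paper's strategy almost step for step --- the PBW coalgebra isomorphism $U(\mathsf L_{(2)})^{\otimes 2}\cong\sym(\mathsf L_{(2)}\otimes\F^2)$, the Cauchy decomposition into $\SF{(k,\ell)}(\mathsf L_{(2)})\otimes\SF{(k,\ell)}(\F^2)$, the identification of the conjugation quotient with the adjoint quotient, and the reduction to the $\Sigma_3$ count of Proposition~\ref{martin:prop} and Corollary~\ref{cor:omega2comp} --- but the step you yourself flag as ``the main obstacle'' is exactly where the content of the corollary lives, and you leave it unproven. The claim that the $\Out(F_2)=\GL_2(\Z)$ action of section~\ref{sec:action} on $\overline{H^{\otimes 2}}$ agrees with the linear $\GL_2$ action transported through PBW is \emph{false} for a general Lie algebra $\mathfrak g$ (your ``commutator corrections'' genuinely survive, e.g.\ for the free Lie algebra), so some special feature of $\mathsf L_{(2)}$ must be invoked, and your proposal never isolates it. The paper closes this gap by citing the result of \cite{CK} that the two $\GL_2(\Z)$ actions on $\overline{H^{\otimes 2}}$ coincide on the nose whenever all commutators of the form $[X,[X,Y]]$ vanish in $\mathfrak g$, which holds for $\mathsf L_{(2)}$ since it is nilpotent of class $2$. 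Your fallback suggestion --- that the PBW filtration on $\Omega_2(U\mathfrak g)$ ``degenerates so that the associated-graded computation over $\sym(\mathfrak g)$ is exact'' --- is both unargued and a strictly weaker kind of statement: an associated-graded identification would at best recover dimensions, whereas the corollary asserts an identification of $\GL(V)$-modules, which the paper gets from the exact (unfiltered) coincidence of the two actions.

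A second, smaller correction: once the \cite{CK} coincidence of actions is in hand, no separate ``extension of the $\homfunctor_2$ argument to the $\Omega_2$ setting'' is required, contrary to your closing sentence. The cited fact lives at the level of $\overline{H^{\otimes 2}}$, before either quotient is taken; by Theorem~\ref{thm:omega2pres}, $\Omega_2(H)$ is the quotient of $\overline{H^{\otimes 2}}$ by the images of $\id-\tau$, $\id-S\otimes S$, and $\id+\gamma+\gamma^2$ --- all operators in the image of the group action, hence transported verbatim --- together with $\epsilon\otimes\id$, whose image (the elements $\eta(1)\otimes a$) visibly corresponds under PBW to $1\otimes\sym(\mathsf L_{(2)})$ and so meets only the single-row summands, reducing each multiplicity by one exactly as in Corollary~\ref{cor:omega2comp}. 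So the structure of your argument is the right one, but as written it is a proposal with the decisive lemma missing rather than a proof: you need the class-$2$ criterion of \cite{CK} (or an equivalent direct verification that the corrections $aS(b_{(2)})-$ commutative product vanish in the adjoint quotient when $[\mathfrak g,[\mathfrak g,\mathfrak g]]=0$) to make it go through.
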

\begin{proof}
Let $H=U(\mathfrak g)$ be the universal enveloping algebra of a Lie algebra $\mathfrak g$. The PBW theorem gives a coalgebra isomorphism with $\sym(\mathfrak g)$. Thus $H^{\otimes 2}$ is isomorphic (as a coalgebra) to $S(\mathfrak g \otimes \F^2)$.  This has an obvious $\GL_2(\Z)$ action coming from its action on $\F^2$ (which coincides with the definition from section~\ref{sec:action} when you take the usual Hopf algebra structure on $S(\mathfrak g)$.
Therefore $\overline{H^{\otimes 2}}$ has two different actions of $\GL_2(\Z)=\Out(F_2)$. One is the action constructed in section \ref{sec:action}, and the other is the action coming from the PBW isomorphism.
In \cite{CK} it is shown that these two actions coincide if commutators of the form $[X,[X,Y]]$ vanish in $\mathfrak g$. In particular this holds for $\mathsf L_{(2)}$. So we calculate
\begin{align*}
U(\mathsf L_{(2)})^{\otimes 2}\cong
\sym(\mathsf L_{(2)})^{\otimes 2}&\cong \bigoplus_{k>\ell\geq 0} \SF{(k,\ell)}(\mathsf{L_{(2)}})\otimes \SF{(k,\ell)}(\F^2)\\
& \cong \bigoplus_{k>\ell\geq 0}\SF{(k,\ell)}(\mathsf L_{(2)})\otimes \C[x,y]_{k-\ell}\otimes (\det)^\ell\\
\end{align*}
Passing to the quotient  $\overline{U(\mathsf L_{(2)})^{\otimes 2}}$ results in taking the quotient of the Schur functors $\SF{(k,\ell)}(\sf L_{(2)})$ by the adjoint action of $\mathsf L_{(2)}$. See \cite{CK}.  The rest of the argument is the same as Corollary~\ref{cor:omega2comp}.
\end{proof}

\subsection{Proofs of Theorems~\ref{thm:rank2pres} and ~\ref{thm:omega2pres}}\label{sec:proofsrank2}
We give two different proofs of Theorem~\ref{thm:rank2pres}, one by directly computing what happens in the graph complex and one computing $H^1(\Out(F_2);\overline{H^{\otimes 2}})\cong \homfunctor_2(H)$ from the group cohomology chain complex. 
\begin{proof}[First proof of Theorem~\ref{thm:rank2pres}:]
$\cG_{H\Lie,1}^{(2)}$ is generated by graphs of the form
\begin{center}
\begin{minipage}{4cm}
\begin{tikzpicture}
\node[empty](A){};
\node[empty](B)[below of=A]{};
\node[empty](C)[above of=A]{};
\node[hopf](D)[right of =C]{$a$};
\node[hopf](E)[right of =B]{$b$};
\node[empty](F)[right of =D, node distance=2cm]{};
\node[empty](G)[right of =E, node distance=2cm]{};
\node[empty](H)[right of =F]{};
\node[empty](I)[right of =G]{};
\node[empty](J)[below of =H]{};
\begin{scope}[decoration={markings,mark = at position 0.5 with {\arrow{stealth}}}]
\draw[densely dashed, postaction=decorate] (D) to (F);
\draw[densely dashed, postaction=decorate] (E) to (G);
\end{scope}
\draw (A) to[out=90,in=180, thick] (D);
\draw (A) to[out=270,in=180, thick] (E);
\draw (F) to[out=0,in=90, thick] (J);
\draw (G) to[out=0,in=270, thick] (J);
\draw (A) to[thick] (J);
\end{tikzpicture}
\end{minipage}
\end{center}
which we identify with $a\otimes b\in \overline{H^{\otimes 2}}$. (The element $a\otimes b$ is only well defined up to conjugation.) The other type of graph, the eyeglass graph, is a linear combination of two of these via an IHX relation.
The various symmetries of the graph lead to the relations $a\otimes b=b\otimes a=S(a)\otimes S(b)$. Now $\partial(\cG_{H\Lie,2}^{(2)})$ is generated by the boundaries of the following two types of graphs:
\begin{center}
\begin{minipage}{4.1cm}
\begin{tikzpicture}
\node[empty](A){};
\node[empty](B)[below of=A]{};
\node[empty](C)[above of=A]{};
\node[hopf](D)[right of =C]{$a$};
\node[hopf](E)[right of =B]{$b$};
\node[empty](F)[right of =D, node distance=2cm]{};
\node[empty](G)[right of =E, node distance=2cm]{};
\node[empty](H)[right of =F]{};
\node[empty](I)[right of =G]{};
\node[empty](J)[below of =H]{};
\node[empty](K)[below of = D]{};
\node[empty](L)[below of = F]{};
\begin{scope}[decoration={markings,mark = at position 0.5 with {\arrow{stealth}}}]
\draw[densely dashed, postaction=decorate] (D) to (F);
\draw[densely dashed, postaction=decorate] (E) to (G);
\draw[densely dashed, postaction=decorate] (K) to (L);
\end{scope}
\draw (A) to[out=90,in=180, thick] (D);
\draw (A) to[out=270,in=180, thick] (E);
\draw (F) to[out=0,in=90, thick] (J);
\draw (G) to[out=0,in=270, thick] (J);
\draw (A) to[thick] (K);
\draw (L) to[thick] (J);
\end{tikzpicture}
\end{minipage}
and
\begin{minipage}{6cm}
\begin{tikzpicture}
\coordinate(a) at (0,0);
\coordinate(d) at (1.5,0);
\coordinate(b) at (-1,1);
\coordinate(c) at (-1,-1);
\coordinate (e) at (2.5,1);
\coordinate (f) at (2.5,-1);
\coordinate (g) at (.3,0);
\coordinate (h) at (1.2,0); 
\node[hopf, at=(c)](C){$a$};
\node[hopf, at=(f)](F){$b$};
\begin{scope}[decoration={markings,mark = at position 0.5 with {\arrow{stealth}}}]
\draw[densely dashed, postaction=decorate] (b) to[out=180, in =180](C);
\draw[densely dashed, postaction=decorate] (F) to[out=0, in=0 ](e);
\draw[densely dashed, postaction=decorate] (g) to (h);
\end{scope}
\draw (a) to[thick] (g);
\draw (h) to[thick] (d);
\draw (a) to[thick, out=90, in =0] (b);
\draw (a) to[thick, out=270, in =0] (C);
\draw (d) to[thick, out=270, in =180] (F);
\draw (d) to[thick, out=90, in =180] (e);
\end{tikzpicture}
\end{minipage}
\end{center}
The boundary of the first graph has three terms corresponding to contracting along each of the three dashed edges. Contracting along the middle edge gives $a\otimes b$. To contract along the other two edges, we use the fact that
$$\begin{minipage}{4cm}
\begin{tikzpicture}
\node[empty](A){};
\node[empty](B)[below of=A]{};
\node[empty](C)[above of=A]{};
\node[hopf](D)[right of =C]{$a$};
\node[hopf](E)[right of =B]{$b$};
\node[empty](F)[right of =D, node distance=2cm]{};
\node[empty](G)[right of =E, node distance=2cm]{};
\node[empty](H)[right of =F]{};
\node[empty](I)[right of =G]{};
\node[empty](J)[below of =H]{};
\node[empty](K)[below of = D]{};
\node[empty](L)[below of = F]{};
\begin{scope}[decoration={markings,mark = at position 0.5 with {\arrow{stealth}}}]
\draw[densely dashed, postaction=decorate] (D) to[densely dashed, postaction=decorate](F);
\draw[densely dashed, postaction=decorate] (E) to[densely dashed, postaction=decorate](G);
\draw[densely dashed, postaction=decorate] (K) to[densely dashed, postaction=decorate](L);
\end{scope}
\draw (A) to[out=90,in=180, thick] (D);
\draw (A) to[out=270,in=180, thick] (E);
\draw (F) to[out=0,in=90, thick] (J);
\draw (G) to[out=0,in=270, thick] (J);
\draw (A) to[thick] (K);
\draw (L) to[thick] (J);
\end{tikzpicture}
\end{minipage}=
\begin{minipage}{4.5cm}
\begin{tikzpicture}
\node[empty](A){};
\node[empty](B)[below of=A]{};
\node[empty](C)[above of=A]{};
\node[empty](D)[right of =C, node distance=1.5cm]{};
\node[hopf](E)[right of =B, node distance=1.5cm]{$S(a_{(2)})b$};
\node[empty](F)[right of =D, node distance=2cm]{};
\node[empty](G)[right of =E, node distance=2cm]{};
\node[empty](H)[right of =F]{};
\node[empty](I)[right of =G]{};
\node[empty](J)[below of =H]{};
\node[hopf](K)[below of = D]{$S(a_{(1)})$};
\node[empty](L)[below of = F]{};
\begin{scope}[decoration={markings,mark = at position 0.5 with {\arrow{stealth}}}]
\draw[densely dashed, postaction=decorate] (D) to[densely dashed, postaction=decorate](F);
\draw[densely dashed, postaction=decorate] (E) to[densely dashed, postaction=decorate](G);
\draw[densely dashed, postaction=decorate] (K) to[densely dashed, postaction=decorate](L);
\end{scope}
\draw (A) to[out=90,in=180, thick] (D);
\draw (A) to[out=270,in=180, thick] (E);
\draw (F) to[out=0,in=90, thick] (J);
\draw (G) to[out=0,in=270, thick] (J);
\draw (A) to[thick] (K);
\draw (L) to[thick] (J);
\end{tikzpicture}
\end{minipage}=
\begin{minipage}{4.5cm}
\begin{tikzpicture}
\node[empty](A){};
\node[empty](B)[below of=A]{};
\node[empty](C)[above of=A]{};
\node[hopf](D)[right of =C, node distance=1.5cm]{$S(b_{(1)})a$};
\node[empty](E)[right of =B, node distance=1.5cm]{};
\node[empty](F)[right of =D, node distance=2cm]{};
\node[empty](G)[right of =E, node distance=2cm]{};
\node[empty](H)[right of =F]{};
\node[empty](I)[right of =G]{};
\node[empty](J)[below of =H]{};
\node[hopf](K)[below of = D]{$S(b_{(2)})$};
\node[empty](L)[below of = F]{};
\begin{scope}[decoration={markings,mark = at position 0.5 with {\arrow{stealth}}}]
\draw[densely dashed, postaction=decorate] (D) to[densely dashed, postaction=decorate](F);
\draw[densely dashed, postaction=decorate] (E) to[densely dashed, postaction=decorate](G);
\draw[densely dashed, postaction=decorate] (K) to[densely dashed, postaction=decorate](L);
\end{scope}
\draw (A) to[out=90,in=180, thick] (D);
\draw (A) to[out=270,in=180, thick] (E);
\draw (F) to[out=0,in=90, thick] (J);
\draw (G) to[out=0,in=270, thick] (J);
\draw (A) to[thick] (K);
\draw (L) to[thick] (J);
\end{tikzpicture}
\end{minipage}
$$
So the boundary becomes $a\otimes b+S(a_{(1)})\otimes S(a_{(2)})b+S(b_{(1)})a\otimes S(b_{(2)})$ which is equivalent to the third relation, in the presence of the relation $a\otimes b=b\otimes a$.

The boundary of the second graph has only one term, which is equal by an IHX relation to $-a\otimes S(b)-a\otimes b$. This gives the relation $a\otimes b=-S(a)\otimes b$, and together with symmetry derived above, its consequence $a\otimes b=S(a)\otimes S(b)$.

This completes the first proof.
\end{proof}

\begin{proof}[Second proof of Theorem~\ref{thm:rank2pres}:]
The statement follows immediately from Proposition~\ref{prop:glpres} and the fact that the natural map $\Out(F_2)\to\GL_2(\Z)$ is an isomorphism. 
\end{proof}

\begin{proof}[Proof of Theorem~\ref{thm:omega2pres}]

We follow the first proof of Theorem~\ref{thm:rank2pres}, but we should only mod out by the first type of boundary, and the second type of boundary only when one of the two loops does not have an element of $H$ adorning it. By an IHX relation, this becomes $2(1\otimes b)=0$.
\end{proof}

\section{Cohomology of $\GL_3(\mathbb Z)$}
In this section, we prove some preliminary results about the cohomology of $\GL_3(\Z)$ before considering the rank $3$ spaces $\homfunctor_3(H)$ and $\Omega_3(H)$.

\newcommand{\grp}{\GL_3(\mathbb Z)}
Let $W=\F\{x,y,z\}$ be the standard representation of $\GL_3(\mathbb Z)$. Then the irreducible finite dimensional $\GL_3(\F)$ modules are all of the form $\Psi_{(a,b,c)}=\SF{(a,b,c)}(W)\otimes (\det)^k$ for a partition $\lambda=(a,b,c)$ and $k\in\mathbb Z$. Furthermore $\Psi_{(a,b,c)}=\Psi_{(a+k,b+k,c+k)}\otimes (\det)^{-k}$, allowing us to extend  $\Psi_{(a,b,c)}$ to arbitrary triples $a\geq b\geq c$ of possibly negative integers. Note that all representations are represented by $\Psi_{(a,b,0)}\otimes (\det)^k$ for some $k$. We also mention that the dual representation satisfies $\Psi^*_{(a,b,c)}=\Psi_{(-c,-b,-a)}$.

In this section, we shall be concerned with calculating $H^3(\GL_3(\mathbb Z); \Psi_{(a,b,0)}\otimes (\det)^k)$. A first observation is that $-I_3\in \GL_3(\mathbb Z)$ is central, and if $-I_3$ acts nontrivially on an irreducible module $M$, then, thinking of $-I_3$ as generating a copy of $\mathbb Z_2$ inside $\GL_3(\Z)$, we have $H^k(\GL_3(\Z);M)\cong H^k(\mathrm{PGL}_3(\Z);H^0(\Z_2;M))\cong
 H^k(\mathrm{PGL}_3(\Z);M^{\mathbb Z_2})= H^k(\mathrm{PGL}_3(\Z);0)=0$.
 
As a result, $H^3(\GL_3(\mathbb Z);\Psi_{(a,b,c)}\otimes (\det)^k)=0$ if $a+b+c+k$ is odd. So we now confine our attention to the even case and assume we have a representation $\Psi_{(a,b,c)}$ where $a+b+c$ is even.

Let $M$ be an arbitrary irreducible representation and let us temporarily switch to homology cf. Remark~\ref{rem:dual}. Then there is a decomposition
$$H_3(\grp;M)=H^\partial_3(\grp;M)\oplus H^{\mathrm{cusp}}_3(\grp;M)$$
into \emph{boundary} and \emph{cuspidal} homology respectively \cite{Ash-Grayson-Green}. It follows from a theorem of Borel and Wallach \cite[II.6.12]{Borel-Wallach} that $H^{\mathrm{cusp}}_3(\grp;M)=0$ unless $M$ is self dual. The only self dual module in even degree is $\Psi_{(2g,g,0)}$ for $g$ even. Let $s_w$ be the dimension of the space of cusp forms of weight $w$.
It is known that for $g$ even $\dim[H^{\mathrm{cusp}}_3(\grp;\Psi_{(2g,g,0)})]\geq s_{g+2}$  arising from the symmetric square construction \cite{ash-pollack}. In fact, it is conjectured that
\begin{conjecture}[Ash-Pollack]\label{conj:ash-pollack}
$$\dim[H^{\mathrm{cusp}}_3(\grp;\Psi_{(2g,g,0)})]= s_{g+2}$$
\end{conjecture}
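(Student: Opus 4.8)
The plan is to prove the conjecture by transporting the cohomology of $\grp$ to automorphic representations of $\GL_3(\mathbb{A})$, in exact analogy with the way the Eichler–Shimura isomorphism was used for $\GL_2$ in the previous section. By Franke's theorem together with Borel's comparison of $(\cg,K)$-cohomology with arithmetic-group cohomology, the cuspidal part decomposes as
$$H^{\mathrm{cusp}}_3(\grp;\Psi_{(2g,g,0)})\cong\bigoplus_\pi H^3\big(\cg,K;\pi_\infty\otimes\Psi_{(2g,g,0)}\big)\otimes(\pi_f)^{K_f},$$
where $\pi$ runs over cuspidal automorphic representations of $\GL_3(\mathbb{A})$ and $K_f=\GL_3(\widehat{\Z})$ is the level-one compact. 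Only everywhere-unramified $\pi$ can contribute, and for these $\dim(\pi_f)^{K_f}=1$ by multiplicity one and uniqueness of the spherical vector. The known bound $\geq s_{g+2}$ is produced by the symmetric square lifts $\mathrm{Sym}^2\sigma$ of the level-one weight-$(g+2)$ holomorphic cusp forms $\sigma$ on $\GL_2$, so the entire content of the conjecture is the reverse inequality: that these lifts are the \emph{only} contributions.

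The key steps would then be as follows. First, I would use the Borel–Wallach reduction already recorded above, by which only self-dual $\pi$ occur, so the sole relevant module is $\Psi_{(2g,g,0)}$. Second, I would invoke the classification of self-dual cusp forms on $\GL_3$: since $3$ is odd, such a $\pi$ is necessarily of orthogonal type, and by the converse to the Gelbart–Jacquet symmetric square lift (functorial descent from $\GL_3$ to $\GL_2$, or Arthur's classification applied to the relevant endoscopic group) every self-dual cuspidal $\pi$ of $\GL_3$ equals $\mathrm{Sym}^2\sigma$ for a cuspidal $\sigma$ on $\GL_2$, unique up to $\sigma\mapsto\sigma\otimes\chi$. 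Third, I would track conductors, showing that an everywhere-unramified $\mathrm{Sym}^2\sigma$ forces $\sigma$ itself to be unramified at every prime (a local comparison of the conductor of $\sigma_p$ with that of $\mathrm{Sym}^2\sigma_p$, with care taken to eliminate the quadratic-ramified edge cases), so that $\sigma$ is a level-one form. Fourth, I would match archimedean data: after the determinant twist centering $\Psi_{(2g,g,0)}$ to $\Psi_{(g,0,-g)}$, its Harish-Chandra parameter $\lambda+\rho=(g+1,0,-g-1)$ coincides with the symmetric square of the discrete-series parameter of a weight-$(g+2)$ form, pinning the weight to $g+2$. Finally, a Vogan–Zuckerman computation gives $\dim H^3\big(\cg,K;\pi_\infty\otimes\Psi_{(2g,g,0)}\big)=1$ for the tempered $\pi_\infty=\mathrm{Sym}^2\sigma_\infty$ (here $\ell_0=1$, $q_0=2$, so the cohomology sits in degrees $2,3$ each one-dimensional), whence each of the $s_{g+2}$ forms contributes exactly one dimension and equality follows.

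The main obstacle is the second step: the upper bound is precisely the assertion that there are no ``exotic'' self-dual, everywhere-unramified cusp forms on $\GL_3$ beyond the symmetric squares, and establishing this cleanly requires the full strength of functorial descent (equivalently Arthur's classification), which is far deeper than anything entering the $\GL_2$ computation of the previous section. No argument internal to $\grp$ — purely cohomological or combinatorial — appears to yield the sharp bound; one genuinely needs automorphic input to rule out extra self-dual level-one forms. The conductor bookkeeping and the archimedean multiplicity are in principle routine once descent is in hand, but they interact with the centering twist and the self-duality constraint and so must be carried out with care. For these reasons I would expect the equality to remain conditional in general, as it does here, with unconditional verification available only for small $g$, where the level-one spaces can be enumerated directly — matching the numerical evidence of Ash–Pollack in \cite{ash-pollack}.
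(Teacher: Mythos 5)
You should first note that the paper does not prove this statement at all: it is quoted verbatim as an open conjecture of Ash--Pollack \cite{ash-pollack}, and the paper uses it only as conditional input (it is what would upgrade the inequality of Theorem~\ref{thm:rank3intro} to an equality in even degrees, together with the Allison--Ash--Conrad boundary conjecture). So there is no proof in the paper to compare against, and your closing assessment --- that the equality should remain conditional, with only the lower bound $\geq s_{g+2}$ unconditional via symmetric square lifts --- is exactly consistent with how the paper treats it. That part of your write-up is sound.

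However, your sketch misplaces where the open difficulty actually lies, and the misplaced step would fail. Your first step asserts that the Borel--Wallach reduction shows ``only self-dual $\pi$ occur.'' What \cite[II.6.12]{Borel-Wallach} gives (and what the paper invokes) is that the \emph{coefficient module} $M$ must be self-dual for nonvanishing cuspidal cohomology; the infinitesimal-character matching then forces $\pi_\infty$ to be essentially self-dual, but it does \emph{not} force the global representation $\pi$ to be self-dual. Non-self-dual cuspidal cohomology classes genuinely exist for congruence subgroups of $\GL_3(\Z)$ (the van Geemen--Top examples at higher level), and the entire open content of the Ash--Pollack conjecture is ruling such classes out at level one. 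Your second step --- descent for self-dual cusp forms on $\GL_3$ --- cannot touch this: it classifies only the self-dual $\pi$, and in fact for $\GL_3$ that descent (every self-dual cuspidal $\pi$ is a Gelbart--Jacquet lift $\mathrm{Sym}^2\sigma$, up to twist) is a long-known theorem rather than the deep missing ingredient you make it out to be. The conductor bookkeeping and the $(\cg,K)$-cohomology multiplicity computation in your later steps are fine for the self-dual contribution, but the argument as structured proves only that the \emph{self-dual} part of $H^{\mathrm{cusp}}_3$ has dimension $s_{g+2}$, leaving the conjecture exactly as open as before.
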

For the boundary homology, in \cite{allison-ash-conrad} the authors construct three subsets of $H^\partial_3(\grp;M)$ and conjecture that they span all of it. They remark that a proof that the subsets span should follow from arguments similar to those found in Lee and Schwermer's paper \cite{LS}. To explain their result, consider the two subgroups $P$ and $Q$ of $\grp$ which are the stabilizers of the lines $(0,0,*)^t$ and $(*,0,0)$ respectively. Let $B=P\cap Q$ be the group of lower triangular matrices. The unipotent radicals of $P$ and $Q$ can be written as
$$U_P=\begin{pmatrix}1&0&0\\0&1&0\\
*&*&1\end{pmatrix}
\hspace{2em}
U_Q=\begin{pmatrix}1&0&0\\
*&1&0\\
*&0&1\end{pmatrix}
$$
Paraphrasing their notation, define homomorphisms $L_P,L_Q\colon \GL_2(\Z)\to \grp$  via
$$X\mapsto \begin{pmatrix}X&0\\0&1\end{pmatrix} \hspace{2em} X\mapsto \begin{pmatrix}1&0\\0&X\end{pmatrix}$$ 
respectively. Finally, define the antisymmetrizing operator $A=\sum_{\sigma\in \Sigma_3} (-1)^{|\sigma|}\sigma$ where the symmetric group $\Sigma_3$ is realized as permutation matrices in $\grp$. 

Finally, recall the three defining equations for $H_1(\GL_2(\Z);M\otimes(\det))$:
\begin{equation}
m(1+\smat\tmat+(\smat\tmat)^2)=m(1+\smat)=m(1+\tau)=0.\tag{$\dagger$}
\end{equation}
Now they define three subsets of $M$: 
\begin{enumerate}
\item $\{A(m)\in M\,|\,\ m\in M^B\}$,  
\item $\{A(m)\,|\, m \in M^{U_P} \text{ and $m$ satisfies ($\dagger$) under $L_P$}\}$
\item $\{A(m)\,|\, m \in M^{U_Q} \text{ and $m$ satisfies ($\dagger$) under $L_Q$}\}$
\end{enumerate}

\begin{theorem}[Allison-Ash-Conrad]
$H_3^\partial(\grp;M)$ can naturally be regarded as a subspace of $M$, and the three subsets above are contained in $H_3^\partial(\grp;M)$. Conjecturally these are everything.
\end{theorem}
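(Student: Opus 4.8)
The plan is to compute $H_3(\grp;M)$ from the Borel--Serre bordification $\overline X$ of the symmetric space $X=\SL_3(\R)/\mathrm{SO}(3)$. Since $\dim X=5$ while $\SL_3$ has $\mathbb Q$-rank $2$, we have $\mathrm{vcd}(\grp)=3$, so degree $3$ is top degree. Over a characteristic-$0$ field $\overline X$ is contractible with finite stabilizers, so $H_*(\grp;M)=H_*(\overline X/\grp;M)$, and I define $H_3^\partial(\grp;M)$ to be the image of $i_*\colon H_3(\partial(\overline X/\grp);M)\to H_3(\grp;M)$ induced by the inclusion of the boundary. The boundary is covered by faces $e(R)$ indexed by $\grp$-conjugacy classes of proper parabolics; because $\GL_3(\Z)$ acts transitively on primitive vectors, on primitive planes, and on unimodular full flags, there are exactly three such classes, represented by the line stabilizer $P$, the plane stabilizer $Q$, and the Borel $B=P\cap Q$ (these are precisely the subgroups of the statement, as one reads off from $U_P$ and $U_Q$). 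Each $\Gamma_R=\grp\cap R$ satisfies $\mathrm{vcd}(\Gamma_R)=3$, so all three faces contribute in degree $3$.

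I would compute the boundary homology through a spectral sequence whose $E^1$-term in total degree $3$ is $H_3(\Gamma_B;M)\oplus H_3(\Gamma_P;M)\oplus H_3(\Gamma_Q;M)$, and compute each summand via the fibration $\Gamma_{N_R}\backslash N_R\to e(R)/\Gamma_R\to \mathcal X_{L_R}$ over the locally symmetric space of the Levi, with associated Hochschild--Serre spectral sequence $H_p(\Gamma_{L_R};H_q(\mathfrak n_R;M))\Rightarrow H_{p+q}(\Gamma_R;M)$; here the nilmanifold homology is the Lie algebra homology $H_q(\mathfrak n_R;M)$, whose $L_R$-structure is governed by Kostant's theorem. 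For $R=P$ the Levi is $\GL_2\times\GL_1$ (embedded via $L_P$), the nilradical $\mathfrak u_P$ is $2$-dimensional, and the only $(p,q)$ with $p+q=3$ and $p\le\mathrm{vcd}(\GL_2(\Z))=1$ is $(1,2)$. Since $\wedge^2\mathfrak u_P$ supplies a $\det$-twist, the surviving term is $H_1(\GL_2(\Z);(\,\cdot\,)\otimes\det)$, which by Remark~\ref{rem:dual} and Corollary~\ref{cor:detrep} is exactly the subspace of $M$ killed by the relations $(\dagger)$; combined with the $U_P$-invariance this yields subset (2). Symmetrically $Q$ yields subset (3), while for the Borel corner the Levi is a torus whose arithmetic group is finite, so $p=0$, $q=3$ forces only $M^B$ to survive, yielding subset (1).

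Next I would exhibit the embedding $H_3^\partial(\grp;M)\hookrightarrow M$ and identify $A$. In the modular-symbol (sharbly) model of top-degree homology, a class is represented by a unimodular symbol $[v_1,v_2,v_3]\otimes m$; the orientation relation $[e_{\sigma(1)},e_{\sigma(2)},e_{\sigma(3)}]\otimes m=(-1)^{|\sigma|}[e_1,e_2,e_3]\otimes\sigma m$ together with $B$-equivariance shows that the well-defined invariant attached to the standard symbol $[e_1,e_2,e_3]\otimes m$ is precisely $A(m)=\sum_{\sigma\in\Sigma_3}(-1)^{|\sigma|}\sigma m\in M$, the antisymmetrizer encoding both the orientation of the standard cone and the transfer from the flag stabilizer to all of $\grp$. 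This simultaneously realizes $H_3^\partial$ as a subspace of $M$ and shows that applying $A$ to any vector satisfying the face condition of the previous step produces a class in $H_3^\partial$, giving the claimed containments (1)--(3).

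The main obstacle is the gluing of the three faces: $e(P)$ and $e(Q)$ meet along the Borel corner $e(B)$, so verifying that the cycles built from subsets (1)--(3) are genuinely closed requires controlling the $E^1$-differentials of the boundary spectral sequence between the codimension-$1$ faces and the codimension-$2$ corner --- equivalently, checking that the orientation signs coming from the face identifications agree with those produced by $A$. The \emph{containment} statement needs only that each constructed class is closed, which is a direct (if delicate) check; the reverse inclusion --- that subsets (1)--(3) \emph{span} $H_3^\partial$ --- is the genuinely hard part, as it demands a vanishing argument of Lee--Schwermer type ruling out all further boundary contributions, which is exactly why it is left conjectural.
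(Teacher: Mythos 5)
There is nothing in the paper to compare against here: this statement is imported verbatim from \cite{allison-ash-conrad}, and the paper supplies no proof of its own, only the citation. Your sketch is, as far as it goes, a faithful reconstruction of the architecture of that reference, and it is consistent with how the paper actually uses the theorem: the three $\GL_3(\Z)$-conjugacy classes of proper parabolics give the three faces; the Hochschild--Serre computation at a maximal face, with the $\det$-twist supplied by $\wedge^2\mathfrak u_P$ (resp. $\wedge^2\mathfrak u_Q$) in the $(p,q)=(1,2)$ slot, is exactly the identification of the type (2) and (3) subsets with $H_1(\GL_2(\Z);\Psi\otimes(\det))$ that the paper's proof of Theorem~\ref{thm:bdrycalc} relies on; the Borel corner forces $(p,q)=(0,3)$ and yields $M^B$; and you correctly leave the spanning statement conjectural, pending Lee--Schwermer \cite{LS} type arguments, just as Allison--Ash--Conrad do.

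That said, your proposal remains a blueprint at three points, which you should acknowledge are genuine verifications and not formalities: the identification of nilmanifold homology with Lie algebra homology for the nontrivial algebraic coefficient module (van Est/Nomizu); the Mayer--Vietoris gluing of the faces $e(P)$, $e(Q)$ along the corner $e(B)$ with matching orientation signs, which you flag but do not execute, and which is precisely what makes ``each constructed class is closed'' delicate; and the realization of $H_3^\partial$ inside $M$, which rests on Ash's unimodular-symbol presentation of top-degree cohomology \cite{ash80} together with the duality of Remark~\ref{rem:dual}, not merely on the heuristic that the standard symbol evaluates to $A(m)$. None of these appears to fail, but a complete proof would have to carry them out as in \cite{allison-ash-conrad}.
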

\begin{remark}
The matrix $h_2=\begin{bmatrix}0&-1\\ 1&-1\end{bmatrix}$ which they use satisfies $\delta h_2\delta=st$. 
 Now $\delta=\tau s$, so the relations $\id+\tau=\id+s=0$ imply that $m\cdot\delta=m$.
So in the presence of these relations, $m\cdot (\id+st+(st)^2)=0$ is an equivalent relation to $m\cdot( \id+h_2+h_2^2)=0$.
%In fact, the relations $m(1+\smat\tmat+(\smat\tmat)^2)$ and $m(\id+h_2+h_2^2)$ are both equivalent to:
%$$\tp{1.5cm}{\varboxtensor\varid{1}}-
%\tp{1.5cm}{\varboxtensor\varopT{0}{0}{1}}-\tp{1.5cm}{\varboxtensor\varopT{1}{0}{1}}=0.$$
\end{remark}

Our first task is to identify the invariants $M^B,M^{U_P}$ and $M^{U_Q}$. We take as our model for $M=\Psi_{(a,b,c)}$ the $\grp$ submodule of $W^{\otimes {a+b+c}}$ generated by $f=x^{a-b}[x,y]^{b-c}(A(xyz))^c$. In the following lemma,  $\sym^k(x,y)$ denotes the $k$th symmetric power of the $\F$-vector space spanned by $x,y$.
\begin{lemma}\
\begin{enumerate}
\item $M^B$ is $0$ unless $a,b,c$ are even, in which case $M^B\cong \F\{f\}$. 
\item $M^{U_P}= \sym^{a-b}(x,y)[x,y]^{b-c}(A(xyz))^c\cong\SF{(a,b)}(\F\{x,y\})$.
\item $M^{U_Q}=x^{a-b}\sym^{b-c}([x,y],[x,z])(A(xyz))^c\cong \SF{(b,c)}(\F\{y,z\})$.
\end{enumerate}
\end{lemma}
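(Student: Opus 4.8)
The plan is to translate the three invariant computations into elementary highest-weight theory for $\GL_3$ and its two maximal parabolic Levi subgroups. First I would fix conventions so that the torus acts on the basis $x,y,z$ of $W$ with strictly decreasing weights $\epsilon_1,\epsilon_2,\epsilon_3$ and so that the lower-triangular unipotent matrices act by the \emph{raising} operators; concretely, write $R_{21}\colon y\mapsto x$, $R_{31}\colon z\mapsto x$, $R_{32}\colon z\mapsto y$ for the three positive root vectors, extended to $W^{\otimes(a+b+c)}$ as derivations. A one-line check shows each of these kills $f=x^{a-b}[x,y]^{b-c}(A(xyz))^c$: any surviving term would contain a wedge with a repeated basis vector (inside $[x,y]=x\wedge y$ or $A(xyz)=x\wedge y\wedge z$) and hence vanishes. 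Thus $f$ is a highest weight vector of weight $(a,b,c)$. Under these conventions the integer unipotent groups of the lemma are generated by the elementary matrices attached to these operators: $N_B$ (the unipotent part of the lower-triangular group $B$) by $\{R_{21},R_{31},R_{32}\}$, $U_P$ by $\{R_{31},R_{32}\}$, and $U_Q$ by $\{R_{21},R_{31}\}$. Since we are in characteristic $0$ and $I+E_{ij}$ acts as $\exp(R_{ij})$ with $R_{ij}$ nilpotent, a vector fixed by all powers $(I+E_{ij})^{t}$ is fixed by a polynomial in $t$ and hence annihilated by $R_{ij}$; so invariance under each of these groups is the same as annihilation by the corresponding raising operators.

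For part (1), the common kernel of all three positive root vectors in the irreducible module $M=\Psi_{(a,b,c)}$ is precisely its one-dimensional highest weight space $\F\{f\}$, so $M^{N_B}=\F\{f\}$. Writing $B=N_B\rtimes D$ with $D\cong(\Z/2)^{3}$ the diagonal sign matrices, the element $\mathrm{diag}(\epsilon_1,\epsilon_2,\epsilon_3)$ scales the weight vector $f$ by $\epsilon_1^{a}\epsilon_2^{b}\epsilon_3^{c}$. Hence $M^{B}=(\F\{f\})^{D}$ equals $\F\{f\}$ when $a,b,c$ are all even and is $0$ otherwise, which is exactly (1).

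For parts (2) and (3) I would invoke the standard fact that, for the Levi decomposition $P'=LU$ of a parabolic and an irreducible $\GL_3$-module $M$, the invariant space $M^{U}$ is an irreducible $L$-module, namely the one generated by the highest weight vector $f$ (this is the degree-zero case of Kostant's description of $H^{\bullet}(\mathfrak u;M)$). In case (2), $U=U_P$ has Levi $L_P=\GL(\langle x,y\rangle)\times\GL(\langle z\rangle)$, and the $L_P$-span of $f$ is $S:=\sym^{a-b}(x,y)\,[x,y]^{b-c}(A(xyz))^{c}$, because $\GL(\langle x,y\rangle)$ spreads the factor $x^{a-b}$ across $\sym^{a-b}(x,y)$ while fixing the two wedge factors up to the scalars $\det^{b-c}$ and $\det^{c}$. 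As $L_P\subset\GL_3$ and $M$ is $\GL_3$-stable, $S\subseteq M$; and $R_{31},R_{32}$ annihilate $S$ since the only $z$'s in $S$ sit inside $A(xyz)$, which they kill. Thus $S\subseteq M^{U_P}$, and as $S$ is $L_P$-irreducible with the same highest weight as $M^{U_P}$, the two coincide; reading off the $\GL(\langle x,y\rangle)$-type gives $S\cong\SF{(a-b)}\otimes(\det)^{b}\cong\SF{(a,b)}(\F\{x,y\})$. Part (3) is the mirror argument with $L_Q=\GL(\langle x\rangle)\times\GL(\langle y,z\rangle)$: now $R_{21},R_{31}$ force $y$ and $z$ to occur only antisymmetrized against $x$ (inside $[x,y]$, $[x,z]$, or $A(xyz)$), the $L_Q$-span of $f$ is $S'=x^{a-b}\sym^{b-c}([x,y],[x,z])(A(xyz))^{c}$, and its $\GL(\langle y,z\rangle)$-type is $\SF{(b-c)}\otimes(\det)^{c}\cong\SF{(b,c)}(\F\{y,z\})$.

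The one step that genuinely needs care is the reverse inclusion $M^{U_P}\subseteq S$ (and its analogue for $U_Q$): the explicit computation only yields the easy containment $S\subseteq M^{U_P}$. I would settle it by the irreducibility of $M^{U_P}$ as an $L_P$-module quoted above, which forces $\dim M^{U_P}=\dim S$; alternatively, one can give a self-contained proof by grading $W^{\otimes(a+b+c)}$ by the number of tensor factors equal to $z$, noting that $R_{31},R_{32}$ drop this grading by one and that, via the PBW decomposition $M=\mathcal U(\mathfrak u_P^{-})\cdot S$, no element outside $S$ can be killed by both. This dimension/irreducibility bookkeeping is the main technical point; everything else is the weight calculation recorded above.
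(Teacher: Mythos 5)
Your proposal is correct, and it is in fact more complete than the paper's own proof. The paper verifies only the easy containments (that $\F\{f\}$, $\sym^{a-b}(x,y)[x,y]^{b-c}(A(xyz))^c$ and $x^{a-b}\sym^{b-c}([x,y],[x,z])(A(xyz))^c$ are invariant under $B$, $U_P$, $U_Q$ respectively), and then explicitly declines to prove the reverse inclusions, remarking that ``a somewhat tedious calculation'' establishes equality but that the containments alone suffice for Theorem~\ref{thm:bdrycalc}. You instead prove the stated equalities by highest-weight theory, and the two nontrivial ingredients you supply are exactly right: first, the observation that invariance under the $\Z$-points of these unipotent groups is equivalent, in characteristic $0$, to annihilation by the raising operators $R_{21},R_{31},R_{32}$ --- since $(I+E_{ij})^t$ acts by $\exp(tR_{ij})$, polynomial in $t$, and fixedness for infinitely many integers $t$ kills all coefficients --- which cleanly bridges the paper's arithmetic groups to the algebraic setting; second, the degree-zero case of Kostant's theorem (equivalently, complete reducibility of the Levi action plus uniqueness of the highest-weight line), which makes $M^{\mathfrak u_P}$ and $M^{\mathfrak u_Q}$ irreducible Levi modules generated by $f$ and thereby forces the reverse inclusions $M^{U_P}\subseteq S$ and $M^{U_Q}\subseteq S'$ --- precisely the direction the paper omits. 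Your weight bookkeeping also checks out: $f$ has weight $(a,b,c)$ and is killed by all three raising operators, the diagonal sign matrices scale $f$ by $\epsilon_1^a\epsilon_2^b\epsilon_3^c$ (giving the parity condition in part (1)), and the Levi spans of $f$ carry determinant twists $\det^{b-c}\cdot\det^{c}$, yielding $\SF{(a,b)}(\F\{x,y\})$ and $\SF{(b,c)}(\F\{y,z\})$ as claimed. The trade-off between the two approaches: the paper's route is minimal and purely computational, proving only what its application needs, while yours buys the actual equalities asserted in the lemma at the modest cost of citing a standard structural fact; the only under-detailed spot in your write-up is the alternative PBW sketch at the end, which is redundant given the Kostant argument and can be dropped.
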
 
\begin{proof}
It is easy to check that $\F\{f\}$ is $B$-invariant,
$\sym^{a-b}(x,y)[x,y]^{b-c}(A(xyz))^c$ is $U_P$ invariant, and that  $x^{a-b}\sym^{b-c}([x,y],[x,z])(A(xyz))^c$ is $U_Q$ invariant. A somewhat tedious caclulation shows that these actually generate the entire invariant subspaces. However, the obvious containment is all that will be needed to prove Theorem~\ref{thm:bdrycalc}, so we omit the proof of equality.
\end{proof}
\begin{lemma}\
\begin{enumerate}
\item Suppose $b>c$. Then $\ker A|_{M^{U_P}}=\{m\in M^{U_P} \,|\, (1-L_P(\tau))m=0\}$. 
\item Suppose $a>b$. Then $\ker A|_{M^{U_Q}}=\{m\in M^{U_Q} \,|\, (1-L_Q(\tau))m=0\}$
\item $A[M^{U_P}]\cap A[M^{U_Q}]$ is at most one-dimensional  and is spanned by $A(f)$. $A(f)\neq 0$ iff one of the three following conditions is satisfied
 \begin{enumerate}
 \item $a>b>c$.
 \item $a=b$ and $b$ is odd.
\item $b=c$ and $b$ is odd.
\end{enumerate}
\end{enumerate}
\end{lemma}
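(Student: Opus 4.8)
The plan is to prove all three parts by the same mechanism: factor the antisymmetrizer $A$ through a single transposition and then read off the answer from the torus weights. Throughout I grade $M=\Psi_{(a,b,c)}\subseteq W^{\otimes(a+b+c)}$ by the weight $(p,q,r)$ counting the number of $x$'s, $y$'s and $z$'s, and I let $\Sigma_3$ act diagonally by permutation matrices, so that $L_P(\tau)$ is the swap $(x\,y)$ fixing $z$ and $L_Q(\tau)$ is the swap $(y\,z)$ fixing $x$. The one algebraic tool I need is the group–algebra identity, valid for any transposition $\sigma$ with transversal $T$ of $\langle\sigma\rangle$,
$$
A=\Big(\sum_{g\in T}\mathrm{sgn}(g)\,g\Big)(1-\sigma),
$$
which holds because $g$ and $g\sigma$ carry opposite signs. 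I will also use the standard facts that $A[M]$ is the sign–isotypic subspace of $M$ (over a field of characteristic $0$), and that for a weight vector $v\in M_\mu$ with stabilizer $H=\mathrm{Stab}(\mu)\subseteq\Sigma_3$ one has $A(v)\neq0$ if and only if $v$ transforms by the sign character of $H$.

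For part (1), I would first record that for $m=p(x,y)[x,y]^{b-c}(A(xyz))^c\in M^{U_P}$ the swap acts by $(x\,y)\cdot m=(-1)^{b}\,p(y,x)[x,y]^{b-c}(A(xyz))^c$, so $(1-L_P(\tau))m=0$ exactly says $p(x,y)=(-1)^b p(y,x)$. The inclusion $\{(1-L_P(\tau))m=0\}\subseteq\ker A$ is then immediate from the factorization. For the converse, set $w=(1-L_P(\tau))m$ and write $A(m)=\big(e-(x\,z)-(y\,z)\big)w$ using $T=\{e,(x\,z),(y\,z)\}$. Every weight of $w$ has third coordinate $c$ and first two coordinates $\geq b>c$ (this is where $b>c$ is used), while $(x\,z)w$ has first coordinate $c$ and $(y\,z)w$ has second coordinate $c$; hence on the weight spaces with both of the first two coordinates $>c$ the two subtracted terms drop out and $A(m)$ restricts to $w$. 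Thus $A(m)=0$ forces $w=0$. Part (2) is the mirror image: using $T=\{e,(x\,y),(x\,z)\}$ and the fact that every weight of $M^{U_Q}$ has first coordinate $a$ and the remaining coordinates $\leq b<a$, the hypothesis $a>b$ isolates $w'=(1-L_Q(\tau))m$ in the weight spaces with first coordinate $a$, giving $A(m)=0\Rightarrow w'=0$.

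For part (3), the key is that an element of $A[M^{U_P}]\cap A[M^{U_Q}]$ is sign–isotypic and is supported, weightwise, only on $\Sigma_3$–orbits common to the weights occurring in $M^{U_P}$ and in $M^{U_Q}$. Each weight of $M^{U_P}$ is $(i+b,a-i,c)$ and so contains the entry $c$; each weight of $M^{U_Q}$ is $(a,j+c,b-j)$ and so contains the entry $a$. When $a>b>c$, the only $P$-orbit containing $a$ is $\{a,b,c\}$, so that is the unique common orbit; it is regular, and since the $(a,b,c)$ weight space of the irreducible $M$ is the one–dimensional highest weight line $\F\{f\}$, the sign–isotypic vectors supported there are exactly $\F\{A(f)\}$, giving the bound. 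In the boundary cases $a=b$ (respectively $b=c$) one has $M^{U_P}\cong\SF{(b,b)}(\F\{x,y\})=\F\{f\}$ (respectively $M^{U_Q}\cong\SF{(b,b)}(\F\{y,z\})=\F\{f\}$), so one of the two images is already one–dimensional and the bound is automatic. To finish, I compute the stabilizer of the weight $(a,b,c)$ of $f$ and the action of its generators: $(x\,y)f=(-1)^b f$ when $a=b$, $(y\,z)f=(-1)^b f$ when $b=c$, and trivial stabilizer when $a>b>c$. By the sign–character criterion these give $A(f)\neq0$ precisely under conditions (a), (b), (c) (with $a=b=c$ falling under ``$b$ odd'' via the $\Sigma_3$–stabilizer).

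The hard part will be the weight–orbit analysis of part (3): one must check carefully that $\{a,b,c\}$ is genuinely the \emph{only} orbit shared by the supports of $A[M^{U_P}]$ and $A[M^{U_Q}]$ — this rests on the strict inequalities $a>b>c$ and degenerates exactly at the equalities, which must be disposed of separately by the one–dimensionality observation — and then to connect the resulting one–dimensionality to multiplicity one of the highest weight and to the sign–character vanishing criterion. By contrast, parts (1) and (2) are routine once the factorization of $A$ and the weights of the generators are written down.
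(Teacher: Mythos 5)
Your proposal is correct, and while parts (1) and (2) are essentially the paper's argument in different clothing, your part (3) takes a genuinely different route. For (1) and (2), the paper projects $A\cdot m$ onto the three components $\F[x,y](A(xyz))^c\oplus\F[x,z](A(xyz))^c\oplus\F[y,z](A(xyz))^c$ (these are exactly your coset sectors, kept disjoint by $b>c$, resp.\ $a>b$) and reads off $m=L_P(\tau)\cdot m$; your factorization $A=\bigl(\sum_{g\in T}\mathrm{sgn}(g)g\bigr)(1-\sigma)$ has the minor virtue of making the easy inclusion $\ker(1-L_P(\tau))\subseteq\ker A$ completely formal, which the paper leaves implicit. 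For (3), the paper equates $A(\varphi(x,y)[x,y]^{b-c}(A(xyz))^c)=A(x^{a-b}\psi([x,y],[x,z])(A(xyz))^c)$, projects to the $\F[x,y](A(xyz))^c$ sector, observes that mixed monomials of $\psi$ involve all three variables and die under the projection, forcing $\psi=rA^{b-c}$ or $rB^{b-c}$, derives the identity $\varphi(x,y)-(-1)^b\varphi(y,x)=rx^{a-b}-(-1)^b ry^{a-b}$, and concludes by writing out the six-term expansion of $\xi=A(f)$ explicitly and inspecting when it vanishes. You instead argue structurally: elements of $A[M^{U_P}]\cap A[M^{U_Q}]$ are sign-isotypic and supported on common $\Sigma_3$-orbits of weights; for $a>b>c$ the unique common orbit is the regular orbit of $(a,b,c)$, where multiplicity one of the highest weight pins the intersection inside $\F\{A(f)\}$; the boundary cases $a=b$ and $b=c$ are absorbed by the one-dimensionality of $M^{U_P}$, resp.\ $M^{U_Q}$; and the vanishing of $A(f)$ is decided by the sign character of the weight stabilizer rather than by explicit expansion. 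Your route buys uniformity — the degenerate cases, including $a=b=c$, come for free from the stabilizer analysis, and the weight-orbit argument is more robust than the paper's ad hoc "pure power" projection step — at the cost of invoking multiplicity-one and the isotypic-support formalism; the paper's computation is more elementary and yields the explicit vector $\xi$, which it reuses afterwards (in the proof of Theorem~\ref{thm:bdrycalc}, where $A(\alpha)$ and $A(\beta)$ are checked to land in the span of $A(f)=\xi$). One small caveat: your general criterion "$A(v)\neq 0$ iff $v$ transforms by the sign character of $\mathrm{Stab}(\mu)$" is accurate as stated only when $v$ is an eigenvector of the stabilizer (in general one needs the sign-isotypic component of $v$ under $H$ to be nonzero), but every $f$ to which you apply it is such an eigenvector, with eigenvalue $(-1)^b$ exactly as you computed, so the application is sound.
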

\begin{proof}
For the first claim, let $m=\varphi(x,y)[x,y]^{b-c}(A(xyz))^c$, and
observe that 
$$A\cdot m\in \F[x,y] (A(xyz))^c\oplus \F[x,z] (A(xyz))^c\oplus \F[y,z] (A(xyz))^c.$$ 
Suppose $A\cdot m=0$. Projecting to each component (assuming $b>c$) all yield the same condition on $m$, namely that $m=L_P(\tau)\cdot m$.

For the second claim, consider $m=(x^{a-b}\varphi([x,y],[x,z])(A(xyz))^c$ and
 $$A\cdot m\in x^{a-b}\F[x,y,z] (A(xyz))^c\oplus y^{a-b}\F[x,y,z] (A(xyz))^c\oplus z^{a-b}\F[x,y,z] (A(xyz))^c\}.$$ Suppose that $A\cdot m=0$. Looking at each component again yields the desired equation $m=L_Q(\tau)\cdot m$, assuming $a>b$.

For the third claim, suppose $$A(\varphi(x,y)[x,y]^{b-c}(A(xyz))^c)=A(x^{a-b}\psi([x,y],[x,z])(A(xyz))^c).$$ Project to the subspace $\F[x,y](A(xyz))^c$. Note that if $A(x^{a-b}\psi([x,y],[x,z])(A(xyz))^c)\neq 0$, then $\psi(A,B)=rA^{b-c}$ or $\psi(A,B)=r B^{b-c}$. Otherwise  every term will involve all three generators. Suppose $ \psi(A,B)=rA^{b-c}$. Then we have an equation 
$$\varphi(x,y)[x,y]^{b-c}-(-1)^c\varphi(y,x)[y,x]^{b-c}=
rx^{a-b}[x,y]^{b-c}-r(-1)^cy^{a-b}[y,x]^{b-c}$$
which implies
$$\varphi(x,y)-(-1)^{b}\varphi(y,x)=rx^{a-b}-(-1)^{b}ry^{a-b}.$$
A similar calculation holds for $\psi(A,B)=rB^{b-c}$. 

Let 
\begin{align*}
\xi=&A(x^{a-b}[x,y]^{b-c}(A(xyz))^c)\\
=&(x^{a-b}[x,y]^{b-c}-(-1)^by^{a-b}[x,y]^{b-c}-(-1)^bz^{a-b}[y,z]^{b-c}-(-1)^cx^{a-b}[x,z]^{b-c}+\\
&y^{a-b}[y,z]^{b-c}+(-1)^{b-c}z^{a-b}[x,z]^{b-c})(A(xyz))^c
\end{align*}
Then our calculations imply that $A[M^{U_P}]\cap A[M^{U_Q}]$ is at most one dimensional, spanned by the vector $\xi$. If $a>b>c$, then $\xi\neq 0$. If $a=b$  then $\xi\neq 0\leftrightarrow b\text{ is odd}$. If $b=c$, then again $\xi\neq 0\leftrightarrow b\text{ is odd}$. 
\end{proof}

\begin{lemma}\label{lem:alphabeta}\
\begin{enumerate}
\item If $b=c$, then $\ker A|_{M^{U_P}}\cap\ker(1+\tau)$ is spanned by $\alpha=(x^{a-b}-y^{a-b})A(xyz)^b$.
\item If $a=b$, then $\ker A|_{M^{U_Q}}\cap\ker(1+\tau)$ is spanned by $\beta =x^{a-b}([x,y]^{b-c}-[x,z]^{b-c})(A(xyz))^c$.
\end{enumerate}
\end{lemma}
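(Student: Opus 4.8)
The plan is to treat both parts in parallel by an explicit computation in the polynomial model, exploiting the fact that the preceding lemma handled the nondegenerate ranges $b>c$ and $a>b$, whereas here the collapse $b=c$ (resp.\ $a=b$) requires a separate analysis: when $b=c$ the factor $[x,y]^{b-c}$ disappears, and the three ``components'' $\F[x,y],\F[y,z],\F[x,z]$ that were independent there now share the pure powers $x^n,y^n,z^n$, so the shape of $\ker A$ genuinely changes. For part (1) I would parametrize $M^{U_P}\cong\SF{(a,b)}(\F\{x,y\})$ by binary forms, writing a general element as $m=\varphi(x,y)\,(A(xyz))^b$ with $\varphi$ homogeneous of degree $n=a-b$, and record the two group actions on this model. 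Since any transposition sends $A(xyz)\mapsto -A(xyz)$, we have $\sigma\cdot(A(xyz))^b=(-1)^{b|\sigma|}(A(xyz))^b$; as $L_P(\tau)$ is the transposition $x\leftrightarrow y$, this gives $L_P(\tau)\cdot m=(-1)^b\varphi(y,x)(A(xyz))^b$, so $\ker(1+L_P(\tau))$ is cut out by $\varphi(y,x)=-(-1)^b\varphi(x,y)$, while $A\cdot m=(A(xyz))^b\sum_{\sigma\in\Sigma_3}(-1)^{|\sigma|}(-1)^{b|\sigma|}\sigma(\varphi)$.

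The key step is to show that, when restricted to $\ker(1+L_P(\tau))$, this six-term antisymmetrizer collapses to a cyclic sum. Using the symmetry relation $\varphi(v,u)=-(-1)^b\varphi(u,v)$ to rewrite the three transposition terms, the sum reduces, in \emph{both} parities of $b$, to
$$A\cdot m = 2\,(A(xyz))^b\,\rho(\varphi),\qquad \rho(\varphi):=\varphi(x,y)+\varphi(y,z)+\varphi(z,x).$$
Hence on $\ker(1+L_P(\tau))$ one has $A\cdot m=0$ iff $\rho(\varphi)=0$. A one-line coefficient comparison finishes the identification of $\ker\rho$: writing $\varphi=\sum_i c_i x^iy^{n-i}$, the coefficient of each genuinely mixed monomial $x^iy^{n-i}$ with $0<i<n$ in $\rho(\varphi)$ equals $c_i$, forcing $c_i=0$ there, after which the pure-power coefficients force $c_0+c_n=0$; thus $\ker\rho=\F\{x^n-y^n\}$. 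Since $x^n-y^n$ is antisymmetric, it survives into $\ker(1+L_P(\tau))$ exactly when $b$ is even, which is the relevant parity for the even-degree computation, and then the intersection is spanned by $\alpha=(x^{a-b}-y^{a-b})(A(xyz))^b$.

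Part (2) is the mirror image, carried out in $M^{U_Q}\cong\SF{(b,c)}(\F\{y,z\})$, which here ($a=b$) is spanned by $\psi(P,Q)\,(A(xyz))^c$ with $P=[x,y]$, $Q=[x,z]$ and $\deg\psi=b-c$. First note that $a+b+c$ even together with $a=b$ forces $c$ even, so the sign $(-1)^c$ coming from $(A(xyz))^c$ is trivial; then $L_Q(\tau)$ (the transposition $y\leftrightarrow z$) swaps $P\leftrightarrow Q$, and $\ker(1+L_Q(\tau))$ is the antisymmetric-in-$(P,Q)$ subspace. The antisymmetrizer is computed by letting $\Sigma_3$ act on the three basic commutators $P,Q,R=[y,z]$, which span $\ext^2(\F\{x,y,z\})$ and are permuted up to sign; restricting to $\ker(1+L_Q(\tau))$ again collapses $A$ to the corresponding cyclic sum, whose kernel is $\F\{P^{b-c}-Q^{b-c}\}$ by the same coefficient comparison. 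Multiplying back by $x^{a-b}=1$ identifies the intersection with $\beta=x^{a-b}([x,y]^{b-c}-[x,z]^{b-c})(A(xyz))^c$.

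The main obstacle is purely the sign bookkeeping: correctly propagating the factor $(-1)^{b|\sigma|}$ (resp.\ $(-1)^{c|\sigma|}$) contributed by the powers of the determinant element $A(xyz)$, and, in part (2), tracking the signs with which $\Sigma_3$ permutes the commutators $\{P,Q,R\}$, so as to verify that the antisymmetrizer really does collapse to a single cyclic-sum operator on the $\tau$-anti-invariants. The one genuine delicacy is the parity point in part (1): the form $x^n-y^n$ produced by $\ker\rho$ is antisymmetric, so it lies in $\ker(1+L_P(\tau))$ precisely when $b$ is even (for $b$ odd the intersection is $0$), whereas part (2) sidesteps this because $c$ is forced even.
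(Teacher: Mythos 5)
Your argument is, in substance, exactly the paper's proof written out in full: the paper likewise combines the $\tau$-condition with the antisymmetrizer to reduce $A\cdot m=0$ on $M^{U_P}$ to the cyclic equation $\varphi(x,y)+\varphi(y,z)+\varphi(z,x)=0$, observes that mixed monomials $x^uy^v$ ($u,v\geq 1$) cannot cancel there, and concludes ``by antisymmetry'' that $\varphi$ is a multiple of $x^{a-b}-y^{a-b}$; part (2) is dismissed as ``similar.'' Your explicit collapse computation and coefficient comparison for part (1) are correct, including the observation that for $b$ odd the intersection is actually $0$ --- a parity the paper's unconditional statement glosses over, harmlessly, since the lemma is only invoked when $a,b,c$ are all even.

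One correction, though: part (2) does \emph{not} sidestep the parity delicacy, and the governing parity there is that of $b-c$, not $c$. With $c$ even, on the $(P,Q)$-antisymmetric subspace the antisymmetrizer collapses to $2\bigl(\psi(P,Q)-\psi(-P,R)-\psi(-R,-Q)\bigr)$ --- a \emph{signed} cyclic sum, not the naive one --- so while the mixed coefficients of $\psi=\sum_i c_i u^i v^{d-i}$ ($d=b-c$) still vanish as you say, the pure-power coefficients appear as $c_d(1-(-1)^{d})$, $c_0(1-(-1)^{d})$, and $c_0+(-1)^{d}c_d$. For $d$ even this yields exactly $\F\{P^{d}-Q^{d}\}$, as you claim; but for $d$ odd it forces $c_0=c_d=0$, so the intersection is $0$ even though $\beta\neq 0$ does lie in $\ker(1+\tau)$: for instance with $a=b=1$, $c=0$ one computes $A([x,y]-[x,z])=4([x,y]-[x,z]+[y,z])\neq 0$. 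So part (2) carries the same even-parity caveat as part (1), with $b-c$ in place of $b$. This does not affect the paper's applications, which use the lemma only in the all-even case, but your sign bookkeeping for the $\Sigma_3$-action on the commutators --- the very point you flagged as the main obstacle --- is incomplete as stated.
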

\begin{proof}
When $b=c$, if $\varphi(x,y)(A(xyz))^c\in \ker A|_{M^{U_P}}\cap\ker(1+\tau)$,
we arrive at the equation $\varphi(x,y)+\varphi(y,z)+\varphi(z,x)=0$. If $\varphi(x,y)$ contains any mixed terms $x^uy^v$ ($u,v\geq 1$), then cancellation is impossible in this equation. Thus $\varphi(x,y)$ is a linear combination of $x^{a-b}$ and $y^{a-b}$, and by antisymmetry it has to be a multiple of $\alpha$.

The proof for $\beta$ is similar.
\end{proof}
Now we put our calculations together to calculate $H_3^\partial(\grp;\Psi_{(a,b,c)})$.
\begin{theorem}\label{thm:bdrycalc}
Let $a\geq b\geq c\geq 0$ be a triplet of nonnegative integers. Then
$$\dim(H_3^{\partial}(\grp;\Psi_{(a,b,c)}))\geq s_{a-b+2}+s_{b-c+2}+\epsilon_{a,b,c}$$
where $$\epsilon_{a,b,c}=\begin{cases}
1&\text{if } a>b>c \text{ and } a,b,c\text{ are all even}\\
0&\text{otherwise}
\end{cases}.$$
\end{theorem}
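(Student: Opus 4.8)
The plan is to invoke the Allison--Ash--Conrad theorem: since the three explicit subsets of $M=\Psi_{(a,b,c)}$ listed just before the statement all lie inside $H_3^\partial(\grp;M)$, regarded as a subspace of $M$, it suffices to bound from below the dimension of the span $V_B+V_P+V_Q$, where $V_B=A[M^B]$, $V_P=A[N_P]$ and $V_Q=A[N_Q]$, and $N_P\subseteq M^{U_P}$ (resp.\ $N_Q\subseteq M^{U_Q}$) is the subspace of elements satisfying $(\dagger)$ under $L_P$ (resp.\ $L_Q$). First I would compute $\dim N_P$ and $\dim N_Q$. By the invariant-subspace lemma, $M^{U_P}\cong\SF{(a,b)}(\F\{x,y\})\cong\Psi_{(a,b)}$ as a $\GL_2(\Z)$-module via $L_P$, and by Remark~\ref{rem:dual} the elements cut out by $(\dagger)$ form exactly $H_1(\GL_2(\Z);\Psi_{(a,b)}\otimes(\det))$. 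Dualizing as in Remark~\ref{rem:dual} and using $\Psi_{(a,b)}^*\otimes(\det)^{-1}\cong\Phi_{a-b}\otimes(\det)^{-a-1}$ together with the computation of $H^1(\GL_2(\Z);\Phi_g\otimes(\det)^k)$, this space vanishes unless $a-b$ is even, in which case it contains a cuspidal subspace $C_P$ of dimension exactly $s_{a-b+2}$ (plus an Eisenstein line precisely when $a$ is even). The same analysis gives $C_Q\subseteq N_Q$ of dimension $s_{b-c+2}$.

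Next I would show $A$ is injective on these cuspidal subspaces. When $b>c$, the kernel lemma identifies $\ker A|_{M^{U_P}}$ with $\ker(1-L_P(\tau))$; since every element of $N_P$ lies in $\ker(1+L_P(\tau))$ and $\ker(1-\tau)\cap\ker(1+\tau)=0$ over a field of characteristic $0$, the restriction $A|_{N_P}$ is injective, so $\dim V_P=\dim N_P\ge s_{a-b+2}$. Symmetrically $\dim V_Q\ge s_{b-c+2}$ when $a>b$. In the degenerate cases $b=c$ or $a=b$ the corresponding parabolic contribution is forced to vanish ($s_2=0$), but the kernel of $A$ also grows; here I would use Lemma~\ref{lem:alphabeta}, which pins $\ker A\cap\ker(1+\tau)$ down to the single line $\langle\alpha\rangle$ (resp.\ $\langle\beta\rangle$), and then check directly that $\alpha$ (resp.\ $\beta$) fails the remaining two relations of $(\dagger)$, so that $A$ stays injective on the relevant cuspidal subspace and no dimension is lost.

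Finally I would establish independence and extract $\epsilon_{a,b,c}$. The third part of the kernel lemma shows $A[M^{U_P}]\cap A[M^{U_Q}]$ is at most the single line $\langle\xi\rangle$ with $\xi=A(f)$, so the cuspidal images obey $C_P\cap C_Q\subseteq\langle\xi\rangle$; checking that $\xi$ lies in neither cuspidal image (it is of Eisenstein type) yields $C_P\cap C_Q=0$, hence $\dim(C_P+C_Q)=s_{a-b+2}+s_{b-c+2}$. When $a>b>c$ are all even, $M^B=\F\{f\}$ is nonzero and $\xi=A(f)\ne 0$ by the kernel lemma, so $V_B=\langle\xi\rangle$; verifying $\xi\notin C_P+C_Q$ then contributes the extra dimension recorded by $\epsilon_{a,b,c}=1$, while in every other case $\epsilon_{a,b,c}=0$ and nothing extra is claimed. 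Combining, $\dim H_3^\partial(\grp;\Psi_{(a,b,c)})\ge\dim(V_B+C_P+C_Q)\ge s_{a-b+2}+s_{b-c+2}+\epsilon_{a,b,c}$.

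The main obstacle is the bookkeeping in the last two steps: separating the cuspidal from the Eisenstein contributions, proving the cuspidal images remain transverse, and showing $\xi$ is independent of them — all while the clean kernel description of $A$ degenerates exactly when $a=b$ or $b=c$, forcing the explicit $\alpha,\beta$ computations of Lemma~\ref{lem:alphabeta}. The subtlest point is the all-even strict case $a>b>c$: there $N_P$ and $N_Q$ each carry an Eisenstein line beyond their cuspidal parts, yet these overlap precisely in $\langle\xi\rangle$, leaving a single surviving extra dimension matching $\epsilon$. Confirming this collapse — rather than an overcount — is the crux of the argument.
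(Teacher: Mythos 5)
Your architecture is the paper's own: invoke the Allison--Ash--Conrad theorem, identify the type (2) and (3) subsets with $H_1(\GL_2(\Z);\Psi_{(a,b)}\otimes(\det))$ and $H_1(\GL_2(\Z);\Psi_{(b,c)}\otimes(\det))$ via Remark~\ref{rem:dual}, read off dimensions from Eichler--Shimura, get injectivity of $A$ from the kernel lemma together with $\ker(1-\tau)\cap\ker(1+\tau)=0$ in characteristic $0$, and control overlaps through $\xi=A(f)$. However, one step of your plan would fail if executed: in the degenerate cases $a=b$ or $b=c$ you propose to ``check directly that $\alpha$ (resp.\ $\beta$) fails the remaining two relations of $(\dagger)$,'' concluding that $A$ stays injective and no dimension is lost. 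That is a check of a false statement. The paper proves the opposite: $\alpha$ and $\beta$ satisfy \emph{all} of $(\dagger)$, hence lie in the corresponding $H_1$'s, and by Lemma~\ref{lem:alphabeta} they span $\ker A$ there. Concretely, when $a=b$ the element $\beta$ corresponds under $M^{U_Q}\cong \SF{(b,c)}(\F\{y,z\})$ to the polynomial $u^{b-c}-v^{b-c}$ (with $u=[x,y]$, $v=[x,z]$), which is the classical Eisenstein period polynomial: it is killed by $1+\smat$ and by $1+\smat\tmat+(\smat\tmat)^2$ as well as by $1+\tau$ --- e.g.\ for $b-c=2$ one has the telescoping identity $(u^2-v^2)+\bigl(v^2-(u+v)^2\bigr)+\bigl((u+v)^2-u^2\bigr)=0$. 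So $A$ is \emph{not} injective on $N_Q$ in this case; it has exactly the one-dimensional kernel $\langle\beta\rangle$.

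Your final numbers are nevertheless attainable, because only the Eisenstein dimension dies: $\dim A(N_Q)=\dim\mathcal M_{b-c+2}-1=s_{b-c+2}$, which is precisely how the paper counts. Alternatively, you can salvage your cuspidal-subspace formulation by replacing the failed check with the correct one: show $\beta\notin C_Q$ (it spans the Eisenstein line), so $\langle\beta\rangle\cap C_Q=0$ and $A|_{C_Q}$ is still injective. A related soft spot is the strict all-even case, where you assert without argument that $\xi$ ``is of Eisenstein type'' and therefore avoids both cuspidal images. This does follow, but not for free: if $\xi=A(c)$ with $c\in C_P$, then since $A(\alpha)$ is a nonzero multiple of $\xi$ and $A$ is injective on $N_P$ (here $b>c$), one gets $\alpha\in C_P$, a contradiction --- an argument that requires knowing $A(\alpha)\neq0$ and the injectivity, so it should be spelled out rather than asserted. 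With those two repairs, your inclusion--exclusion ($C_P\cap C_Q=0$, then $\xi\notin C_P+C_Q$ supplying the $+1$) reproduces the paper's count $(s_{a-b+2}+1)+(s_{b-c+2}+1)+1-2=s_{a-b+2}+s_{b-c+2}+1$ and the theorem follows.
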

\begin{proof}
 If we look at type (2) elements, the subset of $M^{U_P}\cong \Psi_{(a,b)}$ satisfying ($\dagger$) is isomorphic to $H_1(\GL_2(\Z);\Psi_{(a,b)}\otimes (\det))$. Similarly for type (3) elements we have that the subset of $M^{U_Q}\cong\Psi_{(b,c)}$ satisfying ($\dagger$) is isomorphic to $H_1(\GL_2(\Z);\Psi_{(b,c)}\otimes (\det))$. Now recall that 
$$H_1(\GL_2(\Z);\Psi_{(a,b)}\otimes (\det))\cong\begin{cases} \mathcal M_{a-b+2} &\text{if $b$ is even}\\
\mathcal S_{a-b+2}&\text{if $b$ is odd}.\end{cases}$$
Moreover if $b$  is even (and so $a,c$ are even in the only interesting case) then consider the elements
$\alpha\in M^{U_P}$
and
$\beta\in M^{U_Q},$ defined in Lemma~\ref{lem:alphabeta}.
These both satisfy $(\dagger)$ so are actually elements of $H_1(\GL_2(\Z);\Psi_{(a,b)}\otimes (\det))$ and $H_1(\GL_2(\Z);\Psi_{(b,c)}\otimes (\det))$ respectively. It's also easy to see that $A(\alpha)$ and $A(\beta)$ are both in the subspace generated by $A(f)=\xi$. Thus if $a>b>c$ are all even, $$\dim(H_3^\partial(\grp;\Psi_{(a,b,c)}))\geq s_{a-b+2}+s_{b-c+2}+1$$ since $A$ is injective on $H_1(\GL_2(\Z);\Psi_{(a,b)}\otimes (\det))$, $H_1(\GL_2(\Z);\Psi_{(b,c)}\otimes (\det))$ and $M^{B}$, and they all overlap in a one-dimensional subspace. 

Suppose that $a=b>c$ are all even. Then $A(f)=0$. The kernel of $A$ on $H_1(\GL_2(\Z);\Psi_{(b,c)}\otimes (\det))$ is spanned by $\beta$ so $\dim(H_3^\partial(\grp;\Psi_{(a,b,c)}))=s_{b-c+2}$.

Suppose that $a>b=c$ are all even. Then $A(f)=0$ and the kernel of $A$ on $H_1(\GL_2(\Z);\Psi_{(a,b)}\otimes (\det))$ is spanned by $\alpha$. So $\dim(H_3^\partial(\grp;\Psi_{(a,b,c)}))\geq s_{a-b+2}.$

Finally we need to consider the case when two of $a,b,c$ are odd and one is even. Then all type one classes are $0$, since elements of $M^B$ need to be even in each degree. Also, the other two pieces are images of $H_1(\GL_2(\Z);\Psi_{(a,b)}\otimes (\det))$ and $H_1(\GL_2(\Z);\Psi_{(b,c)}\otimes (\det))$ under $A$, and there is only $\GL_2(\Z)$ homology in even degree. So if $b$ is even, and $a,c$ are odd then $H_3^\partial(\grp;\Psi_{(a,b,c)})=0$. 

Suppose that $a,b$ are odd and $c$ is even. Then we are looking at the image of $\mathcal{S}_{a-b+2}$ under $A$. We showed this injects under the assumption that $b>c$ which clearly holds here. 
So the dimension is at least $s_{a-b+2}$.

Similarly if $a$ is even and $b,c$ are odd, we get $s_{b-c+2}$.
\end{proof}

Now, adding to this the cuspidal homology, which is only nontrivial when $\Psi_{(a,b,c)}$ is self dual, i.e. $a-b=b-c$, we see
\begin{corollary}\label{cor:hom3}
Let $a\geq b\geq c\geq 0$ be a triplet of nonnegative integers. Then
$$\dim(H_3(\grp;\Psi_{(a,b,c)}))\geq s_{a-b+2}+s_{b-c+2}+\epsilon_{a,b,c}+\delta_{a,b,c}$$
where $$\delta_{a,b,c}=\begin{cases}
s_{a-b+2}&\text{if } a-b=b-c\\
0&\text{otherwise}
\end{cases}.$$
\end{corollary}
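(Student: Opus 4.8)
The plan is to read the corollary off the orthogonal decomposition $H_3(\grp;M)=H^\partial_3(\grp;M)\oplus H^{\mathrm{cusp}}_3(\grp;M)$ recorded earlier (from \cite{Ash-Grayson-Green}), handling the two summands independently. The boundary summand is already done: Theorem~\ref{thm:bdrycalc} supplies the lower bound $\dim H_3^\partial(\grp;\Psi_{(a,b,c)})\ge s_{a-b+2}+s_{b-c+2}+\epsilon_{a,b,c}$. Hence the only new content of the corollary is to show that the cuspidal summand contributes exactly the extra term $\delta_{a,b,c}$, after which the inequality follows by adding dimensions across the direct sum.

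For the cuspidal summand I would first invoke the Borel--Wallach vanishing quoted above, namely that $H^{\mathrm{cusp}}_3(\grp;M)=0$ unless $M$ is self-dual. Using $\Psi^*_{(a,b,c)}=\Psi_{(-c,-b,-a)}$ together with the normalization $\Psi_{(a+k,b+k,c+k)}\cong\Psi_{(a,b,c)}\otimes(\det)^{k}$, self-duality of $\Psi_{(a,b,c)}$ is equivalent to the condition $a-b=b-c$. Thus whenever $a-b\ne b-c$ the cuspidal part vanishes and simultaneously $\delta_{a,b,c}=0$, so there is nothing to add and the claim matches.

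In the self-dual case I would set $g=a-b=b-c$, so that $b=c+g$ and $a=c+2g$, whence $\Psi_{(a,b,c)}\cong\Psi_{(2g,g,0)}\otimes(\det)^{c}$. When $g$ is even, the Ash--Pollack symmetric-square construction produces $\dim H^{\mathrm{cusp}}_3(\grp;\Psi_{(2g,g,0)})\ge s_{g+2}$, and I would note that the remaining twist by $(\det)^{c}$ neither changes the dimension of cuspidal cohomology (it only twists the underlying cuspidal automorphic representation) nor spoils the parity, since the standing hypothesis that $a+b+c=3(c+g)$ is even forces $c$ to be even once $g$ is. This gives $\dim H^{\mathrm{cusp}}_3(\grp;\Psi_{(a,b,c)})\ge s_{g+2}=s_{a-b+2}=\delta_{a,b,c}$. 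When $g$ is odd, the number $s_{a-b+2}=s_{g+2}$ is the dimension of the (zero) space of odd-weight cusp forms for $\PSL_2(\Z)$, so $\delta_{a,b,c}=0$ and the desired bound is vacuous. Combining $\dim H_3^{\mathrm{cusp}}\ge\delta_{a,b,c}$ with Theorem~\ref{thm:bdrycalc} through the direct sum yields the stated inequality.

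I do not expect a serious obstacle, since every analytic input---the decomposition of \cite{Ash-Grayson-Green}, the Borel--Wallach vanishing, and the symmetric-square lower bound of \cite{ash-pollack}---is available by citation. The only point genuinely requiring care is the parity bookkeeping: one must confirm that the single self-dual family contributes precisely $s_{a-b+2}$, that the odd-$g$ case is truly vacuous because odd-weight cusp forms vanish, and that twisting $\Psi_{(2g,g,0)}$ by the even power $(\det)^{c}$ leaves the cuspidal dimension intact.
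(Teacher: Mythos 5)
Your proposal is correct and follows essentially the same route as the paper: the paper likewise obtains the corollary by adding the boundary bound of Theorem~\ref{thm:bdrycalc} to the Ash--Pollack cuspidal lower bound $s_{g+2}$ across the decomposition $H_3(\grp;M)=H^\partial_3(\grp;M)\oplus H^{\mathrm{cusp}}_3(\grp;M)$, with the cuspidal term nonzero only in the self-dual case $a-b=b-c$. Your extra bookkeeping is sound (and the $(\det)^c$ twist step is even easier than you suggest, since $c$ even makes $(\det)^c$ literally trivial on $\GL_3(\mathbb Z)$, so $\Psi_{(a,b,c)}\cong\Psi_{(2g,g,0)}$ on the nose); note also that the Borel--Wallach vanishing, while true, is not needed for a lower bound when $\delta_{a,b,c}=0$.
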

In particular, for the self dual module $\Psi_{(2g,g,0)}$ gives homology of dimension at least $3s_{g+2}+1$, in accordance with the formula given in \cite{ash-pollack}.
Conjecturally, the inequality in the corollary is an equality.

Returning now to cohomology, we see that 
\begin{theorem}
There is an injection
$$
 \bigoplus_{a+b+c=2k}\SF{(a,b,c)}(V)\otimes(\mathcal S_{a-b+2}\oplus \mathcal S_{b-c+2}\oplus D_{a,b,c}\oplus E_{a,b,c})\hookrightarrow H^3(\GL_3(\Z);\sym(V)^{\otimes 3})_{2k}
$$
where $E_{a,b,c}=\F$ if $a>b>c$ are all even and is equal to $0$ otherwise, and $D_{a,b,c}=\mathcal S_{a-b}$ if $a-b=b-c$, and is equal to $0$ otherwise.
\end{theorem}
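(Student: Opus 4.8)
The plan is to reduce the statement, one $\GL_3(\Z)$-irreducible at a time, to the homology computation of Corollary~\ref{cor:hom3}, and then to reassemble the pieces by Schur--Weyl duality. First I would describe $\sym(V)^{\otimes 3}$ as a $\GL(V)\times\GL_3(\Z)$-module. Writing $\F^3$ for the standard representation with basis $e_1,e_2,e_3$, there is an equivariant identification $\sym(V)^{\otimes 3}\cong\sym(V\otimes\F^3)$ under which the $\GL_3(\Z)$-action, which for $H=\sym(V)$ factors through the standard action on $\F^3$ exactly as in the rank~$2$ computation, is the one induced from $\F^3$. The Cauchy formula then gives the graded decomposition
$$\sym(V)^{\otimes 3}\cong\bigoplus_{a\ge b\ge c\ge 0}\SF{(a,b,c)}(V)\otimes\Psi_{(a,b,c)},$$
in which $\SF{(a,b,c)}(V)$ lies in internal degree $a+b+c$ and is $\GL_3(\Z)$-trivial, while $\Psi_{(a,b,c)}=\SF{(a,b,c)}(\F^3)$ is $\GL(V)$-trivial.

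Because the $\GL(V)$-action commutes with the $\GL_3(\Z)$-action and preserves the grading, I would pass to $\GL_3(\Z)$-cohomology summand by summand, recalling that the central $-I_3$ forces the terms with $a+b+c$ odd to vanish. This gives
$$H^3(\GL_3(\Z);\sym(V)^{\otimes 3})_{2k}\cong\bigoplus_{a+b+c=2k}\SF{(a,b,c)}(V)\otimes H^3(\GL_3(\Z);\Psi_{(a,b,c)}),$$
which reduces the theorem to exhibiting a copy of $\mathcal S_{a-b+2}\oplus\mathcal S_{b-c+2}\oplus D_{a,b,c}\oplus E_{a,b,c}$ inside each $H^3(\GL_3(\Z);\Psi_{(a,b,c)})$.

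To import Corollary~\ref{cor:hom3}, which is phrased homologically, I would invoke the duality of Remark~\ref{rem:dual}: over our characteristic zero field, and for the finite dimensional module $\Psi_{(a,b,c)}$, one has $\dim H^3(\GL_3(\Z);\Psi_{(a,b,c)})=\dim H_3(\GL_3(\Z);\Psi_{(a,b,c)}^*)$, where $\Psi_{(a,b,c)}^*=\Psi_{(-c,-b,-a)}=\Psi_{(a-c,a-b,0)}\otimes(\det)^{-a}$. I would check that, in normalized form, the dual has the same multiset of consecutive differences $\{a-b,\,b-c\}$, that self-duality $a-b=b-c$ is preserved, and that the parities governing $\epsilon_{a,b,c}$ are unchanged. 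Feeding this back into Theorem~\ref{thm:bdrycalc} for the boundary part, which supplies the two Eichler--Shimura summands and the class $E_{a,b,c}$, and into the cuspidal contribution of Corollary~\ref{cor:hom3}, which supplies $D_{a,b,c}$, yields the required lower bound. Since the relevant boundary and cuspidal subspaces are independent apart from the single overlap already isolated in the proof of Theorem~\ref{thm:bdrycalc}, their direct sum embeds, and tensoring with $\SF{(a,b,c)}(V)$ and summing over $a+b+c=2k$ produces the injection.

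The hard part will be the index bookkeeping in this last step. One must track the determinant twist $(\det)^{-a}$ and the dualization carefully enough that the Eichler--Shimura weights $a-b+2$ and $b-c+2$, the parity hypotheses behind $\epsilon_{a,b,c}$, and especially the precise weight of the cuspidal summand $D_{a,b,c}$ emerge exactly as stated; getting this weight right is the delicate point, since the duality relates $H^3(\GL_3(\Z);\Psi_{(a,b,c)})$ to homology of the normalized dual rather than of $\Psi_{(a,b,c)}$ itself. One must also verify that the modular-symbol classes for the two adjacent pairs, together with the cuspidal and type-$(1)$ classes, remain linearly independent after the antisymmetrizer $A$ is applied, so that the claimed direct sum genuinely injects rather than merely surjecting onto a subquotient. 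The remaining ingredients, namely the Cauchy decomposition and the commuting-action argument, are routine.
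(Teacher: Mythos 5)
Your proposal is correct and follows essentially the same route as the paper, whose (much terser) proof combines Remark~\ref{rem:dual} with the identification $\Psi^*_{(a,b,c)}\cong\Psi_{(2k-c,2k-b,2k-a)}$ as $\GL_3(\Z)$-modules and the observation that the sum $s_{a-b+2}+s_{b-c+2}+\epsilon_{a,b,c}+\delta_{a,b,c}$ of Corollary~\ref{cor:hom3} is invariant under $(a,b,c)\mapsto(2k-c,2k-b,2k-a)$. The only streamlining you miss is that normalizing the dual by adding $2k$ (rather than your $(\det)^{-a}$ normalization) makes the residual twist $(\det)^{-2k}$ trivial on $\GL_3(\Z)$ because $a+b+c=2k$ is even, which disposes of your determinant-twist bookkeeping at once; likewise the linear independence you propose to re-verify is already built into Theorem~\ref{thm:bdrycalc} and Corollary~\ref{cor:hom3}, so only the dimension inequality is needed.
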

\begin{proof}
Use Remark~\ref{rem:dual} and the fact that $\Psi^*_{(a,b,c)}\cong \Psi_{(2k-c,2k-b,2k-a)}$ as a $\GL_3(\Z)$ module. The sum $s_{a-b+2}+s_{b-c+2}+\epsilon_{a,b,c}+\delta_{a,b,c}$ is invariant under the transformation $(a,b,c)\mapsto(2k-c,2k-b,2k-a)$. 
\end{proof}

 \section{Rank 3}
 In this section we give presentations for $\homfunctor_3(H)$ and $\Omega_3(H)$ and use these to do explicit computations in the case $H=\sym(V)$. The proofs of the presentations are deferred to section \ref{sec:rank3proofs}.
 
 \subsection{Presentations for $\homfunctor_3(H)$ and $\Omega_3(H)$}
 In the following, let $\sigma_{ij}\colon H^{\otimes 3}\to H^{\otimes 3}$ transpose the $i$th and $j$th factors. Let $E\colon H^{\otimes 3}\to H^{\otimes 3}$ be defined by $E=(\id\otimes m\otimes \id)\circ(\Delta\otimes\id\otimes\id)$, and similarly $F=(m\otimes\id\otimes\id)\circ(\id\otimes\Delta\otimes\id)$.
\begin{theorem}\label{thm:rank3pres}
$\homfunctor_3(H)\cong H^3(\Out(F_3);\overline{H^{\otimes 3}})$ is isomorphic to the quotient of $\overline{H^{\otimes 3}}$ by the images of the following operators (operating on the right):
\begin{enumerate}
\item %$\id+\sigma_{12}(S\otimes S\otimes S)$ 
{$\id+(S\otimes S\otimes S)\sigma_{12}$}
\item %$\id+\sigma_{13}-\sigma_{23}\sigma_{12}-\sigma_{12}$ 
{$\id+\sigma_{13}-\sigma_{12}\sigma_{23}-\sigma_{12}$ }
\item %$(\id+S)\otimes\id\otimes\id+\sigma_{23}(\id+S)\otimes\id\otimes\id$
{$((\id+S)\otimes\id\otimes\id)(\id+\sigma_{23})$}
\item %$\id+\sigma_{23}E+\sigma_{13}F-\sigma_{12}-\sigma_{13}F\sigma_{12}-\sigma_{23}E\sigma_{12}$
{$\id+E\sigma_{23}+F\sigma_{13}-\sigma_{12}-\sigma_{12}F\sigma_{13}-\sigma_{12}E\sigma_{23}$}
\item %$(\id+\sigma_{23}E+\sigma_{13}F)((\id+S)\otimes \id\otimes \id)$
{$((\id+S)\otimes \id\otimes \id)(\id+E\sigma_{23}+F\sigma_{13})$}
\item %$\sigma_{23}E\sigma_{23}(S\otimes\id\otimes \id)+\sigma_{13}F\sigma_{23}(S\otimes\id\otimes \id)+\sigma_{23}E+\sigma_{13}F$
{$(S\otimes\id\otimes \id)(\sigma_{23}E\sigma_{23}
+\sigma_{23}F\sigma_{13})
+E\sigma_{23}+ F\sigma_{13}$}
\end{enumerate}
\end{theorem}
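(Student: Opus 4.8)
The first isomorphism $\homfunctor_3(H)\cong H^3(\Out(F_3);\overline{H^{\otimes 3}})$ is already delivered by Theorem~\ref{thm:HReduced} with $n=3$ (there $2n-2-1=3$ and $\homfunctor_3(H)=H_1(\overline{\cG^{(3)}_{H\Lie}})$), so the whole content is the presentation. I would obtain it by computing directly in the graph complex, following the \emph{first} proof of Theorem~\ref{thm:rank2pres} rather than the second. The group-cohomology shortcut of the second rank-$2$ proof is deliberately avoided: there $\Out(F_2)\cong\GL_2(\Z)$ made a group presentation available, whereas $\Out(F_3)$ is not isomorphic to $\GL_3(\Z)$ (the later coincidence of their cohomologies in even degrees is one of the surprises of this paper), so $\overline{\cG^{(3)}_{H\Lie}}$ is the only uniformly available tool. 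Since the rank-$0$ part of the complex is trivial, the target is the cokernel $\homfunctor_3(H)=\overline{\cG^{(3)}_{H\Lie,1}}\big/\partial\,\overline{\cG^{(3)}_{H\Lie,2}}$.

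\emph{Generators and the symmetry relations (1)--(3).} A connected rank-$3$ graph with a single vertex has one $6$-valent vertex whose six i/o slots are paired into three loops; its underlying $\Lie\arity{6}$-label is pinned down up to IHX, and the three loops carry Hopf-algebra decorations. Using the sliding relation of Definition~\ref{def:freeoperad} to normalize the positions of those decorations, I would identify such a graph with $a\otimes b\otimes c\in\overline{H^{\otimes 3}}$, producing a surjection $\overline{H^{\otimes 3}}\twoheadrightarrow\overline{\cG^{(3)}_{H\Lie,1}}$; as in rank $2$, the remaining one-vertex topologies (the analogues of the eyeglass graph) are rewritten in terms of this one by IHX. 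The internal symmetries of the generator then yield operators (1) and (3)---reorienting a loop inserts an antipode $S$ and permuting loops acts by the $\sigma_{ij}$---while the IHX relation among the three loops meeting at the vertex produces the four-term permutation relation (2). Checking that (1)--(3) are exactly the symmetries, with the correct graph-orientation signs as they interact with $S$ and the $\Sigma_3$-action, is the delicate bookkeeping of this step.

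\emph{The boundary relations (4)--(6).} The space $\overline{\cG^{(3)}_{H\Lie,2}}$ is spanned by two-vertex rank-$3$ graphs, which fall into a short list of combinatorial types indexed by the valences of the two vertices and by how the connecting edges versus the loops are distributed. For each type I would compute $\partial$ by contracting the interior edges. Exactly as in the rank-$2$ calculation, contracting an edge forces a decoration to split through the coproduct and recombine through the product onto the adjacent strands, and this is precisely what the operators $E=(\id\otimes m\otimes\id)(\Delta\otimes\id\otimes\id)$ and $F=(m\otimes\id\otimes\id)(\id\otimes\Delta\otimes\id)$ record. The plain contraction gives the six-term relation (4), the rank-$3$ analogue of $a\otimes b+S(a_{(1)})\otimes S(a_{(2)})b+S(b_{(1)})a\otimes S(b_{(2)})$; the types in which a contraction reverses an oriented loop contribute the antipode-decorated boundaries (5) and (6). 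Together with the previous step this exhibits the displayed quotient as surjecting onto $\homfunctor_3(H)$.

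The easy direction is that each listed operator genuinely kills its image in $\homfunctor_3(H)$; the main obstacle is completeness, namely that the images of (1)--(6) span the \emph{entire} kernel of $\overline{H^{\otimes 3}}\to\homfunctor_3(H)$. This forces a complete, non-redundant enumeration of the one-vertex topologies (for (1)--(3)) and, more painfully, of the two-vertex topologies and all their edge-contractions (for (4)--(6)), together with careful Sweedler-notation tracking of how coproduct and antipode factors are apportioned when a decoration slides across a contracted edge. The compounding of graph-orientation signs with antipodes and permutations is where errors are most likely, so I would organize the argument by the symmetry type of each two-vertex graph and verify that every boundary reduces, modulo (1)--(3), to an $\F$-linear combination of (4)--(6).
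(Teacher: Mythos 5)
Your overall strategy is the paper's: the isomorphism with $H^3(\Out(F_3);\overline{H^{\otimes 3}})$ is quoted from Theorem~\ref{thm:HReduced}, and the presentation is extracted by direct computation in $\overline{\cG^{(3)}_{H\Lie}}$, mimicking the first proof of Theorem~\ref{thm:rank2pres} (you are also right that the group-cohomology shortcut is unavailable, since $\Out(F_3)\not\cong\GL_3(\Z)$). But your generator analysis contains a genuine gap on which all the subsequent bookkeeping hinges. You claim that every one-vertex rank-$3$ graph reduces by IHX to a single family identified with $\overline{H^{\otimes 3}}$, the remaining topologies being eliminable ``as in rank $2$,'' and that the underlying $\Lie\arity{6}$-label is pinned down up to IHX. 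Neither is true. The paper normalizes one-vertex graphs to chord diagrams (a linear tree with ends joined by a long chord), and after this normalization the two remaining decorated chords can sit on the line in three inequivalent patterns; so $\overline{\cG^{(3)}_{H\Lie,1}}$ is generated by \emph{three} copies of $\overline{H^{\otimes 3}}$ --- the tensors $a\otimes b\otimes c$, $\wt{a}\otimes\wt{b}\otimes\wt{c}$, $\wh{a}\otimes\wh{b}\otimes\wh{c}$ --- related by the chord-shift relations (CS1)--(CS3), which are the disciplined form of your ``IHX rewriting.'' Chord shifting does eliminate the second family, via (CS1): $\wt{a}\otimes\wt{b}\otimes\wt{c}=b\otimes c\otimes a+b\otimes a\otimes c$. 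It cannot eliminate the third: (CS3) expresses $\wh{a}\otimes\wh{b}\otimes\wh{c}$ in terms of box and round tensors \emph{plus} the hat term $-S(\wh{a})\otimes\wh{c}\otimes\wh{b}$, and applying (CS3) to that term merely cancels the hats and returns a constraint among box tensors rather than an elimination. The hat family is killed only \emph{modulo boundaries}, using the boundary relation (D1) in its simplified form (D1$'''$). Hence the chain-level surjection $\overline{H^{\otimes 3}}\twoheadrightarrow\overline{\cG^{(3)}_{H\Lie,1}}$ you posit does not exist; $\overline{H^{\otimes 3}}$ surjects only onto the quotient $\homfunctor_3(H)$.

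This error propagates into your proposed sourcing of the six operators, which do not split as ``(1)--(3) from one-vertex symmetries and IHX, (4)--(6) from two-vertex boundaries.'' In the paper one first eliminates the round and hat tensors via (CS1) and (D1$'''$), and then pushes that substitution through \emph{all} of the relations (S1)--(S4), (CS1)--(CS3), (D1)--(D4); the upshot is that operator (1) comes from the symmetry (S1), (2) from the chord-shift relation (CS2), (3) from the boundary (D3), (4) from the boundary (D2), (5) from the boundary (D4), and (6) from the chord-shift relation (CS3). In particular your plan would misfile (3) and (6) and, since the induced form of each relation depends on the elimination formulas, a derivation organized around a single generating family would not reproduce the stated list. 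Finally, the completeness worry you flag for two-vertex graphs is handled in the paper by a normal-form lemma (both trees may be taken linear; if a tree has a self-edge, one self-edge may be assumed to join its ends; a tree with no self-edge may be assumed to be a tripod), proved by induction on the number of leaves --- your argument would need this lemma, or a substitute, to make the enumeration of boundary relations finite and non-redundant.
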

As for the rank $2$ case, we depict these relations graphically in Figure~\ref{fig:rank2relns}.
\begin{figure}
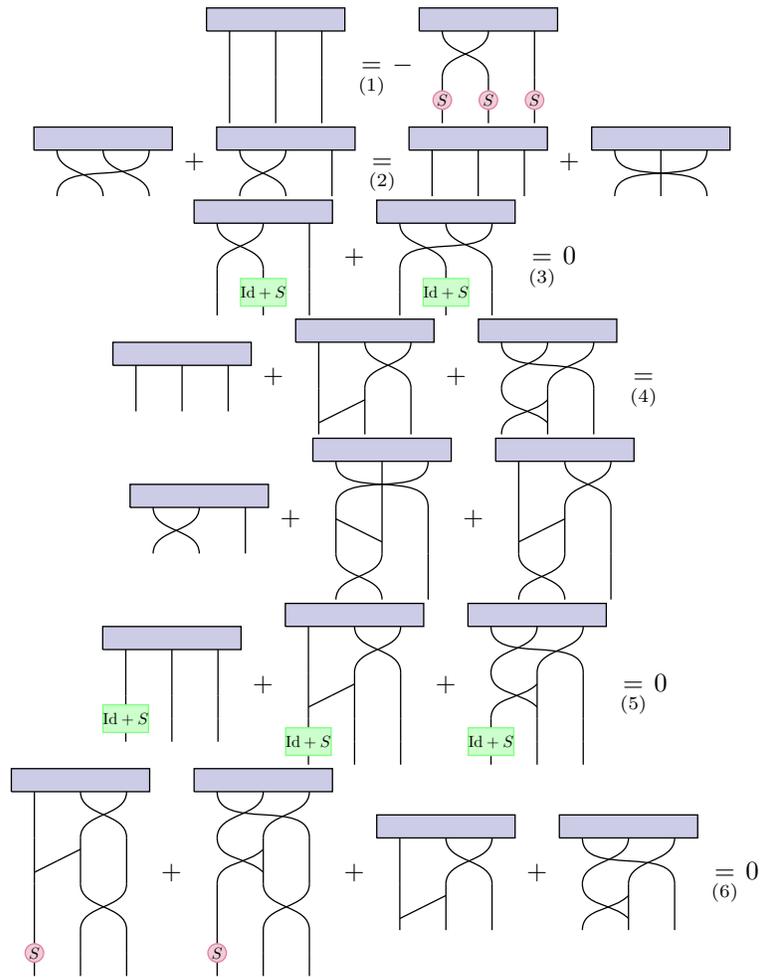

\begin{center}
$\tp{2cm}{\boxtensor\transF{1}\transF{2}}
\underset{(1)}{=}-\tp{2cm}{\boxtensor\transA{1}\antip{1}{1}{1}{2}}$\\
$\tp{2cm}{\boxtensor\transE{1}}+\tp{2cm}{\boxtensor\transA{1}}\underset{(2)}{=}\tp{2cm}{\boxtensor\transF{1}}+\tp{2cm}{\boxtensor\transB{1}}$\\
%$\tp{2cm}{\boxtensor\transA{1}}+\tp{2cm}{\boxtensor\transE{1}}\underset{(D3')}{=}-\tp{2cm}{\boxtensor\transA{1}\antip{0}{1}{0}{2}}-\tp{2cm}{\boxtensor\transE{1}\antip{0}{1}{0}{2}}$\\
$\tp{2cm}{\boxtensor\transA{1}\varantip{0}{1}{0}{2}{\id+S}}+\tp{2cm}{\boxtensor\transE{1}\varantip{0}{1}{0}{2}{\id+S}}\underset{(3)}{=}0$\\
$\tp{2cm}{\boxtensor\transF{1}}+\tp{2cm}{\boxtensor\transC{1}\opT{0}{0}{2}}+\tp{2cm}{\boxtensor\transD{1}\opX{1}{2}}\underset{(4)}{=}\tp{2cm}{\boxtensor\transA{1}}+\tp{2cm}{\boxtensor\transB{1}\opT{1}{0}{2}\transA{3}}+\tp{2cm}{\boxtensor\transC{1}\opT{0}{0}{2}\transA{3}}$\\
%$\tp{2cm}{\boxtensor\transF{1}}+\tp{2cm}{\boxtensor\transC{1}\opT{0}{0}{2}}+\tp{2cm}{\boxtensor\transD{1}\opX{1}{2}}\underset{(D4')}{=}-\tp{2cm}{\boxtensor\transF{1}\antip{1}{0}{0}{2}}-\tp{2cm}{\boxtensor\transC{1}\opT{0}{0}{2}\antip{1}{0}{0}{3}}-\tp{2cm}{\boxtensor\transD{1}\opX{1}{2}\antip{1}{0}{0}{3}}$\\
$\tp{2cm}{\boxtensor\transF{1}\varantip{1}{0}{0}{2}{\id+S}}+\tp{2cm}{\boxtensor\transC{1}\opT{0}{0}{2}\varantip{1}{0}{0}{3}{\id+S}}+\tp{2cm}{\boxtensor\transD{1}\opX{1}{2}\varantip{1}{0}{0}{3}{\id+S}}\underset{(5)}{=}0$\\
$\tp{2cm}{\boxtensor\transC{1}\opT{0}{0}{2}\transC{3}\antip{1}{0}{0}{4}}+\tp{2cm}{\boxtensor\transD{1}\opX{1}{2}\transC{3}\antip{1}{0}{0}{4}}+\tp{2cm}{\boxtensor\transC{1}\opT{0}{0}{2}}+\tp{2cm}{\boxtensor\transD{1}\opX{1}{2}}\underset{(6)}{=}0
$
\end{center}
\caption{$\homfunctor_3(H)$ is isomorphic to $\overline{H^{\otimes 3}}$ modulo the above relations.}\label{fig:rank2relns}
\end{figure}
\begin{theorem}\label{thm:omega3pres}
$\Omega_3(H)$ is presented as a quotient of $\overline{H^{\otimes 3}}$ by the images of operators (operating on the right)
\begin{enumerate}
\item $\id+(S\otimes S\otimes S)\sigma_{12}$
\item $\id+\sigma_{13}-\sigma_{12}\sigma_{23}-\sigma_{12}$
\item {$(S\otimes\id\otimes \id)(\sigma_{23}E\sigma_{23}
+\sigma_{23}F\sigma_{13})
+E\sigma_{23}+ F\sigma_{13}$}
%\item $\id+\sigma_{23}E+\sigma_{13}F-\sigma_{12}-\sigma_{13}F\sigma_{12}-\sigma_{23}E\sigma_{12}$
and by the relations
\item \label{itemA} $1\otimes a\otimes b+1\otimes b\otimes a=0$
\item \label{itemB} $a\otimes\Delta(b)=0$
\end{enumerate}
\end{theorem}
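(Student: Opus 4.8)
The plan is to run the argument for Theorem~\ref{thm:rank3pres} and change only the subspace of boundaries that is divided out. By definition $\homfunctor_3(H)=\overline{\cG^{(3)}_{H\Lie,1}}/\partial\cG^{(3)}_{H\Lie,2}$ while $\Omega_3(H)=\overline{\cG^{(3)}_{H\Lie,1}}/\partial\mathcal{S}_2$ with $\mathcal{S}_2\subseteq\cG^{(3)}_{H\Lie,2}$; the two share the \emph{same} one-vertex chain group $\overline{\cG^{(3)}_{H\Lie,1}}$, which is $\overline{H^{\otimes 3}}$ modulo the automorphism relations of the single degree-one generator. Those automorphism relations are the source of relations (1) and (2) of Theorem~\ref{thm:rank3pres}, and they are insensitive to the choice of boundary space, so (1) and (2) reappear verbatim in the presentation of $\Omega_3(H)$. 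Everything then reduces to evaluating $\partial\mathcal{S}_2$ inside $\overline{\cG^{(3)}_{H\Lie,1}}$.

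A two-vertex rank-$3$ graph is a pair of $H\Lie$-elements joined by edges, and $\mathcal{S}_2$ consists of those in which one vertex is a bare tripod, the generator of $\Lie\arity{3}$, or a bivalent vertex labelled by an element of $H$. I would enumerate these by the combinatorial type of the distinguished vertex and of its attachment to the other vertex, and compute each boundary by contracting the distinguished edge, resolving the resulting four-valent vertex by an IHX relation and accounting for edge reversals by antipodes and for an $H$-label crossing the contracted vertex by the comultiplication --- the same local moves used to derive relations (4)--(6) of Theorem~\ref{thm:rank3pres}. I expect the tripod configurations to reproduce exactly relation (6) of Theorem~\ref{thm:rank3pres}, which is relation (3) of the present statement: it is the relation built from the coproduct operators $E$ and $F$ together with a single antipode $S\otimes\id\otimes\id$ (rather than the antisymmetrizer $(\id+S)\otimes\id\otimes\id$ of relations (3),(5) or no antipode at all as in (4)), and it is the analogue of the order-three relation kept in the proof of Theorem~\ref{thm:omega2pres}. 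The bivalent-$H$ configurations, in which the two legs of the $H$-vertex attach either to a common loop or to two distinct loops of the other vertex, should contract --- once the comultiplication splits the label across the merge and the $\overline{H^{\otimes 3}}$ conjugation relations are applied --- to the two relations~\ref{itemA} and~\ref{itemB}, namely $1\otimes a\otimes b+1\otimes b\otimes a=0$ and $a\otimes\Delta(b)=0$. These are the rank-$3$ counterpart of the counit relation $2(1\otimes b)=0$ produced from the eyeglass graph with one unlabelled loop in the proof of Theorem~\ref{thm:omega2pres}.

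It then remains to see that no further relations appear and that the dropped relations genuinely should be dropped. Relations (3), (4) and (5) of Theorem~\ref{thm:rank3pres}, traced back through the proof of that theorem, come from two-vertex graphs \emph{both} of whose vertices are genuine $H\Lie$-elements --- each a $\Lie$-tree of arity $\geq 4$, or carrying a nontrivial $H$-decoration --- and these are exactly the graphs excluded from $\mathcal{S}_2$. Hence they contribute nothing to $\partial\mathcal{S}_2$, and their disappearance is the sole difference between the two presentations. Assembling the symmetry relations (1),(2) with the three computed boundary families (3), \ref{itemA}, and \ref{itemB} then yields the claimed presentation.

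The main obstacle will be the sign and coproduct bookkeeping of the boundary computation: one must list the attachments of the tripod and of the bivalent vertex without redundancy, track the sign contributed by each IHX resolution and each orientation-reversing contraction, and correctly distribute $\Delta$ when an $H$-label is forced across the contracted vertex, so that the operators $\sigma_{23}E\sigma_{23}$, $\sigma_{23}F\sigma_{13}$, $E\sigma_{23}$, and $F\sigma_{13}$ emerge composed with the correct antipodes. A secondary point is to confirm that relations~\ref{itemA} and~\ref{itemB} are genuinely new rather than consequences of (3) modulo (1) and (2); I would settle this by exhibiting the specific bivalent-$H$ graphs whose boundaries are literally $1\otimes a\otimes b+1\otimes b\otimes a$ and $a\otimes\Delta(b)$.
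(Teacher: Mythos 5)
Your overall strategy --- keep the one-vertex part and recompute only the boundaries coming from $\mathcal S_2$ --- is indeed how the paper proceeds, but two of your concrete attributions are off, and one of them is a genuine gap. First, $\overline{\cG^{(3)}_{H\Lie,1}}$ is \emph{not} a single copy of $\overline{H^{\otimes 3}}$ modulo relations (1)--(2): it is spanned by \emph{three} chord-diagram families (the rectangular, round and triangular tensors $a\otimes b\otimes c$, $\wt{a}\otimes\wt{b}\otimes\wt{c}$, $\wh{a}\otimes\wh{b}\otimes\wh{c}$), subject to the symmetry relations (S1)--(S4) and the chord-shift relations (CS1)--(CS3). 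Collapsing to one copy of $\overline{H^{\otimes 3}}$ uses (CS1) to eliminate the round tensors and, crucially, the bare-tripod boundary (D1) --- which legitimately lies in $\partial\mathcal S_2$, so this is available for $\Omega_3$ --- in its simplified form (D1$'''$) to eliminate the triangular ones. Relation (3) of the theorem is then the image of the \emph{intrinsic} chord-shift relation (CS3) under this substitution; it is not itself produced by a tripod boundary, contrary to your expectation. Your plan, which takes relations (1)--(2) as a complete presentation of the one-vertex space, never accounts for the round and triangular generators at all.

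Second, and more seriously: relations~\ref{itemA} and~\ref{itemB} do not come from bivalent-$H$ configurations. In $\overline{\cG_{H\Lie}}$ the slide relations identify (up to sign) the two contractions adjacent to a bivalent $H$-labeled vertex, so those boundaries contribute nothing new. The actual sources are the (D3)- and (D4)-graphs --- a tripod with a self-loop --- precisely in the special case where the loop carries \emph{no} $H$-element, so that the vertex is a bare element of $\Lie\arity{3}$ and the graph belongs to $\mathcal S_2$; the $H$-decorated loops, which produce relations (3) and (5) of Theorem~\ref{thm:rank3pres}, are what gets excluded. This directly contradicts your assertion that the graphs behind those relations of Theorem~\ref{thm:rank3pres} lie wholly outside $\mathcal S_2$ and ``contribute nothing'': their unlabeled-loop specializations are exactly what survives, giving $\wt{a}\otimes 1\otimes\wt{b}=0$ and $1\otimes\wh{a}\otimes\wh{b}=0$, which convert under (CS1) and (D1$'''$) to $1\otimes a\otimes b+1\otimes b\otimes a=0$ and $1\otimes a\otimes b+1\otimes b\otimes a+b\otimes\Delta(a)=0$, i.e.\ to relations~\ref{itemA} and~\ref{itemB}. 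Since your proposed bivalent-$H$ source evaporates while you have explicitly discarded the true source, your computation of $\partial\mathcal S_2$ would fail to produce~\ref{itemA} and~\ref{itemB}, and together with the misattributed relation (3) the boundary bookkeeping at the heart of the proof would not close up.
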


\subsection{$H=\sym(V)$}
Next we consider the specific case of $H=\sym(V)$. Decompose $\overline{\sym(V)^{\otimes 3}}\cong [\sym(V)^{\otimes 3}]_e\oplus [\sym(V)^{\otimes 3}]_o$ into even and odd degree pieces respectively. The even degree case is particularly simple:
\begin{theorem}\label{thm:evenpres}
$\homfunctor_3(\sym(V))_e\cong H^3(\Out(F_3);[\sym(V)^{\otimes 3}]_e)$ is isomorphic to the quotient of $[\sym(V)^{\otimes 3}]_e$ by the images of the operators below and also depicted in Figure~\ref{fig:even}.
\begin{enumerate}
\item $\id+\sigma_{12}$
\item $\id+\sigma_{23}$
\item $\id{-}S\otimes\id\otimes \id$
\item $E+F-\id$
\end{enumerate}
\end{theorem}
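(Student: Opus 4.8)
The plan is to derive Theorem~\ref{thm:evenpres} directly from the general presentation of Theorem~\ref{thm:rank3pres} by specializing $H=\sym(V)$ and restricting to the even-degree summand $M:=[\sym(V)^{\otimes 3}]_e$. Two features of $\sym(V)$ drive every simplification. First, its antipode acts on the degree-$d$ part as $(-1)^d$; hence on $M$ (total degree even) we have $S\otimes S\otimes S=\id$, while $S\otimes\id\otimes\id$ is the operator $\mathrm{diag}(-1,1,1)$. Second, as recalled in Section~\ref{sec:action}, for $H=\sym(V)$ we have $\overline{H^{\otimes 3}}=H^{\otimes 3}=\sym(V\oplus V\oplus V)$ and the $\Out(F_3)$-action factors through $\GL_3(\Z)$; under this identification $\sigma_{12},\sigma_{23}$ are permutation matrices, $S\otimes\id\otimes\id$ is $\mathrm{diag}(-1,1,1)$, and $E,F$ are the two elementary transvections fixing the third coordinate. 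In particular all six operators of Theorem~\ref{thm:rank3pres} are (images of) elements of the integral group ring $\Z[\GL_3(\Z)]$, and $\homfunctor_3(\sym(V))_e=M/N$, where $N$ is the subspace $\sum_{i=1}^6 M\rho_i$ cut out by the six operators $\rho_i$ acting on the right.

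The reductions I expect are as follows. Since $S\otimes S\otimes S=\id$ on $M$, relation (1) is literally $\id+\sigma_{12}$, which is relation (1) of the even presentation. Relations (1) and (2) together force the full sign action of $\Sigma_3$: a short computation in $\F[\Sigma_3]$ shows that $\id+\sigma_{23}$ (relation (2) of the even presentation), and then every $\id+\sigma_{ij}$, lies in the span of relations (1) and (2). Granting the sign action, relation (4), which factors as $(\id-\sigma_{12})(\id+E\sigma_{23}+F\sigma_{13})$, collapses to $E+F-\id$ (relation (4)); and relation (6), together with $E+F=\id$, collapses to $\id-S\otimes\id\otimes\id$ (relation (3)). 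Relations (3) and (5) then become consequences of the four even relations, so that the six general relations and the four even relations cut out the same subspace $N$.

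The subtle point, and the main obstacle, is that $N=\sum_i M\rho_i$ is only a \emph{subspace} of $M$, not a submodule (it is the space of coboundaries computing $H^3$), so one may not simply ``set $\sigma_{ij}=-1$'' and substitute into the interior of another relation. The clean way to organize the argument is to prove that the two relation sets generate the same \emph{left} ideal in the group ring $\F[\GL_3(\Z)]$, modulo the relation $S\otimes S\otimes S=\id$ (i.e.\ $-I_3=\id$) and with $2$ inverted; this is legitimate precisely because left multiplication sends each $M\rho_i$ into itself, so a left-ideal identity $\rho^{\mathrm{even}}=\sum_k x_k\rho_{i_k}$ yields $M\rho^{\mathrm{even}}\subseteq N$. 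Carrying out these membership computations using left multiplications only --- for instance reducing $\id+E\sigma_{23}+F\sigma_{13}$ by means of $E(\id+\sigma_{23})$ and $F(\id+\sigma_{13})$ to $\id-E-F$, and extracting $\id-S\otimes\id\otimes\id$ from relation (6) after moving the inner transpositions across $E$ and $F$ --- while keeping careful track of the signs, is the bulk of the work. The purely permutational part is handled cleanly by the semisimplicity of $\F[\Sigma_3]$, checking the trivial, sign, and standard isotypic components separately.
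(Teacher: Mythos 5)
Your proposal is correct, but it takes a genuinely different route from the paper. The paper does \emph{not} derive Theorem~\ref{thm:evenpres} from the six operators of Theorem~\ref{thm:rank3pres}: it goes back to the intermediate chord-diagram presentation with \emph{three} families of generators (rectangular, round and triangular tensors) and relations (S1)--(S4), (CS1)--(CS3), (D1)--(D4), and observes that in even total degree, where $S\otimes S\otimes S=\id$, relations (S3) and (D2) kill the round and triangular generators outright; the surviving relations (S1), (CS1), (CS3) and (D1) on rectangular tensors then literally read off as the four even operators, with no ideal-membership computations at all. Your plan --- proving that the six operators and the four even operators generate the same left ideal in $\F[\GL_3(\Z)]$ modulo $-I_3=\id$ --- is a legitimately sufficient substitute, and you correctly identify why left ideals are the right notion ($m\cdot(x\rho)=(mx)\rho$, so each $M\rho$ absorbs left multiples). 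Moreover, the step you flag as the main obstacle is milder than you fear. Once relations (1) and (2) of Theorem~\ref{thm:rank3pres} generate the left ideal $J$ containing all $\id+\sigma_{ij}$ and $\tau-\mathrm{sgn}(\tau)$ for $\tau\in\Sigma_3$ (your isotypic check in $\F[\Sigma_3]$ is exactly the right verification), the intertwining identities $\sigma_{12}E=F\sigma_{12}$ and $\sigma_{12}F=E\sigma_{12}$ let you push $\sigma_{12}$ to the right in the terms $\sigma_{12}E\sigma_{23}$ and $\sigma_{12}F\sigma_{13}$, where it is absorbed by $J$; relation (4) then reduces modulo $J$ \emph{directly} to $2(\id-E-F)$, so the left factor $(\id-\sigma_{12})$ in your factorization never needs to be cancelled. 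Writing $T=S\otimes\id\otimes\id$, relation (6) reduces mod $J$ to $-(\id+T\sigma_{23})(E+F)$, and with $E+F-\id$ in hand the identity $\id+T\sigma_{23}=(\id-T)+T(\id+\sigma_{23})$ delivers $\id-T$; the reverse inclusion (each of the six operators lies in the left ideal of the four) follows from the same identities, with relations (3) and (5) landing in $J+\F[G](\id-E-F)$ as you assert. What the paper's argument buys is brevity --- killing the auxiliary generators before the reduction to a single copy of $\overline{H^{\otimes 3}}$ avoids all group-ring bookkeeping; what yours buys is an argument carried out entirely in the $\GL_3(\Z)$-module picture, which is precisely the structure exploited immediately afterwards in comparing $\homfunctor_3(\sym(V))_e$ with $H^3(\GL_3(\Z);[\sym(V)^{\otimes 3}]_e)$.
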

\begin{figure}
\begin{center}
$\tp{2cm}{\boxtensor\transF{1}}=-\tp{2cm}{\boxtensor\transA{1}}=-\tp{2cm}{\boxtensor\transC{1}}$\\
 $\tp{2cm}{\boxtensor\transF{1}}=\tp{2cm}{\boxtensor\antip{1}{0}{0}{1}}$\\
$\tp{2cm}{\boxtensor\transF{1}}=\tp{2cm}{\boxtensor\opT{0}{0}{1}}+\tp{2cm}{\boxtensor\opT{1}{0}{1}}$
\end{center}
\caption{$\homfunctor_3(\sym(V))_e$ is presented by $[\sym(V)^{\otimes 3}]_e$ modulo these relations.}\label{fig:even}
\end{figure}
The odd degree presentation is a bit more complicated:
\begin{theorem}\label{thm:oddpres}
$\homfunctor_3(\sym(V))_o\cong H^3(\Out(F_3);[\sym(V)^{\otimes 3}]_o)$ is isomorphic to the quotient of $[\sym(V)^{\otimes 3}]_o$ by the images of the operators below and also depicted in Figure~\ref{fig:odd}.
\begin{enumerate}
\item $\id-\sigma_{12}$
\item $(((\id+S)\otimes\id\otimes\id)\id+\sigma_{23})$
\item $(((\id+S)\otimes\id\otimes\id)\id+E\sigma_{23}+F\sigma_{13})$
\item $((\id-S)\otimes\id\otimes\id)(-\sigma_{23}E\sigma_{23}-\sigma_{23}F\sigma_{13}+E\sigma_{23}+F\sigma_{13})$
\end{enumerate}
\end{theorem}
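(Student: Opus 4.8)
The plan is to obtain both assertions by specializing the general rank-$3$ presentation of Theorem~\ref{thm:rank3pres} to $H=\sym(V)$ and restricting to the odd-degree summand, paralleling the even-degree argument behind Theorem~\ref{thm:evenpres}. The cohomological identification $\homfunctor_3(\sym(V))_o\cong H^3(\Out(F_3);[\sym(V)^{\otimes 3}]_o)$ is immediate: every structure map of the graph complex preserves total degree, so the isomorphism $\homfunctor_3(H)\cong H^3(\Out(F_3);\overline{H^{\otimes 3}})$ of Theorem~\ref{thm:rank3pres} splits along the even/odd decomposition, and for $H=\sym(V)$ one has $\overline{H^{\otimes 3}}=H^{\otimes 3}$. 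Thus the real work is to simplify the six operators of Theorem~\ref{thm:rank3pres} on $[\sym(V)^{\otimes 3}]_o$.

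First I would record the two special features of $\sym(V)$ that drive every simplification: it is cocommutative, and its antipode acts on $\sym^n(V)$ by $(-1)^n\id$, so that $S\otimes S\otimes S$ acts by $(-1)^{\text{total degree}}$, which is $-\id$ on the odd part. Feeding this into operator (1) of Theorem~\ref{thm:rank3pres} turns $(S\otimes S\otimes S)\sigma_{12}$ into $-\sigma_{12}$, so operator (1) becomes $\id-\sigma_{12}$, which is relation (1) here. Operators (3) and (5) carry $S$ only on the first tensor factor and no global sign, so they pass unchanged to relations (2) and (3). Next I would dispose of operators (2) and (4): modulo the image of $\id-\sigma_{12}$ the transposition $\sigma_{12}$ acts trivially, and dropping the $\sigma_{12}$ factor from each term together with the symmetric-group identity $\sigma_{13}=\sigma_{12}\sigma_{23}\sigma_{12}$ collapses both operators into that image, so neither imposes a new relation.

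The heart of the argument is matching operator (6) of Theorem~\ref{thm:rank3pres} with relation (4) here. Writing $X=\sigma_{23}E\sigma_{23}+\sigma_{23}F\sigma_{13}$ and $Y=E\sigma_{23}+F\sigma_{13}$, operator (6) is $(S\otimes\id\otimes\id)X+Y$ while relation (4) is $((\id-S)\otimes\id\otimes\id)(-X+Y)$, and I would show these have the same image modulo the images of relations (1)--(3). The tools are the antipode axiom $m\circ(S\otimes\id)\circ\Delta=\eta\epsilon$ (to collapse the compositions hidden inside $E$ and $F$ when an antipode is pushed onto the first strand), cocommutativity (to interchange the two legs of each coproduct), and the odd-degree identity $S\otimes\id\otimes\id=-(\id\otimes S\otimes S)$, which trades the single antipode of operator (6) for antipodes on the other two strands while producing exactly the sign that distinguishes relation (4) from its even-degree analogue. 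The same bookkeeping is what turns operator (6) into the simpler $\id-S\otimes\id\otimes\id$ and $E+F-\id$ of Theorem~\ref{thm:evenpres} in the even case.

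I expect this last step to be the main obstacle. Unlike operators (1)--(4), the operators built from $E$ and $F$ do not preserve the subspaces cut out by the already-simplified relations, so one cannot naively substitute $\sigma_{ij}\equiv\pm\id$ inside them; the reductions must be carried out honestly in $\overline{H^{\otimes 3}}$, tracking each Sweedler term. Concretely, I would expand both operator (6) and relation (4) on a general homogeneous tensor $a\otimes b\otimes c$ of odd total degree, normal-order the results using the antipode and counit identities, and check that their difference lies in the span of the images of relations (1)--(3). Once this identity is in hand the two presentations visibly cut out the same quotient of $[\sym(V)^{\otimes 3}]_o$, completing the proof.
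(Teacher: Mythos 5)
Your proposal is correct and takes essentially the same approach as the paper: the paper's entire proof of Theorem~\ref{thm:oddpres} consists of the single observation that, working from the presentation of Theorem~\ref{thm:rank3pres} with $H=\sym(V)$, the operator $S\otimes S\otimes S$ acts by $-1$ on the odd-degree part, with all remaining bookkeeping left implicit. The details you supply are exactly the computations the paper omits --- operator (1) becoming $\id-\sigma_{12}$, operators (2) and (4) becoming redundant (for (4) note that on the commutative, cocommutative $\sym(V)$ conjugation by $\sigma_{12}$ interchanges $E\sigma_{23}$ and $F\sigma_{13}$, so it factors through $\id-\sigma_{12}$), and operator (6) matching relation (4), which in fact follows by purely operator-level manipulation modulo the images of relations (2) and (3) evaluated at shifted arguments, so your anticipated Sweedler-term expansion, while workable, is more than is needed.
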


\begin{figure}
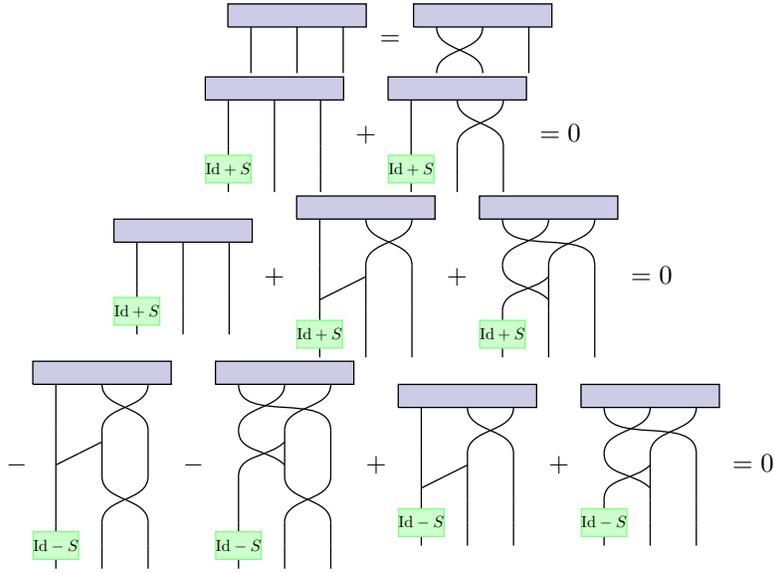

\begin{center}
$\tp{2cm}{\boxtensor\transF{1}}=\tp{2cm}{\boxtensor\transA{1}}$\\
$\tp{2cm}{\boxtensor\transF{1}\varantip{1}{0}{0}{2}{\id+S}}+\tp{2cm}{\boxtensor\transC{1}\varantip{1}{0}{0}{2}{\id+S}}=0$\\
$\tp{2cm}{\boxtensor\transF{1}\varantip{1}{0}{0}{2}{\id+S}}+\tp{2cm}{\boxtensor\transC{1}\opT{0}{0}{2}\varantip{1}{0}{0}{3}{\id+S}}+\tp{2cm}{\boxtensor\transD{1}\opX{1}{2}\varantip{1}{0}{0}{3}{\id+S}}=0$\\
$-\tp{2cm}{\boxtensor\transC{1}\opT{0}{0}{2}\transC{3}\varantip{1}{0}{0}{4}{\id-S}}-\tp{2cm}{\boxtensor\transD{1}\opX{1}{2}\transC{3}\varantip{1}{0}{0}{4}{\id-S}}+\tp{2cm}{\boxtensor\transC{1}\opT{0}{0}{2}\varantip{1}{0}{0}{3}{\id-S}}+\tp{2cm}{\boxtensor\transD{1}\opX{1}{2}\varantip{1}{0}{0}{3}{\id-S}}=0$
\end{center}
\caption{Relations for the odd degree case of $H=\sym(V)$}\label{fig:odd}
\end{figure}

\subsection{Computations}
We begin by showing that in even degrees, there is an isomorphism with $\GL_3(\Z)$ cohomology in the same degree:
\begin{proposition}
$$H^3(\Out(F_3);[\sym(V)^{\otimes 3}]_e)\cong H^3(\GL_3(\Z);[\sym(V)^{\otimes 3}]_e)$$
\end{proposition}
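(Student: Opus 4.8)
The plan is to prove the isomorphism by realizing \emph{both} cohomology groups as quotients of the single module $M_e:=[\sym(V)^{\otimes 3}]_e$ by explicit subspaces, and then checking that the two subspaces coincide. For the $\Out(F_3)$ side this is already accomplished: Theorem~\ref{thm:evenpres} presents $H^3(\Out(F_3);M_e)$ as $M_e/R_{\Out}$, where $R_{\Out}$ is the image of the four operators $\id+\sigma_{12}$, $\id+\sigma_{23}$, $\id-S\otimes\id\otimes\id$, and $E+F-\id$. The key preliminary observation is that each of these operators is built directly from the $\GL_3(\Z)$-action $\rho_{\sym(V)}$: the $\sigma_{ij}$ are images of permutation matrices, $S\otimes\id\otimes\id$ encodes inversion of the first generator, and the operators $E=(\id\otimes m\otimes\id)(\Delta\otimes\id\otimes\id)$ and $F=(m\otimes\id\otimes\id)(\id\otimes\Delta\otimes\id)$ are exactly the actions of elementary (transvection) matrices -- recall from section~\ref{sec:action} that ``doubling a coordinate'' is the coproduct and ``multiplying two coordinates'' is the product, so a unipotent generator acts via a composite of $\Delta$ and $m$. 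Thus the $\Out(F_3)$ presentation is already written entirely in terms of standard generators of $\GL_3(\Z)$ acting on $M_e$.

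First I would produce a matching presentation of the group cohomology $H^3(\GL_3(\Z);M_e)$ as a quotient $M_e/R_{\GL}$. Since $\GL_3(\Z)$ is a virtual duality group with $\mathrm{vcd}=3$, the group $H^3$ is top-degree, and via Remark~\ref{rem:dual} it is computed by $H_3(\GL_3(\Z);M_e^*)$. Here the even-degree hypothesis is essential: because $M_e$ sits in even total degree, the central element $-I_3$ acts trivially (it acts by $S\otimes S\otimes S=(-1)^{\text{total degree}}$), so the cohomology need not vanish, in contrast to the odd case. Decomposing $M_e$ by Schur--Weyl into the irreducibles $\SF{(a,b,c)}(V)\otimes\Psi_{(a,b,c)}$ and feeding in the parabolic/boundary analysis of the previous section -- the subgroups $P,Q,B$, the maps $L_P,L_Q$, the antisymmetrizer $A$, the spanning statement of Allison--Ash--Conrad (provable via the Lee--Schwermer argument they cite), together with the cuspidal contribution -- I would convert the description of $H_3(\GL_3(\Z);M_e^*)$ into a presentation of $H^3(\GL_3(\Z);M_e)$ whose relation subspace $R_{\GL}$ is generated by the $\GL_2(\Z)$-relations $\id+\smat$, $\id+\tau$, $\id+\smat\tmat+(\smat\tmat)^2$ of Proposition~\ref{prop:glpres} attached along the two parabolics, together with the gluing relation coming from $A$.

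The final step is to verify $R_{\GL}=R_{\Out}$ as subspaces of $M_e$. One inclusion should follow from functoriality: since the $\Out(F_3)$-action on $M_e$ factors through $\GL_3(\Z)$, the comparison map induced by $\Out(F_3)\twoheadrightarrow\GL_3(\Z)$ is compatible with the presentations, yielding $R_{\GL}\subseteq R_{\Out}$. The reverse inclusion $R_{\Out}\subseteq R_{\GL}$ is the real content, and is where I expect the main obstacle. One must show that the graph-complex boundary relations -- most notably $E+F=\id$ -- are already forced by the modular-symbol/parabolic relations coming from the Steinberg structure of $\GL_3(\Z)$; concretely, rewrite $E+F-\id$ in terms of the $L_P,L_Q$ and $A$ data and check it lands in the span of $R_{\GL}$. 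This is a finite but intricate verification, and its delicacy is underscored by the failure of the analogous statement in odd degree: there $-I_3$ acts by $-1$, the $\GL_3(\Z)$ cohomology vanishes, yet $H^3(\Out(F_3);[\sym(V)^{\otimes 3}]_o)$ does not (Theorem~\ref{thm:oddpres}), so any correct argument must invoke the even-degree hypothesis in an essential way.
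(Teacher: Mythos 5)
Your handling of the $\Out(F_3)$ side is exactly the paper's: Theorem~\ref{thm:evenpres} presents $H^3(\Out(F_3);M_e)$ as the quotient of $M_e$ by $\id+\sigma_{12}$, $\id+\sigma_{23}$, $\id-S\otimes\id\otimes\id$ and $E+F-\id$, and your identification of these operators with the actions of permutation, transvection and diagonal matrices is correct. The genuine gap is your route to the $\GL_3(\Z)$ side. You propose to extract a presentation $M_e/R_{\GL}$ from the boundary-plus-cuspidal decomposition of $H_3$, i.e., from the Allison--Ash--Conrad subsets together with the cuspidal contribution. But the statement that those three subsets span $H_3^\partial$ is only conjectural --- the paper records merely that a proof ``should follow'' from arguments like Lee--Schwermer's, which is a suggested strategy, not a theorem --- and the exact cuspidal dimension is pinned down only by the Ash--Pollack conjecture (Conjecture~\ref{conj:ash-pollack}). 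This is precisely why everything in the paper's Section 5 (Theorem~\ref{thm:bdrycalc}, Corollary~\ref{cor:hom3}) is stated as an inequality $\geq$ and why Theorem~\ref{thm:rank3intro} is only a lower bound. Your plan would therefore yield at best a conditional description of $H^3(\GL_3(\Z);M_e)$, and without knowing the relation subspace is complete, the final comparison $R_{\GL}=R_{\Out}$ cannot even be set up. Your ``functoriality'' inclusion $R_{\GL}\subseteq R_{\Out}$ is also unjustified as stated: the map $H^3(\GL_3(\Z);M_e)\to H^3(\Out(F_3);M_e)$ induced by $\Out(F_3)\twoheadrightarrow\GL_3(\Z)$ is not known a priori to be realized by the identity of $M_e$ relative to the two chosen presentations.

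The paper's actual proof sidesteps all of this. Since $3$ is the vcd, the top-degree cohomology $H^3(\SL_3(\Z);\sym(V)^{\otimes 3})$ admits an \emph{unconditional} presentation derived from \cite{ash80} (in essence the modular-symbol/Steinberg presentation; see \cite{allison-ash-conrad}, where it appears with characteristic $p$ coefficients): it is the quotient of $\sym(V)^{\otimes 3}$ by $\id+\sigma_{12}$, $\id+\sigma_{23}$, $E+F-\id$ and $S\otimes S\otimes\id-\id$. In particular the relation $E+F-\id$, which you single out as ``the real content'' to be laboriously re-expressed through $L_P$, $L_Q$ and $A$, is already one of the defining relations on the arithmetic side, so no such verification is needed. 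Passing from $\SL_3(\Z)$ to $\GL_3(\Z)$ is then just taking coinvariants under $\operatorname{diag}(-1,1,1)$, which on the even-degree part replaces $S\otimes S\otimes\id-\id$ by $\id - S\otimes\id\otimes\id$, and the resulting list of relations coincides verbatim with that of Theorem~\ref{thm:evenpres}. To repair your outline, replace the boundary/cuspidal analysis by the Ash presentation of top-degree cohomology; the conjectural parabolic machinery of Section 5 is used in the paper only to produce classes (lower bounds), never to prove this isomorphism.
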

\begin{proof}
 From \cite{ash80} one can derive a presentation for $H^3(\SL_3(\Z);\sym(V)^{\otimes 3})$ as the quotient of $\sym(V)^{\otimes 3}$ by relations $\id+\sigma_{12}$, $\id+\sigma_{23}$, $E+F-\id$ and $S\otimes S\otimes \id-\id$. See \cite{allison-ash-conrad} where this is stated in the case of characteristic $p$ coefficients. 
To go to $\GL_3(\Z)$ we need to take the coinvariants with respect to the action of the matrix $\begin{bmatrix} -1&0&0\\0&1&0\\0&0&1\end{bmatrix}$ which gives exactly our presentation. 
\end{proof}

\begin{corollary}
Let $a+b+c$ be even. Then $H^3(\Out(F_3);\Psi_{(a,b,c)})\cong H^3(\GL_3(\Z);\Psi_{(a,b,c)}).$
\end{corollary}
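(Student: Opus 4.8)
The plan is to deduce this from the preceding Proposition by decomposing $[\sym(V)^{\otimes 3}]_e$ into $\GL(V)$-isotypic pieces and then peeling off the common multiplicity spaces. Everything is graded by total degree and all maps in sight respect the grading, so I will work in a fixed total degree $a+b+c=2k$, where each space below is finite-dimensional. First I would record the Howe duality (Cauchy) decomposition: identifying $\sym(V)^{\otimes 3}\cong \sym(V\otimes W)$ with $W=\F\{x,y,z\}$ the standard $\GL_3$-module and the three tensor factors corresponding to $x,y,z$, the Cauchy formula gives
$$\sym(V)^{\otimes 3}\cong \bigoplus_{a\geq b\geq c\geq 0}\SF{(a,b,c)}(V)\otimes \SF{(a,b,c)}(W)=\bigoplus_{a\geq b\geq c\geq 0}\SF{(a,b,c)}(V)\otimes \Psi_{(a,b,c)}$$
as a $\GL(V)\times\GL_3$-module. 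Restricting to even total degree selects exactly the summands with $a+b+c$ even. Here both $G=\GL_3(\Z)$ and $G=\Out(F_3)$ act trivially on the factor $\SF{(a,b,c)}(V)$ and through the standard action on $\Psi_{(a,b,c)}=\SF{(a,b,c)}(W)$, the $\Out(F_3)$-action being that of $\GL_3(\Z)$ precomposed with the standard surjection $\Out(F_3)\to\GL_3(\Z)$. Taking $\dim V\geq 3$ (for instance in the stable range used throughout), the Schur functors $\SF{(a,b,c)}(V)$ are nonzero and pairwise non-isomorphic irreducible $\GL(V)$-modules.

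Next I would pull the trivial coefficient factors out of the group cohomology. Since $\GL(V)$ commutes with the action of $G$, and $G$ acts trivially on each finite-dimensional $\SF{(a,b,c)}(V)$, group cohomology commutes with this finite direct sum and with tensoring by these trivial modules, so for each such $G$ we obtain a $\GL(V)$-equivariant decomposition
$$H^3(G;[\sym(V)^{\otimes 3}]_e)\cong \bigoplus_{a+b+c \text{ even}}\SF{(a,b,c)}(V)\otimes H^3(G;\Psi_{(a,b,c)}),$$
whose $(a,b,c)$-isotypic component is precisely $\SF{(a,b,c)}(V)\otimes H^3(G;\Psi_{(a,b,c)})$.

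Finally I would invoke the Proposition. By Theorem~\ref{thm:evenpres} and the proof of the Proposition, both $H^3(\Out(F_3);[\sym(V)^{\otimes 3}]_e)$ and $H^3(\GL_3(\Z);[\sym(V)^{\otimes 3}]_e)$ are presented as the quotient of the \emph{same} space $[\sym(V)^{\otimes 3}]_e$ by the images of the \emph{same} operators $\id+\sigma_{12}$, $\id+\sigma_{23}$, $\id-S\otimes\id\otimes\id$, $E+F-\id$; hence the Proposition's isomorphism is induced by the identity map of $[\sym(V)^{\otimes 3}]_e$. Each of these operators is assembled from the Hopf structure maps of $\sym(V)$ (product, coproduct, antipode, and coordinate transpositions), all of which are $\GL(V)$-equivariant because $\GL(V)$ acts on $\sym(V)$ by Hopf-algebra automorphisms. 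Therefore the comparison isomorphism is $\GL(V)$-equivariant and restricts to each isotypic component, yielding
$$\SF{(a,b,c)}(V)\otimes H^3(\Out(F_3);\Psi_{(a,b,c)})\cong \SF{(a,b,c)}(V)\otimes H^3(\GL_3(\Z);\Psi_{(a,b,c)}).$$
Cancelling the common nonzero factor $\SF{(a,b,c)}(V)$ (by comparing dimensions, or by applying the exact functor $\Hom_{\GL(V)}(\SF{(a,b,c)}(V),-)$ to extract the multiplicity space) gives the claimed isomorphism.

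I expect the only delicate point to be the verification that the Proposition's isomorphism genuinely descends from a $\GL(V)$-equivariant map rather than being merely an abstract comparison. This is exactly why it matters that both presentations realize the two cohomology groups as quotients of one and the same module by one and the same set of $\GL(V)$-equivariant relations, so that the comparison map is literally induced by the identity and automatically commutes with the $\GL(V)$-action. Granting that, the remainder is the formal bookkeeping of Howe duality together with Schur's lemma.
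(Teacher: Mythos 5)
Your proof is correct and follows essentially the same route as the paper: the paper's one-line proof likewise decomposes $[\sym(V)^{\otimes 3}]_e\cong\bigoplus_{|\lambda|\ \text{even}}\Psi_\lambda\otimes\SF{\lambda}(V)$ and compares the multiplicities of the $\SF{\lambda}(V)$ on the two sides of the Proposition's isomorphism. Your explicit check that the comparison map is $\GL(V)$-equivariant, because both cohomology groups are presented as quotients of the same module by the same $\GL(V)$-equivariant operators so that the isomorphism is induced by the identity, is precisely the point the paper leaves implicit in that multiplicity comparison.
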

\begin{proof}
Decompose $[\sym(V)^{\otimes 3}]_e=\bigoplus_{|\lambda| \text{ even}} \Psi_\lambda \otimes \SF{\lambda}(V)$, and compare the multiplicities of the irreps $\SF{\lambda}(V)$ on both sides of the isomorphism $H^3(\Out(F_3);[\sym(V)^{\otimes 3}]_e)\cong H^3(\GL_3(\Z);[\sym(V)^{\otimes 3}]_e)$.
\end{proof}

\begin{corollary}\label{cor:rank3evencomp}
$$ \bigoplus_{a+b+c=2k}\SF{(a,b,c)}(V)\otimes(\mathcal S_{a-b+2}\oplus \mathcal S_{b-c+2}\oplus D_{a,b,c}\oplus E_{a,b,c})\hookrightarrow \homfunctor_3(\sym(V))_{e}
$$
where $E_{a,b,c}=\F$ if $a>b>c$ are all even and is equal to $0$ otherwise, and $D_{a,b,c}=\mathcal S_{a-b}$ if $a-b=b-c$, and is equal to $0$ otherwise.
\end{corollary}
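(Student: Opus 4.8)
The plan is to assemble three facts already established into a single chain of maps. First I would invoke Theorem~\ref{thm:evenpres}, which identifies $\homfunctor_3(\sym(V))_e$ with $H^3(\Out(F_3);[\sym(V)^{\otimes 3}]_e)$ through the explicit four-operator presentation. Next comes the proposition above, which upgrades this to the coincidence $H^3(\Out(F_3);[\sym(V)^{\otimes 3}]_e)\cong H^3(\GL_3(\Z);[\sym(V)^{\otimes 3}]_e)$. Together these reduce the corollary to exhibiting the listed summands inside the $\GL_3(\Z)$ cohomology.

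That last task is exactly the content of the theorem closing the previous section, which furnishes the injection
$$\bigoplus_{a+b+c=2k}\SF{(a,b,c)}(V)\otimes(\mathcal S_{a-b+2}\oplus \mathcal S_{b-c+2}\oplus D_{a,b,c}\oplus E_{a,b,c})\hookrightarrow H^3(\GL_3(\Z);\sym(V)^{\otimes 3})_{2k},$$
itself assembled from the boundary and cuspidal classes of Theorem~\ref{thm:bdrycalc} and Corollary~\ref{cor:hom3} together with the duality of Remark~\ref{rem:dual}. Since $a+b+c=2k$ is even, the target lies inside $[\sym(V)^{\otimes 3}]_e$; composing this injection with the two isomorphisms above produces, for each $k$, the desired embedding into the degree-$2k$ part of $\homfunctor_3(\sym(V))_e$, and summing over $k$ yields the corollary.

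The one point needing care is the grading bookkeeping: one must confirm that $H^3(\GL_3(\Z);\sym(V)^{\otimes 3})_{2k}$ is precisely the degree-$2k$ graded summand of $H^3(\GL_3(\Z);[\sym(V)^{\otimes 3}]_e)$, and that both the presentation isomorphism of Theorem~\ref{thm:evenpres} and the $\Out(F_3)$-versus-$\GL_3(\Z)$ isomorphism respect the internal $\sym(V)$-grading. This is routine, since $H^3(\GL_3(\Z);-)$ is additive over the grading and both isomorphisms are degreewise. I therefore expect no genuine obstacle at this stage: all the real work lies upstream, in establishing the surprising presentation-level coincidence of $\Out(F_3)$ and $\GL_3(\Z)$ cohomology and in constructing the homology classes, so this corollary amounts only to transporting those classes across identifications already in hand.
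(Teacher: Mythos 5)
Your proposal is correct and follows exactly the route the paper intends: the paper states this corollary without separate proof precisely because it is the concatenation of Theorem~\ref{thm:evenpres} (identifying $\homfunctor_3(\sym(V))_e$ with $H^3(\Out(F_3);[\sym(V)^{\otimes 3}]_e)$), the proposition equating this with $H^3(\GL_3(\Z);[\sym(V)^{\otimes 3}]_e)$, and the injection into $H^3(\GL_3(\Z);\sym(V)^{\otimes 3})_{2k}$ from the end of the $\GL_3(\Z)$ section. Your closing remark on the grading bookkeeping is sound and indeed routine, since all three identifications are degreewise in the internal $\sym(V)$-grading.
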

This establishes Theorem~\ref{thm:rank3intro} of the introduction.

Recall that $\omega_{2n}=\lceil\frac{2n}{3}\rceil$,
and $\mathcal M^{\Omega}_{2n}=\F^{\omega_{2n}}$, $\mathcal S^{\Omega}_{2n}=\F^{\omega_{2n}-1}$. Define these space to be zero in odd degrees. These are so named because we naturally have
$$\mathcal M^{\Omega}_{2n}\twoheadrightarrow\mathcal M_{2n+2},\,\,\,\,\mathcal S^{\Omega}_{2n}\twoheadrightarrow\mathcal S_{2n+2}.$$

The following theorem established Theorem~\ref{thm:omega3intro} of the introduction.
\begin{theorem}\label{thm:iota}
There is an injection
 $$\bigoplus_{a+b+c=2k} \SF{(a,b,c)}(V)\otimes\left(\mathcal S_{a-b+2}\oplus \mathcal S^\Omega_{b-c+2}\oplus D_{a,b,c}\oplus E_{a,b,c} \right)\hookrightarrow\Omega_3(\sym(V))_e.$$
\end{theorem}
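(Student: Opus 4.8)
The plan is to rerun the Allison--Ash--Conrad analysis of $H_3(\grp;M)$ that underlies Theorem~\ref{thm:bdrycalc} and Corollary~\ref{cor:rank3evencomp}, but applied to the presentation of $\Omega_3(\sym(V))$ instead of that of $\homfunctor_3(\sym(V))$. First I would specialize Theorem~\ref{thm:omega3pres} to $H=\sym(V)$ and restrict to the even-degree summand $[\sym(V)^{\otimes 3}]_e$, obtaining a presentation parallel to Theorem~\ref{thm:evenpres}. Comparing this with Theorem~\ref{thm:rank3pres} (equivalently Theorem~\ref{thm:evenpres}) shows that $\Omega_3(\sym(V))_e$ is the quotient of $[\sym(V)^{\otimes 3}]_e$ by strictly fewer relations, so there is a canonical surjection $\Omega_3(\sym(V))_e \twoheadrightarrow \homfunctor_3(\sym(V))_e \cong H^3(\grp;[\sym(V)^{\otimes 3}]_e)$. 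Dualizing via Remark~\ref{rem:dual}, the summand $\SF{(a,b,c)}(V)$ is governed by the irreducible $\Psi_{(a,b,c)}$, and I would construct the three families of classes (types (1), (2), (3) built respectively from $M^B$, $M^{U_P}$ and $M^{U_Q}$ together with the antisymmetrizer $A$) exactly as in the proof of Theorem~\ref{thm:bdrycalc}.

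The classes fall into two groups. The type (1) ($M^B$) family, the cuspidal family, and the type (2) family attached to the $P$-block (factors one and two) are all detected already in the quotient $\homfunctor_3(\sym(V))_e$, where they contribute $E_{a,b,c}$, $D_{a,b,c}$ and $\mathcal S_{a-b+2}$ respectively; their nonvanishing and mutual independence in $\Omega_3$ follow from Corollary~\ref{cor:rank3evencomp} through the surjection. The genuinely new input is the type (3) family attached to the $Q$-block (factors two and three). Here I would show that, upon restricting the $\Omega_3$-relations along $L_Q$ to $M^{U_Q}\cong\Psi_{(b,c)}$, the surviving relations are precisely the $\Omega_2$-relations of Theorem~\ref{thm:omega2pres}, rather than the full $H^1(\GL_2(\Z);{-}\otimes(\det))$ relations of Proposition~\ref{prop:glpres}. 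Consequently the type (3) contribution is governed by $\Omega_2(\sym)$ and yields the space $\mathcal S^\Omega_{b-c+2}$ via Proposition~\ref{martin:prop} and Corollary~\ref{cor:omega2comp}, which is strictly larger than the cusp-form space $\mathcal S_{b-c+2}$ that survives in $\homfunctor_3$.

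Finally I would assemble the families into a single map on $\bigoplus_{a+b+c=2k}\SF{(a,b,c)}(V)\otimes(\mathcal S_{a-b+2}\oplus\mathcal S^\Omega_{b-c+2}\oplus D_{a,b,c}\oplus E_{a,b,c})$ and verify injectivity. The subtle point is that the extra classes in $\mathcal S^\Omega_{b-c+2}$ beyond $\mathcal S_{b-c+2}$ map to zero under the surjection to $\homfunctor_3$, so their independence cannot be imported from Corollary~\ref{cor:rank3evencomp} and must be checked directly in $\Omega_3$ by exhibiting them as independent vectors in the $\Omega_2$-model; as in Theorem~\ref{thm:bdrycalc}, the $B$-, $P$- and $Q$-families overlap only in the at-most-one-dimensional span of $A(f)$, where $A$ is controlled, which pins down independence across summands. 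I expect the main obstacle to be the identification in the second paragraph: verifying that the relations of Theorem~\ref{thm:rank3pres} that are dropped in Theorem~\ref{thm:omega3pres} are exactly the ones imposing the cusp condition on the $Q$-block, so that relaxing them enlarges $\mathcal S_{b-c+2}$ to $\mathcal S^\Omega_{b-c+2}$, while the corresponding constraint on the $P$-block is retained and leaves $\mathcal S_{a-b+2}$ unchanged. This asymmetry is forced by the precise form of the dropped relations, which couple the second and third tensor factors, and making it rigorous is the technical heart of the argument.
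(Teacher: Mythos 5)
Your overall architecture matches the paper's proof of Theorem~\ref{thm:iota}: dualize via Remark~\ref{rem:dual}, mimic Allison--Ash--Conrad, import the $B$-, cuspidal and one parabolic family through the surjection $\Omega_3(\sym(V))_e\twoheadrightarrow\homfunctor_3(\sym(V))_e$ together with Corollary~\ref{cor:hom3}, obtain the enlarged summand from unipotent-invariant functionals subject only to $\Omega_2$-type conditions counted as in Proposition~\ref{martin:prop}, and control overlaps via the span of $A(f)$. However, the step you yourself flag as the technical heart is set up on the wrong parabolic block, and in the form you propose it fails. Via Remark~\ref{rem:dual} the Allison--Ash--Conrad coefficient module here is the dual $M=(\sym(V)^{\otimes 3})^*$, whose constituents are $\Psi^*_{(a,b,c)}\cong\Psi_{(2k-c,2k-b,2k-a)}$, so dualization swaps the parameters $a-b$ and $b-c$ (harmless in the symmetric Corollary~\ref{cor:rank3evencomp}, decisive in the asymmetric Theorem~\ref{thm:iota}). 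The paper's new classes are $A(f)$ for $f\in M^{U_P}$, i.e.\ functionals with $f(a,b,c)=f(a,b,\lambda_1 a+\lambda_2 b+c)$, with antisymmetry, evenness and $\id+\gamma+\gamma^2$ imposed on the \emph{first two} slots via $L_P$; since $(\Psi_{(2k-c,2k-b,2k-a)})^{U_P}\cong\Psi_{(2k-c,2k-b)}$ has highest-weight difference $b-c$, it is the $U_P$ side that produces $\mathcal S^\Omega_{b-c+2}$. Your plan restricts the relations along $L_Q$ to $M^{U_Q}$: if $M$ is the dual, $M^{U_Q}$ has difference $a-b$ and you would be enlarging the wrong summand; if you mean the undualized $\Psi_{(a,b,c)}$, you must still detect classes in the quotient $\Omega_3$ by exhibiting functionals, and there the computation you need is exactly the $U_P$ one.

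The asymmetry in fact runs opposite to your heuristic. The dualized $\Omega_3$-relation reads $f(x+y,y,z)+f(x,x+y,z)=f(x,-y,z-y)+f(x,-y+z,z)$: one side couples slots $(1,2)$, the other slots $(2,3)$, and for $U_P$-invariant $f$ the slot-3 inertness makes both sides of the antisymmetrized identity collapse to $f(x,-y,z)-f(z,y,x)-f(x,z,y)$, using the rank-2 identity $f(x+y,y,z)+f(x,x+y,z)=f(x,-y,z)$ coming from $\id+\gamma+\gamma^2$; this explicit verification is the content of the paper's proof and is absent from your proposal. For a $U_Q$-invariant functional the $(1,2)$-side does not close up into the $(2,3)$-block, and the remark following Theorem~\ref{thm:iota} states outright that the $M^{U_P}$ computation ``doesn't seem to extend to $M^{U_Q}$'' --- so the identification you call the technical heart is precisely the open/failing case. (Your claim that the relations dropped in passing from Theorem~\ref{thm:rank3pres} to Theorem~\ref{thm:omega3pres} ``couple the second and third tensor factors'' is also inaccurate: the weakened relations (3)--(5) act by $\id+S$ on the \emph{first} factor and through $E,F$, which couple the first two factors.) The repair is to keep your first and third paragraphs but replace the second by the $U_P$-functional claim and its explicit check; the $a-b$ family then remains $\mathcal S_{a-b+2}$ simply because no new classes are produced on that side, not because a cusp condition ``is retained'' on the $P$-block.
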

\begin{remark}
By definition $\homfunctor_3(\sym(V))_e$ is a quotient of $\Omega_3(\sym(V))_e$, and this theorem manages to enlarge the $\mathcal S_{a-b+2}$ in the decomposition of $\homfunctor_3(\sym(V))_e$ into an $\mathcal S^{\Omega}_{a-b}$. It seems natural to speculate that the $\mathcal S_{b-c+2}$ can be similarly enlarged (and maybe even the $D_{a,b,c}$ symmetric square terms.) However, calculations similar to the ones carried out here for $M^{U_P}$ don't seem to extend to $M^{U_Q}$. 
%In fact, assuming that $f(x,y,z)\in M^{U_Q}$ is antisymmetric in $y,z$, satisfies $f(-x,y,z)=f(x,-y,-z)=f(x,y,z)$, and $A(f)$ satisfies the defining condition for $\Omega_3$, then one can prove that $f(x,y,z)=f(x,-y,z)$, so one does not get anything more than in the $H_3(\GL_3(\Z);\Psi_{(a,b,c)})$ calculation.
\end{remark}
\begin{proof}
We mimic calculations in \cite{allison-ash-conrad}. We note that $\Omega_3(\sym(V))_{2n}$ is equal to $[\ext^3\sym(V)]_{2n}$ modulo the relation
\begin{equation}a b_{(1)}\wedge b_{(2)}\wedge c+a_{(1)}\wedge a_{(2)}b\wedge c=a\wedge S(b_{(1)})\wedge S(b_{(2)})c+
a\wedge S(b)c_{(1)}\wedge c_{(2)}. \tag{*}
\end{equation}
Dually, $\Omega_3(\sym(V))^*_e$ is the space of functionals $f\colon \sym(V)^{\otimes 3}\to \F$ satisfying
$f(x,y,z)=-f(y,x,z)=-f(x,z,y)$, $f(-x,-y,-z)=f(x,y,z)$ and $$f(x+y,y,z)+f(x,x+y,z)=f(x,-y,z-y)+f(x,-y+z,z).$$

Let $M=(\sym(V)^{\otimes 3}_{2n})^*$. Suppose
\begin{enumerate}
\item $f\in M^{U_P} $, 
\item $f(x,y,z)=-f(y,x,z)$
\item $f(x,y,z)=f(-x,-y,z)=f(x,y,-z)$
\item $f\cdot(\id+\gamma+\gamma^2)=0$, where $\gamma=\begin{bmatrix}0&1\\-1&-1\end{bmatrix}.$
\end{enumerate}
Then we claim that $A(f)\in\Omega_3(\sym(V))^*_e$.

Supposing this for the moment, 
$M\cong\oplus_\lambda \SF{\lambda}(V)\otimes \Psi^*_\lambda$, and $\Psi^*_\lambda=\Psi_{(2k-c,2k-b,2k-a)}$
and taking $U_P$-invariants yields
$M^{U_P}=\oplus_\lambda \SF{\lambda}(V)\otimes \Psi_{(2k-c,2k-b)}\cong \oplus_\lambda \SF{\lambda}(V)\otimes \Psi_{(b,c)}$. Modding out by $\id+\gamma+\gamma^2$ and $1+\tau$ yields 
$\oplus_{\lambda} \SF{\lambda}(V)\otimes \mathcal M^{\Omega}_{b-c}$, by the argument of Proposition  \ref{martin:prop}.
The other summands of the theorem follow from the calculation in Corollary \ref{cor:hom3}, where we need to show that there is no overlap except in the one dimensional subspace spanned by $E_{a,b,c}$. This follows because $A(M^{U_P})\cap A(M^{U_Q})$ is spanned by the $E_{a,b,c}$ terms. It suffices to show that the $D_{a,b,c}$ term is independent of $M^{U_Q}$ and $M^{U_P}$. This can be done by verifying that the only elements of $A(M^{U_P}+M^{U_Q})$ satisfying the $H_3(\GL_3(\Z);M)$ relations are the ones described in \cite{allison-ash-conrad}.

Now we prove the claim.
\begin{align*}
(Af)(x+y,y,z)&= f(x+y,y,z)-f(z,y,x+y)-f(x+y,z,y)\\
(Af)(x,x+y,z)&= f(x,x+y,z)-f(z,x+y,x)-f(x,z,x+y)\\
(Af)(x,-y,z-y)&=f(x,-y,z-y)-f(z-y,-y,x)-f(x,z-y,-y)\\
(Af)(x,-y+z,z)&=f(x,-y+z,z)-f(z,-y+z,x)-f(x,z,-y+z)
\end{align*}
Now, the the hypotheses on $f$ imply that $f(x+y,y,z)+f(x,x+y,z)=f(x,-y,z)$. 
So
\begin{align*}
(Af)(x+y,y,z)+&(Af)(x,x+y,z)=\\
&=f(x,-y,z)-f(z,y,x+y)-f(x+y,z,y)-f(z,x+y,x)-f(x,z,x+y)\\
&=f(x,-y,z)-f(z,y,x)-f(x+y,z,-x)-f(z,x+y,x)-f(x,z,y)\\
&=f(x,-y,z)-f(z,y,x)-f(x+y,z,x)-f(z,x+y,x)-f(x,z,y)\\
&=f(x,-y,z)-f(z,y,x)-f(x,z,y)
\end{align*}
where we use the fact that $f(a,b,c)=f(a,b,\lambda_1 a+\lambda_2b+c)$.
Similarly
\begin{align*}
(Af)&(x,-y,z-y)+(Af)(x,-y+z,z)=\\
&=f(x,-y,z-y)-f(z-y,-y,x)-f(x,z-y,-y)+f(x,-y+z,z)\\
&\,\,\,\,\,\,\,\,\,-f(z,-y+z,x)-f(x,z,-y+z)\\
&=-f(z,y,x)+f(x,-y,z-y)-f(x,z-y,-y)+f(x,-y+z,z)-f(x,z,-y+z)\\
&=-f(z,y,x)+f(x,-y,z)-f(x,z-y,z)+f(x,-y+z,z)-f(x,z,-y)\\
&=-f(z,y,x)+f(x,-y,z)-f(x,z,y)
\end{align*}
Thus both sides are equal.

%Now do the same thing for $U_Q$ invariants! That, is we assume that $f(x,y,z)=f(x,y+\lambda_1x,z)=f(x,y,z+\lambda_2 x)$ and
%that $f(x,y,z)=-f(x,z,y)=f(x,-y,-z)=f(-x,y,z)$ and that $f(x,y+z,z)+f(x,y,y+z)=f(x,y,-z)$.
%Now
%\begin{align*}
%(Af)(x+y,y,z)+(Af)(x,x+y,z)&=f(x+y,y,z)-f(y,x+y,z)-f(z,y,x+y)+f(x,x+y,z)-f(x+y,x,z)-f(z,x+y,x)\\
%&=f(x+y,y,z)-f(y,x+y,z)+f(x,x+y,z)-f(x+y,x,z) -f(z,x,-y)\\
%&=f(x+y,-x,z)-f(y,x,z)+f(x,y,z)-f(x+y,x,z)-f(z,x,-y)
%\end{align*}
%and
%\begin{align*}
%(Af)(x,-y,z-y)+(Af)(x,-y+z,z)&= f(x,-y,z-y)-f(-y,x,z-y)-f(z-y,-y,x)+f(x,-y+z,z)-f(-y+z,x,z)-f(z,-y+z,x)\\
%&=-f(-y,x,z-y)-f(z-y,-y,x)-f(-y+z,x,z)-f(z,-y+z,x)+f(x,-y,-z)\\
%&=-f(-y,x,z)-f(z-y,-z,x)-f(-y+z,x,z)-f(z,-y,x)+f(x,-y,-z)\\
%&=-f(y,x,z)+f(z-y,x,-z)-f(z-y,x,z)-f(z,y,-x)+f(x,y,z)
%\end{align*}
%\red{Sign convention alert!}

\end{proof}

\section{Proof of Theorems~\ref{thm:rank3pres}, ~\ref{thm:oddpres}, ~\ref{thm:omega3pres} }\label{sec:rank3proofs}
\begin{definition}
A graph in $\overline{\cG_{{H\Lie},1}}$ is said to be a ``chord diagram," if it is represented by a linear tree with ends joined by an edge (called the long chord). For example, 
\begin{minipage}{2cm}
\resizebox{2cm}{!}{\begin{tikzpicture}[scale=.8]
\coordinate(a) at (0,0);
\coordinate(b) at (1.5,0);
\coordinate(c) at (3.8,0);
\coordinate(d) at (5,0);
\coordinate(e) at (2,-1);
\coordinate(f) at (5,-.25);
\coordinate(g) at (1,1.5);
\coordinate(h) at (2.5,1.0);
\coordinate(i) at (3.8,.25);
\coordinate(j) at (5,.25);
\node[hopf, at=(g)] (G){${a}$};
\node[hopf, at=(h)] (H){${b}$};
\node[hopf, at=(e)] (E){${c}$};
\draw[thick] (a) to (d);
\draw[thick] (a) to[out=90, in=180] (G);
\draw[thick] (b) to[out=90, in=180] (H);
\draw[thick] (a) to[out=270, in =180] (E);
\draw[thick] (c) to (i);
\draw[thick] (d) to (j);
\draw[thick] (d) to (f);
\begin{scope}[decoration={markings,mark = at position 0.5 with {\arrow{stealth}}}]
\draw[densely dashed, postaction=decorate] (E) to[densely dashed, postaction=decorate, out=0, in=270](f);
\draw[densely dashed, postaction=decorate] (G) to[densely dashed, postaction=decorate,out=0,in=90](i);
\draw[densely dashed, postaction=decorate] (H) to[densely dashed, postaction=decorate,out=0,in=90](j);
\end{scope}
\end{tikzpicture}}
\end{minipage}
is a chord diagram. 
\end{definition}
By slide relations, we can assume that the hopf algebra elements at the ends of the tree are all at the beginnings of their respective edges, as in this picture. Note that a chord diagram can be rewritten as a sum of other chord diagrams in the following way. Choose an edge from among the top edges to become the new long chord. Now its two ends are joined by a subarc of the tree. Using IHX relations, lengthen this arc until it occupies the whole tree. This gives a linear combination of new chord diagrams. The equality of the original diagram with these new diagrams is called a ``chord shift relation." Some examples will be seen below. Chord diagrams also have a $\mathbb Z_2$ symmetry, which multiplies by a sign for each edge and applies the antipode to each element of $H$.
\begin{lemma}
$\overline{\cG_{{H\Lie},1}}$ is presented as a vector space by chord diagrams modulo chord shifting relations and the $\Z_2$-symmetry.
\end{lemma}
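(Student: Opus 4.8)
The plan is to prove the lemma in three stages: exhibit chord diagrams as a spanning set, check that the two named families of relations genuinely hold, and then verify that they exhaust all relations. A generator of $\cG_{H\Lie,1}$ is a single vertex carrying an element of $H\Lie\arity{2r}$ whose $2r$ i/o slots are joined in pairs by $r$ edges; concretely it is a trivalent ($\Lie$) tree $T$ with $2r$ leaves, decorated by elements of $H$ along its edges, together with the chosen pairing of leaves into $r$ chords. First I would use the slide relations defining $\overline{\cG_{H\Lie}}$ to push every $H$-decoration to the beginning of its edge, so that the Hopf-algebra labels sit only at the leaves. Then, fixing one chord $e=\{p,q\}$, I would invoke the standard fact that, modulo IHX and antisymmetry, the arity-$n$ part of the cyclic $\Lie$ operad is spanned by caterpillar (linear) trees with two prescribed end-leaves (these number $(n-2)!$, and in fact form a basis). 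Applying this with end-leaves $p$ and $q$ rewrites $T$ as a linear combination of caterpillars whose spine runs from $p$ to $q$, i.e.\ as a sum of chord diagrams whose long chord is $e$. Hence chord diagrams span $\overline{\cG_{H\Lie,1}}$.

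Next I would confirm that the two relation families hold. The $\Z_2$-symmetry is the edge-orientation relation of the underlying one-vertex graph: reversing the spine reverses every edge, which in the cyclic operad $H\Lie$ introduces a sign per edge and applies the antipode $S$ to each $H$-label, matching the stated symmetry. The chord-shifting relations record the dependence of the caterpillar expansion on the choice of long chord: selecting a different top edge as the new long chord and lengthening, via IHX, the subarc of the spine joining its two ends produces exactly the asserted linear combination of chord diagrams. Both families are therefore consequences of the IHX, orientation, and slide relations already present, so the tautological map
$$
\Phi\colon \frac{\F\{\text{chord diagrams}\}}{\langle\text{chord shifts},\ \Z_2\text{-symmetry}\rangle}\longrightarrow \overline{\cG_{H\Lie,1}}
$$
is a well-defined surjection.

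It remains to prove that $\Phi$ is injective, which is the heart of the lemma. The strategy is to show that every defining relation of $\overline{\cG_{H\Lie,1}}$, once both of its sides are expanded into chord diagrams by the spanning procedure, becomes a consequence of chord shifts and the $\Z_2$-symmetry. Multilinearity and the $H$-slide relations pose no difficulty, since the expansion is linear and was built using precisely the slide relations, while the graph-orientation relation is by definition the $\Z_2$-symmetry. The entire content therefore reduces to the IHX relation on the tree part, and the key step is the combinatorial claim that \emph{any two caterpillar expansions of the same tree-with-chords, taken about two different pairs of end-leaves, are related by a sequence of chord shifts}. Granting this, any IHX relation, expanded on both sides, is a composite of chord shifts, and injectivity follows.

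I expect this last combinatorial claim to be the main obstacle. One must show that the chord-shift moves are rich enough to connect all caterpillar presentations of a fixed generator, including IHX relations taking place far from the long chord, and one must track the $H$-decorations correctly through each move using the slide relations. The natural approach is to induct on the combinatorial distance between the two chosen end-pairs in the tree, using a single chord shift to move one end-leaf one step at a time and checking that the $H$-labels transform consistently; the base case, in which the two expansions share a common end-leaf, should follow directly from the definition of the chord shift.
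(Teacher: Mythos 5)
Your overall architecture---chord diagrams span $\overline{\cG_{{H\Lie},1}}$ by expanding into caterpillars whose ends are the endpoints of a chosen long chord, the two relation families hold because they are consequences of IHX, orientation and slide relations, and the real content is that these relations exhaust everything---is the same as the paper's. But where the paper closes the argument, your proposal leaves a genuine gap: you reduce injectivity of $\Phi$ to the claim that any two caterpillar expansions of a fixed graph, taken about different long chords, are connected by chord shifts, and you do not prove this claim; moreover, the induction you sketch (moving one end-leaf a step at a time, inducting on the distance between the two end-pairs) does not match the mechanics of the move you are allowed to use. A chord shift does not migrate an endpoint incrementally---by definition it swaps the long chord wholesale, selecting a new top edge and re-expanding the whole spine via IHX---so the proposed base case and inductive step are not transitions the relation actually performs, and the worry you raise about ``IHX relations taking place far from the long chord'' is left unresolved.

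The paper avoids this connectivity problem entirely. It defines the forward map from $\overline{\cG_{{H\Lie},1}}$ to chord diagrams modulo relations by expanding about a chosen long chord; by the basis fact you yourself quote (caterpillars with two prescribed end-leaves form a basis of the cyclic $\Lie$ operad), the expansion about a \emph{fixed} chord is unique, so all IHX relations, near or far from the chord, are automatically respected. Independence of the choice of chord is then a single application of the relations: chord-shift each term of the $e_1$-expansion to long chord $e_2$, and the resulting combination of $e_2$-diagrams must coincide with the direct $e_2$-expansion by the same uniqueness (up to the spine-reversal, which is exactly why the $\Z_2$-symmetry must be imposed alongside the shifts). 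Injectivity then comes for free from a retraction rather than from your relation-by-relation analysis: the composite from chord diagrams mod relations into $\overline{\cG_{{H\Lie},1}}$ and back is the identity, since expanding a chord diagram about its own long chord returns it, and an inclusion-induced surjection admitting a retraction is an isomorphism. To repair your write-up, replace the sketched induction by this two-line argument: one chord shift per diagram, plus linear independence of fixed-end caterpillars.
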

\begin{proof}
We define a map from $\overline{\cG_{{H\Lie},1}}$ to chord diagrams modulo the given relations.  Given an arbitrary graph in $\overline{\cG_{{H\Lie},1}}$, choose an edge to be the long chord. Then use IHX relations to expand as a sum of chord diagrams with this chord as the long one. We need to show that this does not depend on the choice of chord. Given a different choice, the two sets of diagrams are related to each other by chord shifting relations, so the map is well-defined. 

The map in the other direction is induced by inclusion, and is well-defined since the chord shift relations are IHX consequences. The composition of these two maps is the identity in one direction, implying that chord diagrams modulo chord shift relations inject into $\overline{\cG_{{H\Lie},1}}$. But because chord diagrams  generate $\overline{\cG_{{H\Lie},1}}$, this means that the two spaces are isomorphic.
\end{proof}

\begin{lemma}
$\partial(\overline{\cG_{{H\Lie},2}})$ is generated by boundaries of graphs of the following form. First one can assume both trees are linear (including the case of a tripod). If there are edges connecting a tree to itself, we may assume that at least one of them connects the ends of this tree. Finally, if there is no such self-edge, we may assume that tree is a tripod.
\end{lemma}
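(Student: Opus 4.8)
The plan is to reduce an arbitrary generator of $\overline{\cG_{H\Lie,2}}$ to the asserted normal forms using only relations that already hold in the complex, so that applying $\partial$ afterwards is automatically legitimate. A generator is a two-vertex graph consisting of two $\Lie$-trees $T_1,T_2$ joined by some edges, where an edge may connect a leaf of one tree to a leaf of the other (a connecting edge) or two leaves of the same tree (a self-edge), and where the $H$-labels are recorded up to the slide relations defining $\overline{\cG_{H\Lie}}$. The three moves available to me are: the IHX relation applied at an internal edge of a single tree, the slide relations moving $H$-labels along edges, and the $\Z_2$-symmetry of chord diagrams (a sign per edge together with the antipode on each $H$-label). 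Each of these is a relation in $\overline{\cG_{H\Lie,2}}$, so rewriting a graph $G$ by them does not change the class $\partial G$, and it suffices to bring $G$ itself into normal form.

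First I would linearize both trees. The free Lie operad $\Lie\arity{n}$ is spanned by left-normed brackets, equivalently by caterpillar (``linear'') trees; this is a direct consequence of the Jacobi/IHX relation. Applying this expansion at each of the two vertices writes $G$ as a linear combination of two-vertex graphs in which both $T_1$ and $T_2$ are linear, with the tripod being the smallest case. Next, for a linear tree carrying at least one self-edge, I would mimic the chord-shift relation from the chord-diagram lemma: choosing one self-edge, I lengthen the sub-arc of the spine running between its two feet by a sequence of IHX moves, exactly as in that lemma, until the self-edge joins the two extreme leaves of the tree. This is purely internal to the tree in question and leaves the rest of the graph untouched, so a self-edge can always be taken to connect the ends.

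The hard part will be the case of a self-edge-free tree. Here leaf count is an obstruction: the IHX relation may only be applied along an internal edge of a single tree, never across a connecting edge (that would amalgamate the two vertices and drop the homological degree), and neither the slide relations nor the $\Z_2$-symmetry alter how many legs a tree has. Consequently the reduction to a tripod cannot be a rewriting of $G$ inside $\overline{\cG_{H\Lie,2}}$, but must instead be proved at the level of the boundary: I would show that if $T_1$ is linear, self-edge-free, and has at least four legs, then $\partial G$ already lies in the span of the boundaries of graphs in which the corresponding tree is a tripod, together with graphs carrying a self-edge (handled by the previous step). The natural mechanism is an induction on the number of legs of $T_1$: peel off the outermost trivalent vertex of $T_1$, which is itself a tripod, and compare the chord diagrams obtained by contracting the two connecting edges at that vertex against those coming from a tripod-plus-$T_2$ graph, reconciling the difference by chord-shift relations in $\overline{\cG_{H\Lie,1}}$ and absorbing the correction terms (which acquire a self-edge) via the self-edge step. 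Verifying that this peeling telescopes correctly in the target complex is where I expect the real work, and the main obstacle, to lie.
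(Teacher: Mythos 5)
Your first two reductions (linearizing both trees by Jacobi/IHX, and lengthening the sub-arc under a self-edge until it joins the ends of the spine) coincide with the paper's, and your diagnosis of the third step is also right: since IHX cannot be applied across a connecting edge, the reduction of a self-edge-free tree to a tripod cannot be a rewriting of $G$ inside $\overline{\cG_{H\Lie,2}}$ and must be proved at the level of $\partial G$. But that third step is exactly where the lemma has content, and your proposal stops there: you never establish that the ``peeling telescopes,'' and you say so yourself. As sketched it is not even clear what identity is being asserted --- contracting the two connecting edges at the outermost trivalent vertex of $T_1$ produces one-vertex graphs, i.e.\ elements of $\overline{\cG_{H\Lie,1}}$, and matching those against the contractions of a tripod-plus-$T_2$ graph via chord-shift relations does not by itself express $\partial G$ in terms of boundaries of the allowed normal forms. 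For that you must exhibit an explicit chain in $\overline{\cG_{H\Lie,2}}$, built from graphs of the allowed shapes, whose boundary equals $\partial G$; that chain is precisely the missing ingredient.

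The paper supplies it by a different and cleaner device, staying in degree $2$ rather than reconciling degree-$1$ chord diagrams. Given $G$ with a self-edge-free tree $T$ that is not a tripod, choose an internal edge of $T$ and break $T$ into two subtrees joined by a new dashed edge $e'$, obtaining a three-vertex graph $G_3$ with $c_{e'}(G_3)=G$, where $c_f$ denotes contraction along the dashed edge $f$. Let $\Sigma\in\overline{\cG_{H\Lie,2}}$ be the sum of the $c_f(G_3)$ over the \emph{old} dashed edges $f$ (all edges emanating from $T$, but not $e'$). In $\partial\Sigma$ the double contractions of two old edges cancel in pairs --- a term contracting first an edge at one subtree and then an edge at the other appears twice with opposite signs, by the same mechanism that gives $\partial^2=0$ --- and the surviving terms are $\sum_f \pm\, c_{e'}c_f(G_3)=\pm\,\partial G$. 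Hence $\partial G=\pm\,\partial\Sigma$, and in each term of $\Sigma$ one vertex is a subtree of $T$ with strictly fewer leaves, still self-edge-free, while the merged vertex may acquire self-edges and so falls under the case you already handled; induction on the number of leaves, with the tripod as base case, then closes. Your ``peel off the outermost trivalent vertex'' is essentially the special case in which the chosen internal edge is the one adjacent to that vertex; had you formed the corresponding sum of single contractions and computed its boundary, instead of comparing chord diagrams in $\overline{\cG_{H\Lie,1}}$, the telescoping you flagged as the obstacle becomes exactly this pairwise cancellation.
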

\begin{proof}
One can straighten out the trees and have edges join their ends using IHX relations. So the only thing that we need to check is that boundaries of trees without self-edges such that the trees are not tripods do not add anything to $\partial(\overline{\cG_{{H\Lie},2}})$. We proceed by induction on the number of leaves of the tree, the base case being a tripod with three leaves. Otherwise, one can choose an internal edge and break the tree into two pieces joined by a dashed edge. Form the graph which is the sum of contracting along each of the dashed edges emanating from the original tree (but not the new dashed edge we just created). This is still an element of  $\overline{\cG_{{H\Lie},2}}$. Taking its boundary, if an edge emanating from the left tree had been contracted, we can contract along an edge in the right tree while taking the boundary, and vice versa. These terms all cancel in pairs, leaving us with the single term where the dashed edge breaking apart the original tree is filled in. In this way, we have shown the boundary with the original tree is representable by boundaries of terms with trees of smaller size.
\end{proof}

Specifying now to the rank 3 case, the three generators of $\overline{\cG^{(3)}_{H\Lie,1}}$ are given below. 
\begin{center}
\tp{3cm}{
\coordinate(a) at (0,0);
\coordinate(b) at (1.5,0);
\coordinate(c) at (3.8,0);
\coordinate(d) at (5,0);
\coordinate(e) at (2,-1);
\coordinate(f) at (5,-.25);
\coordinate(g) at (1,1.5);
\coordinate(h) at (2.5,1.0);
\coordinate(i) at (3.8,.25);
\coordinate(j) at (5,.25);
\node[hopf, at=(g)] (G){${a}$};
\node[hopf, at=(h)] (H){${b}$};
\node[hopf, at=(e)] (E){${c}$};
\draw[thick] (a) to (d);
\draw[thick] (a) to[out=90, in=180] (G);
\draw[thick] (b) to[out=90, in=180] (H);
\draw[thick] (a) to[out=270, in =180] (E);
\draw[thick] (c) to (i);
\draw[thick] (d) to (j);
\draw[thick] (d) to (f);
\begin{scope}[decoration={markings,mark = at position 0.5 with {\arrow{stealth}}}]
\draw[densely dashed, postaction=decorate] (E) to[densely dashed, postaction=decorate, out=0, in=270](f);
\draw[densely dashed, postaction=decorate] (G) to[densely dashed, postaction=decorate,out=0,in=90](i);
\draw[densely dashed, postaction=decorate] (H) to[densely dashed, postaction=decorate,out=0,in=90](j);
\end{scope}
}
\hspace{1em}
\tp{3cm}{
\coordinate(a) at (0,0);
\coordinate(b) at (1.5,0);
\coordinate(c) at (3.5,0);
\coordinate(d) at (5,0);
\coordinate(e) at (2,-1);
\coordinate(f) at (5,-.25);
\coordinate(g) at (1,1.5);
\coordinate(h) at (2.5,1.0);
\coordinate(i) at (3.5,.25);
\coordinate(j) at (5,.25);
\node[hopf, at=(g)] (G){$\wt{a}$};
\node[hopf, at=(h)] (H){$\wt{b}$};
\node[hopf, at=(e)] (E){$\wt{c}$};
\draw[thick] (a) to (d);
\draw[thick] (a) to[out=90, in=180] (G);
\draw[thick] (b) to[out=90, in=180] (H);
\draw[thick] (a) to[out=270, in =180] (E);
\draw[thick] (c) to (i);
\draw[thick] (d) to (j);
\draw[thick] (d) to (f);
\begin{scope}[decoration={markings,mark = at position 0.5 with {\arrow{stealth}}}]
\draw[densely dashed, postaction=decorate] (E) to[densely dashed, postaction=decorate, out=0, in=270](f);
\draw[densely dashed, postaction=decorate] (G) to[densely dashed, postaction=decorate,out=0,in=90](j);
\draw[densely dashed, postaction=decorate] (H) to[densely dashed, postaction=decorate,out=0,in=90](i);
\end{scope}
}
\hspace{1em}
\tp{3cm}{
\coordinate(a) at (0,0);
\coordinate(b) at (1.8,0);
\coordinate(c) at (2.7,0);
\coordinate(d) at (4.5,0);
\coordinate(e) at (1,-1);
\coordinate(f) at (4.5,-.25);
\coordinate(g) at (.7,1);
\coordinate(h) at (3.7,1);
\coordinate(i) at (1.8,.25);
\coordinate(j) at (4.5,.25);
\node[hopf, at=(g)] (G){$\wh{a}$};
\node[hopf, at=(h)] (H){$\wh{b}$};
\node[hopf, at=(e)] (E){$\wh{c}$};
\draw[thick] (a) to (d);
\draw[thick] (a) to[out=90, in=180] (G);
\draw[thick] (c) to[out=90, in=180] (H);
\draw[thick] (a) to[out=270, in =180] (E);
\draw[thick] (b) to (i);
\draw[thick] (d) to (j);
\draw[thick] (d) to (f);
\begin{scope}[decoration={markings,mark = at position 0.5 with {\arrow{stealth}}}]
\draw[densely dashed, postaction=decorate] (E) to[densely dashed, postaction=decorate, out=0, in=270](f);
\draw[densely dashed, postaction=decorate] (G) to[densely dashed, postaction=decorate,out=0,in=90](i);
\draw[densely dashed, postaction=decorate] (H) to[densely dashed, postaction=decorate,out=0,in=90](j);
\end{scope}
}
\end{center}

We identify them with elements of $\overline{H^{\otimes 3}}$. So the first generator corresponds to $a\otimes b\otimes c\in H^{\otimes 3}$. In order to distinquish the three graphs, we consider three copies of $\overline{H^{\otimes 3}}$, with elements of the second copy denoted $\widetilde{a}\otimes\widetilde{b}\otimes\widetilde{c}$ and elements of the third copy denoted $\wh{a}\otimes\wh{b}\otimes\wh{c}$. Depicting these three types of tensors pictorially, we use the symbols $\tp{1.5cm}{\boxtensor\transF{1}},\tp{1.5cm}{\roundtensor\transF{1}},$ and $\tp{1.5cm}{\tritensor\transF{1}}$.

We proceed to list the relations. There are symmetries of the graphs, chord shift relations, and relations coming from $\im\partial$.

\subsection{Symmetry relations}
The symmetries of the graphs correspond to the following relations: (One is actually a chord shift relation in disguise.)
\begin{center}
\begin{tabular}{|c|l|}
\hline
(S1)&$a\otimes b\otimes c=-S(b)\otimes S(a)\otimes S(c)$\\
\hline
(S2)& $\wt{a}\otimes \wt{b}\otimes \wt{c}=\wt{c}\otimes \wt{b}\otimes \wt{a}$\\
\hline
(S3)& $\wt{a}\otimes \wt{b}\otimes \wt{c}=-S(\wt{a})\otimes S(\wt{b})\otimes S(\wt{c})$\\
\hline
(S4)& $\wh{a}\otimes \wh{b}\otimes \wh{c}=-S(\wh{b})\otimes S(\wh{a})\otimes S(\wh{c})$\\
\hline
\end{tabular}
\end{center}

\subsection{Chord shift relations}
There are three chord shift relations as in the following chart, and proven below.
\begin{center}
\begin{tabular}{|c|l|}
\hline
(CS1)& $\wt{a}\otimes\wt{b}\otimes\wt{c}=b\otimes c\otimes a+b\otimes a\otimes c$\\
\hline
(CS2)& $ \wt{a}\otimes \wt{b}\otimes \wt{c}=\wt{c}\otimes \wt{a}\otimes \wt{b}-a\otimes c\otimes b-c\otimes a\otimes b+\wt{a}\otimes \wt{c}\otimes \wt{b}$\\
\hline
(CS3)& $ \wh{a}\otimes \wh{b}\otimes \wh{c}=\wt{c}\otimes \wt{a}\otimes \wt{b}-a\otimes c\otimes b-S(\wh{a})\otimes \wh{c}\otimes \wh{b}+S(a)\otimes c\otimes b$\\
\hline
\end{tabular}
\end{center}

 For the first type of chord diagram, there is only one other chord up to symmetry. So without loss of generality, we choose the $b$ chord to expand into the long chord. 
\begin{align*}
\tp{3cm}{
\coordinate(a) at (0,0);
\coordinate(b) at (1.5,0);
\coordinate(c) at (3.8,0);
\coordinate(d) at (5,0);
\coordinate(e) at (2,-1);
\coordinate(f) at (5,-.25);
\coordinate(g) at (1,1.5);
\coordinate(h) at (2.5,1.0);
\coordinate(i) at (3.8,.25);
\coordinate(j) at (5,.25);
\node[hopf, at=(g)] (G){${a}$};
\node[hopf, at=(h)] (H){${b}$};
\node[hopf, at=(e)] (E){${c}$};
\draw[thick] (a) to (d);
\draw[thick] (a) to[out=90, in=180] (G);
\draw[thick] (b) to[out=90, in=180] (H);
\draw[thick] (a) to[out=270, in =180] (E);
\draw[thick] (c) to (i);
\draw[thick] (d) to (j);
\draw[thick] (d) to (f);
\begin{scope}[decoration={markings,mark = at position 0.5 with {\arrow{stealth}}}]
\draw[densely dashed, postaction=decorate] (E) to[densely dashed, postaction=decorate, out=0, in=270](f);
\draw[densely dashed, postaction=decorate] (G) to[densely dashed, postaction=decorate,out=0,in=90](i);
\draw[densely dashed, postaction=decorate] (H) to[densely dashed, postaction=decorate,out=0,in=90](j);
\end{scope}
}
=
\tp{2.5cm}{
\coordinate(a) at (0,0);
\coordinate(b) at (1,0);
\coordinate(c) at (2,0);
\coordinate(d) at (3,0);
\coordinate (i) at (0,2);
\coordinate(h) at (1,1);
\coordinate(j) at (2,.25);
\coordinate(k) at (3,.25);
\coordinate(e) at (1,-.25);
\coordinate(f) at (3,-.25);
\coordinate(g) at (2,-1);
\node[hopf, at=(i)](I){$c$};
\node[hopf, at=(h)](H){$a$};
\node[hopf, at=(g)](G){$b$};
\draw[thick] (a) to (d);
\draw[thick] (a) to[out=180,in=180] (I);
\draw[thick] (a) to[out=90,in=180] (H);
\draw[thick] (b) to[out=270,in=180] (G);
\draw[thick] (c) to (j);
\draw[thick] (f) to (k);
  \begin{scope}[decoration={markings,mark = at position 0.5 with {\arrow{stealth}}}]
\draw[densely dashed, postaction=decorate] (I) to[densely dashed, postaction=decorate, out=0, in=90](k);
\draw[densely dashed, postaction=decorate] (H) to[densely dashed, postaction=decorate,out=0,in=90](j);
\draw[densely dashed, postaction=decorate] (G) to[densely dashed, postaction=decorate,out=0,in=270](f);
\end{scope}
}
&=
\tp{2cm}{
\coordinate(a) at (0,0);
\coordinate(d) at (0,1);
\coordinate(f) at (1,2);
\coordinate(e) at (1,3);
\coordinate(g) at (1,-1);
\coordinate(b) at (2,0);
\coordinate(c) at (3,0);
\coordinate(i) at (2,.25);
\coordinate(j) at (3,.25);
\coordinate(h) at (3,-.25);
\node[hopf, at=(e)](E){$c$};
\node[hopf, at=(f)](F){$a$};
\node[hopf, at=(g)](G){$b$};
\draw[thick] (a) to (d);
\draw[thick] (a) to (c);
\draw[thick](a) to[out =270, in =180] (G);
\draw[thick](d) to[out=135,in=180] (E);
\draw[thick] (d) to[out=45, in =180] (F);
\draw[thick] (b) to (i);
\draw[thick] (h) to (j);
  \begin{scope}[decoration={markings,mark = at position 0.5 with {\arrow{stealth}}}]
\draw[densely dashed, postaction=decorate] (E) to[densely dashed, postaction=decorate, out=0, in=90](j);
\draw[densely dashed, postaction=decorate] (F) to[densely dashed, postaction=decorate,out=0,in=90](i);
\draw[densely dashed, postaction=decorate] (G) to[densely dashed, postaction=decorate,out=0,in=270](h);
\end{scope}
}
\\&=
\tp{3cm}{
\coordinate(a) at (0,0);
\coordinate(b) at (1.5,0);
\coordinate(c) at (3.5,0);
\coordinate(d) at (5,0);
\coordinate(e) at (2,-1);
\coordinate(f) at (5,-.25);
\coordinate(g) at (1,1.5);
\coordinate(h) at (2.5,1.0);
\coordinate(i) at (3.5,.25);
\coordinate(j) at (5,.25);
\node[hopf, at=(g)] (G){$c$};
\node[hopf, at=(h)] (H){$a$};
\node[hopf, at=(e)] (E){$b$};
\draw[thick] (a) to (d);
\draw[thick] (a) to[out=90, in=180] (G);
\draw[thick] (b) to[out=90, in=180] (H);
\draw[thick] (a) to[out=270, in =180] (E);
\draw[thick] (c) to (i);
\draw[thick] (d) to (j);
\draw[thick] (d) to (f);
\begin{scope}[decoration={markings,mark = at position 0.5 with {\arrow{stealth}}}]
\draw[densely dashed, postaction=decorate] (E) to[densely dashed, postaction=decorate, out=0, in=270](f);
\draw[densely dashed, postaction=decorate] (G) to[densely dashed, postaction=decorate,out=0,in=90](j);
\draw[densely dashed, postaction=decorate] (H) to[densely dashed, postaction=decorate,out=0,in=90](i);
\end{scope}
}
-
\tp{3cm}{
\coordinate(a) at (0,0);
\coordinate(b) at (1.5,0);
\coordinate(c) at (3.8,0);
\coordinate(d) at (5,0);
\coordinate(e) at (2,-1);
\coordinate(f) at (5,-.25);
\coordinate(g) at (1,1.5);
\coordinate(h) at (2.5,1.0);
\coordinate(i) at (3.8,.25);
\coordinate(j) at (5,.25);
\node[hopf, at=(g)] (G){${a}$};
\node[hopf, at=(h)] (H){${c}$};
\node[hopf, at=(e)] (E){${b}$};
\draw[thick] (a) to (d);
\draw[thick] (a) to[out=90, in=180] (G);
\draw[thick] (b) to[out=90, in=180] (H);
\draw[thick] (a) to[out=270, in =180] (E);
\draw[thick] (c) to (i);
\draw[thick] (d) to (j);
\draw[thick] (d) to (f);
\begin{scope}[decoration={markings,mark = at position 0.5 with {\arrow{stealth}}}]
\draw[densely dashed, postaction=decorate] (E) to[densely dashed, postaction=decorate, out=0, in=270](f);
\draw[densely dashed, postaction=decorate] (G) to[densely dashed, postaction=decorate,out=0,in=90](i);
\draw[densely dashed, postaction=decorate] (H) to[densely dashed, postaction=decorate,out=0,in=90](j);
\end{scope}
}
\end{align*}

The relation becomes $$a\otimes b\otimes c=\wt{c}\otimes \wt{a}\otimes \wt{b}-a\otimes c\otimes b,$$
and by renaming this is equivalent to $$(CS1)\,\, \wt{a}\otimes\wt{b}\otimes\wt{c}=b\otimes c\otimes a+b\otimes a\otimes c.$$ 
Again for the second generator, there is only one other chord to consider.
\begin{multline*}
\graphB{a}{b}{c}=
\tp{2cm}{
\coordinate(a) at (0,0);
\coordinate(b) at (1,-1);
\coordinate(c) at (3,-.25);
\coordinate(d) at (3,0);
\coordinate(e) at (3,1);
\coordinate(f) at (3,1.25);
\coordinate(g) at (2.75,1);
\coordinate(h) at (1,1);
\coordinate(i) at (0,1);
\coordinate(j) at (1,2);
\node[hopf, at=(j)](J){$c$};
\node[hopf, at=(h)](H){$a$};
\node[hopf, at=(b)](B){$b$};
\draw[thick] (i) to (a) to (d) to (e);
\draw[thick] (c) to (d);
\draw[thick] (g) to (e) to (f);
\draw[thick] (i) to[out=90, in =180] (J);
\draw[thick](i) to[out=0,in=180] (H);
\draw[thick](a) to[out=270, in=180](B);
\begin{scope}[decoration={markings,mark = at position 0.5 with {\arrow{stealth}}}]
\draw[densely dashed, postaction=decorate] (J) to[densely dashed, postaction=decorate, out=0, in=90](f);
\draw[densely dashed, postaction=decorate] (H) to[densely dashed, postaction=decorate,out=0,in=180](g);
\draw[densely dashed, postaction=decorate] (B) to[densely dashed, postaction=decorate,out=0,in=270](c);
\end{scope}
}\\
=\graphB{c}{a}{b}-\graphA{a}{c}{b}
-\graphA{c}{a}{b}+\graphB{a}{c}{b}
\end{multline*}
  yielding
$$(CS2)\,\, \wt{a}\otimes \wt{b}\otimes \wt{c}=\wt{c}\otimes \wt{a}\otimes \wt{b}-a\otimes c\otimes b-c\otimes a\otimes b+\wt{a}\otimes \wt{c}\otimes \wt{b}$$
The third relation is:
\begin{align*}\graphC{a}{b}{c}=
\tp{2cm}{
\coordinate(a) at (0,0);
\coordinate(b) at (1,-1);
\coordinate(c) at (3,-.25);
\coordinate(d) at(3,0);
\coordinate(e) at(3,.25);
\coordinate(f) at (1,3);
\coordinate(g)at (0,2);
\coordinate(h) at(0,1);
\coordinate(i) at(1,1);
\coordinate(j) at(1,2);
\node[hopf, at=(f)](F){$c$};
\node[hopf,at=(j)](J){$a$};
\node[hopf,at=(b)](B){$b$};
\draw[thick] (d) to (a) to (g);
\draw[thick] (c) to (e);
\draw[thick] (h) to (i);
\draw[thick] (g) to[out=90, in=180] (F);
\draw[thick] (g) to[out=0, in=180] (J);
\draw[thick] (a) to[out=270, in=180] (B);
\begin{scope}[decoration={markings,mark = at position 0.5 with {\arrow{stealth}}}]
\draw[densely dashed, postaction=decorate] (F) to[densely dashed, postaction=decorate, out=0, in=90](e);
\draw[densely dashed, postaction=decorate] (J) to[densely dashed, postaction=decorate,out=0,in=0](i);
\draw[densely dashed, postaction=decorate] (B) to[densely dashed, postaction=decorate,out=0,in=270](c);
\end{scope}
}&=
\tp{2.5cm}{
\coordinate(a) at (0,0);
\coordinate(d) at (0,1);
\coordinate(f) at (1,2);
\coordinate(e) at (1,3);
\coordinate(g) at (1,-1);
\coordinate(b) at (2,0);
\coordinate(c) at (3,0);
\coordinate(i) at (2,.25);
\coordinate(j) at (3,.25);
\coordinate(h) at (3,-.25);
\node[hopf, at=(e)](E){$c$};
\node[hopf, at=(f)](F){$a$};
\node[hopf, at=(g)](G){$b$};
\draw[thick] (a) to (d);
\draw[thick] (a) to (c);
\draw[thick](a) to[out =270, in =180] (G);
\draw[thick](d) to[out=135,in=180] (E);
\draw[thick] (d) to[out=45, in =180] (F);
\draw[thick] (b) to (i);
\draw[thick] (h) to (j);
  \begin{scope}[decoration={markings,mark = at position 0.5 with {\arrow{stealth}}}]
\draw[densely dashed, postaction=decorate] (E) to[densely dashed, postaction=decorate, out=0, in=90](j);
\draw[densely dashed, postaction=decorate] (F) to[densely dashed, postaction=decorate,out=0,in=90](i);
\draw[densely dashed, postaction=decorate] (G) to[densely dashed, postaction=decorate,out=0,in=270](h);
\end{scope}
}-
\tp{2.5cm}{
\coordinate(a) at (0,0);
\coordinate(b) at (1,-1);
\coordinate(c) at (3,-.25);
\coordinate(d) at (3,0);
\coordinate(e) at (3,.25);
\coordinate(f) at (2,2.5);
\coordinate(g) at (1,1.5);
\coordinate(h) at (2,1.5);
\coordinate(i) at (0,.25);
\coordinate(j) at (1,0);
\coordinate(k) at (1.5,.7);
\node[hopf, at=(f)](F){$c$};
\node[hopf, at=(h)](H){$a$};
\node[hopf, at=(b)](B){$b$};
\draw[thick] (a) to (d);
\draw[thick] (a) to (i);
\draw[thick] (a) to[out=270,in=180] (B);
\draw[thick] (j) to (g);
\draw[thick] (g) to (H);
\draw[thick](g) to[out=90,in=180] (F);
\draw (c) to (e);
 \begin{scope}[decoration={markings,mark = at position 0.5 with {\arrow{stealth}}}]
\draw[densely dashed, postaction=decorate] (F) to[densely dashed, postaction=decorate, out=0, in=90](e);
\draw[densely dashed, postaction=decorate] (H) to[densely dashed, postaction=decorate,out=0,in=0](k) to[densely dashed, out=180,in=90] (i);
\draw[densely dashed, postaction=decorate] (B) to[densely dashed, postaction=decorate,out=0,in=270](c);
\end{scope}
}\\
&=\tp{2.5cm}{
\coordinate(a) at (0,0);
\coordinate(d) at (0,1);
\coordinate(f) at (1,2);
\coordinate(e) at (1,3);
\coordinate(g) at (1,-1);
\coordinate(b) at (2,0);
\coordinate(c) at (3,0);
\coordinate(i) at (2,.25);
\coordinate(j) at (3,.25);
\coordinate(h) at (3,-.25);
\node[hopf, at=(e)](E){$c$};
\node[hopf, at=(f)](F){$a$};
\node[hopf, at=(g)](G){$b$};
\draw[thick] (a) to (d);
\draw[thick] (a) to (c);
\draw[thick](a) to[out =270, in =180] (G);
\draw[thick](d) to[out=135,in=180] (E);
\draw[thick] (d) to[out=45, in =180] (F);
\draw[thick] (b) to (i);
\draw[thick] (h) to (j);
  \begin{scope}[decoration={markings,mark = at position 0.5 with {\arrow{stealth}}}]
\draw[densely dashed, postaction=decorate] (E) to[densely dashed, postaction=decorate, out=0, in=90](j);
\draw[densely dashed, postaction=decorate] (F) to[densely dashed, postaction=decorate,out=0,in=90](i);
\draw[densely dashed, postaction=decorate] (G) to[densely dashed, postaction=decorate,out=0,in=270](h);
\end{scope}
}-
\tp{2.5cm}{
\coordinate(a) at (0,0);
\coordinate(b) at (1,-1);
\coordinate(c) at (4,-.25);
\coordinate(d) at (4,0);
\coordinate(e) at (4,.25);
\coordinate(f) at (3,1);
\coordinate(g) at (2,1);
\coordinate(h) at (1,1);
\coordinate(i) at (2,0);
\coordinate(j) at (0,.25);
\node[hopf,at=(h)](H){$S(a)$};
\node[hopf,at=(f)](F){$c$};
\node[hopf,at=(b)](B){$b$};
\draw[thick] (a) to (d);
\draw[thick](a) to (i);
\draw[thick](a) to (j);
\draw[thick] (H) to (F);
\draw[thick] (i) to (g);
\draw[thick] (c) to (e);
\draw[thick] (a) to[out=270,in=180] (B);
 \begin{scope}[decoration={markings,mark = at position 0.5 with {\arrow{stealth}}}]
\draw[densely dashed, postaction=decorate] (j) to[densely dashed, postaction=decorate, out=90, in=180](H);
\draw[densely dashed, postaction=decorate] (F) to[densely dashed, postaction=decorate,out=0,in=90](e);
\draw[densely dashed, postaction=decorate] (B) to[densely dashed, postaction=decorate,out=0,in=270](c);
\end{scope}
}
\end{align*}
Using IHX and slide relations, this becomes
$$\graphB{c}{a}{b}-\graphA{a}{c}{b}-\graphC{S(a)}{c}{b}+\graphA{S(a)}{c}{b}.$$
Yielding the third chord shift relation:
$$(CS3)\,\, \wh{a}\otimes \wh{b}\otimes \wh{c}=\wt{c}\otimes \wt{a}\otimes \wt{b}-a\otimes c\otimes b-S(\wh{a})\otimes \wh{c}\otimes \wh{b}+S(a)\otimes c\otimes b$$ 

\subsection{Boundary relations}
There are four boundary relations.
\begin{center}
\begin{tabular}{|c|l|}
\hline
(D1)& $S(\wt{a}_{(1)})\otimes S(\wt{a}_{(2)})\wt{b}\otimes\wt{c}-S(a_{(1)})b\otimes S(a_{(2)})\otimes c-S(\wh{a})\wh{b}_{(1)}\otimes S(\wh{b}_{(2)})\otimes\wh{c}$\\
&$+S(a)b_{(1)}\otimes S(b_{(2)})\otimes c+S(\wt{a})\otimes S(\wt{b})\otimes \wt{c}-S(a)\otimes S(b)\otimes c=0$ \\
\hline
(D2)& $\wh{a}\otimes \wh{b}\otimes \wh{c}=-S(\wh{a})\otimes S(\wh{b})\otimes S(\wh{c})$\\
\hline
(D3)&  $\wt{a}\otimes \wt{b}\otimes \wt{c}=-\wt{a}\otimes S(\wt{b})\otimes \wt{c}$\\
\hline
(D4)& $\wh{a}\otimes \wh{b}\otimes \wh{c}=-S(\wh{a})\otimes \wh{b}\otimes \wh{c}$\\
\hline
\end{tabular}
\end{center}

These are proven as follows.
\begin{align*}
\tp{3cm}{
\coordinate(a) at (0,0);
\coordinate(b) at (1,-1);
\coordinate(c) at (2,-.25);
\coordinate(d) at (2,0);
\coordinate(e) at (2,.25);
\coordinate(f) at (1,.25);
\coordinate(g) at (0,.25);
\coordinate(h) at (1,0);
\coordinate(i) at (-2,2);
\coordinate(j) at (-1,3);
\coordinate(k) at (-1,2);
\coordinate(l) at (-1,1);
\node[hopf, at=(b)](B){$c$};
\node[hopf, at=(k)](K){$b$};
\node[hopf, at=(l)](L){$a$};
\node[empty, at=(j)](J){};
\draw[thick] (g) to (a) to (d) to (e);
\draw[thick](f) to (h);
\draw[thick](c) to (d);
\draw[thick](a) to[out=270, in =180] (B);
\draw[thick] (i) to[out=90,in=180] (J);
\draw[thick] (i) to[out=0,in=180] (K);
\draw[thick] (i) to[out=-90,in=180] (L);
 \begin{scope}[decoration={markings,mark = at position 0.5 with {\arrow{stealth}}}]
\draw[densely dashed, postaction=decorate] (J) to[densely dashed, postaction=decorate, out=0, in=90](e);
\draw[densely dashed, postaction=decorate] (K) to[densely dashed, postaction=decorate,out=0,in=90](f);
\draw[densely dashed, postaction=decorate] (L) to[densely dashed, postaction=decorate,out=0,in=90](g);
\draw[densely dashed, postaction=decorate] (B) to[densely dashed, postaction=decorate,out=0,in=270](c);
\end{scope}
}
&\mapsto
\tp{3cm}{
\coordinate(a) at (0,0);
\coordinate(b) at (1,-1);
\coordinate(c) at (2,-.25);
\coordinate(d) at (2,0);
\coordinate(e) at (2,.25);
\coordinate(f) at (1,.25);
\coordinate(g) at (0,.25);
\coordinate(h) at (1,0);
\coordinate(i) at (-2,2);
\coordinate(j) at (-1,3);
\coordinate(k) at (-1,2);
\coordinate(l) at (-1,1);
\node[hopf, at=(b)](B){$c$};
\node[hopf, at=(k)](K){$b$};
\node[hopf, at=(l)](L){$a$};
\node[empty, at=(j)](J){};
\draw[thick] (g) to (a) to (d) to (e);
\draw[thick](f) to (h);
\draw[thick](c) to (d);
\draw[thick](a) to[out=270, in =180] (B);
\draw[thick] (i) to[out=90,in=180] (J);
\draw[thick] (i) to[out=0,in=180] (K);
\draw[thick] (i) to[out=-90,in=180] (L);
 \begin{scope}[decoration={markings,mark = at position 0.5 with {\arrow{stealth}}}]
\draw[thick] (J) to[out=0, in=90](e);
\draw [densely dashed, postaction=decorate](K) to[densely dashed, postaction=decorate,out=0,in=90](f);
\draw[densely dashed, postaction=decorate] (L) to[densely dashed, postaction=decorate,out=0,in=90](g);
\draw[densely dashed, postaction=decorate] (B) to[densely dashed, postaction=decorate,out=0,in=270](c);
\end{scope}
}+
\tp{3cm}{
\coordinate(a) at (0,0);
\coordinate(b) at (1,-1);
\coordinate(c) at (2,-.25);
\coordinate(d) at (2,0);
\coordinate(e) at (2,.25);
\coordinate(f) at (1,.25);
\coordinate(g) at (0,.25);
\coordinate(h) at (1,0);
\coordinate(i) at (-2.5,2);
\coordinate(j) at (-1,3);
\coordinate(k) at (-1,2);
\coordinate(l) at (-1,1);
\node[hopf, at=(b)](B){$c$};
\node[empty, at=(k)](K){};
\node[hopf, at=(l)](L){$S(b_{(2)})a$};
\node[hopf, at=(j)](J){$S(b_{(1)})$};
\draw[thick] (g) to (a) to (d) to (e);
\draw[thick](f) to (h);
\draw[thick](c) to (d);
\draw[thick](a) to[out=270, in =180] (B);
\draw[thick] (i) to[out=90,in=180] (J);
\draw[thick] (i) to[out=0,in=180] (K);
\draw[thick] (i) to[out=-90,in=180] (L);
 \begin{scope}[decoration={markings,mark = at position 0.5 with {\arrow{stealth}}}]
\draw[densely dashed, postaction=decorate] (J) to[densely dashed, postaction=decorate, out=0, in=90](e);
\draw (K) to[thick,out=0,in=90](f);
\draw[densely dashed, postaction=decorate] (L) to[densely dashed, postaction=decorate,out=0,in=90](g);
\draw[densely dashed, postaction=decorate] (B) to[densely dashed, postaction=decorate,out=0,in=270](c);
\end{scope}
}+
\tp{3cm}{
\coordinate(a) at (0,0);
\coordinate(b) at (1,-1);
\coordinate(c) at (2,-.25);
\coordinate(d) at (2,0);
\coordinate(e) at (2,.25);
\coordinate(f) at (1,.25);
\coordinate(g) at (0,.25);
\coordinate(h) at (1,0);
\coordinate(i) at (-2.5,2);
\coordinate(j) at (-1,3);
\coordinate(k) at (-1,2);
\coordinate(l) at (-1,1);
\node[hopf, at=(b)](B){$c$};
\node[hopf, at=(k)](K){$S(a_{(2)})b$};
\node[empty, at=(l)](L){};
\node[hopf, at=(j)](J){$S(a_{(1)})$};
\draw[thick] (g) to (a) to (d) to (e);
\draw[thick](f) to (h);
\draw[thick](c) to (d);
\draw[thick](a) to[out=270, in =180] (B);
\draw[thick] (i) to[out=90,in=180] (J);
\draw[thick] (i) to[out=0,in=180] (K);
\draw[thick] (i) to[out=-90,in=180] (L);
 \begin{scope}[decoration={markings,mark = at position 0.5 with {\arrow{stealth}}}]
\draw[densely dashed, postaction=decorate] (J) to[densely dashed, postaction=decorate, out=0, in=90](e);
\draw[densely dashed, postaction=decorate] (K) to[densely dashed, postaction=decorate,out=0,in=90](f);
\draw (L) to[thick,out=0,in=90](g);
\draw[densely dashed, postaction=decorate] (B) to[densely dashed, postaction=decorate,out=0,in=270](c);
\end{scope}
}
\\
&=
\tp{2.5cm}{
\coordinate(a) at (0,0);
\coordinate(b) at (.7,0);
\coordinate(c) at (3,0);
\coordinate(d) at (3,-.25);
\coordinate(e) at (1,-1);
\coordinate(f) at (0,.25);
\coordinate(g) at (.7,.25);
\coordinate(h) at (3,.25);
\coordinate(i) at (2,2);
\coordinate(j) at (2,1);
\coordinate(k) at (3,1.5);
\node[hopf, at=(i)](I){$S(a)$};
\node[hopf, at=(j)](J){$S(b)$};
\node[hopf, at=(e)](E){$c$};
\draw[thick] (f) to (a) to (c) to (d);
\draw[thick] (a) to[out=-90,in=180] (E);
\draw[thick] (b) to (g);
\draw[thick] (I) to[out=0,in=135] (k);
\draw[thick] (J) to[out=0,in=-135] (k);
\draw[thick] (k) to[out=0,in=90] (c);
 \begin{scope}[decoration={markings,mark = at position 0.5 with {\arrow{stealth}}}]
\draw[densely dashed, postaction=decorate] (f) to[densely dashed, postaction=decorate, out=90, in=180](I);
\draw[densely dashed, postaction=decorate] (g) to[densely dashed, postaction=decorate,out=90,in=180](J);
\draw[densely dashed, postaction=decorate] (E) to[densely dashed, postaction=decorate,out=0,in=270](d);
\end{scope}
}
-
\tp{3cm}{
\coordinate(a) at (0,0);
\coordinate(b) at (1,-1);
\coordinate(c) at (4,-.25);
\coordinate(d) at (4,0);
\coordinate(e) at (4,.25);
\coordinate(f) at (3,1);
\coordinate(g) at (2,1);
\coordinate(h) at (1,1);
\coordinate(i) at (2,0);
\coordinate(j) at (0,.25);
\node[hopf,at=(h)](H){$S(a)b_{(1)}$};
\node[hopf,at=(f)](F){$S(b_{(2)})$};
\node[hopf,at=(b)](B){$c$};
\draw[thick] (a) to (d);
\draw[thick](a) to (i);
\draw[thick](a) to (j);
\draw[thick] (H) to (F);
\draw[thick] (i) to (g);
\draw[thick] (c) to (e);
\draw[thick] (a) to[out=270,in=180] (B);
 \begin{scope}[decoration={markings,mark = at position 0.5 with {\arrow{stealth}}}]
\draw[densely dashed, postaction=decorate] (j) to[densely dashed, postaction=decorate, out=90, in=180](H);
\draw[densely dashed, postaction=decorate] (F) to[densely dashed, postaction=decorate,out=0,in=90](e);
\draw[densely dashed, postaction=decorate] (B) to[densely dashed, postaction=decorate,out=0,in=270](c);
\end{scope}
}+\tp{2.5cm}{
\coordinate(a) at (0,0);
\coordinate(d) at (0,1);
\coordinate(f) at (1,2);
\coordinate(e) at (1,3);
\coordinate(g) at (1,-1);
\coordinate(b) at (2,0);
\coordinate(c) at (3,0);
\coordinate(i) at (2,.25);
\coordinate(j) at (3,.25);
\coordinate(h) at (3,-.25);
\node[hopf, at=(e)](E){$S(a_{(1)})$};
\node[hopf, at=(f)](F){$S(a_{(2)})b$};
\node[hopf, at=(g)](G){$c$};
\draw[thick] (a) to (d);
\draw[thick] (a) to (c);
\draw[thick](a) to[out =270, in =180] (G);
\draw[thick](d) to[out=135,in=180] (E);
\draw[thick] (d) to[out=45, in =180] (F);
\draw[thick] (b) to (i);
\draw[thick] (h) to (j);
  \begin{scope}[decoration={markings,mark = at position 0.5 with {\arrow{stealth}}}]
\draw[densely dashed, postaction=decorate] (E) to[densely dashed, postaction=decorate, out=0, in=90](j);
\draw[densely dashed, postaction=decorate] (F) to[densely dashed, postaction=decorate,out=0,in=90](i);
\draw[densely dashed, postaction=decorate] (G) to[densely dashed, postaction=decorate,out=0,in=270](h);
\end{scope}
}\\
&=\graphB{S(a)}{S(b)}{c}-\graphA{S(a)}{S(b)}{c}-\graphC{S(a)b_{(1)}}{S(b_{(2)})}{c}\\&+\graphA{S(a)b_{(1)}}{S(b_{(2)})}{c}+
\graphB{S(a_{(1)})}{S(a_{(2)})b}{c}-\graphA{S(a_{(2)})b}{S(a_{(1)})}{c}\\
&=S(\wt{a})\otimes S(\wt{b})\otimes\wt{c}-S(a)\otimes S(b)\otimes c-S(\wh{a})\wh{b}_{(1)}\otimes S(\wh{b}_{(2)})\otimes\wh{c}\\
&+S(a)b_{(1)}\otimes S( b_{(2)})\otimes c+
S(\wt{a}_{(1)})S(\wt{a}_{(2)}\wt{b})\otimes \wt{c}-S(a_{(1)})b\otimes S(a_{(2)})\otimes c
\end{align*}
This concludes the proof of (D1).

To prove (D2) we consider the following boundary.
\begin{multline*}
\begin{minipage}{5.5cm}
\resizebox{5.5cm}{!}{
\begin{tikzpicture}
\coordinate (a) at (0,0);
\coordinate (b) at(1,0);
\coordinate (c) at (2,0);
\coordinate (d) at (3.5,0);
\coordinate(e) at (4,0);
\coordinate (f) at (5,0);
\coordinate (g) at (5,-1);
\coordinate (h) at (4,-1);
\coordinate (i) at (3.5,-1);
\coordinate (j) at (1.5,-1);
\coordinate(k) at (1,-1);
\coordinate(l) at (0,-1);
\node[hopf, at=(a)](A){$a$};
\node[hopf, at=(c)](C){$c$};
\node[hopf, at=(f)](F){$b$};
\draw[thick] (A) to (b) to (k) to (l);
\draw[thick] (b) to (C);
\draw[thick] (k) to (j);
\draw[thick] (d) to (F);
\draw[thick] (i) to (g);
\draw[thick](e) to (h);
  \begin{scope}[decoration={markings,mark = at position 0.5 with {\arrow{stealth}}}]
\draw[densely dashed, postaction=decorate] (l) to[densely dashed, postaction=decorate, out=180, in=180](A);
\draw[densely dashed, postaction=decorate] (C) to[densely dashed, postaction=decorate,out=0,in=180](d);
\draw[densely dashed, postaction=decorate] (j) to[densely dashed, postaction=decorate,out=0,in=180](i);
\draw[densely dashed, postaction=decorate] (F) to[densely dashed, postaction=decorate,out=0,in=0](g);
\end{scope}
\end{tikzpicture}
}
\end{minipage}
\mapsto
\begin{minipage}{5.5cm}
\resizebox{5.5cm}{!}{
\begin{tikzpicture}
\coordinate (a) at (0,0);
\coordinate (b) at(1,0);
\coordinate (c) at (2,0);
\coordinate (d) at (3.5,0);
\coordinate(e) at (4,0);
\coordinate (f) at (5,0);
\coordinate (g) at (5,-1);
\coordinate (h) at (4,-1);
\coordinate (i) at (3.5,-1);
\coordinate (j) at (1.5,-1);
\coordinate(k) at (1,-1);
\coordinate(l) at (0,-1);
\node[hopf, at=(a)](A){$a$};
\node[hopf, at=(c)](C){$c$};
\node[hopf, at=(f)](F){$b$};
\draw[thick] (A) to (b) to (k) to (l);
\draw[thick] (b) to (C);
\draw[thick] (k) to (j);
\draw[thick] (d) to (F);
\draw[thick] (i) to (g);
\draw[thick](e) to (h);
  \begin{scope}[decoration={markings,mark = at position 0.5 with {\arrow{stealth}}}]
\draw[densely dashed, postaction=decorate] (l) to[densely dashed, postaction=decorate, out=180, in=180](A);
\draw[densely dashed, postaction=decorate] (C) to[densely dashed, postaction=decorate,out=0,in=180](d);
\draw[thick] (j) to[thick,out=0,in=180](i);
\draw[densely dashed, postaction=decorate] (F) to[densely dashed, postaction=decorate,out=0,in=0](g);
\end{scope}
\end{tikzpicture}
}
\end{minipage}
+\\
\begin{minipage}{5.5cm}
\resizebox{5.5cm}{!}{
\begin{tikzpicture}
\coordinate (a) at (0,0);
\coordinate (b) at(1,0);
\coordinate (c) at (2,0);
\coordinate (d) at (3.5,0);
\coordinate(e) at (4,0);
\coordinate (f) at (5,0);
\coordinate (g) at (5,-1);
\coordinate (h) at (4,-1);
\coordinate (i) at (3.5,-1);
\coordinate (j) at (2,-1);
\coordinate(k) at (1,-1);
\coordinate(l) at (0,-1);
\node[hopf, at=(a)](A){$a$};
\node[hopf, at=(j)](J){$S(c)$};
\node[hopf, at=(f)](F){$b$};
\draw[thick] (A) to (b) to (k) to (l);
\draw[thick] (b) to (c);
\draw[thick] (k) to (J);
\draw[thick] (d) to (F);
\draw[thick] (i) to (g);
\draw[thick](e) to (h);
  \begin{scope}[decoration={markings,mark = at position 0.5 with {\arrow{stealth}}}]
\draw[densely dashed, postaction=decorate] (l) to[densely dashed, postaction=decorate, out=180, in=180](A);
\draw[thick] (c) to[thick,out=0,in=180](d);
\draw[densely dashed, postaction=decorate] (J) to[densely dashed, postaction=decorate,out=0,in=180](i);
\draw [densely dashed, postaction=decorate](F) to[densely dashed, postaction=decorate,out=0,in=0](g);
\end{scope}
\end{tikzpicture}
}
\end{minipage}
\end{multline*}
This equals $S(\wt{a})\otimes S(\wt{b})\otimes \wt{c}+\wt{a}\otimes \wt{b}\otimes S(\wt{c})$.

Relations (D3) and (D4) come from the boundary of graphs where one tree is a tripod connected by a self-edge. If $a\in H$ is attached to this loop, then in the image, using IHX, will be a short chord with the element $a+S(a)$ attached.

\begin{figure}
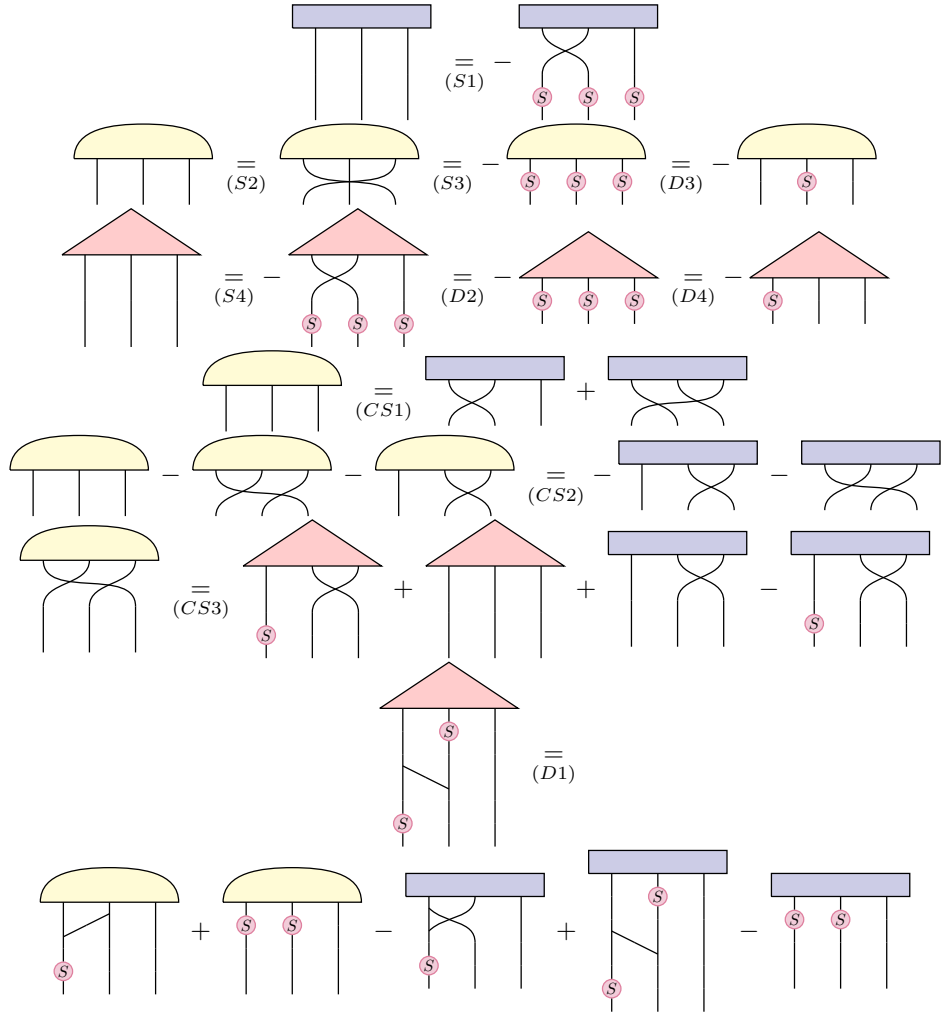

\begin{center}
%\begin{center}
%\begin{tabular}{|c|l|}
%\hline
%(S1)&$a\otimes b\otimes c=-S(b)\otimes S(a)\otimes S(c)$\\
%\hline
%(S2)& $\wt{a}\otimes \wt{b}\otimes \wt{c}=\wt{c}\otimes \wt{b}\otimes \wt{a}$\\
%\hline
%(S3)& $\wt{a}\otimes \wt{b}\otimes \wt{c}=-S(\wt{a})\otimes S(\wt{b})\otimes S(\wt{c})$\\
%\hline
%(S4)& $\wh{a}\otimes \wh{b}\otimes \wh{c}=-S(\wh{b})\otimes S(\wh{a})\otimes S(\wh{c})$\\
%\hline
%(CS1)& $\wt{a}\otimes\wt{b}\otimes\wt{c}=b\otimes c\otimes a+b\otimes a\otimes c$\\
%\hline
%(CS2)& $ \wt{a}\otimes \wt{b}\otimes \wt{c}=\wt{c}\otimes \wt{a}\otimes \wt{b}-a\otimes c\otimes b-c\otimes a\otimes b+\wt{a}\otimes \wt{c}\otimes \wt{b}$\\
%\hline
%(CS3)& $ \wh{a}\otimes \wh{b}\otimes \wh{c}=\wt{c}\otimes \wt{a}\otimes \wt{b}-a\otimes c\otimes b-S(\wh{a})\otimes \wh{c}\otimes \wh{b}+S(a)\otimes c\otimes b$\\
%\hline
%(D1)& $S(\wt{a}_{(1)})\otimes S(\wt{a}_{(2)})\wt{b}\otimes\wt{c}-S(a_{(1)})b\otimes S(a_{(2)})\otimes c-S(\wh{a})\wh{b}_{(1)}\otimes S(\wh{b}_{(2)})\otimes\wh{c}$\\
%&$+S(a)b_{(1)}\otimes S(b_{(2)})\otimes c+S(\wt{a})\otimes S(\wt{b})\otimes \wt{c}-S(a)\otimes S(b)\otimes c=0$ \\
%\hline
%(D2)& $\wh{a}\otimes \wh{b}\otimes \wh{c}=-S(\wh{a})\otimes S(\wh{b})\otimes S(\wh{c})$\\
%\hline
%(D3)&  $\wt{a}\otimes \wt{b}\otimes \wt{c}=-\wt{a}\otimes S(\wt{b})\otimes \wt{c}$\\
%\hline
%(D4)& $\wh{a}\otimes \wh{b}\otimes \wh{c}=-S(\wh{a})\otimes \wh{b}\otimes \wh{c}$\\
%\hline
%\end{tabular}
%\end{center}
\vspace{1em}$\,$\\
$\tp{2cm}{\boxtensor\transF{1}\transF{2}}
\underset{(S1)}{=}-\tp{2cm}{\boxtensor\transA{1}\antip{1}{1}{1}{2}}$\\
$\tp{2cm}{\roundtensor\transF{1}}\underset{(S2)}=\tp{2cm}{\roundtensor\transB{1}}\underset{(S3)}{=}-\tp{2cm}{\roundtensor\antip{1}{1}{1}{1}}
\underset{(D3)}{=}-\tp{2cm}{\roundtensor\antip{0}{1}{0}{1}}$\\
$\tp{2cm}{\tritensor\transF{1}\transF{2}}
\underset{(S4)}{=}-\tp{2cm}{\tritensor\transA{1}\antip{1}{1}{1}{2}}\underset{(D2)}{=}-\tp{2cm}{\tritensor\antip{1}{1}{1}{1}}\underset{(D4)}{=}-\tp{2cm}{\tritensor\antip{1}{0}{0}{1}}$\\
 $\tp{2cm}{\roundtensor\transF{1}}\underset{(CS1)}{=}\tp{2cm}{\boxtensor\transA{1}}+\tp{2cm}{\boxtensor\transE{1}}$\\
$\tp{2cm}{\roundtensor\transF{1}}-\tp{2cm}{\roundtensor\transD{1}}-\tp{2cm}{\roundtensor\transC{1}}\underset{(CS2)}{=}-\tp{2cm}{\boxtensor\transC{1}}-\tp{2cm}{\boxtensor\transD{1}} $\\
 $\tp{2cm}{\roundtensor\transD{1}\transF{2}}\underset{(CS3)}{=}\tp{2cm}{\tritensor\transC{1}\antip{1}{0}{0}{2}}+\tp{2cm}{\tritensor\transF{1}\transF{2}}+\tp{2cm}{\boxtensor\transC{1}\transF{2}}-\tp{2cm}{\boxtensor\transC{1}\antip{1}{0}{0}{2}}$\\
 $\tp{2cm}{\tritensor\antip{0}{1}{0}{1}\opT{1}{0}{2}\antip{1}{0}{0}{3}}\underset{(D1)}{=}\tp{2cm}{\roundtensor\opT{0}{0}{1}\antip{1}{0}{0}{2}}+\tp{2cm}{\roundtensor\antip{1}{1}{0}{1}\transF{2}}-\tp{2cm}{\boxtensor\opX{0}{1}\antip{1}{0}{0}{2}}
+\tp{2cm}{\boxtensor\antip{0}{1}{0}{1}\opT{1}{0}{2}\antip{1}{0}{0}{3}}
-\tp{2cm}{\boxtensor\antip{1}{1}{0}{1}\transF{2}}
$
\end{center}
\caption{Rank 3 relations in pictorial format}\label{fig:relset}
\end{figure}
All of the relations are summarized in Figure~\ref{fig:relset}.
\subsection{Special case: $H=\sym(V)$ in even degree}
The set of relations in Figure~\ref{fig:relset} simplifies quite a bit if we assume that $H=\sym(V)$ is commutative and that the degree is even. In this case $S\otimes S\otimes S$ acts as the identity on $H^{\otimes 3}$, ensuring by relations (S3) and (D2) that the tensors $\wt{a}\otimes\wt{b}\otimes\wt{c}$ and $\wh{a}\otimes\wh{b}\otimes \wh{c}$ are zero. So the above presentation becomes very simple. (S1) and (CS1) (or equivalently (CS2)) imply that the rectangular tensor is totally antisymmetric.
$$\tp{2cm}{\boxtensor\transF{1}}=-\tp{2cm}{\boxtensor\transA{1}}=-\tp{2cm}{\boxtensor\transC{1}}$$
 (CS3) kills any tensors with a factor of odd degree,
 $$\tp{2cm}{\boxtensor\transF{1}}=\tp{2cm}{\boxtensor\antip{1}{0}{0}{1}}$$
  and finally (D1) is equivalent to
$$\tp{2cm}{\boxtensor\transF{1}}=\tp{2cm}{\boxtensor\opT{0}{0}{1}}+\tp{2cm}{\boxtensor\opT{1}{0}{1}}$$
This proves Theorem~\ref{thm:evenpres}.

\subsection{Simplifying the relations}
First note that relation (D1) can be rewritten by appending the invertible automorphism $\tp{1.5cm}{\antip{1}{0}{0}{1}\opT{1}{1}{2}\antip{0}{1}{0}{3}}$ to the bottom, yielding
$$\tp{2cm}{\tritensor\transF{1}}\underset{(D1')}{=}\tp{2cm}{\roundtensor\opX{0}{1}}+\tp{2cm}{\roundtensor\opT{1}{0}{1}}-\tp{2cm}{\boxtensor\opT{0}{0}{1}}+\tp{2cm}{\boxtensor\transF{1}}-\tp{2cm}{\boxtensor\opT{1}{0}{1}}.$$
Now, removing the round tensors using (CS1) gives
$$\tp{2cm}{\tritensor\transF{1}}\underset{(D1'')}{=}\tp{2cm}{\boxtensor\transF{1}}+\tp{2cm}{\boxtensor\transC{1}\opT{0}{0}{2}}+\tp{2cm}{\boxtensor\opX{1}{1}}
+\tp{2cm}{\boxtensor\opX{1}{2}\transC{1}}-\tp{2cm}{\boxtensor\opT{1}{0}{1}}
$$

Substituting into (S2) is tautological. Substituting into (S3) and using (S1) gives
$$\tp{2cm}{\boxtensor\transE{1}}+\tp{2cm}{\boxtensor\transA{1}}\underset{(S3')}{=}\tp{2cm}{\boxtensor\transF{1}}+\tp{2cm}{\boxtensor\transB{1}}.
$$
This is also the result of plugging into (CS2.) This allows us to simplify (D1'') to
$$\tp{2cm}{\tritensor\transF{1}}\underset{(D1''')}{=}\tp{2cm}{\boxtensor\transF{1}}+\tp{2cm}{\boxtensor\transC{1}\opT{0}{0}{2}}+\tp{2cm}{\boxtensor\transD{1}\opX{1}{2}}
$$
This relation and (CS1) then allow us to write all tensors in terms of the rectangular ones $a\otimes b\otimes c$. Rewriting all relations in terms of rectangular ones yields the presentation in Theorem~\ref{thm:rank3pres}.

(S4) becomes a tautology.

(D2) becomes relation (4).

(CS3) becomes relation (6).

As already noted (S1) becomes (1) and S3 becomes (2).

D3 becomes (3) and D4 becomes (5).
\subsection{Proof of Theorem~\ref{thm:oddpres}}
We work with the presentation from Theorem~\ref{thm:rank3pres}. When $H=\sym(V)$, $S\otimes S\otimes S$ acts on the odd degree part of $H^{\otimes 3}$ by $-1$. 

\subsection{Proof of Theorem~\ref{thm:omega3pres}}
When calculating $\Omega_3(H)$, the only boundaries to consider are $D1$ and also $D3$ and $D4$ in the special case where the loop has no element of $H$ on it. So we divide by relations $1\otimes\widehat{a}\otimes \widehat{b}=0=\widetilde{a}\otimes 1\otimes \widetilde{b}$. Converting these to rectangular tensors yields the relations $1\otimes a\otimes b+1\otimes b\otimes a+b\otimes \Delta(a)=0$, and $1\otimes a\otimes b+1\otimes b\otimes a=0$, which together yield the relations \ref{itemA} and \ref{itemB}.

\section{Computer calculations}
Using the presentations given in Theorems~\ref{thm:rank2pres}, \ref{thm:rank3pres}, \ref{thm:omega2pres} and \ref{thm:omega3pres}  computations were made and are listed in Figures~\ref{fig:Rank2} and \ref{fig:Rank3}. 
Note that the data is consistent with Corollaries~\ref{cor:omega2comp},  ~\ref{cor:rank3evencomp}, and Theorem ~\ref{thm:iota}. Note that these computations give slightly different answers than those calculated in \cite{C}. For example in that paper, the decomposition of $\Omega_2(T(V))_4$ was given as $[31]\oplus[1^4]$ which is not correct. It should be $[4]\oplus[31]$, which is consistent with Corollary~\ref{cor:omega2comp}.

The source code for the current computations are available as a mathematica notebook at \texttt{http://www.math.utk.edu/\~{}jconant/} and in the arXiv source folder for this paper. 

\begin{figure}[h]
\begin{tabular}{|c|l|l|l|l|}
\hline
degree&$\homfunctor_2(\sym(V))$&$\homfunctor_2(T(V))$&$\Omega_2(\sym(V))$&$\Omega_2(T(V))$\\
\hline
0&0&0&0&0\\
\hline
1&0&0&0&0\\
\hline
2&0&0&0&0\\
\hline
3&0&0&0&0\\
\hline
4&$[31]$&$[31]$&$[4]\oplus[31]$&$[4]\oplus [31]$\\
\hline
5&0&$[31^2]\oplus[2^21]\oplus[21^3]$&$0$&$2[31^2]\oplus[2^21]\oplus[21^3]$\\
\hline
6&$[51]$ &$[1^6]\oplus[51]\oplus[21^4]$&$[6]\oplus 2[51]$&$[1^6]\oplus 2[51]\oplus 2[21^4]$\\
 & &$\oplus[42]\oplus2[2^21^2]\oplus [3^2]$&$\oplus[42]$&$\oplus 3[42]\oplus 2[2^21^2] \oplus[3^2]$\\
 & &$\oplus[2^3]\oplus 2[321]$ & &$\oplus 2[2^3]\oplus 3[321]\oplus[6]$\\
 \hline
 7&$0$&?&0&?\\
 \hline
8& $[71]\oplus[53]$&?&$2[8]\oplus 2[71]$&?\\
&&&$\oplus2[62]\oplus[53]$&\\
 \hline
\end{tabular}
\caption{Rank 2 Computations}\label{fig:Rank2}
\end{figure}

\begin{figure}[h]
\begin{tabular}{|l|l|l|l|l|}
\hline
degree&$\homfunctor_3(\sym(V))$&$\homfunctor_3(T(V))$&$\Omega_3(\sym(V))$&$\Omega_3(T(V))$\\
\hline
0&0&0&0&0\\
\hline
1&0&0&0&0\\
\hline
2&0&0&0&0\\
\hline
3&$[21]$&$[21]$&$[21]$&$[21]$\\
\hline
4&$0$&${[1^4]}$&$[4]\oplus[31]$&${[4]\oplus[31]\oplus[21^2]\oplus[1^4]}$\\
\hline
5&$[41]\oplus[32]$&$[41]\oplus[32]$&${[31^2]\oplus[2^21]\oplus }3[41]$&${2[5]\oplus 3[41]\oplus[21^3]}$ \\
&&&$\oplus3[32]\oplus2[5]$&${\oplus3[31^2]\oplus 4[32]\oplus 3[2^21]}$\\
\hline
6&[42]&?&$ [6]\oplus2[51]\oplus[42]$&?\\
\hline
7&$[7]\oplus2[61]\oplus2[52]$&?&$5[7]\oplus 8[61]\oplus 8[52]$&?\\
&$\oplus [51^2]\oplus[43]$&&$\oplus 3[51^2]\oplus 5[43]\oplus 4[421]$&\\
&$\oplus[421]\oplus[3^21]$&&$\oplus [3^21]\oplus[32^2]$&\\
\hline
8&$[62]$&?&$2[8]\oplus 2[71]\oplus 2[62]\oplus[53]$&?\\
\hline
\end{tabular}
\caption{Rank 3 Computations}\label{fig:Rank3}
\end{figure}

\frenchspacing

\end{document}